\newtheorem{thm}{Theorem}[section]
\newtheorem{clm}[thm]{Claim}
\newtheorem{cor}[thm]{Corollary}
\newtheorem{lem}[thm]{Lemma}
\newtheorem{rem}[thm]{Remark}
\newtheorem{df}[thm]{Definition}
\newtheorem{exm}[thm]{Example}
\newenvironment{fg}{\begin{figure}}{\end{figure}}
\newenvironment{enum}{\begin{enumerate}}{\end{enumerate}}
\newenvironment{eq}{\begin{equation}}{\end{equation}}
\newcommand{\lra}{\longrightarrow}
\newcommand{\del}{\partial}
\newcommand{\mcal}{\mathcal}
\newcommand{\are}{\mcal{A}}
\newcommand{\len}{\ell}
\newlength{\standardunitlength}
\begin{document}
\title{Quasi Normality and $PL$ Approximation of Least Area Surfaces in $3$-Manifolds}
\author{Eli Appleboim\\
\href{mailto:eliap@ee.technion.ac.il}{eliap@ee.technion.ac.il}}
\affil{Technion, Haifa, Israel \\ Gordon Academic College of Education, Haifa, Israel}

\date{}

\maketitle
\begin{abstract}
{\noindent This} paper presents relations between least area and normal surfaces, embedded in either a Euclidean or hyperbolic $3$-manifold. A relaxed version of normal surfaces, termed quasi-normal, is introduced, and it is shown that under appropriate conditions, every embedded least area surface is quasi-normal with respect to a fine enough fat triangulation of the $3$-manifold. In addition, it is shown that the intersections of a least area surface with the tetrahedra of such fine enough triangulation, even when not as simple as in the case of normal surfaces, are also well behaved. Finally, it is shown that a least area surface, when considered as a quasi normal surface, gives rise to a sequence of piecewise flat surfaces termed as flat-associated surfaces, and this sequence converges to the given least area surface and approximates its area.  
\end{abstract}

\section{Introduction}
{\noindent Let} $(\mcal{M}, \mathfrak{g}, \mcal{T})$ be a smooth $3$-manifold equipped with a Riemannian metric $\mathfrak{g}$ and a triangulation $\mcal{T}$. For surfaces embedded in $\mcal{M}$ there are two notions of area that can be associated. One is the usual notion of area that is given by the metric $\mathfrak{g}$. The second version is given by the intersection of a surface with $\mcal{T}$, and can be considered as a topological version of area. For each area function there is a notion of simplicity of surfaces. In the metric case this is the notion of least area surfaces, where a least area surface minimizes the area in some equivalence class, while in the topological view point it is given by least weight normal surfaces for which their intersection pattern with the triangulation admits some minimality criteria.\\
{\noindent Given} these two versions of area minimizing surfaces it is most natural to look for possible relations between them. It is already known that these two families of surfaces share some common properties. For instance, in \cite{msy} it is proved that every incompressible surface can be isotoped to a least area surface, while in \cite{jr1} it is shown that every incompressible surface can be isotoped to a least weight normal surface. Yet, a complete picture of the relations between the two types of area minimizing surfaces is far from being achieved. For example, in \cite{jr2} the following, still open, conjecture is made: "... it should be possible to subdivide a triangulation arbitrarily finely and obtain sequences of $PL$ minimal surfaces which converge to classical analytic minimal surfaces. This would be analogous to some of the early work on the Dirichlet principle." (see, \cite{jr2}, pp.495-496). A possible rephrase of this conjecture is given by the two following questions \\

{\noindent (QI)} Is it possible to subdivide a given triangulation of a $3$-manifold arbitrarily fine, so that every least area surface becomes a normal surface with respect to a fine enough triangulation?\\

{\noindent (QII)Is every least area surface the limit surface, in some appropriate manner, of a sequence of normal surfaces? \\

{\noindent An} affirmative answer to the first question gives a topological interpretation of the differential geometric picture, while an affirmative answer to the second question gives an alternative topological proof of many classical results about the existence of least area surfaces in $3$-manifolds, such as those that were obtained in ~\cite{msy}, using partial differential equations. Moreover, the use of existing algorithms for finding normal surfaces (such as ~\cite{jo}) makes this topological counterpart potentially computable.\\

{\noindent The} motivation for answering these two questions comes in two flavors. One is purely theoretical, as answering these questions significantly extends the analogy between minimal and normal surfaces. The second motivation is of an applied nature. For example, in the area of image processing, one of the possible ways of dealing with noisy images is by minimal surface theory(\cite{ms}, \cite{apt}). However, all existing techniques in this direction use at least second order derivations of the given noisy image, thus induce numerical instabilities that somehow need to be overcome. In contrast, the topological version of minimal and least area surfaces reflected by normal surfaces as proposed in \cite{jr2} does not rely on derivatives hence may overcome numeric issues.\\

\noindent{This} paper addresses the two questions presented above. It is organized as follows: Section $2$ is devoted to the presentation of all needed background material, covering basic notations, definitions and necessary known results on $3$-manifold topology, least area surfaces, normal surface theory and fat triangulations. Afterwards, in Sections $3$ and $4$ question (QI) is concerned. Section $3$ begins with the definition of a quasi-normal surface which can be thought of as a relaxed version of a normal surface. Then, it is shown that every least area surface is quasi normal with respect to small enough (in terms of mesh-size), fat triangulation. Moreover, the upper bound on the mesh-size will be shown to depend only on the injectivity radius of $\mcal{M}$, and on the Schoen curvature bound of least area surfaces, thus making this bound global (Section $3$, Theorem \ref{thm:LeastAreaIsNormal}). In Section $4$ a tameness theorem will be proved, asserting that the intersection patterns of a quasi-normal surface with a small enough fat triangulation are well behaved and controlled (Section $4$, Theorem \ref{thm:NonNormalAreTame}). Results in these two sections will be shown first for closed $3$-manifolds, after which necessary modifications will be done for compact $3$-manifolds with boundary (Section $4$, Theorem \ref{thm:LeastAreaQuasiNormalBoundary}, and Theorem \ref{thm:NonNormalAreTameBoundary}).

\noindent{In} Sections $5$, and $6$ the focus is turned toward question (QII). In $5$ the median subdivision of a triangulation is introduced, and  it will be shown that the median subdivision has the same fatness as the original triangulation. This yields a procedure under which a given triangulation can be iteratively subdivided, to obtain a sequence of triangulations having decreasing mesh and fixed fatness (Section $5$, Lemmas \ref{lem:MadianSubdivKeepsFatness} and \ref{lem:MedianSubdivPreservesFatness}). It is important to note that since it is necessary to keep the fatness fixed, such a subdivision process needs special care. For example, the commonly used barycentric subdivision will not do. Section $6$ makes use of all results obtained in its preceding sections in order to give a method by which it is possible to approximate a given least area surface by a sequence of simple surfaces. First, the exact meaning of convergence of a sequence of surfaces is defined. Then, as a least area surface is assumed to be in quasi normal position with respect to a given triangulation, the notion of {\em flat associate} surface of the least area surface is defined. This will follow by showing that the median subdivision procedure built in Section $5$ gives rise to a sequence of flat associated surfaces which converges to the given least area surface (Section $6$, Theorem \ref{thm:AppxThm}, and Theorem \ref{thm:AppxThmLinear}).

\section{Preliminaries}
{\noindent Throughout} this paper we work in the smooth category. The notation $\mcal{M}$ will denote a compact Riemannian $3$-manifold, the boundary of which, if exists, will be denoted by $\del \mcal{M}$. A map $f:(\mcal{F}, \del \mcal{F}) \lra (\mcal{M}, \del \mcal{M})$ between manifolds with boundary is {\it proper} if $f^{-1}(\del \mcal{M}) = \del \mcal{F}$ and the preimage of every compact subset of $\mcal{M}$ is a compact subset of $\mcal{F}$. Unless otherwise stated all concerned surfaces in $\mcal{M}$ are assumed to be properly  embedded. It will be assumed that $\mcal{M}$ is equipped with a triangulation, denoted by $\mcal{T}$. The $i$ skeleton of $\mcal{T}$ is $\mcal{T}^{(i)}, i = 0, 1, 2, 3$, i.e.
\[ \mcal{T}^{(i)} = \bigcup \{ \textrm{simplices of dimension} \leq i \} \]
{\noindent Every} triangulation of a three-manifold is assumed to be {\it non-singular} and {\it proper}, where non-singular means that every $i$-cell, $i = 0, ..., 3$, is a homeomorphic image of a standard $i$-cell in $\mathbb{R}^3$, and proper means that the intersection of two $j$-cells is either empty or exactly one $(j-1)$-cell, $j = 1, 2, 3$. It is a well known result of Moise that every smooth $3$-manifold can be non-singularly and properly triangulated, see \cite{moi}. If $\gamma$ is a curve embedded in $\mcal{M}$ we denote its length with respect to the metric by $\len(\gamma)$, and for an embedded surface $\mcal{F}$, its area is denoted by $\are(\mcal{F})$. We denote by $\lambda(\mcal{T})$ the mesh of a triangulation $\mcal{T}$, i.e. the maximal diameter of a $3$-simplex of $\mcal{T}$.

\subsection{Basics of $3$-Manifold Topology}\label{sec:PrelimTopo}
{\noindent We} give a very short and essential list of definitions and results about $3$-manifolds topology that are necessary for the sequel, for details see \cite{hem}.

{\noindent Let} $\mcal{S}: \mcal{F} \rightarrow \mcal{M}$ be an embedding of a surface $\mcal{F}$ into $\mcal{M}$. Abusively, whenever no confusion is expected, we denote the image $\mcal{S}(\mcal{F})$ by $\mcal{F}$. Two properly embedded surfaces $\mcal{F}$ and $\mcal{G}$ are {\em isotopic} if there exists a homotopy $H: I \times \mcal{M} \rightarrow \mcal{M}$ such that for every time $t \in I$ the map $H_t: \mcal{M} \rightarrow \mcal{M}$ is a homeomorphism, and so that,
\[H_0(\mcal{F}) = \mcal{F} \;\;,\]
and,
\[H_1(\mcal{F}) = \mcal{G} \;\;.\]

{\noindent A} $2$-sphere embedded in a $3$-manifold is {\em essential} if it does not bound a $3$-ball. A $3$-manifold $\mcal{M}$ is {\em irreducible} if it does not contain any essential $2$-sphere. Let $\mcal{F}$ be a surface embedded in $\mcal{M}$, of genus $> 0$. A {\em compressing disk} for $\mcal{F}$ is a disk, $\mcal{D}$, embedded in $\mcal{M}$, such that $ \partial \mcal{D} = \mcal{D} \cap \mcal{F}$, and $\partial \mcal{D}$ does not bound a disk in $\mcal{F}$. A surface $\mcal{F}$ is {\em compressible} in $\mcal{M}$ if there exists a compressing disk for $\mcal{F}$. Otherwise it is {\em incompressible}. If $\mcal{F}$ is a $2$-sphere then it is incompressible if it is essential.

{\noindent A} compact irreducible orientable $3$-manifold that contains a two sided incompressible surface is called {\em Haken manifold}. The complement of a link in $\mathbb{S}^3$ is a Haken manifold where each of the boundary torii is an embedded incompressible surface.

\subsection{Least Area Surfaces}\label{sec:PrelimMinimal}
In this subsection essential material on minimal and least area surfaces is reviewed. References to this include \cite{my1}, \cite{my2}, \cite{my3}. 
\subsubsection{Plateau Problem}
\noindent{The} Plateau problem was first set by Lagrange although named after Joseph Plateau who was interested in soap films.
The statement of the problem is as follows:\\
Given a closed piecewise smooth curve $\Gamma$ in a $3$-manifold, does there always exist a surface $\mcal{F}$ so that $\partial\mcal{F} = \Gamma$ and
\[ \are(\mcal{F}) = \inf\{ \are(\mcal{G})\; ; \partial\mcal{G} = \Gamma, and \textrm{ $\mcal{G}$ is a surface isotopic to $\mcal{F}$ }\} \]
{\noindent A} similar question can also be formulated for closed surfaces. A surface $\mcal{F}$ that admits the above equality is called a {\em least area surface}.\\

\subsubsection{Some Riemannain Geometry}
Let $\mcal{F} : \Sigma \rightarrow \mcal{M}$ be an embedding of a smooth surface $\Sigma$ into $\mcal{M}$. Let $(u, v) \in \Omega \subset \mathbb{R}^2$ denote the parametrization domain of $\Sigma$. 

{\noindent The} area of $\mcal{F}$, with respect to an area element $d\are = dudv$ that is induced by the embedding, is given by
\begin{eq}\label{eq:AreaOfSurface}
\are(\mcal{F}) = \int_{\Omega}|\mcal{F}_u \times \mcal{F}_v|dudv\;\; .
\end{eq}

{\noindent Let} $II$ denote the second fundamental form of $\mcal{F}$, and let $K, H$ respectively denote the Gaussian and mean curvatures of $\mcal{F}$.

\subsubsection{First Variation of Area}
Let $\mcal{F} + t\mu$ be a smooth variation of $\mcal{F}$ in the direction of the normal vector $N$. We then have that the derivative of the area with respect  to the variation vector $\mu$ is given by,
\begin{eq}\label{eq:FirstVariationOfArea}
\frac{d\are(\mcal{F} + t\mu)}{dt}|_{t = 0} = \int_{\mcal{F}}\mu H
\end{eq}

\begin{df}\label{MinimalSurface}
	{\em
We will say that $\mcal{F}$ is a} minimal surface {\em in the manifold $\mcal{M}$ if it is a stationary point of the area variation above.}
\end{df}

{\noindent Following} the definition $\mcal{F}$ is minimal if and only if it satisfies,
\begin{eq}\label{eq:MinimalIffZeroMean}
H \equiv 0 \;.
\end{eq}

\subsubsection{Second Variation and Stability}
{\noindent The} term ``minimal'' can be misleading, since, not every minimal surface is a least area surface. Figure \ref{fg:UnStableGeodesic} reflects this in one dimension lower showing curves on a $2$-sphere. The curves, $\alpha$ and $\beta$ shown in the figure, are both geodesic curves. A small variation of $\alpha$ may have shorter length than $\alpha$ itself, while in contrast, any variation of $\beta$ is longer than $\beta$. In such a case we say that $\beta$ is a {\em stable geodesic} and $\alpha$ is {\em unstable}.

\begin{fg}[h]
\[ 
	\setlength{\unitlength}{0.5\standardunitlength}
	\begin{array}{c}  \hspace{-1.7mm}
		\raisebox{-8pt}{\input UnStableGeod.tex }
		\hspace{-1.9mm}
	\end{array}
 \]
\caption{stable geodesic and non stable geodesic}\label{fg:UnStableGeodesic}
\end{fg}

{\noindent Let} $|II|^2$ denote the squared norm of $II$, and let $Ric_{\mcal{M}}(N)$ denote the {\em Ricci curvature} of $\mcal{M}$ in the direction of $N$. A minimal surface in a $3$-manifold is {\em stable} if it satisfies the following inequality, see  \cite{msy}:

\begin{eq}\label{eq:ShortStabilityCond}
\int_{\mcal{F}}(|II|^2 + Ric_{\mcal{M}}(N)) \leq 0 \;\;,
\end{eq}

In \cite{msy} it is shown that a least area surface is a stable minimal surface.\\

\subsubsection{Some Examples and Existence Results}
A surface $\mcal{F}$ {\em spans} the simple closed curve $\Gamma$, in $\mcal{M}$, if $\del \mcal{F} = \Gamma$.\\ \\
{\noindent \textbf{The case of $\mathbb{R}^3$ }}\\
\begin{enum}
\item Obviously a plane is an example of a stable minimal surface in $\mathbb{R}^3$.
\item If $\mcal{F}:\Omega \rightarrow \mathbb{R}^3$ is a $\mcal{C}^2$ function defined on domain $\Omega \subset \mathbb{R}^2$ that satisfies the minimal surface equation \ref{eq:MinimalIffZeroMean} then it is a least area surface among all surfaces given as functions from $\Omega$ to $\mathbb{R}$, and that coincide with $\mcal{F}$ on $\partial \Omega$.
\end{enum}
{\noindent The} second example above was established independently by Douglas \cite{dou} and Rado \cite{rad}, first for simply connected domains, yielding a least area disk, and was later extended to general domains, for example in \cite{dou2}. \\ \\
\noindent{\textbf{Least Area Surfaces in General $3$-manifolds}}\\
\noindent{A} generalization of the existence of solutions to the Plateau problem for disk type surfaces in general $3$-manifolds was given by Morrey in \cite{morr}. For more general surfaces we have the following:
\begin{thm}\label{thm:LeastAreaMeeksSimonYau}(\cite{msy})
Let $\mcal{M}$ be a compact irreducible $3$-manifold and let $\mcal{F}$ be an embedded incompressible surface in $\mcal{M}$. Then $\mcal{F}$ is isotopic to a least area surface. Moreover, the least area representative in the isotopy class of $\mcal{F}$ is a stable minimal surface.
\end{thm}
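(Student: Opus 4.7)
The plan is to follow the Meeks--Simon--Yau strategy of extracting a limit from a minimizing sequence in the isotopy class of $\mcal{F}$, then using the hypotheses (incompressibility and irreducibility) to rule out the degenerations that normally obstruct such a limit procedure. Specifically, I would begin by setting
\[
A_0 \;=\; \inf\bigl\{\are(\mcal{G}) \,:\, \mcal{G} \text{ embedded and isotopic to } \mcal{F}\bigr\}
\]
and choosing a minimizing sequence $\{\mcal{F}_n\}$ of embedded surfaces, each isotopic to $\mcal{F}$, with $\are(\mcal{F}_n)\to A_0$. Because $\mcal{M}$ is compact the areas $\are(\mcal{F}_n)$ are uniformly bounded, so the surfaces have bounded mass as integral $2$-varifolds (or integer multiplicity currents) and a subsequence converges in the appropriate weak topology to a stationary integral varifold $\mcal{F}_\infty$.

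Next I would promote this weak limit to a genuine smooth embedded minimal surface. The area-minimizing property implies that $\mcal{F}_\infty$ is stable in the sense of varifolds, so the regularity theory for stable stationary varifolds (Allard's theorem together with the Schoen--Simon curvature estimates for two-sided stable minimal surfaces in $3$-manifolds) gives that the support of $\mcal{F}_\infty$ is a smooth, embedded minimal surface, possibly with multiplicity and possibly with extra spherical components. Because $\mcal{M}$ is irreducible, any embedded $2$-sphere component bounds a ball, and I would discard such components without changing area or altering the isotopy class of the nontrivial part.

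The heart of the argument, and the step I expect to be the main obstacle, is to show that the smooth limit $\mcal{F}_\infty$ still lies in the isotopy class of $\mcal{F}$. A priori, a minimizing sequence could degenerate by pinching off small handles or by collapsing a subsurface along a compressing disk, producing a limit in a simpler (non-isotopic) class. To prevent this, before passing to the limit I would modify each $\mcal{F}_n$ by the Meeks--Simon--Yau ``$\gamma$-reduction'' / cut-and-paste procedure: whenever $\mcal{F}_n$ admits a compressing disk or a disk exchange that strictly decreases area, perform the replacement, using a tower construction in the universal cover to produce embedded replacement disks as in Dehn's lemma. Incompressibility of $\mcal{F}$ guarantees that no such surgery disconnects $\mcal{F}_n$ from its isotopy class in a fatal way (any created sphere is inessential by irreducibility, and any compressing disk for an isotopic copy would contradict incompressibility of $\mcal{F}$ itself). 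Repeating this finitely many times for each $n$ yields a minimizing sequence whose weak limit is automatically incompressible, and then a standard argument shows the convergence can be upgraded to convergence in Hausdorff distance (and $C^\infty$ away from possible singular sets, which are empty here), so $\mcal{F}_\infty$ is ambient isotopic to $\mcal{F}$.

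Finally, stability, i.e.\ the inequality (\ref{eq:ShortStabilityCond}), follows by a direct second-variation computation: since $\mcal{F}_\infty$ minimizes area in its isotopy class, the second derivative $\frac{d^2}{dt^2}\are(\mcal{F}_\infty+t\mu)\bigr|_{t=0}$ is nonnegative for every compactly supported normal variation $\mu N$. Expanding this second variation using the Gauss equation produces exactly
\[
\int_{\mcal{F}_\infty}\!\!\bigl(|\nabla\mu|^2 - (|II|^2+\mathrm{Ric}_{\mcal{M}}(N))\mu^2\bigr)\,d\are \;\ge\; 0,
\]
and choosing $\mu\equiv 1$ (permitted since $\mcal{F}_\infty$ is compact, as $\mcal{M}$ is compact) yields the stated stability inequality. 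This completes the proof.
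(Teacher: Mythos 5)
The paper does not prove this statement: Theorem \ref{thm:LeastAreaMeeksSimonYau} is quoted verbatim from \cite{msy} as background material, so there is no in-paper proof to compare against. Your sketch does track the broad outline of the actual Meeks--Simon--Yau argument (minimizing sequence, weak compactness in the varifold/current topology, cut-and-paste $\gamma$-reduction to control the isotopy class, second variation for stability), so the approach is the right one.

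That said, two of your steps are doing more work than you acknowledge, and a reader would want you to flag them. First, you assert that ``the area-minimizing property implies $\mcal{F}_\infty$ is stable in the sense of varifolds'' and then invoke Allard plus Schoen--Simon for regularity, but stability of the limit is not immediate from weak convergence of a minimizing sequence; in \cite{msy} the regularity of the limit is obtained by a replacement argument (constructing comparison surfaces by solving local Plateau problems in small balls) rather than by appealing to a pre-established stability of the varifold limit, and the stability inequality is deduced only after the smooth minimizer is in hand. Second, and more seriously, the claim that ``a standard argument shows the convergence can be upgraded to convergence in Hausdorff distance... so $\mcal{F}_\infty$ is ambient isotopic to $\mcal{F}$'' compresses the genuinely hard part of the theorem. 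In Meeks--Simon--Yau the limit may a priori be a minimal surface counted with multiplicity two that is double-covered by the minimizing sequence (the twisted $I$-bundle case); ruling this out, or absorbing it into the statement, requires a careful analysis that is the technical heart of their paper. Your $\gamma$-reduction paragraph points in the right direction, but as written it does not actually exclude multiplicity in the limit or justify why the isotopy type is preserved through infinitely many surgeries. Your final second-variation computation, including the choice $\mu\equiv 1$ on the closed limit surface to obtain inequality (\ref{eq:ShortStabilityCond}), is correct.
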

{\noindent Using} the stability condition, given in Equation \ref{eq:ShortStabilityCond} Meeks-Simon-Yau deduced the following:
\begin{thm}(\cite{msy})
if $\mcal{M}$ is a compact orientable $3$-manifold with everywhere strictly positive Ricci curvature, and with (possibly empty) boundary of non negative mean curvature (with respect to the outward normal), then $\mcal{M}$ does not contain any compact orientable embedded least area surface of positive genus.
\end{thm}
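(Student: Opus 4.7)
The plan is to argue by contradiction using the stability inequality of Section \ref{sec:PrelimMinimal}. Suppose that $\mcal{F} \subset \mcal{M}$ is a compact orientable embedded least area surface of positive genus. By the second assertion of Theorem \ref{thm:LeastAreaMeeksSimonYau}, $\mcal{F}$ is then a stable minimal surface, so Equation \ref{eq:ShortStabilityCond} applies:
\[ \int_{\mcal{F}}\bigl(|II|^2 + \operatorname{Ric}_{\mcal{M}}(N)\bigr) \le 0. \]
This is nothing but the second variation formula evaluated at the constant test function $\phi \equiv 1$, together with the sign convention that non-negativity of $\delta^2 \are$ is what \emph{stability} asserts.

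Now exploit the curvature hypotheses. Since $\operatorname{Ric}_{\mcal{M}} > 0$ strictly everywhere on $\mcal{M}$, we have $\operatorname{Ric}_{\mcal{M}}(N) > 0$ at every point of $\mcal{F}$, irrespective of the choice of unit normal. Combined with $|II|^2 \ge 0$ pointwise, the integrand is everywhere strictly positive, and so the integral is strictly positive. This contradicts the displayed inequality and rules out the existence of such an $\mcal{F}$.

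The mean-convexity of $\partial \mcal{M}$ and the positive-genus hypothesis play no role in the stability computation itself; rather, they are what make the invocation of Theorem \ref{thm:LeastAreaMeeksSimonYau} meaningful: mean-convexity prevents a minimizing sequence from being pushed into $\partial \mcal{M}$, while positive genus rules out the degenerate case of a $2$-sphere isotopy class, in which (in an irreducible target) the infimal area is zero and no least area representative exists. The only technical subtlety I anticipate is that ``compact embedded least area surface'' should be read as a \emph{closed} surface in $\mcal{M}$, so that the constant test function $\phi \equiv 1$ is admissible in the second variation formula underlying Equation \ref{eq:ShortStabilityCond}. If one allows $\partial \mcal{F} \subset \partial \mcal{M}$, the argument must use the free-boundary version of the second variation, where the hypothesis $H_{\partial \mcal{M}} \ge 0$ with respect to the outward normal is precisely what ensures that the boundary integrand picked up when substituting $\phi \equiv 1$ has the correct sign to preserve the direction of the inequality; the strict positivity of $|II|^2 + \operatorname{Ric}_{\mcal{M}}(N)$ then closes the argument in the same way. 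This correct bookkeeping of boundary signs is the only place where the proof requires care.
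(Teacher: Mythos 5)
The paper offers no proof of its own here: it simply records that Meeks--Simon--Yau deduced the statement from the stability inequality in Equation~\ref{eq:ShortStabilityCond}, and your reconstruction follows exactly that route. The main line of argument is correct: a least area surface is a stable minimal surface, stability with the constant test function $\phi \equiv 1$ yields $\int_{\mcal{F}}\bigl(|II|^2 + Ric_{\mcal{M}}(N)\bigr) \le 0$, and strict positivity of Ricci forces the integrand (hence the integral) to be strictly positive, a contradiction.

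One small inaccuracy in your closing remark about the boundary case: in the free-boundary second-variation formula the boundary integrand is $-\mathrm{II}_{\partial \mcal{M}}(N,N)\,\phi^2$, i.e.\ the second fundamental form of $\partial \mcal{M}$ evaluated on the unit normal $N$ of $\mcal{F}$, not the mean curvature of $\partial \mcal{M}$. Mean convexity $H_{\partial \mcal{M}} \ge 0$ does not by itself control the sign of $\mathrm{II}_{\partial \mcal{M}}(N,N)$, so it does not directly tame the boundary term the way you suggest. Its actual role is the first one you mention: it is a barrier hypothesis, ensuring that an area-minimizing representative exists, is smoothly embedded, and sits in the interior of $\mcal{M}$, so that $\mcal{F}$ is closed and the interior stability inequality with $\phi \equiv 1$ (no boundary term) applies verbatim. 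With that reading the proof is complete.
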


The following two lemmas will be of crucial importance later as they will be used in Section $3$.
\begin{lem} [\cite{hs}] \label{lem:DiskSpanCurve}
Let $\mcal{M}$ be a closed Riemannian $3$-manifold. There exists $\epsilon > 0$ such that for any point $x \in \mcal{M}$, the ball $B_{\mcal{M}}(x, \; \epsilon)$ satisfies that if $\Gamma \subset \del B_{\mcal{M}}(x, \; \epsilon)$ is a simple closed curve and if $D$ is a least area disk in $\mcal{M}$ spanning $\Gamma$, then $D$ is properly embedded in $B_{\mcal{M}}(x, \; \epsilon)$.
\end{lem}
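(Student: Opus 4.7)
The plan is a barrier argument: I would use the strict mean convexity of small geodesic balls as an obstacle for the minimal disk $D$, combined with the strong maximum principle for minimal surfaces. Compactness of $\mcal{M}$ yields uniform geometric control near every point: there exists $R_0 > 0$, smaller than both the (uniform) injectivity and convexity radii of $\mcal{M}$, such that for every $x \in \mcal{M}$ the distance function $\rho_x(\cdot) := d_{\mcal{M}}(x, \cdot)$ is smooth on $B_{\mcal{M}}(x, R_0) \setminus \{x\}$ with uniformly positive definite Hessian, and each geodesic sphere $S(x, r) = \del B_{\mcal{M}}(x, r)$ with $0 < r < R_0$ is strictly mean convex: its mean curvature vector points toward $x$ with magnitude bounded below by a positive function of $r$. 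These are standard consequences of Hessian/Rauch comparison applied to the uniform sectional curvature bound on $\mcal{M}$.

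To apply the barrier argument at an interior point of $D$, I would first ensure $D$ cannot escape the ball $B_{\mcal{M}}(x, R_0)$ on which $\rho_x$ is smooth. Using the monotonicity formula for minimal surface area at any interior point of $D$, combined with the bound $\are(D) \leq \are(\mathrm{Cone}_x(\Gamma)) = O(\epsilon \cdot \len(\Gamma)) = O(\epsilon^2)$ coming from the geodesic cone over $\Gamma$ from $x$, one deduces that $\mathrm{dist}(p, \del D) = O(\epsilon)$ for every interior $p \in D$, with universal constants depending only on the curvature bounds of $\mcal{M}$. Choosing $\epsilon$ small enough in terms of $R_0$ then forces $D \subset B_{\mcal{M}}(x, R_0)$.

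With $\epsilon$ so chosen, suppose for contradiction that $D \not\subset \overline{B_{\mcal{M}}(x, \epsilon)}$. Since $\rho_x \equiv \epsilon$ on $\del D = \Gamma$ while $\rho_x > \epsilon$ somewhere on $D$, the continuous function $\rho_x|_D$ attains its maximum $r_0 \in (\epsilon, R_0)$ at some interior point $p_0 \in D$. Near $p_0$, $D$ lies locally in $\overline{B_{\mcal{M}}(x, r_0)}$ and is tangent at $p_0$ to the geodesic sphere $S(x, r_0)$. Writing both surfaces as graphs $u_D, u_S$ over their common tangent plane at $p_0$ with outward unit normal $\nu$, one has $u_D \leq u_S$ locally, hence $\Delta u_D(p_0) \leq \Delta u_S(p_0)$, which in turn gives $H_D(\nu)(p_0) \leq H_{S(x, r_0)}(\nu)(p_0)$. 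But by regularity of least area disks $D$ is minimal at $p_0$, so $H_D(\nu)(p_0) = 0$, while $H_{S(x, r_0)}(\nu)(p_0) < 0$ by the strict mean convexity established in the first step. This is the desired contradiction, and so $D \subset \overline{B_{\mcal{M}}(x, \epsilon)}$ as required.

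The main obstacle is the containment step: without the a priori guarantee that $D$ lies in the convexity ball $B_{\mcal{M}}(x, R_0)$, the interior maximum $p_0$ of $\rho_x|_D$ could land outside the injectivity radius of $x$, where $\rho_x$ is only Lipschitz and the barrier argument breaks down. The monotonicity formula for minimal surface area, together with the uniform sectional curvature bound on $\mcal{M}$, is the correct tool to supply this containment and to make the uniform choice of $\epsilon$ independent of $x$.
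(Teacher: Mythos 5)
The paper itself offers no proof of this lemma; it is quoted directly from Hass--Scott \cite{hs}. Your barrier strategy --- strict mean convexity of small geodesic spheres, the strong maximum principle applied at the point of $D$ farthest from $x$, and a monotonicity-formula containment step to justify working inside the convexity ball --- is the standard argument and is sound in outline.

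There is, however, one step that fails as written. You estimate $\are(D)\leq\are(\mathrm{Cone}_x(\Gamma))=O(\epsilon\cdot\len(\Gamma))$ and then assert this is $O(\epsilon^2)$; that last equality silently assumes $\len(\Gamma)=O(\epsilon)$, which is not among the hypotheses. The curve $\Gamma$ is only required to be a simple closed curve on the geodesic sphere $\del B_{\mcal{M}}(x,\epsilon)$, and such a curve can be arbitrarily long, so the cone over it can have area far exceeding $\epsilon^2$. The fix is to use a different competitor: $\Gamma$ separates $\del B_{\mcal{M}}(x,\epsilon)$ (a topological $2$-sphere once $\epsilon$ is below the injectivity radius) into two disks, and the smaller of these has area at most $\frac{1}{2}\are\left(\del B_{\mcal{M}}(x,\epsilon)\right)=O(\epsilon^2)$ unconditionally; since $D$ is least area, $\are(D)=O(\epsilon^2)$ follows, and your monotonicity estimate, hence the containment $D\subset B_{\mcal{M}}(x,R_0)$, goes through as intended. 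Two smaller points worth noting: the lemma asserts that $D$ is \emph{properly} embedded in $B_{\mcal{M}}(x,\epsilon)$, so you should also rule out interior points of $D$ touching $\del B_{\mcal{M}}(x,\epsilon)$ (the same strong maximum principle, applied at $r_0=\epsilon$, does this), and the embeddedness of the least area disk itself is being taken from Meeks--Yau rather than proved here.
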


\begin{lem} [\cite{hs}] \label{lem:ShortEnoughCurve}
Let $\mcal{M}$ be a closed Riemannian manifold. There exists $r > 0$ such that if $\varepsilon < r$ and $\Gamma$ is a closed curve of length less than $\varepsilon$ contained in $B(x, \varepsilon)$, then any least area disk $D$ spanning $\Gamma$ lies in the ball $B(x, \varepsilon)$.
\end{lem}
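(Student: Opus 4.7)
The plan is to use the interior maximum principle for the distance-squared function on a minimal surface, combined with an area upper bound obtained by cone comparison, so as to first confine $D$ to a region where distance spheres are strictly convex. I would begin by choosing $r>0$ small enough, uniformly over the compact manifold $\mcal{M}$, so that for every $x\in\mcal{M}$ and every $t\le r$: the ball $B(x,t)$ lies strictly inside the injectivity radius; the distance function $\phi_x:=d(\cdot,x)$ is smooth on $B(x,t)\setminus\{x\}$; and the Hessian comparison theorem, applied using the bound on the sectional curvatures of $\mcal{M}$, yields $\mathrm{Hess}(\phi_x^2/2)\ge c\,\mathfrak{g}$ as quadratic forms on $B(x,t)$ for some fixed $c>0$. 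This last condition is the analytic version of the statement that the distance spheres $\partial B(x,t)$ have strictly positive mean curvature with respect to the inward normal.

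Given $\varepsilon<r$, a closed curve $\Gamma\subset B(x,\varepsilon)$ with $\ell(\Gamma)<\varepsilon$, and a least area disk $D$ spanning $\Gamma$, I would next bound $\are(D)$ from above by exhibiting an explicit competitor: the radial cone $C$ over $\Gamma$ from $x$, parametrized by $(s,\tau)\mapsto\exp_x\bigl(\tau\exp_x^{-1}(\gamma(s))\bigr)$ on $[0,\ell(\Gamma)]\times[0,1]$. A direct comparison with the Euclidean cone, using that on $B(x,r)$ the metric is close to the flat metric in normal coordinates, gives $\are(C)<\varepsilon^2$, whence $\are(D)<\varepsilon^2$ since $D$ is least area. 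Combining this area bound with the standard monotonicity formula for minimal surfaces at a point $y\in D$ realizing $\max_D\phi_x$ yields $d(y,x)\le C'\varepsilon$ for a universal $C'>1$; by taking $r$ smaller from the outset (so that $C'r$ still lies in the range where Step 1 is valid), I may therefore assume $D\subset B(x,r)$ throughout the rest of the argument.

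The final step is the maximum principle. Set $u:=\phi_x^2/2$ and consider $u|_D$. Since $D$ is compact, $u|_D$ attains its maximum at some $y\in D$. If $y\in\Gamma$ then $u(y)\le\varepsilon^2/2$, forcing every point of $D$ into $\overline{B(x,\varepsilon)}$ and finishing the proof. Otherwise $y$ is an interior point of $D$, and $u|_D$ is smooth near $y$ by interior regularity of embedded least area disks together with the fact that $y\notin\mathrm{cut}(x)$ (since $d(y,x)<r$). The maximum condition gives $\Delta_D u(y)\le 0$. However, for a minimal surface one has the identity $\Delta_D u=\mathrm{tr}_{T_yD}\mathrm{Hess}_M u$, and by the choice of $r$ in Step 1 this trace is at least $2c>0$, a contradiction. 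Hence $y\in\Gamma$ and $D\subset\overline{B(x,\varepsilon)}$.

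The principal technical hurdle I anticipate is justifying the smoothness of $u|_D$ at the interior maximum $y$ together with $y$ avoiding the cut locus of $x$. The former uses interior regularity for embedded least area disks, which is standard but nontrivial; the latter is precisely why the preliminary monotonicity step is necessary, namely to trap $D$ inside a ball where $\phi_x$ is smooth and the Hessian comparison applies. Once these are in hand, the maximum principle computation is immediate from the minimality equation $H\equiv 0$ used in the earlier part of the excerpt.
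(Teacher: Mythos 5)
The paper states this lemma as a direct citation to Hass--Scott [HS] and does not reproduce its proof, so there is no in-paper argument to compare against. Your proof is correct and is the standard barrier argument for such confinement statements: first bound $\are(D)$ by the cone over $\Gamma$, use monotonicity to trap $D$ in $B(x,C'\varepsilon)$, and then apply the interior maximum principle to $u=\phi_x^2/2$, using that on a minimal surface $\Delta_D u = \mathrm{tr}_{T_yD}\,\mathrm{Hess}_M u$, which is strictly positive where $\mathrm{Hess}_M u\ge c\,\mathfrak{g}$. You rightly identify the preliminary monotonicity step as essential: without it the interior maximum of $u|_D$ could lie at or beyond the cut locus of $x$, where the Hessian comparison is unavailable. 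Two minor points of phrasing rather than substance: the constant $C'$ depends on the monotonicity constant, hence on $K_{\mcal{M}}$ and $inj_{\mcal{M}}$, so it is uniform by compactness of $\mcal{M}$ but not ``universal''; and the smoothness of $u|_D$ at an interior maximum is a consequence of the Osserman--Gulliver--Alt exclusion of interior branch points for least area disks, which is the relevant fact here rather than embeddedness per se (though in this paper's setting all surfaces are embedded, so the distinction is moot). Finally, your conclusion $D\subset\overline{B(x,\varepsilon)}$ upgrades to the open ball exactly as you note, since $u$ attains its maximum on $\Gamma$, which lies in the open ball.
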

\noindent{under} appropriate conditions the two lemmas above hold also for compact $3$-manifolds with boundary. This will be referred to in more details in Section $4$. 
\subsubsection{Curvature Bounds}
For a compact $3$-manifold $\mcal{M}$ let $K_{\mcal{M}}$ and $inj_{\mcal{M}}$ respectively denote the maximal sectional curvature and the injectivity radius of $\mcal{M}$. Another result  that will be of extensive use later is the following theorem proved by Schoen.
\begin{thm}(\cite{sch})\label{thm:SchoenBound}
Let $\mcal{F}$ be an immersed stable minimal surface in a compact $3$-manifold $\mcal{M}$, and let $K_{\mcal{F}}$ denote the maximal Gaussian curvature along $\mcal{F}$. Then there exists a constant $C$ that depends only on $K_{\mcal{M}}$ and on $inj_{\mcal{M}}$ so that
\[|K_{\mcal{F}}| < C \;. \]
\end{thm}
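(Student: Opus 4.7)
The plan is to reduce the statement to a pointwise bound on the second fundamental form $|II|^2$ and then combine the stability inequality with a Simons-type identity via Moser iteration. By the Gauss equation for $\mcal{F}$ in $\mcal{M}$, at any point $p \in \mcal{F}$ one has $K_{\mcal{F}}(p) = K_{\mcal{M}}(T_p\mcal{F}) + \kappa_1 \kappa_2$, where $\kappa_1, \kappa_2$ are the principal curvatures. Since $\mcal{F}$ is minimal, $\kappa_1 + \kappa_2 = 0$, so $\kappa_1 \kappa_2 = -\tfrac{1}{2}|II|^2$. The sectional curvature term is controlled by $K_{\mcal{M}}$, so the theorem reduces to producing a pointwise bound on $|II|^2$ depending only on $K_{\mcal{M}}$ and $inj_{\mcal{M}}$.

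First I would extract an integral $L^2$ bound on $|II|^2$ from the stability inequality (Equation \ref{eq:ShortStabilityCond}). Fixing $p \in \mcal{F}$ and choosing $r$ as a small fraction of $\min\{inj_{\mcal{M}}, K_{\mcal{M}}^{-1/2}\}$, I would insert into the stability inequality a Lipschitz cutoff $\phi$ with $\phi \equiv 1$ on the intrinsic geodesic ball $B^{\mcal{F}}_{r/2}(p)$, supported in $B^{\mcal{F}}_r(p)$, and satisfying $|\nabla \phi| \leq 2/r$. This gives $\int_{\mcal{F}} |II|^2 \phi^2 \leq \int_{\mcal{F}} (|\nabla \phi|^2 - Ric_{\mcal{M}}(N)\phi^2)$; combined with the monotonicity formula for minimal surfaces, which provides an upper bound on the area of $B^{\mcal{F}}_r(p)$ in terms of $K_{\mcal{M}}$ and $inj_{\mcal{M}}$, this yields $\int_{B^{\mcal{F}}_{r/2}(p)} |II|^2 \leq C_1$ for some $C_1 = C_1(K_{\mcal{M}}, inj_{\mcal{M}})$.

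Next I would upgrade this integral estimate to a pointwise one by Moser iteration. A Simons-type identity for minimal surfaces in a $3$-manifold gives a differential inequality of the form $\Delta |II|^2 \geq 2|\nabla |II||^2 - c_1|II|^4 - c_2|II|^2$, with $c_1, c_2$ depending only on $K_{\mcal{M}}$. Pairing this with the Michael--Simon Sobolev inequality on the minimal surface, whose constant depends only on the ambient dimension, the standard Moser iteration on powers of $|II|^2 + 1$ produces $\sup_{B^{\mcal{F}}_{r/4}(p)} |II|^2 \leq C_2 \int_{B^{\mcal{F}}_{r/2}(p)} |II|^2 \leq C_1 C_2$, with $C_2 = C_2(K_{\mcal{M}}, inj_{\mcal{M}})$. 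Since $p \in \mcal{F}$ is arbitrary, this gives a uniform bound on $|II|^2$, and hence on $|K_{\mcal{F}}|$.

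The main obstacle is ensuring that every constant---the area bound from monotonicity, the Sobolev constant, and the initial radius $r$---depends only on $K_{\mcal{M}}$ and $inj_{\mcal{M}}$ and not on the specific immersion. The Michael--Simon--Sobolev inequality is the essential ingredient here, because its constants are universal in the ambient dimension; once the injectivity radius controls the local comparison between $\mcal{M}$ and Euclidean space, the Moser iteration closes up cleanly and the resulting $C$ has the claimed dependence. A secondary subtlety is that the Simons inequality above assumes a smooth immersion, which is already part of the setup of least-area surfaces and hence available for free in the cases of interest.
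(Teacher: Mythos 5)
Theorem \ref{thm:SchoenBound} is not proved in this paper; it is quoted verbatim from Schoen's paper \cite{sch} as a black box, so there is no internal argument here to compare your sketch against. On its own terms your plan is a recognisable modern route to a Schoen-type curvature estimate: Gauss equation to reduce $K_{\mcal{F}}$ to $|II|^2$, an $L^2$ bound on $|II|^2$ from stability with a cutoff, and then a Simons inequality plus Michael--Simon Sobolev plus Moser iteration to pass to a sup bound. This is closer in spirit to the Choi--Schoen integral-estimate approach than to Schoen's original 1983 argument, but it is a legitimate and well-trodden path.

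Two technical points would need attention before the sketch could be called a proof. First, your cutoff $\phi$ should be built from the \emph{extrinsic} distance $d_{\mcal{M}}(\cdot,p)$ restricted to $\mcal{F}$, not the intrinsic geodesic distance on $\mcal{F}$: the monotonicity formula bounds $\are(\mcal{F}\cap B^{\mcal{M}}_r(p))$ for extrinsic balls, so it controls $\int|\nabla\phi|^2$ only if $\phi$ is extrinsic, and using intrinsic balls risks circularity, since without the bound on $|II|$ you are trying to prove you cannot a priori compare intrinsic and extrinsic balls. Second, the Simons-type inequality for $\Delta|II|^2$ in a curved ambient $3$-manifold carries error terms involving not only $R_{\mcal{M}}$ but also $\nabla R_{\mcal{M}}$, so your constants $c_1,c_2$ actually depend on $\|\nabla R_{\mcal{M}}\|_\infty$ as well. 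The paper's statement of the theorem glosses over this same dependence (Schoen's theorem formally assumes a bound on $|\nabla R_{\mcal{M}}|$), and it is harmless for the present paper because the only ambient geometries used downstream are Euclidean and hyperbolic, where $\nabla R_{\mcal{M}}\equiv 0$; but a self-contained proof should make the hypothesis explicit. With those two adjustments the outline closes up, modulo the usual care in transplanting the Michael--Simon inequality to a ball of radius below $inj_{\mcal{M}}$ via normal coordinates, which you rightly flag.
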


\subsection{Normal Surface Theory}\label{sec:PrelimNormal}
 
{\noindent Many} of the classical results in the study of minimal and least area surfaces are based on methods of partial differential equations. These techniques although powerful, sometimes pose difficulties when trying to deduce topological information. Hence, it is desirable to have topological methods and techniques to deal with minimal and least area surfaces. For instance, it may be beneficial to analyse the way least area surfaces intersect with the tetrahedra of some given triangulation of the ambient manifold.\\

{\noindent An} example of such a topological viewpoint is the theory of {\em normal surfaces}. Normal surfaces were first introduced in the 1930`s by Kneser (~\cite{kne}), and further developed by Haken in the 1960`s (~\cite{hak}). This theory studies the ways surfaces intersect with a given triangulation of a $3$-manifold. In this section we will give some preliminary definitions and notations from normal surface theory that will be used later in this work. Most material in this section relies on ~\cite{jr1}, ~\cite{jr2}.

\subsubsection{Normal Surfaces}
Let $\mcal{F}$ be a surface properly embedded in a $3$-manifold $\mcal{M}$, and let $\mcal{T}$ be a given triangulation of $\mcal{M}$.
\begin{df}\label{df:NormalIsotopy}
{\em
An isotopy $H:\mcal{M} \times I: \rightarrow \mcal{M}$ is a} normal isotopy {\em with respect to $\mcal{T}$, if $H_x(\sigma) = \sigma$ for every $x \in I$ and for all simplices $\sigma \in \mcal{T}$. Two submanifolds of $\mcal{M}$ are {\em normally isotopic} if there exists a normal isotopy which carries one to the other.}
\end{df}

\begin{df}\label{df:NormalSurface}
{\em
Let $\mathfrak{t}\in \mcal{T}$ denote a tetrahedron, and let $\mathfrak{f}$ denote a face of $\mathfrak{t}$.
A} normal arc {\em in $\mathfrak{f}$, is an arc properly embedded in $\mathfrak{f}$, such that its end points are on two different edges of $\mathfrak{f}$. A} normal curve {\em in $\mcal{T}^{(2)}$, is a curve properly embedded in $\mcal{T}^{(2)}$, transverse to the edges of $\mcal{T}$, so that its intersection with each $2$-face consists of a possibly empty collection of normal arcs. A} normal disk {\em is a disk $\mcal{D}$, properly embedded in a tetrahedron $\mathfrak{t}$ such that $\partial \mcal{D} = \mcal{D} \cap \partial \mathfrak{t}$, is a simple closed normal curve in $\partial \mathfrak{t}$. An} elementary disk {\em is a normal disk so that its boundary is either a triangle that separates one vertex on $\partial \mathfrak{t}$, from all other three vertices of $\partial \mathfrak{t}$, or a quadrilateral (quad for short) that partitions the vertices of the tetrahedron into two separated pairs.
}
\end{df}
{\noindent Elementary} disks are illustrated in Figure \ref{fg:FigureDiskTypes}, and they come in seven different types corresponding to seven possibilities to partition the set of vertices of a tetrahedron into subsets according to the definition. In the following we will sometimes refer to the boundary of an elementary disk as a $3$-gon or $4$-gon according to its being a triangle or a quad, and for a general normal disk we may refer to its boundary as an $n$-gon.
\begin{df}
{\em
A surface $\mcal{G}$ properly embedded in $\mcal{M}$ is} normal {\em with respect to $\mathcal{T}$ if it is transverse to the skeleta of $\mcal{T}$, and if its intersections with any of the tetrahedra of $\mathcal{T}$ is a (possibly empty) collection of disjoint elementary disks.}
\end{df}
{\noindent Note} that since the surface is embedded, we can have only one type of $4$-gon at each tetrahedron.
\begin{figure}[h]
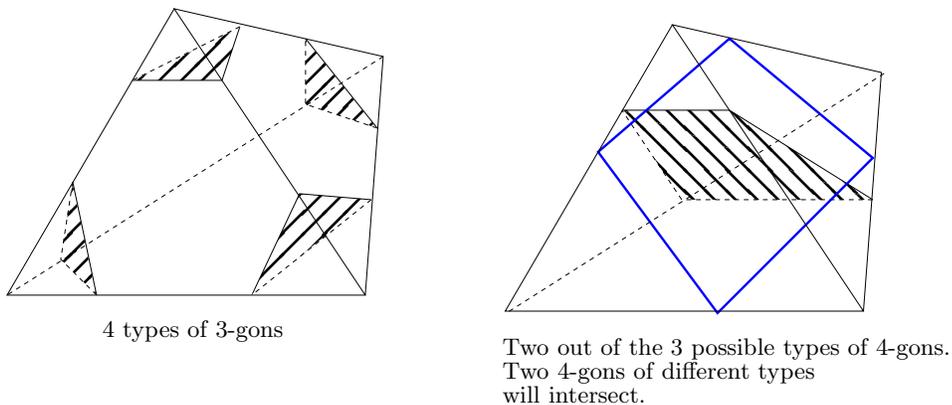

\[
	\setlength{\unitlength}{0.5\standardunitlength}
	\begin{array}{c}  \hspace{-1.7mm}
		\raisebox{-8pt}{\input NormalDisks.tex }
		\hspace{-1.9mm}
	\end{array}
\]
\caption{\small normal disks} \label{fg:FigureDiskTypes}
\end{figure}

\subsubsection{Possible Normal Disk Types}
The {\em length} of a normal curve is the number of normal arcs consisting it.
Recall that the boundary of a normal disk is a simple closed normal curve on the boundary of a tetrahedron that forms a polygon. A thorough analysis of the possible length such a polygon can have, was done by Thompson and Stocking in \cite{tho} and \cite{stoc} respectively. It is summarized as follows.
\begin{lem}(\cite{tho}, \cite{stoc})\label{lem:PossibleGons}
If $\mcal{D}$ is a normal disk then $\partial \mcal{D}$ is an $n$-gon where $n$ is either $3$, $4$ or an even number $\geq 8$.
\end{lem}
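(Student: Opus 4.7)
The strategy is to view $\gamma=\partial\mathcal{D}$ as a simple closed normal curve on the $2$-sphere $\partial\mathfrak{t}$, transverse to the $1$-skeleton of $\mathfrak{t}$. It separates $\partial\mathfrak{t}$ into two disks and hence partitions the four vertices of $\mathfrak{t}$ into sets of sizes $(k,4-k)$. I would argue by case analysis on $k$, combining three ingredients: (i) a parity count of intersection numbers on each edge, (ii) the edge-matching equations forcing the number of arc endpoints on any shared edge to agree on both incident faces, and (iii) the global topological fact that $\gamma$ is connected.

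For step (i), for any edge $e=\{i,j\}$ the intersection number $c_e=|e\cap\gamma|$ is odd if and only if $i$ and $j$ lie on opposite sides of $\gamma$, so $n=\sum_e c_e \equiv k(4-k)\pmod 2$. The extremes $k\in\{0,4\}$ make $\gamma$ bound a vertex-free disk on $\partial\mathfrak{t}$ and hence force $\gamma$ to be empty, contradicting that $\mathcal{D}$ is a non-trivial normal disk; by symmetry I may therefore assume $k\in\{1,2\}$, so $n$ is odd precisely when $k=1$.

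In the case $k=1$, with $v$ the isolated vertex, I would show that every admissible configuration of arc types on the three faces meeting $v$ (consistent with the face-wise parity constraints and with edge matching along the three edges at $v$) can be peeled by an outermost triangular loop encircling $v$, so that any configuration with $n>3$ splices into at least two disjoint simple closed normal curves. Connectedness of $\gamma$ thus forces $n=3$, and every odd $n\geq 5$ is ruled out.

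In the case $k=2$ the minimum admissible configuration is the quad, giving $n=4$. The decisive step, and main obstacle, is ruling out $n=6$: parity together with edge matching do admit weight-$6$ realizations---for instance, the arc pattern $(1,1,0)$ on two faces and $(1,0,0)$ on the two remaining faces---but a direct tracing of the arcs across shared edges shows that every such arithmetic realization splits into two disjoint triangles encircling two distinct vertices, and is therefore excluded by connectedness of $\gamma$. For every even $n\geq 8$ one exhibits a connected realization explicitly; for $n=8$, placing $(1,1,0)$ on two opposite faces and $(0,1,1)$ on the other two traces out a single octagonal loop. The hexagon exclusion is the real obstacle because it is the unique place where the arithmetic constraints alone are insufficient and the global connectedness of $\gamma$ must genuinely be invoked.
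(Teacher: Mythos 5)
The paper does not prove this lemma at all: it is stated twice (here and again as Lemma~\ref{thm:ThompsonStocking}) and in both places is cited directly from Thompson~\cite{tho} and Stocking~\cite{stoc}, so there is no in-paper argument to compare your proposal against. What you have written is a reasonable blind reconstruction of the standard combinatorial argument, and it is in the same spirit as the original proofs: partition the vertices by the curve, use the parity identity $n\equiv k(4-k)\pmod 2$ together with edge-matching, and then use connectedness to kill the remaining small cases. The exclusion of $k\in\{0,4\}$ (via an outermost-arc argument in a vertex-free complementary disk) is standard, and your identification of the hexagon as the unique place where arithmetic alone is insufficient and connectedness must be invoked is correct.

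There are, however, two places where the argument as written is imprecise enough that it would not survive without more work. First, in the $k=1$ case the claim that one can always peel ``an outermost triangular loop encircling $v$'' is not literally true: for example the edge-weight pattern $(1,1,1)$ on the three edges at $v$ together with $(2,2,0)$ on the opposite edges (a length-$7$ candidate) is realizable as a disjoint union of normal curves, but the triangle one peels off is a vertex-link around one of the \emph{other} vertices, not around $v$, with the remainder a quad. The conclusion (disconnection, hence $n=7$ impossible for a connected curve) is still correct, but the mechanism is not the one you describe, and several odd-weight patterns are in fact outright unrealizable because a single face would need two arc-endpoints on the same edge from a single arc. Second, the hexagon exclusion and the $n\geq 8$ realizability both require an actual finite trace of arcs across shared edges; you assert the outcome but do not carry it out, and your description of the octagon as living on ``two opposite faces'' is not meaningful for a tetrahedron (every pair of faces shares an edge) — the octagon is better specified by the pair of opposite \emph{edges} it winds around. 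In short: the strategy is sound and matches Thompson's and Stocking's approach, but as written it is a plan with the decisive case analysis left unverified, and one of the stated mechanisms (peeling around the isolated vertex) is not quite right.
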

{\noindent Figure} \ref{fg:OctagonNormalDisk} shows an example of a normal disk whose boundary is of length eight.
\begin{fg}[H]
\[ 
	\setlength{\unitlength}{0.5\standardunitlength}
	\begin{array}{c}  \hspace{-1.7mm}
		\raisebox{-8pt}{\setlength{\unitlength}{0.00033333in}
\begingroup\makeatletter\ifx\SetFigFont\undefined%
\gdef\SetFigFont#1#2#3#4#5{%
  \reset@font\fontsize{#1}{#2pt}%
  \fontfamily{#3}\fontseries{#4}\fontshape{#5}%
  \selectfont}%
\fi\endgroup%
{\renewcommand{\dashlinestretch}{30}
\begin{picture}(7224,5439)(0,-10)
\thicklines
\dashline{120.000}(1062,3612)(1064,3611)(1069,3609)
	(1079,3606)(1093,3600)(1113,3593)
	(1139,3583)(1170,3571)(1207,3557)
	(1248,3541)(1292,3525)(1339,3507)
	(1387,3489)(1435,3471)(1483,3453)
	(1529,3435)(1575,3418)(1618,3402)
	(1660,3387)(1699,3372)(1737,3359)
	(1772,3346)(1806,3333)(1838,3322)
	(1869,3311)(1899,3300)(1928,3290)
	(1957,3281)(1984,3271)(2012,3262)
	(2044,3252)(2075,3241)(2107,3231)
	(2139,3221)(2171,3211)(2204,3201)
	(2237,3191)(2270,3182)(2303,3172)
	(2336,3163)(2369,3155)(2402,3146)
	(2434,3138)(2466,3131)(2497,3124)
	(2527,3118)(2557,3112)(2585,3106)
	(2613,3102)(2639,3098)(2664,3095)
	(2688,3092)(2711,3090)(2733,3088)
	(2754,3087)(2775,3087)(2797,3087)
	(2820,3089)(2841,3091)(2863,3094)
	(2883,3098)(2904,3103)(2923,3108)
	(2943,3115)(2962,3122)(2980,3131)
	(2998,3140)(3015,3150)(3032,3161)
	(3048,3172)(3063,3183)(3077,3196)
	(3090,3208)(3103,3221)(3115,3234)
	(3127,3247)(3138,3261)(3150,3274)
	(3161,3290)(3173,3307)(3185,3324)
	(3197,3342)(3209,3361)(3221,3381)
	(3232,3401)(3244,3423)(3255,3445)
	(3266,3467)(3276,3490)(3286,3513)
	(3295,3536)(3303,3558)(3311,3581)
	(3318,3603)(3324,3624)(3329,3645)
	(3333,3666)(3337,3687)(3340,3708)
	(3343,3729)(3344,3750)(3346,3772)
	(3346,3794)(3346,3817)(3346,3840)
	(3345,3864)(3343,3888)(3341,3912)
	(3338,3936)(3335,3960)(3331,3984)
	(3328,4007)(3323,4030)(3319,4052)
	(3314,4074)(3310,4095)(3305,4116)
	(3300,4137)(3294,4158)(3289,4179)
	(3283,4200)(3277,4221)(3271,4243)
	(3264,4266)(3257,4289)(3250,4312)
	(3243,4335)(3235,4359)(3228,4382)
	(3220,4404)(3212,4426)(3205,4448)
	(3197,4469)(3190,4489)(3183,4508)
	(3176,4527)(3169,4545)(3162,4562)
	(3154,4581)(3147,4600)(3139,4618)
	(3130,4638)(3121,4658)(3111,4679)
	(3101,4702)(3089,4726)(3077,4752)
	(3065,4778)(3052,4804)(3041,4828)
	(3031,4849)(3022,4866)(3017,4877)
	(3014,4884)(3012,4887)
\path(1062,3612)(912,2037)
\path(3687,4287)(4587,612)
\path(3687,4287)(3012,4887)
\dashline{120.000}(3012,4887)(4737,4437)
\dashline{120.000}(912,2037)(4587,612)
\dashline{120.000}(1587,2712)(5787,987)
\dashline{120.000}(1062,3612)(1587,2712)
\path(4737,4437)(5787,987)
\thinlines
\dashline{60.000}(12,3012)(7212,1512)
\path(3912,5412)(7212,1512)(2787,12)
	(12,3012)(3912,5412)(2787,12)
\end{picture}
} }
		\hspace{-1.9mm}
	\end{array}
\]
\caption{\small octagon normal disk}\label{fg:OctagonNormalDisk}
\end{fg}

\subsubsection{\it{pl} Minimal Surfaces}
From a bird's-eye view normal surfaces can be considered as a topological version of minimal surfaces. This is illustrated for example by the following theorem that can be viewed as a topological version of Theorem \ref{thm:LeastAreaMeeksSimonYau}

\begin{thm} (\cite{hak} \cite{jr2}) \label{thm:IsotopeSurfaceToNormal}
	Let $\mcal{G}$ be a surface properly embedded in a $3$-manifold $\mcal{M}$. Then, for any triangulation $\mcal{T}$, of $\mcal{M}$, there is a finite sequence of compressions, $\partial$-compressions and local isotopies, that deforms $\mcal{G}$ into a(possibly empty) set of surfaces that are normal, w.r.t. $\mcal{T}$, and another (maybe empty) set of surfaces, each of which is entirely contained in a single tetrahedron of $\mcal{T}$. In particular, if $\mcal{M}$ is irreducible, and $\mcal{G}$ is incompressible and $\partial$-incompressible, then it is isotopic to a normal surface.
\end{thm}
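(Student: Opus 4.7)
The plan is to put $\mcal{G}$ in general position with respect to $\mcal{T}$ and then iteratively simplify the intersection pattern $\mcal{G}\cap\mcal{T}^{(2)}$ by isotopies, compressions, and $\partial$-compressions, measuring progress by a complexity that cannot decrease indefinitely. First I would perturb $\mcal{G}$ by an ambient isotopy so that it misses $\mcal{T}^{(0)}$, meets $\mcal{T}^{(1)}$ transversely in finitely many points, and meets each $2$-face $\mathfrak{f}$ transversely; then $\mcal{G}\cap\mathfrak{f}$ is a disjoint union of properly embedded arcs and simple closed curves. I would then order surfaces lexicographically by the pair $(w(\mcal{G}),\,n(\mcal{G}))$, where $w(\mcal{G})=|\mcal{G}\cap\mcal{T}^{(1)}|$ is the weight and $n(\mcal{G})$ is the total number of components of $\mcal{G}\cap\mcal{T}^{(2)}$.

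Next I would apply three types of simplification moves until none is available. Move A (trivial closed curves): if some face contains a simple closed curve of $\mcal{G}\cap\mathfrak{f}$, pick one innermost in $\mathfrak{f}$ bounding a clean disk $D\subset\mathfrak{f}$; compressing $\mcal{G}$ along $D$ either splits off a component or (when $\partial D$ bounds on $\mcal{G}$) produces a $2$-sphere, which by irreducibility bounds a ball across which we may isotope $\mcal{G}$ to delete the curve. Move B (return arcs): if some face contains an arc of $\mcal{G}\cap\mathfrak{f}$ with both endpoints on one edge, pick one outermost bounding a clean disk $D\subset\mathfrak{f}$; using $D$ as either a $\partial$-compressing disk or an isotopy template across its edge strictly reduces $w(\mcal{G})$ by two. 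Move C (bad pieces in tetrahedra): when A and B are exhausted, each $\mcal{G}\cap\mathfrak{f}$ is a union of normal arcs, so each piece $P$ of $\mcal{G}\cap\mathfrak{t}$ has boundary a disjoint union of normal curves on $\partial\mathfrak{t}$; if $P$ is closed, it is retained as one of the pieces contained in a single tetrahedron, while if $P$ has nonempty boundary and is not an elementary disk, the fact that $\mathfrak{t}$ is a ball guarantees a compressing or $\partial$-compressing disk for $P$, which I would apply to simplify $P$.

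Since each move strictly decreases $(w,n)$ lexicographically, the process terminates. At that point every component either is an elementary disk in its tetrahedron, so their union forms a normal surface, or is a closed surface contained in a single tetrahedron; this proves the first assertion. For the \emph{in particular} clause, the hypotheses that $\mcal{M}$ is irreducible and that $\mcal{G}$ is incompressible and $\partial$-incompressible upgrade each compression and $\partial$-compression into an ambient isotopy: any sphere produced by a compression bounds a ball by irreducibility, any disk produced by a $\partial$-compression is parallel into $\partial\mcal{M}$, and the closed ``tetrahedron-contained'' pieces are themselves spheres that bound balls. Hence the whole normalization is realized by an isotopy.

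The main obstacle is the termination analysis of Move C, specifically in handling disk pieces whose boundary is a polygon of length $\geq 8$ (which by Lemma~\ref{lem:PossibleGons} is the only remaining option besides $3$-gons and $4$-gons). One must exhibit a $\partial$-compressing disk for such a long-polygon piece, using the fact that a polygon of length $\geq 8$ on $\partial\mathfrak{t}$ must repeat some edge of $\mathfrak{t}$ and hence admits an outermost ``parallel pair'' of normal arcs that cobounds a simplifying bigon with an edge. The delicate point is to verify that applying this $\partial$-compression does not reintroduce trivial closed curves or return arcs, i.e.\ that the complexity $(w,n)$ genuinely drops after the operation; a careful bookkeeping of how the $\partial$-compression alters $\mcal{G}\cap\mcal{T}^{(2)}$ inside the affected face is what makes the whole induction close.
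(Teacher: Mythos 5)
The paper does not actually prove this statement: it is quoted from the cited sources, Haken and Jaco--Rubinstein (see also Theorem~$2.4$ of \cite{jr1}, referenced in the remark immediately following the statement), so there is no in-paper proof to compare against. Judged against the standard normalization argument in those sources, your outline has the right architecture --- general position, a weight-based complexity, and three flavors of simplifying move --- and your treatment of the trivial-curve and return-arc moves is correct. The ``in particular'' clause is also essentially right, though note that a \emph{closed} piece sitting inside a tetrahedron is not automatically a sphere (balls contain embedded surfaces of every genus); you need incompressibility of $\mcal{G}$ to force any such piece to compress down to spheres, which then vanish by irreducibility.

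The genuine gap is in your termination argument, and it is not the one you flag. For a disk piece with $\partial$-polygon of length $\geq 8$, the $\partial$-compression / fold across the doubled edge strictly drops the weight $w$ by two, so the lexicographic complexity $(w,n)$ drops regardless of what gets reintroduced into $\mcal{G}\cap\mcal{T}^{(2)}$; the ``careful bookkeeping'' you worry about is not actually needed there. The real problem is the other half of Move~C: when a piece $P$ with nonempty boundary is not a disk (planar with several boundary circles, or positive genus), the compressing disk you invoke lies entirely in the \emph{interior} of the tetrahedron. Compressing along it touches neither $\mcal{T}^{(1)}$ nor $\mcal{T}^{(2)}$, so $w$ and $n$ are both unchanged, and your claim that ``each move strictly decreases $(w,n)$ lexicographically'' fails. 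To repair this you need a third, lower-priority coordinate that internal compressions do decrease --- the standard choice is something like $-\chi(\mcal{G})$, or the number of non-disk pieces, or the total genus plus number of boundary curves over all pieces --- and you must order it after $(w,n)$ so that the weight- and face-reducing moves remain primary. With that extra coordinate in place the lexicographic induction closes; without it, termination of Move~C is not established.
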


\noindent{In} ~\cite{jr2} a more delicate analogy is made between minimal surfaces and normal surfaces. This is done by introducing $p\ell$-minimal surfaces that are normal surfaces which satisfy some combinatorial minimality constrains. It is shown in ~\cite{jr2} that $p\ell$-minimal surfaces satisfy the following:

\begin{thm}\label{thm:JacoRubinsteinExistence} (\cite{jr2})
In the normal homotopy class of any normal surface  $\mcal{G}$, that is not a {\em vertex linking sphere} (i.e a $2$-sphere which is the link of a vertex), there exists a $p\ell$-minimal surface. 
\end{thm}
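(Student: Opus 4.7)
The plan is to realize a $p\ell$-minimal representative by combinatorial area minimization within the normal homotopy class, in analogy with the classical minimization of the area functional from Equation \ref{eq:AreaOfSurface} in the smooth category.

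First I would fix a piecewise flat metric on each tetrahedron of $\mcal{T}$ and choose, once and for all, a specific least area representative in the standard tetrahedron for each of the seven elementary disk types of Figure \ref{fg:FigureDiskTypes}. The $p\ell$-area of a normal surface $\mcal{G}$ is then the sum, over all elementary disks comprising $\mcal{G}$, of the areas of the corresponding representatives. This functional $\are_{p\ell}$ is manifestly nonnegative, invariant under normal isotopy, and determined by the vector of normal coordinates $(x_1,\ldots,x_{7k})$ of $\mcal{G}$, where $k$ is the number of tetrahedra of $\mcal{T}$.

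Next I would show that the infimum of $\are_{p\ell}$ over the normal homotopy class of $\mcal{G}$ is attained. Since each $x_i$ is a nonnegative integer and each admissible weight in the definition of $\are_{p\ell}$ is bounded below by a fixed positive constant, the sublevel set $\{\mcal{G}' : \are_{p\ell}(\mcal{G}') \leq \are_{p\ell}(\mcal{G})\}$ intersected with the normal homotopy class is finite up to normal isotopy. The hypothesis that $\mcal{G}$ is not a vertex linking sphere enters precisely here: a vertex linking sphere can be pushed within its normal homotopy class to arbitrarily small copies concentrated near the vertex, so the infimum on its class is zero but not realized by any nonempty normal surface. Excluding this degenerate case, the infimum is strictly positive and attained by some $\mcal{G}^\ast$.

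Finally I would verify that $\mcal{G}^\ast$ satisfies the combinatorial minimality conditions defining a $p\ell$-minimal surface. A minimizer is, by construction, a local minimum of $\are_{p\ell}$ under all admissible normal homotopy moves, namely normal isotopies together with the disk exchange moves relating different normal representatives in the same class, so that property is automatic once one checks that a local decrease of $\are_{p\ell}$ can always be realized by a move within the normal homotopy class. The main obstacle in carrying this out rigorously is the careful selection of the least area disk representatives inside each tetrahedron: they must be chosen so that the first variation of the resulting functional across a disk exchange has a sign controlled by the combinatorial minimality inequality from \cite{jr2}, which requires a delicate comparison between the areas of the two elementary disks swapped by such a move and the area of the intermediate (possibly non-normal) configuration that interpolates between them.
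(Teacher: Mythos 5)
This theorem is cited in the paper from \cite{jr2} without proof --- the paper treats it as background from Jaco and Rubinstein's work and does not reprove it. So there is no in-paper proof to compare against; the comparison below is against the actual Jaco--Rubinstein argument and against internal consistency of your own sketch.

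The central gap is that the $p\ell$-area functional you introduce is not compatible with the vertex-linking-sphere exclusion that the theorem is built around. You define $\are_{p\ell}$ by fixing, once and for all, a single least-area representative for each of the seven elementary disk types and summing their areas; this makes $\are_{p\ell}$ a function of the normal-coordinate vector alone. Under that definition, a vertex-linking sphere has a fixed, strictly positive $p\ell$-area (four $3$-gons, one in each tetrahedron adjacent to the vertex), and the normal coordinate vector of a vertex link does not change under normal homotopy. So with your functional the infimum over a vertex-linking sphere's class is positive and trivially attained, and you have no mechanism forcing the exclusion. In the Jaco--Rubinstein framework, by contrast, the $p\ell$-area is a genuine geometric quantity, roughly $\tfrac{1}{2}(\textrm{weight} + \textrm{length})$, where weight counts intersections with $\mcal{T}^{(1)}$ and length is measured in a Riemannian metric on $\mcal{T}^{(2)}$. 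Under that functional a vertex-linking sphere can be normally homotoped toward the vertex with $p\ell$-area shrinking to $0$, so the infimum is $0$ and is not attained by any nonempty normal surface. Your claim in the second paragraph --- that the vertex link can be ``pushed to arbitrarily small copies'' so the infimum is zero --- is true for Jaco--Rubinstein's functional but false for the one you defined; you are implicitly borrowing the conclusion of their setup without having set up the functional that delivers it.

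A second gap is structural: $p\ell$-minimality in \cite{jr2} is not simply ``global minimizer of a normal-coordinate-determined weight.'' The minimization is taken over the full normal homotopy class (which includes non-normal intermediate configurations passing over vertices and through edges), and the argument must show that a minimizing sequence in that class can be replaced by a normal surface attaining the infimum, then that the minimizer satisfies the local $p\ell$-minimality conditions (stationarity of each normal curve under small normal homotopies, a $p\ell$ analogue of the mean-curvature-zero condition). Your sketch gestures at both steps --- compactness of the sublevel set and ``the first variation across a disk exchange has a sign controlled by the combinatorial minimality inequality'' --- but defers them, and indeed the last sentence concedes that the ``careful selection of least area disk representatives'' needed to make those variational comparisons work is the main obstacle. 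Since that is exactly where the argument of \cite{jr2} lives, what you have written is a plausible blueprint rather than a proof, and its one concrete ingredient (the fixed-weight functional) does not actually support the vertex-linking exclusion in the statement.
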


{\noindent In} \cite{ni} an altered definition of $p\ell$ minimal surfaces is given, and in respect to this definition the following is proved.
\begin{thm}[\cite{ni}]
There is exactly one $p\ell$-minimal surface in a normal homotopy class of any normal surface $\mcal{G}$ that is not a vertex linking sphere.
\end{thm}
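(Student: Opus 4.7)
The plan is to promote the existence result of Jaco--Rubinstein (Theorem~\ref{thm:JacoRubinsteinExistence}) to a uniqueness statement by exhibiting strict convexity of the combinatorial area functional attached to the altered definition of $p\ell$-minimal surface. First I would parametrize the normal homotopy class of $\mcal{G}$ as follows. Fix the vector of normal coordinates of $\mcal{G}$, i.e.\ for every tetrahedron $\mathfrak{t}\in\mcal{T}$ the number of normal triangles of each of the four types and the number of quads of the (unique, by embeddedness) admissible quad type. Any surface normally isotopic to $\mcal{G}$ and realized as a union of flat disks is determined by the positions at which its normal arcs cross the edges of $\mcal{T}$; the matching condition across every $2$-face cuts out a closed convex polytope $P\subset\mathbb{R}^N$ of admissible configurations. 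A $p\ell$-minimal surface in the sense of~\cite{ni} is then a critical point of the total flat-disk area $\mathbf{A}:P\to\mathbb{R}_{\geq 0}$, which is a sum of the Euclidean areas of the finitely many flat triangles and quads cut out by the parameters.

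Having set up this framework, the strategy is to show that $\mathbf{A}$ is strictly convex on $P$, from which uniqueness of the minimizer is immediate by standard convex analysis. I would carry this out disk by disk: for a single flat triangle inscribed in a corner of a tetrahedron with its three vertices sliding independently on the three edges incident to that corner, a direct computation of $|\mcal{F}_u\times\mcal{F}_v|$ (in the notation of Equation~\ref{eq:AreaOfSurface}) shows that the area is a strictly convex function of the three edge parameters, with Hessian controlled below by the dihedral angle data of the tetrahedron. The same analysis applies to a flat quadrilateral, where one further checks that the quad area, viewed as a function of the four sliding vertices, is strictly convex once the quad type is fixed. Summing over the finitely many elementary disks preserves strict convexity, because the sum of convex functions is convex and at least one summand is strictly convex in each coordinate direction, thanks to the assumption that $\mcal{G}$ is not a vertex-linking sphere (which rules out the degenerate case where every disk is a triangle linking a single vertex and the area functional becomes trivially minimized at the vertex).

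The core of the proof is then to combine strict convexity with the fact, supplied by Theorem~\ref{thm:JacoRubinsteinExistence}, that a minimizer exists in $P$. Strict convexity forces this minimizer to be the unique critical point of $\mathbf{A}$ in the interior of $P$, and a separate check on the faces of $P$ (where some disk degenerates to a lower-dimensional polygon) rules out boundary minima: degeneration either produces a surface that is not normal, contradicting that we are in the normal homotopy class of $\mcal{G}$, or reduces the configuration to one with strictly smaller normal coordinates, again a contradiction.

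The step I expect to be the main obstacle is the strict convexity for the quadrilateral disks. Unlike the triangle case, the four sliding vertices of a quad are not independent: two disjoint edges of the tetrahedron carry two endpoints each, and the quad need not be planar for generic parameter values, so the definition of its ``area'' in~\cite{ni} must be interpreted either as the sum of the two triangles obtained by a canonical diagonal or as a variationally defined minimal spanning disk. In either interpretation the Hessian computation becomes more delicate, and one must verify that the chosen convention is both symmetric under the natural symmetries of the tetrahedron and strictly convex away from degeneracies. Once this is settled, uniqueness across the whole normal homotopy class follows by assembling the local strict convexity contributions over the finitely many tetrahedra of $\mcal{T}$.
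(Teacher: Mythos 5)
The paper does not prove this theorem; it is quoted from Ni's paper \cite{ni} without argument, so there is no internal proof to compare yours against, only the citation. Evaluating the proposal on its own merits, the pivotal technical claim fails: the Euclidean area of a flat normal triangle, viewed as a function of the three distances $t_1, t_2, t_3$ at which its vertices sit along the three edges incident to a tetrahedron corner, is \emph{not} convex. Taking the corner at the origin with orthonormal edge directions, the area is
\[
\frac{1}{2}\sqrt{t_1^2 t_2^2 + t_2^2 t_3^2 + t_1^2 t_3^2}\;,
\]
and restricting to $t_3 = 0$ this is $\frac{1}{2}t_1 t_2$, a saddle on the positive quadrant: at $(t_1,t_2)=(1,2)$ and $(2,1)$ the value is $1$, while at the midpoint $(1.5,1.5)$ it is $1.125$, strictly above the average of the endpoint values. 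By continuity the same failure persists for small $t_3 > 0$, i.e.\ for honest normal triangles near a vertex, so this is not a removable boundary degeneracy. Since your entire scheme (parametrize the normal isotopy class as a convex polytope $P$, show $\mathbf{A}$ strictly convex, invoke existence, deduce uniqueness) hinges on this convexity, the proof does not go through as written; a sum of non-convex summands cannot be rescued by the observation that a sum of convex functions is convex.

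There is also a more structural concern you flag yourself: the theorem is about Ni's \emph{altered} definition of $p\ell$-minimal surface, whose precise form you do not know. Identifying Ni's minimality functional with total flat-disk Euclidean area (or with the two-triangle approximation to a non-planar quad) is a guess, and if Ni's functional is something else, for instance a combinatorial weight in the spirit of Jaco--Rubinstein, then the convexity question is an entirely different one. So even were the triangle-area convexity to hold, the argument would remain conditional on an unverified identification of the object being minimized.
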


\subsection{Fat triangulations }\label{sec:PrelimFat}
In this section we will give basic definitions notations and quote relevant results about fat triangulations. All details can be found in the relevant cited literature.

\subsubsection{Fat Triangulations}
\begin{df}
\begin{enumerate}
\item {\em A triangle in $\mathbb{R}^2$ is called} $\varphi$-fat {\em iff  all its angles are larger than a prescribed value $\varphi> 0$.}
\item {\em A k-simplex $\tau \subset \mathbb{R}^n$, $2 \leq k \leq n$, is $\varphi${\em -fat} if there exists $\varphi > 0$ such that the ratio $\frac{r}{R} \geq \varphi$, where $r$ and $R$, are resp. the radii of the inscribed and circumscribed (k-1)-spheres of $\tau$.}
\item {\em A triangulation $\mathcal{T} = \{ \sigma_i \}_{i\in \bf I }$ is {\it fat} if all its simplices are $\varphi$-{\em fat} for some $\varphi > 0$.}
\end{enumerate}
\end{df}

{\noindent The} importance of fat triangulations is stressed by the following example.
\subsubsection{Schwartz Lantern} The Schwartz lantern is given by the following:
\begin{exm}\label{exm:Lantern}
{\em Let $\mcal{G}$ be the standard cylinder $\mathbb{S}^1 \times I$ of hight $1$ and radius $1$ in $\mathbb{R}^3$. Let $\mcal{G}_n$ be the ``lantern'' that is given as the $pl$ triangulated surface so that the vertices of its triangulation are placed on $\mcal{G}$. Suppose the set of vertices of $\mcal{G}_n$ consists of $n^4$ points so that $n$ points are evenly placed along the circle $\mathbb{S}^1 \times \{1\}$ and $n^3$ points are placed along the vertical direction as indicated in the following Figure \ref{fg:Lantern}.
\begin{fg}[H]
\[ 
	\setlength{\unitlength}{0.5\standardunitlength}
	\begin{array}{c}  \hspace{-1.7mm}
		\raisebox{-8pt}{\begin{picture}(0,0)%
\includegraphics{Lantern.pstex}%
\end{picture}%
\setlength{\unitlength}{2368sp}%
\begingroup\makeatletter\ifx\SetFigFont\undefined%
\gdef\SetFigFont#1#2#3#4#5{%
  \reset@font\fontsize{#1}{#2pt}%
  \fontfamily{#3}\fontseries{#4}\fontshape{#5}%
  \selectfont}%
\fi\endgroup%
\begin{picture}(6075,6525)(1951,-7336)
\end{picture}%
 }
		\hspace{-1.9mm}
	\end{array}
 \]
\caption{Schwartz Lantern}\label{fg:Lantern}
\end{fg}
{\noindent Simple} computation shows that each of the $pl$ triangles of $\mcal{G}_n$ has base length $\geq \frac{1}{2n}$ and hight $\geq \frac{1}{4n}\arctan(\frac{\pi}{2n})$. Following this, the area of the lantern thus satisfies:
\begin{eq}\label{eq:LanternBlowsI}
\are(\mcal{G}_n) \geq n^4\frac{1}{2n}\frac{1}{4n}\arctan(\frac{\pi}{2n}) \;.
\end{eq}

{\noindent As} $n \rightarrow \infty$, $\arctan(\frac{\pi}{2n}) \rightarrow \frac{\pi}{2n} > \frac{1}{2n}$, hence the following holds:
\begin{eq}\label{eq:LanternBlowsII}
\are(\mcal{G}_n) \geq \frac{n}{8} \;,
\end{eq}

{\noindent so} $\are(\mcal{G}_n)$ blows up to $\infty$ as $n$ goes to $\infty$. On the other hand, it can be shown that if the $n^4$ points are places so that the triangles of $\mcal{G}_n$ are fat then $\are(\mcal{G}_n) \rightarrow \are(\mcal{G})$.
}
\end{exm}

\begin{thm}(\cite{cms})
There exists a constant $c(k)$ that depends solely upon the
dimension $k$ of  $\tau$ such that
\begin{equation} \label{eq:ang-cond}
\frac{1}{c(k)}\cdot \varphi(\tau) \leq \min_{\hspace{0.1cm}\sigma
< \tau}\measuredangle(\tau,\sigma) \leq c(k)\cdot \varphi(\tau)\,,
\end{equation}
{\noindent and}
\begin{equation} \label{eq:area-cond}
\varphi(\tau) \leq \frac{Vol_j(\sigma)}{diam^{j}\,\sigma} \leq c(k)\cdot \varphi(\tau)\,,
\end{equation}
{\noindent where} $\varphi$ denotes the fatness of the simplex $\tau$, $\measuredangle(\tau,\sigma)$ denotes the  ({\em
internal}) {\em dihedral angle} of the face $\sigma < \tau$ and $Vol_{j}(\sigma)$; $diam\,\sigma$ stand for the
Euclidian $j$-volume and the diameter of $\sigma$ respectively. (If $dim\,\sigma = 0$, then $Vol_{j}(\sigma) = 1$,
by convention.)
\end{thm}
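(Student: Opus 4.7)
The plan is to exploit that every quantity appearing in the two inequalities — the fatness $\varphi(\tau)$, the dihedral angle $\measuredangle(\tau,\sigma)$, and the ratio $Vol_j(\sigma)/diam^{j}(\sigma)$ — is invariant under similarity transformations of $\tau$. Hence one may normalize so that $diam(\tau)=1$, placing the vertices of $\tau$ inside the closed unit ball. The space of $k$-simplices with diameter $1$, modulo rigid motions, is a compact subset of a Euclidean space, and the non-degenerate simplices (those with $\varphi(\tau) > 0$) form an open dense subset on which each of the functions above is continuous. All three bounds will be derived by combining compactness on this set with a direct estimate of the rate of degeneration along its boundary.

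For inequality~\eqref{eq:ang-cond}, I would verify that $\min_{\sigma<\tau}\measuredangle(\tau,\sigma)$ and $\varphi(\tau)$ tend to zero together, at the same linear order, as $\tau$ degenerates. Both quantities are governed by the height of the vertex opposite a face: a short height forces the inscribed sphere of $\tau$ to have small radius $r$ while $R$ stays bounded, and it also forces at least one dihedral angle to be small. An elementary trigonometric computation in the normalized setting shows that the ratio $\min_\sigma \measuredangle(\tau,\sigma)/\varphi(\tau)$ is bounded above and below by positive constants in a neighbourhood of the degenerate locus. On the complement, where $\varphi(\tau)$ is bounded below by some fixed $\varphi_0>0$, continuity and compactness immediately yield uniform bounds. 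Combining the two regimes produces the dimensional constant $c(k)$.

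For inequality~\eqref{eq:area-cond} in the top-dimensional case $j=k$, I would use the polytope estimate $r \leq k\cdot Vol_k(\tau)/Vol_{k-1}(\partial \tau)$ together with $R \geq diam(\tau)/2$ to conclude $\varphi(\tau) \leq C(k)\cdot Vol_k(\tau)/diam^k(\tau)$. For general $j < k$ the argument reduces to the top-dimensional case once it is known that every face of a $\varphi$-fat $k$-simplex is itself $c'(k)\varphi$-fat (see below). The upper bound on $Vol_j(\sigma)/diam^j(\sigma)$ then follows by writing $Vol_j(\sigma)$ as a product of $j$ mutually orthogonal heights of $\sigma$: each is at most $diam(\sigma)$, and at least one is of order $r$, so the ratio is controlled linearly by $\varphi(\tau)$, again with a constant depending only on $k$.

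The main obstacle, I expect, is the lemma that faces of a fat simplex are fat. This does not reduce to elementary manipulations, because the inradius and circumradius of $\sigma$ are defined inside a lower-dimensional affine subspace, and the inscribed sphere of $\tau$ does not project onto the inscribed sphere of $\sigma$. I would prove it by another compactness argument in the normalized setting: the function $\varphi(\sigma)/\varphi(\tau)$ is positive and continuous on the set of non-degenerate $k$-simplices with $diam(\tau)=1$, and one verifies by a case analysis on the type of degeneration (which subsimplex is collapsing) that $\varphi(\sigma)$ never vanishes faster than $\varphi(\tau)$, whence the ratio is bounded below by a dimensional constant. This closes the induction and supplies the constant $c(k)$ uniformly in the choice of face $\sigma<\tau$.
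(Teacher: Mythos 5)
The paper does not supply a proof of this theorem; it is quoted from Cheeger--M\"uller--Schrader and used as a black box, so there is no internal argument to compare against. Evaluated on its own terms, your overall strategy (normalize by diameter, invoke compactness away from degeneracy, do a quantitative analysis near the degenerate stratum) has the right shape, but both of your concrete quantitative claims fail on the simplest degenerating family, a \emph{needle}: for $k=3$, a simplex with equilateral base of side $\epsilon$ and apex at height $1$ above the base centroid. As $\epsilon\to 0$ one has $\varphi(\tau)=r/R\sim\epsilon$, yet \emph{every} codimension-one dihedral angle stays bounded away from zero (the three base dihedrals tend to $\pi/2$, the three along the long edges tend to $\pi/3$). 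So ``a short height forces at least one dihedral angle to be small'' is simply false. What does degenerate is the normalized solid angle at the \emph{apex vertex}, i.e.\ $\measuredangle(\tau,\sigma)$ for a $0$-dimensional face $\sigma$; your trigonometric argument never examines angles at faces of dimension $<k-1$ and therefore cannot produce the upper bound in \eqref{eq:ang-cond}.

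The treatment of \eqref{eq:area-cond} has a matching problem. The step ``write $Vol_j(\sigma)$ as a product of heights of $\sigma$; at least one is of order $r$'' has no justification: the heights of a face $\sigma$ are intrinsic to $\sigma$ and carry no information about $r(\tau)$. For the base of the needle all three heights of $\sigma$ are $\sim diam\,\sigma=\epsilon$, so $Vol_2(\sigma)/diam^2\sigma\sim 1$ while $\varphi(\tau)\sim\epsilon$, and the claimed upper bound $Vol_j(\sigma)/diam^j\sigma\le c(k)\varphi(\tau)$ fails on the nose for that face; even for $\sigma=\tau$ the two sides scale differently ($Vol_3(\tau)/diam^3\tau\sim\epsilon^2$ versus $\varphi(\tau)\sim\epsilon$). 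This mismatch signals that the statement as transcribed does not quite reproduce the CMS original (which is phrased in terms of the normalized volume $Vol/diam^k$ rather than $r/R$, and takes a minimum over faces), so you would need to pin down the exact formulation and the intended homogeneity before a proof can be attempted. Your auxiliary lemma --- faces of a fat simplex are uniformly fat --- is correct and genuinely the right ingredient for the lower bounds, but it does not rescue the upper bounds, and compactness of the normalized configuration space by itself yields nothing on the open stratum near degeneracy where the entire content of the estimates lives.
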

{\noindent Condition}  \ref{eq:ang-cond} is just the expression of fatness as
a function of dihedral angles in all dimensions, while Condition
\ref{eq:area-cond} expresses fatness as given by ``large
area/diameter''.

\subsubsection{Existence Results}
Existence of fat triangulations of Riemannian manifolds is guaranteed by the studies given in \cite{ca}, \cite{pel}, \cite{sa1}, \cite{sa2} and \cite{bre}. These are summarized below.

\begin{thm}(\cite{ca})\label{thm:cairns}
Every compact $\mathcal{C}^2$ Riemannian manifold  admits a fat triangulation.
\end{thm}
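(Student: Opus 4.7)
The plan is to construct the triangulation chart-by-chart in normal coordinates and then glue the local pieces together while keeping quantitative control of fatness throughout. The compactness of $\mcal{M}$ is used twice: to reduce to a finite cover by coordinate charts, and to secure uniform $C^2$ bounds on the metric. First I would pick a finite cover $\{B_i = B_{\mcal{M}}(p_i, r_i)\}_{i=1}^N$ by geodesic balls, each $r_i$ smaller than the injectivity radius at $p_i$ and small enough that in the normal coordinate chart centered at $p_i$ one has the pointwise estimate $g_{jk}(x) = \delta_{jk} + O(|x|^2)$. I would also arrange the cover so that each overlap $B_i \cap (B_1 \cup \cdots \cup B_{i-1})$ contains a definite collar.

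On a single chart, I would pull back a standard Euclidean fat triangulation (for instance, the Kuhn--Freudenthal triangulation of a cubical lattice, whose simplices have uniform fatness $\varphi_0 > 0$) via the exponential map, scaled by a mesh parameter $\lambda > 0$. Because the deviation of $g$ from the Euclidean metric in the chart is $O(\lambda^2)$ on $\lambda$-sized simplices, the volume-to-diameter characterization of fatness given in \ref{eq:area-cond} together with the dihedral-angle characterization in \ref{eq:ang-cond} shows that the pulled-back simplices, now regarded as Riemannian simplices, retain fatness at least $\varphi_0/2$ provided $\lambda$ is chosen small enough. By compactness, a single $\lambda$ works uniformly across all $N$ charts, yielding a \emph{local} fat triangulation on each $B_i$.

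The principal step, and the main obstacle, is the inductive gluing. Assume that a $\varphi_{i-1}$-fat triangulation $\mcal{T}_{i-1}$ has been constructed on $V_{i-1} = B_1 \cup \cdots \cup B_{i-1}$. To extend it across $B_i$, I would place the scaled lattice triangulation on $B_i \setminus V_{i-1}$ and then modify it in a thin collar of $\partial V_{i-1} \cap B_i$ so that the two triangulations coincide along the common interface. Following Cairns' original scheme, this is done in two substeps: first a generic perturbation of the vertices of the new triangulation into general position with respect to $\mcal{T}_{i-1}$, which by transversality can be taken arbitrarily small and thus preserves fatness up to a factor arbitrarily close to $1$; and second a star subdivision (equivalently, the Whitehead subdivision used in \cite{pel}) of the simplices that straddle the interface, producing a combined simplicial complex. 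The hard point is showing that this subdivision preserves fatness up to a multiplicative loss $c > 1$ that can be chosen \emph{independent} of $i$, which requires the quantitative subdivision lemma of Cairns, sharpened in \cite{pel} and \cite{sa1}. Since the number of gluings is bounded by $N$, the resulting global fatness $\varphi \geq \varphi_0 \cdot (2c)^{-N}$ is a positive constant, and the global simplicial complex built in this inductive procedure is the desired fat triangulation of $\mcal{M}$.
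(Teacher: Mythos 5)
The paper does not prove Theorem \ref{thm:cairns}: it is quoted from Cairns \cite{ca} with no argument supplied, so there is no in-paper proof to compare against. Your proposal must be judged on its own terms. The outline you give --- cover by normal coordinate charts, pull back a Euclidean lattice triangulation by the exponential map scaled to mesh $\lambda$, glue inductively across chart overlaps --- is the standard route and is close in spirit to Cairns' construction as refined by Peltonen \cite{pel} and Saucan \cite{sa1}, \cite{sa2}. The local step is sound: in a normal chart $g_{jk}=\delta_{jk}+O(|x|^2)$, so on $\lambda$-scale simplices the Riemannian distortion of lengths, volumes and dihedral angles is $O(\lambda^2)$, and Equations~\ref{eq:ang-cond} and \ref{eq:area-cond} then give fatness loss that vanishes as $\lambda\to 0$; compactness supplies a single uniform $\lambda$ and bounds the number of charts $N$, so even the wasteful estimate $\varphi_0(2c)^{-N}$ stays positive.

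Two caveats are worth naming. First, the quantitative gluing lemma --- that matching two fat triangulations along a collar costs only a bounded multiplicative loss of fatness, with a bound independent of which pair of charts is being matched --- is treated entirely as a black box. That lemma is the actual mathematical content of the theorem; a complete proof must establish it, not merely invoke it. Second, you write ``star subdivision (equivalently, the Whitehead subdivision),'' but these are not equivalent. Naive star subdivision (coning a simplex to an interior point) can produce arbitrarily thin simplices near the interface, and it is exactly the more careful Whitehead-type refinement, in which the new vertices are placed so as to control both the diameters and the inradii of all offspring simplices simultaneously, that makes the fatness accounting close. Conflating the two hides precisely the step where the difficulty lives. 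With those two corrections flagged, your proposal is a reasonable sketch of the intended argument.
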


\begin{thm}(\cite{pel})\label{thm:peltonen}
Every open (unbounded) $\mathcal{C}^\infty$ Riemannian manifold admits a fat triangulation.
\end{thm}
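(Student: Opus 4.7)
The plan is to reduce to Cairns' theorem (Theorem \ref{thm:cairns}) by exhausting $\mcal{M}$ with compact submanifolds, constructing fat triangulations on each, and inductively extending them while controlling the fatness constant uniformly. First I would fix a nested exhaustion $K_1 \subset K_2 \subset \cdots$ of $\mcal{M}$ by smooth compact codimension-$0$ submanifolds with boundary, chosen so that $K_i \subset \mathrm{int}(K_{i+1})$, $\bigcup_i K_i = \mcal{M}$, and so that on each annular region $A_i = K_{i+1} \setminus \mathrm{int}(K_i)$ there is a uniform lower bound on the injectivity radius and a uniform upper bound on $|K_{\mcal{M}}|$. Such an exhaustion can be extracted from a proper smooth exhaustion function via Sard's theorem, and the $\mcal{C}^\infty$ hypothesis is used precisely to guarantee that consecutive annuli have uniformly comparable Riemannian geometry.

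Next I would construct the triangulation locally through normal coordinates. Around each point $x$, the map $\exp_x$ on a ball of radius comparable to the injectivity radius yields a bilipschitz model of the metric with constant close to $1$. On the Euclidean side one uses a standard fat simplicial tiling, for instance the Kuhn decomposition of the cubic lattice, whose simplices are $\varphi_0$-fat for a universal $\varphi_0 > 0$. Pulling this tiling back by $\exp_x$ and invoking condition \eqref{eq:area-cond}, one sees that the resulting curvilinear simplices remain $\varphi$-fat for some $\varphi > 0$ depending only on the bilipschitz constant, hence only on the uniform curvature and injectivity bounds.

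The main obstacle, and the heart of the argument, is gluing these local pieces. A common refinement or barycentric subdivision will destroy fatness. I would instead use an inductive thickening in the spirit of Peltonen \cite{pel}: assuming a $\varphi$-fat triangulation $\mcal{T}_i$ has been constructed on a neighborhood of $K_i$, I extend across $A_{i+1}$ by first putting down a fat triangulation of a collar $\partial K_i \times [0,1]$ that agrees combinatorially with $\mcal{T}_i|_{\partial K_i}$, then triangulating the remaining portion of $A_{i+1}$ via the normal-coordinate construction above, and finally matching the two along a common PL hypersurface with a star-type subdivision known to preserve fatness. The key technical lemma to be established here is that any fat triangulation of a PL hypersurface sitting inside a region of controlled geometry extends to a fat triangulation of a one-sided collar, with fatness bounded below by a constant depending only on the input fatness and the ambient bilipschitz constant.

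The hardest part overall is preventing the fatness constants $\varphi_i$ from decaying to zero under iteration: careless bookkeeping gives $\varphi_{i+1} \leq \rho \, \varphi_i$ for some $\rho < 1$ at each stage, which would be fatal. The uniform exhaustion built in the first step is what rules this out, since it makes the bilipschitz constants, and therefore the extension constants, uniformly bounded across all the annuli $A_i$. With this uniformity in place one obtains $\varphi_i \geq \varphi_\infty > 0$ independent of $i$, and the nested union $\bigcup_i \mcal{T}_i$ is the desired fat triangulation of $\mcal{M}$.
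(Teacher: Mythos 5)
The paper does not prove this statement; it is quoted verbatim from Peltonen \cite{pel} as background. So there is no proof in the paper to compare you against, only the original reference. That said, your sketch can still be assessed against what Peltonen's argument actually has to accomplish, and there is one conceptual gap worth flagging.

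Your overall scaffold — exhaust $\mcal{M}$ by compact pieces $K_i$, triangulate locally via normal coordinates pulled back from a Euclidean fat tiling, then extend inductively across annuli while tracking fatness — is the right shape and is indeed in the spirit of Peltonen's construction (and of the later extension result of Saucan, Theorem~\ref{thm:ext-to-int}, that the paper also quotes). The place where your argument goes wrong is the claim that the exhaustion can be chosen so that "consecutive annuli have uniformly comparable Riemannian geometry," and that the $\mathcal{C}^\infty$ hypothesis guarantees this. It does not. A general open $\mathcal{C}^\infty$ Riemannian manifold may have sectional curvature blowing up and injectivity radius tending to $0$ at infinity; there is no single bilipschitz constant valid across all the $A_i$, and no choice of exhaustion will produce one. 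Your final paragraph uses exactly this false uniformity to rule out the geometric decay $\varphi_{i+1}\le \rho\,\varphi_i$, so as written the fatness control does not close.

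The correct way to prevent fatness from decaying to zero is not to assume uniform geometry but to exploit two other features. First, fatness is scale-invariant: on each compact annulus $A_i$ you are free to shrink the mesh as much as you like (to whatever scale the local curvature and injectivity radius on that particular compact piece demand), and the Euclidean model simplices still give you the same universal fatness $\varphi_0$ after pullback, by Condition~\ref{eq:area-cond}. Second, and more importantly, the induction must be arranged so that each simplex is created once and never subsequently modified; the gluing across $\partial K_i$ should only affect a bounded-thickness collar, degrade fatness there by a universal multiplicative constant independent of $i$, and leave everything interior to $K_{i-1}$ untouched. Under that discipline the global infimum of fatness is $\min(\varphi_0,\varphi_0')$ for two universal constants, not a product over $i$. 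If you rewrite the last paragraph around these two points rather than around a (nonexistent) uniform bound on $|K_{\mcal{M}}|$ and on injectivity radius, the sketch becomes sound.
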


\begin{thm}(\cite{sa2})\label{thm:ext-to-int}
Let $M^{n}$ be an $n$-dimensional $\mathcal{C}^{1}$ Riemannian manifold with boundary, having a finite number of
compact boundary components. Then, any  fat triangulation of $\partial M^{n}$ can be extended to a fat
triangulation of $M^{n}$.
\end{thm}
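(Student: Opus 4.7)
The plan is to build the triangulation in two pieces---one in a thin collar neighborhood of $\partial M^n$ that extends the given boundary triangulation, and one in the complementary interior region obtained from Theorem \ref{thm:cairns} or \ref{thm:peltonen}---and then to glue the two along a common separating hypersurface while keeping all simplices fat. The overall difficulty is that simply invoking Peltonen/Cairns on the interior does not respect any prescribed boundary data, so the whole problem reduces to a controlled interpolation.

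First I would invoke the collar neighborhood theorem for $\mathcal{C}^1$ manifolds with boundary to fix a diffeomorphism $\Phi:\partial M^n\times[0,\delta]\to C\subset M^n$ for some $\delta>0$, where the pulled-back metric is $\mathcal{C}^0$-close to the product metric $g_{\partial}+dt^2$ when $\delta$ is small. For each $(n-1)$-simplex $\sigma$ in the given fat triangulation $\mathcal{T}_\partial$ of $\partial M^n$ I would form the prism $\sigma\times[0,\delta]$ and triangulate it by the canonical Freudenthal (``staircase'') decomposition into $n$ simplices, choosing the diagonals coherently from a fixed linear ordering of the vertices of $\mathcal{T}_\partial$, so that the subdivisions on adjacent prisms agree on their common vertical face. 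This gives a combinatorial triangulation $\mathcal{T}_C$ of $C$ that restricts to $\mathcal{T}_\partial$ on $\partial M^n\times\{0\}$ and to an isomorphic copy $\mathcal{T}_\partial^{\,\delta}$ on $\partial M^n\times\{\delta\}$.

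Next I would verify that the simplices of $\mathcal{T}_C$ are $\varphi'$-fat for some $\varphi'>0$ depending only on $\varphi$, on the ratio $\delta/\lambda(\mathcal{T}_\partial)$, and on the $\mathcal{C}^0$-distortion of $g$ from the product metric on $C$. The quickest route is the volume/diameter characterization of Condition \ref{eq:area-cond}: in the exact product metric, each Freudenthal simplex built over a $\varphi$-fat base simplex has a volume/diameter$^n$ ratio bounded below by a function of $\varphi$ and $\delta/\lambda(\mathcal{T}_\partial)$ alone, and shrinking $\delta$ further absorbs the metric distortion inherited from the embedding $\Phi$.

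The remaining step, and the main obstacle, is matching $\mathcal{T}_C$ to a fat triangulation of the compact $\mathcal{C}^1$ manifold with boundary $N := \overline{M^n\setminus C}$. Applying Theorem \ref{thm:cairns} to $N$ (or to its double, restricting back) produces a fat triangulation $\mathcal{T}_N$, but its trace on $\partial N=\partial M^n\times\{\delta\}$ will generically disagree with $\mathcal{T}_\partial^{\,\delta}$, and a naive simplicial approximation from one to the other can collapse fatness. I would handle this by interposing a second thin collar $\partial M^n\times[\delta,\delta']$ in which one performs a controlled interpolation: refine both $\mathcal{T}_\partial^{\,\delta}$ and $\mathcal{T}_N|_{\partial N}$ to a common fat subdivision (using the median-style subdivisions developed later in Section $5$ to keep fatness under control), then fill the thin slab by Freudenthal prisms over this common refinement on one side and by a star-subdivision straightening argument on the other, ensuring vertex displacements are small relative to $\delta'-\delta$ so that every volume/diameter ratio stays uniformly bounded below. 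This interpolation slab is where one must invest the most care, because all of the tension between the rigid prescribed boundary triangulation and the free interior one must be resolved inside it without losing fatness; the $\mathcal{C}^1$ hypothesis on the metric is exactly what allows the local Euclidean estimates used in this slab to pass to the Riemannian setting.
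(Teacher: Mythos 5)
The paper does not supply a proof of this theorem---it is a cited background result from Saucan's \emph{Note on a theorem of Munkres}---so your attempt can only be judged against what is known of that proof (which, following Munkres and Peltonen, does indeed use a collar-plus-interior decomposition with a controlled gluing step). Your overall architecture matches: extend into a collar, triangulate the interior by Cairns/Peltonen, and resolve the mismatch in an interpolation slab. But the step where your argument must actually do work is precisely the step that is hand-waved, and the specific technique you propose there does not work.

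The problem is in the sentence ``refine both $\mathcal{T}_\partial^{\,\delta}$ and $\mathcal{T}_N|_{\partial N}$ to a common fat subdivision (using the median-style subdivisions developed later in Section $5$ to keep fatness under control).'' Median subdivision refines a \emph{single} triangulation while preserving its fatness; it does nothing to produce a \emph{common} refinement of two unrelated triangulations of the same hypersurface. The genuine common refinement is the overlay (the cell complex of intersections), and the overlay of two fat triangulations of a hypersurface is in general \emph{not} fat, no matter how much one subdivides first: if two $(n-1)$-simplices from the two triangulations meet at a very small angle, the overlay contains a sliver whose fatness is of that order, and the angle depends on the global positions of the two independently constructed meshes, not on anything you control by refining. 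So this step, as stated, collapses. The ``star-subdivision straightening argument'' with ``small vertex displacements'' is the right kind of idea---the actual proof in Saucan/Munkres does perturb one triangulation into controlled position relative to the other and then estimates the fatness of the resulting fitted complex---but in your write-up it is a placeholder rather than an argument: you would need to specify how the vertices are moved, prove that the displaced complex is still a triangulation, and give a lower bound on the fatness of the simplices that the fitting produces. A secondary, smaller issue: you say that ``shrinking $\delta$ further absorbs the metric distortion,'' but shrinking $\delta$ with $\mathcal{T}_\partial$ fixed flattens the Freudenthal prisms and destroys their fatness; $\delta$ must be taken \emph{comparable to} $\lambda(\mathcal{T}_\partial)$, and the metric-distortion bound must instead come from the $\mathcal{C}^1$-smallness of the collar (or from first refining $\mathcal{T}_\partial$ by a fatness-preserving subdivision, which is permissible but must be said). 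In short, the frame is right but the proof's central technical content---the fat fitting of two independent triangulations along a hypersurface---is assumed rather than proved.
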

{\noindent Following} a thick-thin decomposition theorem for hyperbolic manifolds, W. Breslin (\cite{bre}) uses the term {\em thick triangulation} instead of fat.
\begin{thm}(\cite{bre})
Let $\mu$ be a Margulis constant for $\mathbb{H}^n, \; n\geq 2$. There exists a constant $L = L(n, \mu)$ such that every complete hyperbolic $n$-manifold has a $(\mu, L)$ thick triangulation.
\end{thm}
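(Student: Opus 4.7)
The plan is to exploit the Margulis thick-thin decomposition of a complete hyperbolic $n$-manifold $\mcal{M}$. Writing $\mcal{M} = \mcal{M}_{\geq\mu} \cup \mcal{M}_{\leq\mu}$, where the thick part is the locus on which the injectivity radius is at least $\mu/2$ and the thin part is a disjoint union of standard Margulis tubes around short closed geodesics together with cusp neighborhoods, we construct a fat triangulation on each piece whose fatness depends only on $n$ and $\mu$, and then reconcile them along the common boundary.

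On $\mcal{M}_{\geq\mu}$, the lower bound on the injectivity radius provides uniformly bi-Lipschitz exponential charts from Euclidean balls of radius $r=r(n,\mu)$. Covering $\mcal{M}_{\geq\mu}$ by countably many such charts and applying the construction underlying Theorems \ref{thm:cairns} and \ref{thm:peltonen} with mesh size $\ll r$ yields a fat triangulation whose fatness $\varphi_{1}=\varphi_{1}(n,\mu)$ is uniform across all such manifolds, since the metric distortion in each chart is uniformly bounded. Each Margulis tube is, by the Margulis lemma, isometric to the quotient of a metric neighborhood of a geodesic in $\mathbb{H}^{n}$ by a virtually cyclic stabilizer, and each cusp neighborhood is the quotient of a horoball by a virtually abelian parabolic group. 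Up to conjugacy in $\mathrm{Isom}(\mathbb{H}^{n})$, only finitely many families of such elementary subgroups arise subject to the $\mu$-thinness constraint in fixed dimension $n$, so for each family we fix, once and for all, an equivariant fat triangulation of the universal cover built from finitely many \emph{brick} shapes stacked along the radial coordinate in a cusp or the tubular coordinate in a tube. Since these stacking maps are hyperbolic isometries and fatness measured via $\mathrm{Vol}_{j}(\sigma)/\mathrm{diam}(\sigma)^{j}$ in Condition \ref{eq:area-cond} is isometry-invariant, the construction descends to a fat triangulation of each thin piece with uniform fatness $\varphi_{2}=\varphi_{2}(n,\mu)$.

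The hardest step is matching the two triangulations along the interface $\partial\mcal{M}_{\geq\mu}$ and controlling uniformity in the non-compact cusps. For the matching, we first fix on each boundary component (a flat quotient of a horosphere or of a tube cross-section) a fat triangulation compatible with the one inherited from the thin side, and then invoke Theorem \ref{thm:ext-to-int} to extend it inward into the thick part without degrading the fatness. For the cusps, the self-similar stacking structure guarantees that, although the individual bricks shrink metrically as one descends toward the cusp, the fatness ratio is preserved exactly under the hyperbolic isometries identifying successive \emph{floors}. Taking $L=L(n,\mu)=\min\{\varphi_{1},\varphi_{2}\}$, after absorbing the unavoidable loss from the compatibility step, yields the desired uniform constant and completes the proof.
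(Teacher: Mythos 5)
This statement is quoted in the paper as a theorem of Breslin (\cite{bre}); the paper supplies no proof, so I am evaluating your sketch against Breslin's actual argument rather than against the paper.

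Your high-level strategy, triangulating the thick and thin parts separately along the Margulis decomposition and then reconciling along the interface, is in the right spirit, but the treatment of the thin part contains a genuine error that the rest of the argument leans on. You assert that, in a fixed dimension $n$ and with the $\mu$-thinness constraint, only finitely many conjugacy classes of elementary subgroups can appear, so that the thin pieces fall into finitely many geometric families and can be triangulated once and for all by finitely many brick shapes. This is false: the relevant elementary subgroups form continuous families, not finite ones. A cusp cross-section is a compact flat $(n-1)$-manifold, and while Bieberbach gives finitely many \emph{topological} types, their flat metrics vary in a positive-dimensional moduli space (already for $n=3$, flat tori form a two-parameter family). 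Likewise Margulis tubes vary continuously in the length of the core geodesic, the tube radius, and the rotational holonomy. There is therefore no finite list of bricks to stack; any construction must produce uniform fatness across these continuous families, and that is precisely the technical content of Breslin's theorem that your sketch elides. Breslin obtains uniformity not by finiteness of shapes but by exploiting the bilipschitz control on simplices (the content of what the present paper later quotes as Lemma~5.7) together with a careful net/packing construction that works inside the degenerating geometry of tubes and cusps.

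A secondary, smaller gap is in the gluing step. You invoke Theorem~\ref{thm:ext-to-int} to extend a boundary triangulation inward into the thick part, but that theorem on its own does not assert a fatness bound depending only on the fatness of the boundary triangulation and on $n$; uniformity of the extension is exactly what needs to be checked here. Moreover, you have already triangulated the interior of the thick part via charts, so you would need to reconcile the chart-based triangulation with the boundary-extended one, and you do not say how. The closing line, taking $L=\min\{\varphi_1,\varphi_2\}$ \emph{after absorbing the unavoidable loss from the compatibility step}, acknowledges this but does not quantify it, and the quantification is where the work lies.
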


\subsubsection{Fat Traingulations and Manifolds Approximations}
\begin{df}
	{\em Let $(\mcal{M}, \mcal{T})$ be an $m$-dimensional triangulated manifold, and let $f$ be a $\mathcal{C}^r$ embedding of $(\mcal{M}, \mcal{T})$ into $\mathbb{R}^n$. Let $\delta$ be a positive real number. Let $|K| \subset \mathbb{R}^n$ be an embedding of an $m$-simplicial complex $K$. Denote by $d_{eucl}$ the euclidean distance in $\mathbb{R}^n$, and by $St(a, K)$ the star of a point $a$ in the complex $K$. Then $g:|K| \rightarrow \mathbb{R}^n$ is called a} $\delta$-approximation {\em to $f$ if:
		\begin{enum}
			\item There exists a subdivision $K'$ of $K$ such that $g \in \mathcal{C}^r(K',\mathbb{R}^n)$
			\item $d_{eucl}\big(f(x),g(x)\big) < \delta(x)$, for any $x \in |K|$
			\item $d_{eucl}\big(df_a(x),dg_a(x)\big) \leq \delta(a)\cdot d_{eucl}(x,a)$\,, for any $a \in |K|$ and for all $x \in \overline{St}(a,K')$.
		\end{enum}
	}
\end{df}
\begin{df}\label{df:SecantMapII}
	{\em
		Let $(\mcal{M}, \mcal{T})$ be a $\mcal{C}^r$ triangulated $m$-manifold embedded in $\mathbb{R}^n$. Let $s \in \mcal{T}$ be a simplex. The {\em vertex map} $V_s:s \rightarrow
		\mathbb{R}^n$, is determined by $V_s(v) = v$, where $v$ is a vertex of $s$. The linear extension of $V_s$ is called the} secant map induced by
	$\mcal{T}$, {\em and it is denoted by $L_s$}
\end{df}
{\noindent The} motivation for having fat triangulations for manifolds in terms of $p\ell$-approximations is stressed by the following theorem.

\begin{thm}\label{thm:Munkres1}(\cite{mun})
	Let $(\mcal{M}, \mcal{T})$ be a $\mathcal{C}^r$ triangulated $m$-manifold embedded in $\mathbb{R}^n$. Then, for $\delta > 0$, there exists
	$\epsilon, \varphi_0 >0$, such that, if any $\tau < \mcal{T}$, fulfills the following conditions:
	\begin{enum}
		\item $diam(\tau)\; < \; \epsilon$
		\item $\tau$ is $\varphi_0$-fat
	\end{enum}
	then, the secant map $L_\tau$ is a $\delta$-approximation of $f|\tau$.
\end{thm}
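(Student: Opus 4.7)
The plan is to verify the three conditions in the definition of $\delta$-approximation for $L_\tau$, exploiting compactness of $\mcal{M}$ to obtain uniform bounds $\|df\|_\infty \leq M_1$, $\|d^2 f\|_\infty \leq M_2$ on the embedding $f$ (working with $r \geq 2$; regularity enters only through a second-order Taylor remainder), and using $\varphi_0$-fatness to control the conditioning of the edge basis of $\tau$. Condition $(1)$ is immediate: since $L_\tau$ is affine on $\tau$, the choice $K' = K = \tau$ places $L_\tau$ in $\mcal{C}^r(K', \mathbb{R}^n)$.

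For the value estimate $(2)$, let $x \in \tau$ have barycentric coordinates $\lambda_0, \dots, \lambda_m$ in the vertices $v_0, \dots, v_m$. Using $\sum_i \lambda_i (v_i - x) = 0$ and Taylor expanding $f$ around $x$,
\[
f(x) - L_\tau(x) \;=\; \sum_i \lambda_i \bigl(f(x) - f(v_i)\bigr) \;=\; -df_x\Bigl(\sum_i \lambda_i (v_i - x)\Bigr) - \sum_i \lambda_i R_i \;=\; -\sum_i \lambda_i R_i,
\]
with $|R_i| \leq \tfrac12 M_2 \,\mathrm{diam}(\tau)^2$. Choosing $\epsilon$ with $\tfrac12 M_2 \epsilon^2 < \delta$ settles $(2)$.

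The heart of the argument is condition $(3)$. Since $L_\tau$ is affine, its differential $dL_\tau$ is a constant linear map on the tangent plane of $\tau$; fix $v_0$ and set $e_i = v_i - v_0$ for $i = 1, \dots, m$. Taylor expansion at $v_0$ gives
\[
dL_\tau(e_i) \;=\; f(v_i) - f(v_0) \;=\; df_{v_0}(e_i) + R_i, \qquad \|R_i\| \leq \tfrac12 M_2 \,\mathrm{diam}(\tau)^2,
\]
so $A := dL_\tau - df_{v_0}$ satisfies $\|A(e_i)\| = O(\mathrm{diam}(\tau)^2)$ on each edge. Upgrading this edgewise bound to an operator-norm bound on the tangent plane is exactly where $\varphi_0$-fatness intervenes: the volume-diameter inequality \eqref{eq:area-cond} forces the edge matrix $E = [e_1 | \cdots | e_m]$ to have smallest singular value at least $c(\varphi_0)\cdot \mathrm{diam}(\tau)$, whence $\|A\| \leq C(\varphi_0)\cdot \mathrm{diam}(\tau)$. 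Combining with $\|df_a - df_{v_0}\| \leq M_2\, \mathrm{diam}(\tau)$ for every $a \in \tau$ yields $\|dL_\tau - df_a\| \leq C'(\varphi_0)\cdot \epsilon$, and shrinking $\epsilon$ a second time (after $\varphi_0$ is fixed) gives $(3)$.

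The main obstacle is precisely this passage from an edgewise estimate to an operator-norm bound: without fatness, $A$ can be amplified arbitrarily along a short transverse edge of a thin simplex, which is exactly the mechanism behind the area blow-up in the Schwartz lantern (Example \ref{exm:Lantern}). The order of quantifiers is therefore essential: one fixes $\varphi_0 > 0$ first (pinning down the conditioning constant $C'(\varphi_0)$ through \eqref{eq:area-cond}), and only afterwards shrinks $\epsilon$ small enough that both the Taylor remainder in $(2)$ and the amplified edgewise error in $(3)$ fall below the tolerance $\delta$.
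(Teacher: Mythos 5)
The paper states Theorem~\ref{thm:Munkres1} purely as a citation to Munkres~\cite{mun} and supplies no proof of its own, so there is no paper-side argument to compare against; what you have given is a self-contained proof, and it is correct. The three-clause decomposition, the cancellation $\sum_i \lambda_i(v_i - x) = 0$ driving the value estimate, and in particular the upgrade of the edgewise Taylor remainder $\|A(e_i)\| = O(\mathrm{diam}(\tau)^2)$ to an operator bound $\|A\| = O(\mathrm{diam}(\tau))$ via the singular-value estimate $\sigma_{\min}(E) \geq c(\varphi_0)\,\mathrm{diam}(\tau)$ --- which does indeed follow from the volume--diameter inequality~\eqref{eq:area-cond}, since $\prod_i \sigma_i(E) = m!\,\mathrm{Vol}_m(\tau) \geq m!\,\varphi_0\,\mathrm{diam}(\tau)^m$ while every $\sigma_i(E) \lesssim \mathrm{diam}(\tau)$ --- is the standard mechanism, and it is the same in spirit as Munkres' own proof, which uses his ``thickness'' of a simplex in the role your argument assigns to fatness. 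Your identification of the Schwartz-lantern mechanism with the failure of this conditioning estimate on thin simplices is an apt remark, and the quantifier discipline (fix $\varphi_0$, then shrink $\epsilon = \epsilon(\delta,\varphi_0)$) is exactly what the existential statement requires. The one caveat you already flag yourself: restricting to $r \geq 2$ lets you cite a second-order remainder; to cover $r = 1$, replace $M_2\,\mathrm{diam}(\tau)$ everywhere by the modulus of continuity of $df$, which still tends to zero as $\mathrm{diam}(\tau) \to 0$ by compactness of $\mcal{M}$, and the same estimates go through.
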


\begin{df}\label{df:HausdorffMetric}
	{\em
		Let $\mcal{M}$ be a Riemannian manifold. The} Hausdorff metric {\em is a metric defined on the set of compact subsets of $\mcal{M}$ by setting:}\\
	$d_H(A, B) = \max_{a \in A}\{d_{\mcal{M}}(a, B)\} + \max_{b \in B}\{d_{\mcal{M}}(b, A\}$, {\em for any two compact subsets $A$ and $B$, of $\mcal{M}$, see \cite{mor}.
	}
\end{df}

{\noindent We} will make a use of the following lemma from \cite{mor}
\begin{lem}\label{lem:MorvanAreaAppx}
	Let $\mcal{S}$ be a surface embedded in the compact triangulated $3$-manifold $(\mcal{M}, \mcal{T})$ and let $\mcal{S}_j$ be a sequence of surfaces embedded in $\mcal{M}$ that converges to $\mcal{S}$ in the Hausdorff metric on the set of embedded compact surfaces in $\mcal{M}$. Suppose that in addition to the metric convergence of $\mcal{S}_j$ to $\mcal{S}$ the following holds:
	For every $x \in \mcal{S}$ and $x_j \in \mcal{S}_j$ so that $x = \lim_{j \rightarrow \infty} x_j$,
	\begin{eq}
		\lim_{j \rightarrow \infty} N_{x_j}\mcal{S}_j = N_{x}\mcal{S} \;,
	\end{eq}
	where $N_{x_j}\mcal{S}_j, N_{x}\mcal{S}$ are the unit normals of $\mcal{S}$ at $x$ and $\mcal{S}_j$ at $x_j$ respectively. Then
	\begin{eq}
		\lim_{j \rightarrow \infty} \are(\mcal{S}_j) = \are(\mcal{S})
	\end{eq}
\end{lem}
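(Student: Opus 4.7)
The plan is to realize each $\mcal{S}_j$, for $j$ sufficiently large, as a normal graph over $\mcal{S}$ and then exploit continuity of the area functional in the $C^1$ topology of graph functions.

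First I would invoke the tubular neighborhood theorem: since $\mcal{S}$ is a smooth compact embedded surface in $\mcal{M}$, there exist $\epsilon > 0$ and a smooth diffeomorphism $\Phi:\mcal{S}\times (-\epsilon,\epsilon) \lra U$ onto an open tubular neighborhood $U$ of $\mcal{S}$, given by $\Phi(x,t) = \exp_x(tN_x\mcal{S})$. Let $\pi:U \lra \mcal{S}$ denote the associated nearest-point projection, so that $\pi(\Phi(x,t)) = x$. Since $\mcal{S}_j \to \mcal{S}$ in the Hausdorff metric, for $j$ large enough $\mcal{S}_j \subset U$, and the restriction $\pi_j := \pi|_{\mcal{S}_j}$ is well defined.

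The crucial step is to show that, for $j$ large, $\pi_j$ is a diffeomorphism onto $\mcal{S}$, which will let us write $\mcal{S}_j$ as the graph of a smooth function $h_j:\mcal{S}\lra(-\epsilon,\epsilon)$. The key input is a compactness argument that upgrades the pointwise normal-convergence hypothesis into a uniform statement: for every $\eta>0$ there is $J$ such that, for all $j\geq J$ and every $y\in \mcal{S}_j$, the tangent plane $T_y\mcal{S}_j$ lies within angular distance $\eta$ of $T_{\pi(y)}\mcal{S}$. (If this failed, one could extract a subsequence contradicting the hypothesis.) Such uniform transversality forces $d\pi_j$ to be a linear isomorphism at every point of $\mcal{S}_j$, so that $\pi_j$ is a local diffeomorphism between compact surfaces and hence a covering map of $\mcal{S}$. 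I expect the main technical obstacle to sit here: ruling out multi-sheeted coverings. Two distinct points of $\mcal{S}_j$ projecting to the same $x\in\mcal{S}$ would both lie on the short normal segment $\Phi(\{x\}\times(-\epsilon,\epsilon))$, and excluding this scenario needs combining the Hausdorff smallness of $h_j$ with the embeddedness of $\mcal{S}_j$ (likely via a degree or Euler-characteristic argument applied to $\pi_j$, or via a standing assumption that $\mcal{S}_j$ is topologically the same as $\mcal{S}$).

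Once the graph representation $\mcal{S}_j = \{\Phi(x,h_j(x)):x\in\mcal{S}\}$ is established, the Hausdorff convergence gives $h_j\to 0$ in $C^0(\mcal{S})$, and the uniform tangent-plane convergence gives $dh_j\to 0$ in $C^0(\mcal{S})$, since $T_{\Phi(x,h_j(x))}\mcal{S}_j$ is an explicit smooth expression in $x$, $h_j(x)$ and $dh_j(x)$ and must limit to $T_x\mcal{S}$. In brief, $h_j\to 0$ in $C^1(\mcal{S})$. Finally, in the graph parametrization the induced area takes the form
\[
\are(\mcal{S}_j) \;=\; \int_{\mcal{S}} J\bigl(x,h_j(x),dh_j(x)\bigr)\, dA(x),
\]
where $J$ is a continuous function of its arguments, determined by the first fundamental form of $\mcal{S}$ and by $\Phi$, with $J(x,0,0)\equiv 1$ because $d\Phi_{(x,0)}$ restricts to the identity on $T_x\mcal{S}$. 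The uniform convergences $h_j\to 0$, $dh_j\to 0$ therefore give $J(x,h_j(x),dh_j(x))\to 1$ uniformly on the compact $\mcal{S}$, and integrating yields $\are(\mcal{S}_j)\to \are(\mcal{S})$, as required.
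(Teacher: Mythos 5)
The paper does not prove this lemma at all: it is quoted verbatim as a known result from Morvan--Thibert \cite{mor}, so there is no internal proof to compare against. Your strategy --- write $\mcal{S}_j$ as a normal graph $\Phi(x,h_j(x))$ over $\mcal{S}$, upgrade the pointwise normal convergence to uniform tangent-plane convergence by compactness, deduce $h_j\to 0$ in $C^1$, and pass to the limit in the area integrand --- is essentially the standard argument behind such statements, and most of its steps are sound. The compactness upgrade in particular works: a putative bad subsequence $y_k\in\mcal{S}_{j_k}$ can be completed to a full sequence $x_j\in\mcal{S}_j$ converging to the same limit point, and the hypothesis then forces the contradiction.

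The genuine gap is the one you flag yourself and then leave unresolved: showing that the local diffeomorphism $\pi_j:\mcal{S}_j\to\mcal{S}$ has degree one. This cannot be extracted from the stated hypotheses. Take $\mcal{S}_j$ to be two parallel copies of $\mcal{S}$ at distance $1/j$ (or, for a connected example, the boundary of a thin tubular neighborhood of a one-sided $\mcal{S}$): Hausdorff convergence holds, the tangent planes converge to those of $\mcal{S}$, yet $\are(\mcal{S}_j)\to 2\,\are(\mcal{S})$. One can object that the unit normals of half the sheets converge to $-N_x\mcal{S}$ rather than $N_x\mcal{S}$, but for unoriented embedded surfaces the normal is only defined up to sign, so the hypothesis as written does not exclude this. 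The lemma therefore needs either an additional injectivity/multiplicity-one assumption on the projection (which is what \cite{mor} effectively imposes) or the areas must be counted with multiplicity --- and indeed the paper itself is forced to introduce the ``multiple area'' $\widetilde{\are(\widehat{\mcal{F}_i})}$ in Corollary \ref{cor:AppxArea} for precisely this reason. To close your proof you would need to add such a hypothesis explicitly, or supply the degree argument you allude to (which requires knowing, say, that $\mcal{S}_j$ is connected, isotopic to $\mcal{S}$, and separates the tubular neighborhood appropriately); as written, the step from ``covering map'' to ``diffeomorphism'' is asserted but not proved, and it is exactly where the statement can fail.
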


{\noindent As} the secant approximation admits a natural Euclidean structure, Theorem \ref{thm:Munkres1} can be used in order to show relations between the geometric structure of the original manifold $(\mcal{M}, \mathfrak{g})$ and its Euclidean secant approximation. Suppose $(\mcal{M}, \mathfrak{g})$ is a Riemannian manifold with constant sectional curvature $K$. Let $(\mcal{M}_t, \mathfrak{g}_t)$ denote the manifold obtained from $(\mcal{M}, \mathfrak{g})$ by scaling the metric $\mathfrak{g}$ by a real scaling factor $t > 0$.That is, the scaled metric $\mathfrak{g}_t$ is given by $\mathfrak{g}_t = \frac{1}{t}\mathfrak{g}$. Then  $(\mcal{M}_t, \mathfrak{g}_t)$ is a Riemannian manifold with constant sectional curvature $K_t = t^2K$, and the sequence of manifolds $(\mcal{M}_t, \mathfrak{g}_t)$ converges to a Euclidean manifold $\widehat{\mcal{M}}$, (see \cite{por}, pp. 369).


\section{Quasi-Normality of Least Area Surfaces}
In this section we examine the way in which a least area surface meets a triangulation of a $3$-manifold. It will be shown that under some considerations such a surface meets the triangulation in a fairly simple way.

\subsection{Notations and Definitions}
Throughout the rest of this section $(\mcal{M}, \mcal{T})$ will denote a smooth triangulated $3$-manifold. It will be assumed that $\mcal{M}$ admits either Euclidean or hyperbolic Reimannian structure. Recall the following notations: The injectivity radius of $\mcal{M}$ is denoted by $inj_{\mcal{M}}$, the maximal sectional curvature of $\mcal{M}$ by $K_{\mcal{M}}$, and the mesh of a triangulation $\mcal{T}$ of $\mcal{M}$, by $\lambda(\mcal{T})$.
\begin{df}
{\em A fixed surface $\mcal{F}$ embedded in the triangulated $3$-manifold $(\mcal{M}, \mcal{T})$, is} quasi-normal {\em with respect to $\mcal{T}$ if $\mcal{F} \cap \mcal{T}^{(3)}$ is a collection of disks, such that every normal disk in this collection is an elementary disk, and for every $2$-face $\mathfrak{f}$, $\mcal{F} \cap int(\mathfrak{f})$ does not contain any simple closed curve}.
\end{df}
{\noindent Note} that the difference between quasi-normal surface and a normal one is that the intersection of a quasi-normal and the $2$-skeleton of the triangulation may contain curves having both their end points on the same edge of $\mcal{T}^{(1)}$. Following Schleimer, \cite{schl}, such a curve will be called a {\em bent curve}. It is illustrated in Figure \ref{fg:BentCurve} below.

\begin{fg}[H]
	\[ 
	\setlength{\unitlength}{0.5\standardunitlength}
	\begin{array}{c}  \hspace{-1.7mm}
		\raisebox{-8pt}{\begin{picture}(0,0)%
\includegraphics{BentCurve.pstex}%
\end{picture}%
\setlength{\unitlength}{1973sp}%
\begingroup\makeatletter\ifx\SetFigFont\undefined%
\gdef\SetFigFont#1#2#3#4#5{%
  \reset@font\fontsize{#1}{#2pt}%
  \fontfamily{#3}\fontseries{#4}\fontshape{#5}%
  \selectfont}%
\fi\endgroup%
\begin{picture}(5274,3324)(3439,-4423)
\put(6001,-3136){\makebox(0,0)[lb]{\smash{{\SetFigFont{9}{10.8}{\rmdefault}{\mddefault}{\updefault}{\color[rgb]{0,0,0}$\gamma$}%
}}}}
\end{picture}%
 }
		\hspace{-1.9mm}
	\end{array}
 \]
	\caption{a bent curve}\label{fg:BentCurve}
\end{fg}

\renewcommand{\thethm}{\arabic{thm}}
\setcounter{thm}{0}
\begin{thm} \label{thm:LeastAreaIsNormal}
Let $\mcal{M}$ be a closed, irreducible, orientable, Euclidean or hyperbolic $3$-manifold. Let $\mcal{F}$ be a fixed closed, two-sided, incompressible, least area surface in $\mcal{M}$. Let $\mcal{T}$ be some geodesic least area triangulation of $\mcal{M}$ so that $\mcal{F} \cap \mcal{T}^{(0)} = \emptyset$. Then for every $0 < \varphi < \frac{\pi}{2}$ there exists a constant $\Psi > 0$ that depends only on $inj_{\mcal{M}}, K_{\mcal{M}}$, and $\varphi$, such that if $\mcal{T}$ is $\varphi$-fat, and $\lambda(\mcal{T}) < \Psi$, then $\mcal{F}$ is quasi-normal with respect to $\mcal{T}$.
\end{thm}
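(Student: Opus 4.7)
My plan is to convert Schoen's curvature bound into a graphicality statement at a scale depending only on $inj_{\mcal{M}}$, $K_{\mcal{M}}$, and $\varphi$. First I apply Theorem 2.5 to $\mcal{F}$ (which is stable minimal, since it is least area, by Theorem 2.4) to obtain $|K_{\mcal{F}}| \le C$ with $C = C(inj_{\mcal{M}}, K_{\mcal{M}})$. By the Gauss equation this also bounds the second fundamental form of $\mcal{F}$, and standard interior estimates for stable minimal surfaces then yield a uniform graphical radius: for every prescribed slope $s > 0$ there is $\rho = \rho(C, s) > 0$ such that, for every $p \in \mcal{F}$, the component of $\mcal{F} \cap B_{\mcal{M}}(p, \rho)$ through $p$ is a smooth graph over (the exponential image of) $T_p \mcal{F}$ with slope at most $s$. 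I pick $s = s(\varphi)$ small enough that any surface of slope $\le s$ sitting inside a $\varphi$-fat tetrahedron cannot re-enter a face it has already exited --- explicitly, $s$ chosen small compared to the uniform lower bound on dihedral angles guaranteed by Theorem 2.12 (Equation 2.1) --- and then set $\Psi := \min\{\rho, \epsilon, r, inj_{\mcal{M}}/2\}$, where $\epsilon, r$ are the Hass--Scott constants from Lemmas 2.2 and 2.3. This $\Psi$ depends only on $(inj_{\mcal{M}}, K_{\mcal{M}}, \varphi)$, as required.

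\textbf{Components are elementary disks.} With $\lambda(\mcal{T}) < \Psi$, every tetrahedron $\mathfrak{t}$ lies inside $B_{\mcal{M}}(p, \rho)$ for any $p \in \mcal{F} \cap \mathfrak{t}$. Let $D$ be the component of $\mcal{F} \cap \mathfrak{t}$ through $p$. Since $D$ sits inside a graph of slope $\le s$ over $T_p\mcal{F}$, orthogonal projection onto $T_p\mcal{F}$ is a homeomorphism from $D$ onto a subset of the convex disk $T_p\mcal{F} \cap \mathfrak{t}$, so $D$ is a topological disk. If $\del D$ is a normal curve, I compare it to the boundary of the planar slice $P := T_p\mcal{F} \cap \mathfrak{t}$: convexity of $\mathfrak{t}$ (a tetrahedron has only four faces) forces $\del P$ to be a $3$-gon or $4$-gon meeting that many distinct faces of $\mathfrak{t}$. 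The slope/fatness calibration from the previous paragraph guarantees that the $C^1$-small graphical perturbation preserves the combinatorial type of this polygon --- the boundary $\del D$ meets exactly the same faces in the same cyclic order as $\del P$ --- so $\del D$ is a $3$- or $4$-gon and $D$ is elementary.

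\textbf{No closed curves in faces, and the main obstacle.} For the remaining condition, suppose some face $\mathfrak{f}$ meets $\mcal{F}$ in a simple closed curve and let $\gamma$ be innermost on $\mathfrak{f}$, bounding a flat disk $D_{\mathfrak{f}} \subset \mathfrak{f}$ whose interior is disjoint from $\mcal{F}$. Incompressibility of $\mcal{F}$ forces $\gamma$ to bound a disk $D_{\mcal{F}} \subset \mcal{F}$, and irreducibility of $\mcal{M}$ makes $D_{\mathfrak{f}} \cup D_{\mcal{F}}$ bound a $3$-ball. Swapping $D_{\mcal{F}}$ for $D_{\mathfrak{f}}$ yields a surface isotopic to $\mcal{F}$, so the least area hypothesis forces $\are(D_{\mathfrak{f}}) \ge \are(D_{\mcal{F}})$; the standard Meeks--Yau corner roundoff at $\gamma$ then strictly decreases the area, contradicting that $\mcal{F}$ is least area. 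Here I use transversality of $\mcal{F}$ with $\mathfrak{f}$ along $\gamma$, which follows generically from the hypothesis $\mcal{F} \cap \mcal{T}^{(0)} = \emptyset$ combined with the geodesic/minimal nature of $\mcal{T}$. The main obstacle I anticipate is the graphical radius estimate in the first step: one must extract $\rho$ with slope $\le s(\varphi)$ whose dependence is only on $(inj_{\mcal{M}}, K_{\mcal{M}}, \varphi)$ and not on the particular $\mcal{F}$, which requires careful bookkeeping through the stable minimal surface regularity theory. Once this is in hand, the remaining steps are convex-geometric and topological, combined with the standard Meeks--Yau exchange-and-roundoff argument.
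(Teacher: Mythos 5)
Your proposal takes a genuinely different route from the paper's for both key steps, but one of the two gaps you flag at the end is real and substantive, and there is a second you may not have noticed.

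\textbf{What is different.} For the ``normal disks are elementary'' step, the paper does not use a graphical-radius argument at all; it scales the metric (Claim 3.3) so that the Schoen bound $C$ is smaller than $2\varphi$, then runs a direct curvature-comparison inside a single tetrahedron using Banchoff's generalized curvature (Definition 3.8), the Rataj--Z\"ahle convergence result (Theorem 3.9), fatness-controlled dihedral angles, and an explicit comparison circle to bound the distance between intersection points on an edge. Your route -- uniform graphical radius from bounded $|II|$, then comparison of $\partial D$ with the convex slice $T_p\mcal{F}\cap\mathfrak{t}$ -- is a legitimate alternative in spirit, and it would also deliver Lemma \ref{lem:IntesectionsAreDisks} essentially for free (the paper proves disk-ness by a separate sphere-swap argument). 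But the ``slope/fatness calibration preserves the combinatorial type of the boundary polygon'' claim is asserted, not argued. You need to show that the domain $\{x\in T_p\mcal{F}:x+u(x)N\in\mathfrak{t}\}$ is simply connected and that its boundary crosses the same edges in the same cyclic order as $\partial P$; near a vertex or an edge of $\mathfrak{t}$ a slope-$s$ perturbation can in principle alter crossing combinatorics, and you must rule this out with an inequality in $s,\varphi,\lambda$. You flag this yourself, so I only note it.

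\textbf{The real gap.} In your ``no closed curves in faces'' paragraph, the exchange-and-roundoff does not close as written. You correctly deduce $\are(D_{\mathfrak{f}})\ge\are(D_{\mcal{F}})$ from the least-area hypothesis on $\mcal{F}$, which means $\are(\mcal{F}')\ge\are(\mcal{F})$. Rounding off the corner of $\mcal{F}'$ along $\gamma$ gives $\are(\mcal{F}'')<\are(\mcal{F}')$, but this alone does \emph{not} give $\are(\mcal{F}'')<\are(\mcal{F})$: the corner roundoff saves an amount controlled by the roundoff radius, which can be arbitrarily small, while $\are(D_{\mathfrak{f}})-\are(D_{\mcal{F}})$ could be a fixed positive number. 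To finish you need the reverse inequality $\are(D_{\mathfrak{f}})\le\are(D_{\mcal{F}})$, forcing equality, and that is exactly where the ``geodesic least area triangulation'' hypothesis enters: $\mathfrak{f}$ is a least-area disk spanning $\partial\mathfrak{f}$, hence $D_{\mathfrak{f}}$ is a least-area disk spanning $\gamma$, giving the missing inequality. You only invoke that hypothesis for transversality. The paper uses it for precisely this area inequality, and then closes not by roundoff but by Theorem 8 of \cite{my3}: two least-area disks spanning $\partial\mathfrak{f}$ (namely $\mathfrak{f}$ and $\mathfrak{f}'=(\mathfrak{f}\setminus D_{\mathfrak{f}})\cup D_{\mcal{F}}$) must have identical or disjoint interiors, which fails since they agree on $\mathrm{int}(\mathfrak{f})\setminus D_{\mathfrak{f}}$ and disagree inside $\gamma$. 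Your roundoff variant is fine once equality is established, but without the reverse inequality the argument stops at ``$\mcal{F}'$ is at least as large as $\mcal{F}$,'' which contradicts nothing. The paper also needs a separate Case II (when $D_{\mcal{F}}$ is not contained in a single tetrahedron), handled via Hodgson--Rivin convexity and Lemma \ref{lem:DiskSpanCurve}; choosing $\gamma$ innermost on $\mathfrak{f}$ as you do largely sidesteps this, but you should say explicitly that the embedded-sphere swap works even when $D_{\mcal{F}}$ ranges over many tetrahedra.
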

\renewcommand{\thethm}{\thesection.\arabic{thm}}
\setcounter{thm}{1}

\begin{rem}
{\em
Recall that according to Theorem \ref{thm:IsotopeSurfaceToNormal} (see $2.4$ of \cite{jr1}), any incompressible surface can be isotoped to a surface that is normal with respect to any given triangulation $\mcal{T}$. In this context Theorem \ref{thm:LeastAreaIsNormal} states that if the surface is also least area and the triangulation is fat with small enough mesh, then the surface is already in a position that, in some sense, is close to normal position.
}
\end{rem}

{\noindent We} will make use of the following claim:
\begin{clm}\label{clm:EnoughForScaled}
It is enough to prove Theorem \ref{thm:LeastAreaIsNormal} for a scaled manifold $(\mcal{M}_t, \mathfrak{g}_t)$, where $\mathfrak{g}_t = \frac{1}{t}\mathfrak{g}$ is a scaled version of the metric $\mathfrak{g}$.
\end{clm}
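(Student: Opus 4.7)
The plan is to verify that every hypothesis and the conclusion of Theorem \ref{thm:LeastAreaIsNormal} are compatible with the rescaling $\mathfrak{g} \mapsto \mathfrak{g}_t = \tfrac{1}{t}\mathfrak{g}$, so that once the theorem is established for one value of $t$ it automatically holds for every value, and in particular for $t=1$. Concretely, I would produce explicit transformation rules for $inj_{\mcal{M}}$, $K_{\mcal{M}}$, $\lambda(\mcal{T})$, $\varphi$, and $\are$, and then check that the claimed dependence $\Psi=\Psi(inj_{\mcal{M}}, K_{\mcal{M}},\varphi)$ pulls back under these rules.

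First I would note that the underlying smooth structure, the triangulation $\mcal{T}$, the embedded surface $\mcal{F}$, and all purely combinatorial/topological hypotheses (closed, irreducible, orientable, two-sided, incompressible) are unaffected by a change of Riemannian metric; likewise the desired conclusion, namely quasi-normality of $\mcal{F}$ with respect to $\mcal{T}$, is a purely topological property of the pair $(\mcal{F},\mcal{T})$. Hence these parts of the statement transfer verbatim between $(\mcal{M},\mathfrak{g})$ and $(\mcal{M}_t,\mathfrak{g}_t)$.

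Next I would collect the scaling behavior of the remaining quantities. The area functional satisfies $\are_{\mathfrak{g}_t}=c(t)\,\are_{\mathfrak{g}}$ for a positive constant $c(t)$, so the class of least area representatives (and hence ``geodesic least area'' triangulations) is preserved, and so is the status of $\mcal{F}$ as a least area surface. The fatness $\varphi$ is defined as a ratio $r/R$ of inscribed and circumscribed radii of simplices and is therefore scale-invariant. The quantities $inj_{\mcal{M}}$, $\lambda(\mcal{T})$, and $K_{\mcal{M}}$ transform by explicit powers of $t$ (as recorded in the discussion preceding Definition \ref{df:SecantMapII}, where it is noted that $K_t=t^2K$ and lengths scale accordingly).

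Given these rules, if the theorem is established on $(\mcal{M}_t,\mathfrak{g}_t)$ with some bound $\Psi_t=\Psi_t(inj_{\mcal{M}_t},K_{\mcal{M}_t},\varphi)$, then substituting the scaling formulas for $inj_{\mcal{M}_t}$ and $K_{\mcal{M}_t}$ in terms of $inj_{\mcal{M}}$ and $K_{\mcal{M}}$ and multiplying $\Psi_t$ by the appropriate power of $t$ to convert lengths yields a bound $\Psi=\Psi(inj_{\mcal{M}},K_{\mcal{M}},\varphi)$ of the same functional form for $(\mcal{M},\mathfrak{g})$, proving the claim. I do not anticipate any analytic obstacle here; the only care required is bookkeeping of the scaling exponents. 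The practical value of the claim, which presumably motivates its statement, is that it lets one choose a convenient scale in the sequel — for example one in which $K_{\mcal{M}_t}$ is arbitrarily small and $inj_{\mcal{M}_t}$ is arbitrarily large — so that small geodesic balls behave essentially as Euclidean ones when Lemmas \ref{lem:DiskSpanCurve} and \ref{lem:ShortEnoughCurve} and the Schoen bound of Theorem \ref{thm:SchoenBound} are applied.
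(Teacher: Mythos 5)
Your proposal is correct and follows essentially the same approach as the paper: observe that quasi-normality is a topological/intersection-pattern property of $(\mcal{F},\mcal{T})$ and hence scale-invariant, that rescaling by a positive constant preserves the least area property of both $\mcal{F}$ and the triangulation's $2$-faces, that fatness is a scale-invariant ratio, and that $K_{\mcal{M}}$, $inj_{\mcal{M}}$, and $\lambda(\mcal{T})$ transform by explicit powers of $t$ so that the bound $\Psi$ pulls back. Your write-up is somewhat more explicit about the bookkeeping for $\Psi$ and correctly identifies the purpose of the claim (making the Schoen bound $C$ smaller than $2\varphi$), but the underlying argument is the same.
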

\begin{proof}
According to ~\cite{por} the scaled manifold $\mcal{M}_t$ is homeomorphic to the original manifold $\mcal{M}$, and has sectional curvature $K_t = t^2K$. In addition, since the scaling factor $\frac{1}{t}$ is positive we have that a least area surface is mapped isotopically onto a least area surface with respect to the scaled metric $\mathfrak{g}_t$ inside $\mcal{M}_t$. Moreover, a $\varphi$-fat geodesic least area triangulation $\mcal{T}$ of $\mcal{M}$ is mapped onto a $\varphi$-fat geodesic least area triangulation $\mcal{T}_t$ of $\mcal{M}_t$. As a result, if $\mcal{F}_t$ is quasi normal with respect to $\mcal{T}_t$ then $\mcal{F}$ must be quasi normal with respect to $\mcal{T}$, as the intersection patterns do not change along the scaling$\setminus$rescaling process. This proves the claim.
\end{proof}

Following Claim \ref{clm:EnoughForScaled} and the fact that $K_t \rightarrow 0$ as $t \rightarrow 0$, we have that the Schoen curvature bound $C_t$, given in \ref{thm:SchoenBound} also tends to zero. Hence, without loss of generality we will assume that the curvature bound $C$ is smaller than $2\varphi$. This fact will be of major significance for the proof of Theorem \ref{thm:LeastAreaIsNormal}.\\

\subsection{Essential Lemmas}
{\noindent Theorem} \ref{thm:LeastAreaIsNormal} will be deduced from the following three Lemmas.

\begin{lem}\label{lem:NoSimpleClosedCurves}
There exists $\epsilon > 0$ such that if $\mcal{T}$ is $\varphi$-fat for some positive $\varphi$, and $\lambda(\mcal{T}) < \epsilon$ then the intersection of $\mcal{F}$ with the interior of any of the $2$-faces of $\mcal{T}$ does not contain simple closed curves.
\end{lem}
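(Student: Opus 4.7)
The plan is to argue by contradiction. Suppose $\mcal{T}$ is $\varphi$-fat with $\lambda(\mcal{T})$ small enough that every $2$-face $\mathfrak{f}$ fits inside a ball of radius less than the $r$ of Lemma \ref{lem:ShortEnoughCurve} and less than the $\epsilon$ of Lemma \ref{lem:DiskSpanCurve}, and assume for contradiction that $\mcal{F} \cap int(\mathfrak{f})$ contains a simple closed curve for some face $\mathfrak{f}$. Since $\mathfrak{f}$ is a topological disk, I would first perform a standard innermost reduction: among all simple closed curves in $\mcal{F} \cap int(\mathfrak{f})$ (across all faces) pick $\gamma$ so that the sub-disk $D_{\mathfrak{f}} \subset \mathfrak{f}$ it bounds contains no further simple closed component of $\mcal{F} \cap \mathfrak{f}$. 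Any arc components of $\mcal{F} \cap \mathfrak{f}$ that enter $D_{\mathfrak{f}}$ have both endpoints on $\gamma$ (their endpoints lie on $\partial \mathfrak{f}$, while $\gamma \subset int(\mathfrak{f})$), so by successively cutting $D_{\mathfrak{f}}$ along an innermost such arc and replacing $\gamma$ by the resulting simple closed curve on $\mcal{F}$ lying in $\mathfrak{f}$, I may assume $D_{\mathfrak{f}} \cap \mcal{F} = \gamma$.

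By incompressibility of $\mcal{F}$, $\gamma$ cannot be essential on $\mcal{F}$ (else $D_{\mathfrak{f}}$ would be a compressing disk), so $\gamma$ bounds a disk $D \subset \mcal{F}$, and by irreducibility the sphere $D \cup D_{\mathfrak{f}}$ bounds a ball $B \subset \mcal{M}$. Claim \ref{clm:EnoughForScaled} allows me to rescale so that both the ambient sectional curvature and the Schoen bound of Theorem \ref{thm:SchoenBound} are arbitrarily small, making the geometry on $B$ effectively Euclidean. Since the face $\mathfrak{f}$ is totally geodesic, $D_{\mathfrak{f}}$ is an area-minimizing disk spanning $\gamma$: projecting $D$ orthogonally onto the geodesic plane carrying $\mathfrak{f}$, the projection is area non-increasing and, by a boundary-degree argument, its image covers $D_{\mathfrak{f}}$, whence $\are(D_{\mathfrak{f}}) \leq \are(D)$, with equality only if $D = D_{\mathfrak{f}}$.

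Now I swap: let $\tilde{\mcal{F}}$ be obtained from $(\mcal{F} \setminus D) \cup D_{\mathfrak{f}}$ by smoothing the corner along $\gamma$. Because $D$ and $D_{\mathfrak{f}}$ co-bound the ball $B$, $\tilde{\mcal{F}}$ is isotopic to $\mcal{F}$, and $\are(\tilde{\mcal{F}}) \leq \are(\mcal{F}) - \are(D) + \are(D_{\mathfrak{f}}) + o(1)$ as the smoothing scale shrinks; the strict inequality from the previous paragraph then contradicts $\mcal{F}$ being least area. In the equality case $D = D_{\mathfrak{f}}$, the surface $\mcal{F}$ contains an open piece of the totally geodesic surface $\Sigma$ extending $\mathfrak{f}$, so by unique continuation of real analytic minimal surfaces the component of $\mcal{F}$ meeting $D_{\mathfrak{f}}$ coincides with $\Sigma$, forcing $\mathfrak{f} \subset \mcal{F}$ and in particular the vertices of $\mathfrak{f}$ into $\mcal{F} \cap \mcal{T}^{(0)}$, contradicting the hypothesis $\mcal{F} \cap \mcal{T}^{(0)} = \emptyset$. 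The main obstacle will be the area comparison $\are(D_{\mathfrak{f}}) \leq \are(D)$: the projection argument is transparent in $\mathbb{R}^3$, but executing it in the Riemannian setting requires checking that the curvature smallness secured via Claim \ref{clm:EnoughForScaled} genuinely dominates the error in the orthogonal projection onto a totally geodesic plane.
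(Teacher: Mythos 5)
Your opening reduction (choosing $\gamma$ with $D_{\mathfrak{f}}$ innermost in $\mathfrak{f}$ so that $D_{\mathfrak{f}}\cap\mcal{F}=\gamma$) and the use of incompressibility and irreducibility are fine, but the heart of your argument --- the orthogonal projection onto ``the geodesic plane carrying $\mathfrak{f}$'' to get $\are(D_{\mathfrak{f}})\leq\are(D)$ --- does not go through, and you have misidentified why. The obstacle is not a curvature-error term that rescaling via Claim~\ref{clm:EnoughForScaled} suppresses: rescaling changes units uniformly and leaves the relative picture intact. The real problem is global. In a closed Euclidean or hyperbolic $3$-manifold the geodesic extension of a face $\mathfrak{f}$ generally does not close up to an embedded totally geodesic surface (it may be dense), and nearest-point projection onto it is at best defined and $1$-Lipschitz on a tubular neighborhood whose size is unrelated to $\lambda(\mcal{T})$. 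Meanwhile the disk $D\subset\mcal{F}$ bounded by $\gamma$ is in no way localized: shrinking $\lambda(\mcal{T})$ controls the size of $\mathfrak{f}$ and of $\gamma$, not the size of $D$, which can be essentially all of $\mcal{F}$. So the projection map you need is simply not available on $D$, and the claimed inequality (and its equality analysis via unique continuation) is unsupported. You cite Lemmas~\ref{lem:ShortEnoughCurve} and~\ref{lem:DiskSpanCurve} in the preamble but never use them; they localize the \emph{least area} disk spanning a short curve, not the a priori huge disk $D\subset\mcal{F}$, and that distinction is exactly what your argument glosses over.

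The paper's proof is organized precisely to circumvent this. It splits into two cases according to whether the disk $\mcal{D}\subset\mcal{F}$ bounded by $\gamma$ is innermost in $\mcal{F}$. In Case~I, innermost forces $int(\mcal{D})$ into a single tetrahedron $\tau$, so everything is local; the strict area comparison is then obtained \emph{not} by comparing the subdisks $\mcal{E}$ and $\mcal{D}$ directly, but by comparing the full $2$-faces $\mathfrak{f}$ and $\mathfrak{f}'=(\mathfrak{f}\setminus\mcal{E})\cup\mcal{D}$ and invoking the Meeks--Yau uniqueness/disjointness theorem for least area disks (Theorem~8 of \cite{my3}), using the hypothesis that $\mcal{T}$ is a least area triangulation so $\mathfrak{f}$ itself is least area. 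In Case~II, no such $\mcal{D}$ is innermost; the paper then chooses a second curve $\beta\subset\mcal{D}\cap\partial\tau$, uses fatness plus Hodgson--Rivin to get a strictly convex spanning disk in $\partial\tau$, Meeks--Yau to produce a least area disk inside $\tau$, and Lemma~\ref{lem:DiskSpanCurve} (Hass--Scott) to force any least area disk spanning $\beta$ to lie in $\tau$ --- from which the exchange and contradiction follow. Your proposal has neither the case split that localizes the problem nor a substitute for Meeks--Yau uniqueness, and the projection shortcut you offered in their place is not sound in a compact quotient manifold.
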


\begin{proof} Suppose conversely that $\gamma$ is a simple closed curve in the interior of some $2$-face $\mathfrak{f}$. Then $\gamma$ bounds a disk in $\mathfrak{f}$, denoted by $\mcal{E}$, and since $\mcal{F}$ is incompressible $\gamma$ must also bound a disk $\mcal{D} \subset \mcal{F}$, for otherwise, $\mcal{E}$ will be a compressing disk for $\mcal{F}$.\\
{\noindent {\bf Case I:}} Assume that the disk $\mcal{D}$ is an innermost disk in $\mcal{F}$.
\begin{fg}[H]
\[ 
	\setlength{\unitlength}{0.5\standardunitlength}
	\begin{array}{c}  \hspace{-1.7mm}
		\raisebox{-8pt}{\input NonNormalCloseCurve.tex }
		\hspace{-1.9mm}
	\end{array}
 \]
\caption{\small $\gamma$ bounds a disk $\mcal{D} \subset \mcal{F}$ }
\end{fg}
{\noindent Therefore} the interior of $\mcal{D}$ is entirely contained in some tetrahedron $\tau$. In this case $\mcal{S} = \mcal{E} \cup \mcal{D}$ is a $2$-sphere embedded in $\mcal{M}$, and since $\mcal{M}$ is irreducible, $\mcal{S}$ bounds a $3$-ball $\mcal{B}$ in $\mcal{M}$, through which $\mcal{D}$ can be isotoped to $\mcal{E}$ without changing $\mcal{F}$ outside of $\mcal{B}$. The resulting surface is denoted by $\mcal{F}'$, and it is isotopic to $\mcal{F}$. Moreover, it coincides with $\mcal{F}$ outside of $\mcal{B}$. Suppose this isotopy does not reduce the area of $\mcal{F}$. Let $\mathfrak{f}' = (\mathfrak{f} \setminus \mcal{E})\cup \mcal{D}$, that is, $\mathfrak{f}'$ is a disk obtained from the $2$-face $\mathfrak{f}$ by gluing $\mcal{D}$ instead of $\mcal{E}$ along their common boundary. Note that both disks $\mathfrak{f}$ and $\mathfrak{f}'$ span $\partial \mathfrak{f}$. According to the assumption one has that $\are(\mathfrak{f}') \leq \are(\mathfrak{f})$, hence since $\mathfrak{f}$ is a least area disk $\mathfrak{f}'$ is also a least area disk. Note that the following also holds:
\begin{eq}\label{eq:NonEmptyInteriors}
int(\mathfrak{f}') \cap int(\mathfrak{f}) = int(\mathfrak{f}) \setminus \mcal{E} \neq \emptyset .
\end{eq}

{\noindent According} to Theorem $8$ of \cite{my3} if $\mcal{M}$ is a $3$-manifold whose boundary is piecewise smooth and the mean curvature of the boundary is non negative, then any two least area disks that span the same simple close curve on the boundary of $\mcal{M}$, have either distinct or identical interiors. In our case the corresponding $3$- manifold is the tetrahedron $\tau$, which has piecewise smooth boundary with non negative mean curvature, since every $2$-face is a least area disk. The simple close curve on the boundary of $\mcal{M}$ is the boundary of the $2$-face $\mathfrak{f}$. Therefore, since $\mathfrak{f}$ and $\mathfrak{f}'$ are least area disks having the same boundary curve, their interiors either coincide or distinct. This is in contradiction to Equation \ref{eq:NonEmptyInteriors} above. Hence, the isotopy from $\mcal{F}$ onto $\mcal{F}'$ does reduce area, which is in contradiction to the assumption that $\mcal{F}$ is a least area surface. This completes the proof of the lemma in case I.\\

{\noindent {\bf Case II:}} Suppose now that within the collection of intersection curves of $\mcal{F}$ with the interior of $2$-faces, such as the curve $\gamma$, there is no curve for which the corresponding disk $\mcal{D}$ is innermost in $\mcal{F}$. In this case let $\gamma$ be one of these curves so that the disk $\mcal{E}$ that $\gamma$ bounds inside the $2$-face $\mathfrak{f}$ is innermost in $\mathfrak{f}$. Such $\gamma$ always exists due to compactness. In particular, it follows from the assumption that $int(\mcal{D})$ is not entirely contained in a single tetrahedron. The intersection of $int(\mcal{D})$ with the $2$-skeleton of $\mcal{T}$ is a collection of simple closed curves each is the intersection of $int(\mcal{D})$ with the boundary of some tetrahedron. Again, due to compactness there exists a curve $\gamma$ for which the number of simple closed curve in $\mcal{D} \cap \mcal{T}^{(2)}$ is minimal. Then there exists a tetrahedron $\tau$ such that the curve $\gamma$ is on a $2$-face of $\tau$ and such that the disk $\mcal{D}$ intersects $\tau$ in at least one additional curve $\beta$. Since $\partial \tau$ is a $2$-sphere the curve $\beta$ also bounds a disk in $\partial \tau$, denoted by $\mcal{E}'$, and from the minimality condition on $\gamma$ it follows that $\mcal{E}'$ cannot be entirely contained in the interior of a $2$-face. Hence, the curve $\beta$ intersects the set of edges of $\tau$. Let $v$ be any vertex of $\tau$, and let $\theta_1, \theta_2, \theta_3$ denote the three dihedral angles around $v$ (i.e. along each of the three edges of $\tau$ meeting at $v$). Since $\mcal{T}$ is $\varphi$-fat, each of these angles is at least $\varphi$, and since the manifold $\mcal{M}$ is non positively curved the sum of these angles does not exceed $\pi$ putting this together imposes that $\theta_1 + 2 \cdot \varphi \leq \theta_1 + \theta_2 + \theta_3 \leq \pi$, from which it follows that $\theta_1 \leq \pi - 2 \cdot \varphi$. The same holds for the other two angles and in fact to all dihedral angles of the triangulation $\mcal{T}$. From Theorem $1.1$ of \cite{hr} it follows that the disk $\mcal{E}'$ is a strictly convex region of $\partial \tau$. According to Theorem $1$ of \cite{my3} the curve $\beta$ bounds a least area disk $\mcal{D}''$ properly embedded inside $\tau$, and since $\mcal{F}$ is incompressible it also bounds a disk $\mcal{D}'$ in $\mcal{F}$ which, according to the assumption is not contained in $\tau$. Moreover, according to Lemma \ref{lem:DiskSpanCurve} there exists $\epsilon > 0$ so that if $\lambda(\mcal{T})$ is smaller than $\epsilon$ then every least area disk that spans $\beta$ is properly embedded inside $\tau$. in particular this means that the disk $\mcal{D}'$ cannot be a least area disk. Let $\mcal{S} = \mcal{D}'' \cup \mcal{D}'$, then $\mcal{S}$ is a $2$-sphere in $\mcal{M}$ which bounds a $3$-ball due to the irreducibility of $\mcal{M}$. If $\mcal{S}$ is embedded in $\mcal{M}$ it gives rise to an isotopy that takes $\mcal{D}'$ to $\mcal{D}''$ thus isotoping $\mcal{F}$ to a surface of smaller area forming a contradiction.\\

{\noindent In} order to complete the proof, it is still necessary to consider the case in which $\mcal{S}$ is not an embedded sphere. This can happen if the disks $\mcal{D}'$ and $\mcal{D}''$ intersect in their interiors. Since $\mcal{D}'$ is not a least area disk, it can be deformed slightly, if necessary, so to ensure that $\mcal{D}'$ and $\mcal{D}''$ intersect transversely, so every component of the intersection is a simple closed curve. Suppose $\mcal{D}'$ and $\mcal{D}''$ are such that the number of components in their intersection is minimal among all such pairs. Let $\delta \in \mcal{D}' \cap \mcal{D}''$ be an innermost curve in $\mcal{D}''$, and let $\mcal{E}''$ and $\mcal{E}'''$ be the disks bounded by $\delta$ on $\mcal{D}'$ and $\mcal{D}''$ respectively. Then $int(\mcal{E}'') \cap int(\mcal{E}''') = \emptyset$, hence $\mcal{S}' = \mcal{E}'' \cup \mcal{E}'''$ is an embedded $2$-sphere. Note that $\delta$ may not be innermost on $\mcal{E}'''$, since the disk $\mcal{D}'$ may still intersect $int(\mcal{E}''')$ outside $\mcal{E}''$, as illustrated in Figure \ref{fig:NonDiskInductionStep}.

\begin{fg}[H]
\[ 
	\setlength{\unitlength}{0.5\standardunitlength}
	\begin{array}{c}  \hspace{-1.7mm}
		\raisebox{-8pt}{\input NonDiscTwoCompInductionI.tex }
		\hspace{-1.9mm}
	\end{array}
 \]
\caption{looking for an embedded $2$-sphere}\label{fig:NonDiskInductionStep}
\end{fg}

{\noindent Isotoping} $\mcal{E}''$ through $\mcal{E}'''$ eliminates $\delta$ along with all other possible intersections of $\mcal{D}'$ with the interior of $\mcal{E}'''$, thus reducing the number of connected components by at least $1$. This contradicts the assumption that $\mcal{D}'$ and $\mcal{D}''$ minimize the number of intersections. This completes the proof of the lemma.
\end{proof}

{\noindent The} next two lemmas are concerned with the intersection of a least area surface with the interiors of tetrahedra. First, it is necessary to show that this intersection is made of a collection of disks. In general, this may not hold (see \cite{cm1}--~\cite{cm5}). However, if the mesh of the triangulation is considered, then the following holds:

\begin{lem}\label{lem:IntesectionsAreDisks}
There exists $\epsilon > 0$ such that if $\lambda(\mcal{T}) < \epsilon$, then the intersection of $\mcal{F}$ with any tetrahedron of $\mcal{T}$ is a (possibly empty) collection of disks.
\end{lem}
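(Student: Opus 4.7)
The plan is to choose $\epsilon$ to be the minimum of the thresholds supplied by Lemma~\ref{lem:NoSimpleClosedCurves}, Lemma~\ref{lem:DiskSpanCurve} and Lemma~\ref{lem:ShortEnoughCurve}, so that whenever $\lambda(\mcal{T}) < \epsilon$ the conclusions of all three lemmas apply to every tetrahedron $\tau$ of $\mcal{T}$ and to any simple closed curve sitting inside $\tau$. With this choice each component $C$ of $\mcal{F}\cap \tau$ is a compact surface properly embedded in $\tau$ whose (possibly empty) boundary components are simple closed curves on $\partial\tau$, each of which must cross at least one edge of $\tau$ by Lemma~\ref{lem:NoSimpleClosedCurves}. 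I argue by contradiction, assuming that some component $C$ is not a disk.

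If $\partial C=\emptyset$ then $C$ is a closed surface embedded in the ball $\tau$. A sphere component would bound a $3$-ball by irreducibility of $\mcal{M}$, and collapsing it via an area-reducing isotopy would contradict least area (the excluded case of $\mcal{F}$ being an essential sphere is ruled out by incompressibility). A positive-genus closed surface in the ball $\tau$ is compressible in $\tau$, and the compressing disk, cleaned up by the standard innermost-disk procedure using incompressibility of $\mcal{F}$ and irreducibility of $\mcal{M}$, produces a compressing disk for $\mcal{F}$ in $\mcal{M}$, contradicting incompressibility. We may therefore assume $\partial C\neq\emptyset$.

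Since $C$ has nonempty boundary and is not a disk, it has either positive genus or at least two boundary components, so I can find a simple closed curve $\alpha$ on $C$ that does not bound a disk on $C$ (a non-separating curve in the first case, or a curve separating $\partial C$ into non-trivial subcollections in the second). The curve $\alpha$ lies in the ball $\tau$ and so bounds a disk there; by Morrey's existence theorem together with the mesh choice and Lemma~\ref{lem:DiskSpanCurve}/Lemma~\ref{lem:ShortEnoughCurve}, there is a least area disk $E$ spanning $\alpha$ that is confined to a small neighborhood of $\tau$. By incompressibility of $\mcal{F}$, $\alpha$ also bounds a disk $D_\alpha\subset\mcal{F}$; and because $\alpha$ is essential on $C$, the disk $D_\alpha$ cannot be contained in $C$, hence must exit the tetrahedron $\tau$.

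Following the template of Case~II in the proof of Lemma~\ref{lem:NoSimpleClosedCurves}, I would first run innermost-disk surgeries on $E\cap\mcal{F}$ to reduce to the configuration $E\cap\mcal{F}=\alpha$. The embedded $2$-sphere $\mcal{S}=E\cup D_\alpha$ then bounds a $3$-ball $\mcal{B}$ by irreducibility, and isotoping $D_\alpha$ across $\mcal{B}$ onto $E$ yields a surface $\mcal{F}'$ isotopic to $\mcal{F}$ with $\are(\mcal{F}')=\are(\mcal{F})-\are(D_\alpha)+\are(E)$. Since every least area disk spanning $\alpha$ is confined by Lemma~\ref{lem:DiskSpanCurve}/Lemma~\ref{lem:ShortEnoughCurve} to a small neighborhood of $\tau$, while $D_\alpha$ is not so confined, $D_\alpha$ cannot itself be a least area disk, so $\are(E)<\are(D_\alpha)$ strictly, giving the contradiction to the least area property of $\mcal{F}$. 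The principal difficulty will lie in the innermost-disk cleanup step: one has to verify that the cleanup terminates and does not destroy the property that the resulting disk $E$ spans $\alpha$ and remains confined to a small neighborhood of $\tau$, while simultaneously preserving all the geometric bounds used to force the strict area inequality, a bookkeeping similar to the nested innermost-disk induction in Case~II of the preceding lemma.
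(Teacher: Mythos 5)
Your outline follows the same overall strategy as the paper (find an essential curve on the non--disk component $C$, observe that it bounds a disk $D_\alpha\subset\mcal F$ exiting $\tau$ because it is essential in $C$, produce a competing least area disk confined to $\tau$, and swap to reduce area), and you correctly supply the closed--component case that the paper's proof glosses over. But you choose a genuinely different curve: you take an \emph{interior} essential curve on $C$, whereas the paper works with the \emph{boundary} components $\alpha_1,\dots,\alpha_n=\partial C=\mcal S\cap\partial\tau$. That choice matters for the confinement step. The paper's $\alpha_i$ lies on $\partial\tau$; after Lemma~\ref{lem:NoSimpleClosedCurves} rules out its being inside a single $2$-face, the paper invokes Hodgson--Rivin (Theorem~1.1 of \cite{hr}) to get a strictly convex disk in $\partial\tau$, and then Meeks--Yau (Theorem~1 of \cite{my3}) to produce a least area disk properly embedded in $\tau$, with Lemma~\ref{lem:DiskSpanCurve} confining all least area disks spanning $\alpha_i$. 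None of that machinery is available for your interior curve.

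The concrete gap: to confine the least area disk $E$ spanning your interior $\alpha$, you appeal to Lemma~\ref{lem:ShortEnoughCurve}, but that lemma requires $\len(\alpha)<\varepsilon$, and you never establish a length bound. A curve drawn on $C$ inside a tetrahedron of small \emph{diameter} can a priori be arbitrarily long, and for a positive--genus component $C$ the ``shortest essential curve'' has no obvious mesh--independent bound without invoking something like the Schoen curvature estimate plus a monotonicity/area bound for minimal surfaces in small balls. The paper sidesteps this entirely because its curve lives on $\partial\tau$, where Meeks--Yau's boundary existence theorem plus convexity does the work without any length hypothesis. So either supply a length bound on $\alpha$ (e.g.\ by isotoping $\alpha$ to a boundary component of $C$ and then controlling the boundary curves, at which point you are back to the paper's argument), or replace the interior--curve step with the paper's boundary--curve argument. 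One further small point, present in both proofs but worth flagging: incompressibility only hands you $D_\alpha\subset\mcal F$ once you know $\alpha$ bounds a disk whose interior is disjoint from $\mcal F$; your ``cleanup'' paragraph gestures at the needed innermost--disk argument, but make sure it is run \emph{before} you conclude $\alpha$ bounds a disk on $\mcal F$, not only when assembling the final sphere $\mcal S=E\cup D_\alpha$.
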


\begin{proof}
Let $\mcal{G} = \mcal{F} \cap \tau$, for some tetrahedron $\tau \in \mcal{T}^{(3)}$, and let $\mcal{S}$ be some connected component of $\mcal{G}$, properly embedded in $\tau$.
Suppose conversely, that $\mcal{S}$ is not a disk. Let $\{\alpha_1, \alpha_2, ..., \alpha_n \} = \partial \mcal{S} = (\mcal{S} \cap \partial \tau)$. See Figure \ref{fg:NonDiskIntersectionI} for an illustration in the case $n = 3$.
\begin{fg}[H]
\[ 
	\setlength{\unitlength}{0.5\standardunitlength}
	\begin{array}{c}  \hspace{-1.7mm}
		\raisebox{-8pt}{\input NonDiskTwoComponents.tex }
		\hspace{-1.9mm}
	\end{array}
 \]
\caption{non-disk intersection}\label{fg:NonDiskIntersectionI}
\end{fg}
{\noindent Since} $\partial \tau$ is a $2$-sphere, each of the curves $\alpha_j$ bounds a disk in $\partial \tau$, and since $\mcal{F}$ is incompressible each $\alpha_j$ must also bound a disk, $\mcal{D}_j$ in $\mcal{F}$. If all disks $\{\mcal{D}_j \; ; j = 1, ..., n \}$ are contained in the tetrahedron $\tau$, then $\mcal{S}$ is a $2$-sphere. In this case $\mcal{S}$ can be shrunk to a point inside $\tau$, thus reducing the area of $\mcal{F}$. Therefore, there exists some $1 \leq i \leq n$ for which $\mcal{D}_i$ is not entirely contained in $\tau$ (See Figure \ref{fg:DiskFromOutside} for an illustration).
\begin{fg}[H]
\[ 
	\setlength{\unitlength}{0.5\standardunitlength}
	\begin{array}{c}  \hspace{-1.7mm}
		\raisebox{-8pt}{\input NonDiskTwoCompOutDisk.tex }
		\hspace{-1.9mm}
	\end{array}
 \]
\caption{$\alpha$ bounds a disk outside of $\tau$}\label{fg:DiskFromOutside}
\end{fg}
{\noindent Since} $\alpha_i = \partial \mcal{D}_i$ is embedded in $\partial \tau$, and since due to Lemma \ref{lem:NoSimpleClosedCurves} $\alpha_i$ cannot be entirely embedded in a $2$-face of $\tau$, Theorem $1.1$ of \cite{hr} is used again from which it follows that $\alpha_i$ bounds a strictly convex disk $\mcal{E} \subset \partial \tau$. Again, according to Meeks-Yau (\cite{my3}, Theorems $1$) and to Lemma \ref{lem:DiskSpanCurve}, there exists $\epsilon > 0$ so that if $\lambda(\mcal{T}) < \epsilon$ then $\alpha_i$ bounds a least area disk $\mcal{D}''$ that is properly embedded in the tetrahedron $\tau$, and $\mcal{D}_i$ cannot be a least area disk. Let $\mcal{S}'$ be the $2$-sphere $\mcal{D}_i \cup \mcal{D}''$. As in Lemma \ref{lem:NoSimpleClosedCurves} if $\mcal{S}'$ is embedded $\mcal{F}$ can be isotoped to a surface $\mcal{F}'$ of smaller area, in contradiction to the choice of $\mcal{F}$.\\
{\noindent If} $\mcal{S}'$ is not embedded then the procedure used in Lemma \ref{lem:NoSimpleClosedCurves} is repeated. $\mcal{D}^i$ and $\mcal{D}''$ are chosen such that the number of components in their intersection is minimal amongst all such pairs. Let $\gamma \in \mcal{D}_i \cap \mcal{D}''$ be an innermost curve in $\mcal{D}_i$, and let $\mcal{E}$ and $\mcal{E}''$ be the disks bounded by $\gamma$ on $\mcal{D}_i$ and $\mcal{D}''$ respectively. Then $int(\mcal{E}) \cap int(\mcal{E}'') = \emptyset$, hence $\mcal{S}'' = \mcal{E} \cup \mcal{E}''$ is an embedded $2$-sphere. Isotoping $\mcal{E}$ through $\mcal{E}''$ eliminates $\gamma$ thus reducing the number of connected components by at least $1$. This contradicts the assumption that $\mcal{D}_i$ and $\mcal{D}''$ minimize the number of intersections. The proof of the lemma is completed.
\end{proof}

{\noindent Recall} that a disk $\mcal{D}$ embedded in some tetrahedron $\tau$ is normal if its boundary is a collection of normal arcs in $\partial \tau$. Recall also that a normal disk is elementary if its boundary is made of either $3$ or $4$ normal arcs. By Thompson, (\cite{tho}) and Stocking (\cite{stoc}) the  following holds:

\begin{lem}\label{thm:ThompsonStocking}(\cite{tho} pp. 615; \cite{stoc} Claim $1$, Claim $2$)
All possible normal disks are $3$-gons, $4$-gons and $n$-gons, where $n$ is even and greater or equal to $8$.
\end{lem}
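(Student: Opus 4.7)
The plan is to translate the question into a combinatorial one: a normal disk $\mcal{D}$ is determined up to normal isotopy by its boundary curve $\partial\mcal{D}$, a simple closed normal curve on $\partial\tau\cong S^2$, and this curve in turn is determined up to normal isotopy by the numbers $n_1,\dots,n_6$ of its intersection points with the six edges of $\tau$. The length $n$ of $\mcal{D}$ equals $\sum_{i=1}^{6} n_i$, since each edge-intersection point is an endpoint of exactly two normal arcs, one on each of the two faces meeting at that edge.

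First I would carry out a parity analysis. Because $\partial\mcal{D}$ is a simple closed curve on the $2$-sphere $\partial\tau$, it separates the four vertices of $\tau$ into two classes $V_+$ and $V_-$, and for every edge $e_i$ the parity of $n_i$ equals the parity of the ``side difference'' of its two endpoints. Discarding the trivial regimes $|V_+|\in\{0,4\}$ (which force $n=0$), there remain two cases: $|V_+|=1$, in which the three edges at the isolated vertex have odd $n_i$ and the other three have even $n_i$, forcing $n$ to be odd with $n\geq 3$; and $|V_+|=2$, in which the four cross-edges between the two pairs have odd $n_i$ and the two diagonal edges even $n_i$, forcing $n$ to be even with $n\geq 4$. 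Next, on each triangular face $f$ of $\tau$ with edge-intersection counts $p,q,r$, the normal arcs form a non-crossing chord system, so the numbers of arcs of the three connection types are $(p+q-r)/2$, $(p+r-q)/2$, $(q+r-p)/2$, and these must be non-negative integers on each of the four faces of $\tau$.

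The main obstacle, and where I expect to spend the most effort, is ruling out the lengths $n=5,6,7$, since the parity and per-face triangle-inequality constraints alone still admit candidate intersection vectors of these weights. The missing ingredient is that $\partial\mcal{D}$ must be a \emph{single} simple closed curve, not merely a non-crossing multi-arc system on $\partial\tau$. I would enumerate the finitely many candidate vectors $(n_1,\dots,n_6)$ at each of these weights and, by selecting the non-crossing matching on each face and tracing the resulting closed curves face by face, show that every admissible candidate either violates the triangle inequality on some face or splits as two disjoint simple closed normal curves on $\partial\tau$ (typically a vertex-linking triangle together with a residual triangle or quadrilateral), contradicting the connectedness of $\partial\mcal{D}$. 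For $n=3$ and $n=4$ the same tracing is harmless and recovers the standard triangle and quadrilateral. Finally, an explicit normal disk of length $8$ is exhibited in Figure \ref{fg:OctagonNormalDisk}, and iterating the standard edge-stabilization operation (inserting a pair of parallel arcs near an edge of $\tau$) produces simple closed normal curves of every even length $n\geq 8$, completing the classification.
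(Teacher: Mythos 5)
The paper does not give its own proof of this lemma; it is quoted verbatim from Thompson \cite{tho} and Stocking \cite{stoc}, so there is no internal argument to compare against. Judged on its own terms, your combinatorial framework is sound: a normal curve system on $\partial\tau$ is determined up to normal isotopy by the edge-intersection vector $(n_1,\dots,n_6)$, the length is $\sum n_i$, the parity of $n_i$ is the parity of the side-difference of the endpoints of $e_i$, and the per-face arc counts $(p+q-r)/2$, etc., must be non-negative integers. Your plan of enumerating the admissible vectors of a given weight and tracing the resulting curves to detect disconnections does work at the weights $5$, $6$, and $7$.

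The genuine gap is one of scope. You treat $\{5,6,7\}$ as the only lengths that need excluding, whereas the lemma forbids every length in $\{5,6,7,9,11,13,\dots\}$. Your own parity analysis shows that $|V_+|=1$ only forces ``$n$ odd, $n\geq 3$,'' so the entire infinite family of odd lengths $\geq 5$ lives in that case, and a finite enumeration at weights $5$, $6$, $7$ says nothing about $n=9$ and beyond. Concretely, at $n=9$ the vector $(n_{12},n_{13},n_{14},n_{23},n_{24},n_{34})=(1,1,1,2,2,2)$ has the required parities and passes all four face triangle inequalities, so it survives your filter; tracing it does reveal a split into three disjoint vertex-linking triangles, but that instance is not covered by the cases you propose to check, and there are infinitely many further odd weights. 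What is missing is a uniform argument that any simple closed normal curve separating exactly one vertex of $\tau$ from the other three is a vertex-linking triangle, so that $|V_+|=1$ contributes only $n=3$. Once that is in place, the $|V_+|=2$ case is even, your exclusion of $n=6$ and the stabilization construction giving all even $n\geq 8$ do finish the classification; but as written, the plan conflates ``the small lengths strictly between $4$ and $8$'' with ``all forbidden lengths'' and does not establish the lemma.
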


{\noindent In} the following lemma it is proved that if $\mcal{T}$ is a fat triangulation with small enough mesh, then every normal disk is an elementary disk.

\begin{lem}\label{lem:3Or4Gon}
For every $0 < \varphi < \frac{\pi}{2}$ there exists $\epsilon > 0$ that depends only on $inj_{\mcal{M}}, K_{\mcal{M}}$ and $\varphi$, such that if $\lambda(\mcal{T}) < \epsilon$, and $\mcal{T}$ is $\varphi$-fat, then every normal disk in $\mcal{F} \cap \mcal{T}$ is either a $3$-gon or a $4$-gon.
\end{lem}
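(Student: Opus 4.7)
My strategy is to rule out the $n\geq 8$ case left open by Lemma~\ref{thm:ThompsonStocking}. The guiding idea: the Schoen bound, combined with small mesh and $\varphi$-fatness, forces $\mcal D$ to be $C^1$-close to a totally geodesic planar disk, and the tetrahedron $\tau$ to be close to a Euclidean tetrahedron; since a Euclidean plane meets the boundary of a Euclidean tetrahedron in a polygon with at most four sides, the intersection cannot carry an $8$-gon or larger.

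First I would apply Claim~\ref{clm:EnoughForScaled} and Theorem~\ref{thm:SchoenBound}, rescaling so that both the ambient sectional curvature $|\bar K|$ and the Schoen constant $C$ bounding $|K_{\mcal F}|$ are as small as desired. The Gauss equation for the minimal surface $\mcal F$,
\[
|II_{\mcal F}|^2 \;=\; 2(\bar K - K_{\mcal F}),
\]
then forces the second fundamental form $|II_{\mcal F}|$ to be uniformly small as well. A standard graph representation for minimal surfaces with small second fundamental form (an $\varepsilon$-regularity type estimate) yields an $\epsilon_1>0$ such that, whenever $\lambda(\mcal T)<\epsilon_1$, any disk component $\mcal D\subset\mcal F\cap\tau$ is a graph, over a disk in the tangent plane $T_p\mcal F$ for some $p\in\mcal D$, of a function with arbitrarily small $C^1$-norm. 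Simultaneously, for $\lambda(\mcal T)$ small relative to $inj_{\mcal M}$, the inverse exponential map $\exp_p^{-1}$ gives a $C^1$-close identification of $\tau$ with a Euclidean tetrahedron $\tau_0\subset\mathbb R^3$ whose dihedral angles differ from those of $\tau$ by an arbitrarily small error and so remain in the range $[\varphi,\pi-2\varphi]$ secured by $\varphi$-fatness.

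Combining these two approximations, the pair $(\mcal D,\partial\tau)$ is identified with a pair $(\Pi,\partial\tau_0)$ consisting of a Euclidean plane $\Pi$ and a Euclidean tetrahedron $\tau_0$ with dihedral angles uniformly bounded away from $0$ and $\pi$. Since $\tau_0$ is convex and has only four $2$-faces, $\Pi$ meets each $2$-face of $\tau_0$ in at most one line segment, so $\Pi\cap\partial\tau_0$ is a convex polygon with at most four sides. The $\varphi$-fatness keeps the angles at which $\Pi$ crosses the edges of $\tau_0$ uniformly away from zero; a transversality argument then lifts the $C^1$-closeness of $\mcal D$ to $\Pi$ to a combinatorial equality, namely that for each $2$-face $\mathfrak f\subset\partial\tau$ the number of arcs in $\mcal D\cap\mathfrak f$ equals the number of segments in $\Pi\cap\mathfrak f_0$. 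Hence $\partial\mcal D$ has at most four arcs, contradicting $n\geq 8$.

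The main obstacle lies in calibrating the two smallness thresholds---small $\lambda(\mcal T)$ and small $C$---so that the combined bound $\Psi$ depends only on $inj_{\mcal M}$, $K_{\mcal M}$ and $\varphi$, as required by the statement, and in upgrading the geometric $C^1$-closeness to a rigorous combinatorial equality of edge-crossing counts; this is precisely where $\varphi$-fatness, via the uniform transversality of $\Pi$ with the edges of $\tau_0$, is decisive.
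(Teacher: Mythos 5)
Your opening moves mirror the paper's: rescaling via Claim~\ref{clm:EnoughForScaled} so that the ambient curvature and the Schoen constant $C$ are small, then using the resulting smallness of $|II_{\mcal F}|$ to argue that a disk component $\mcal D\subset\mcal F\cap\tau$ is nearly flat when the mesh is small. From there, however, your route diverges, and the divergence is where a genuine gap appears.

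The critical step is the sentence ``a transversality argument then lifts the $C^1$-closeness of $\mcal D$ to $\Pi$ to a combinatorial equality, namely that for each $2$-face $\mathfrak f\subset\partial\tau$ the number of arcs in $\mcal D\cap\mathfrak f$ equals the number of segments in $\Pi\cap\mathfrak f_0$,'' justified by the claim that ``$\varphi$-fatness keeps the angles at which $\Pi$ crosses the edges of $\tau_0$ uniformly away from zero.'' This is not so: $\varphi$-fatness constrains the dihedral and vertex angles of the tetrahedron $\tau_0$ only; it says nothing about the angle between $\tau_0$ and an arbitrary tangent plane $\Pi$ of a least area surface passing through $\tau$. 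That angle can be arbitrarily close to zero, since the surface can run nearly parallel to a face. When $\Pi$ is nearly tangent to a $2$-face, the intersection $\Pi\cap\mathfrak f_0$ is not stable under $C^1$-small perturbations, and the required combinatorial equality simply does not follow from $C^1$-closeness. More concretely, an octagon forces $\partial\mcal D$ to cross some edge $e$ twice, at two points $p,q$ at mutual distance $\leq\lambda(\mcal T)$; on that scale a graph with small $C^1$-norm can perfectly well double back across $e$, and nothing in your argument excludes this. Ruling it out is the entire content of the lemma.

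The paper's proof is organized precisely so as to sidestep this. It never compares $\mcal D$ to an ambient plane. Instead, following Thompson's classification it picks a pair of opposite edges of $\tau$ that $\mcal D$ meets at least twice each, and studies two cross-sectional curves $\gamma$ and $\beta$ on $\mcal D$ running between those crossings. It then invokes Banchoff's generalized curvature at an edge and the Rataj--Z\"ahle convergence result to show that if such a curve approaches the edge $e$, its curvature approaches the wedge quantity $\pi-\theta\geq 2\varphi$, which after rescaling exceeds the Schoen bound $C$; this yields a uniform lower bound $r_0$ on the distance from the curve to $e$. A comparison circle of curvature $C$ tangent to $\gamma$ at its farthest point then forces $p$ and $q$ to be at least some fixed distance apart, so the configuration cannot occur once $\lambda(\mcal T)$ is below that threshold. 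The fatness hypothesis enters exclusively through the dihedral angle bound $\theta\leq\pi-2\varphi$, not through any transversality of $\mcal D$ with $\partial\tau$ --- which is what makes the argument work even when $\mcal D$ is nearly parallel to a face.

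To repair your proof you would have to handle the nearly-tangent regime separately, and the natural way to do that is essentially the Banchoff/Rataj--Z\"ahle comparison the paper already gives.
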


\begin{proof}
Suppose there is some tetrahedron $\tau \in \mcal{T}$ for which $\mcal{F} \cap \tau$ contains some connected component $\mcal{D}$ with at least $8$ edges. Using \cite{tho} and elementary combinatorial argument, one can easily show that there is at least one pair of opposite edges of $\tau$ (i.e., edges that do not share a vertex), so that $\mcal{D}$ intersects each of these edges at least twice. Let $p$ and $q$ be a pair of intersection points on one of these edges, and let $x$ and $y$ be a pair of intersection points on the opposite edge. See Figures \ref{fig:OctagonDisk1} for an illustration in the case where $\partial \mcal{D}$ is made of eight edges.
\begin{fg}[H]
\[ 
	\setlength{\unitlength}{0.5\standardunitlength}
	\begin{array}{c}  \hspace{-1.7mm}
		\raisebox{-8pt}{\input NormalOctagonA.tex }
		\hspace{-1.9mm}
	\end{array}
 \]
\caption{octagon again}\label{fig:OctagonDisk1}
\end{fg}
Let $\mcal{P}$ be the plane embedded in $\mcal{M}$, which is spanned by $p, q$ and some point $v$ in the interior of the segment $xy$.
Let $\gamma = \mcal{P} \cap \mcal{D}$, see Figure \ref{fig:OctagonCounterScheon}. Let $d:\gamma \mapsto \mathbb{R}^+$ be defined by sending each $t \in \gamma$ to its distance from $e$, where $e$ is the edge containing $p$ and $q$. Since $d(p) = d(q) = 0$, $d$ attains its maximum at some internal point of $\gamma$, denoted by $z$.
\begin{fg}[H]
\[ 
	\setlength{\unitlength}{0.5\standardunitlength}
	\begin{array}{c}  \hspace{-1.7mm}
		\raisebox{-8pt}{\input NormalOctagonB.tex }
		\hspace{-1.9mm}
	\end{array}
 \]
\caption{the plane $\mcal{P}$ intersects the octagon}\label{fig:OctagonCounterScheon}
\end{fg}
{\noindent In} order to proceed with the proof, the following definition by Banchoff of generalized curvature of a not-necessarily smooth curve, and a theorem by Rataj and Z$\ddot{a}$hle that relates the generalized curvature to the usual curvature of smooth curves, are needed:
\begin{df}(\cite{ban})\label{df:BanschofCurvature}
{\em Let $\alpha$ be a piecewise smooth curve embedded in a $3$-manifold $M$. Let $x \in \alpha$ be a point where $\alpha$ is not smooth, and let $t, t'$ be the two tangent vectors to $\alpha$ on the two sides of $x$ respectively. Let $\eta$ denote the angle between $t$ and $t'$. Then the curvature of $\alpha$ at $x$ is given by:}
\begin{eq}
k_{\alpha}(x) = \pi - \eta \;.
\end{eq}
\end{df}
{\noindent The} following theorem is given in \cite{raza} for all dimensions and for a large family of Riemannian manifolds. We will restrict it explicitly to curves and surfaces. See also \cite{cms2}, Theorem 6.
\begin{thm}\label{thm:ZhaleCurvatureConvergence}(\cite{raza}, Proposition $6$)
Let $L_1$, $L_2$ be two line segments in $\mathbb{R}^2$, so that $\L_1 \cap L_2$ is a point $x$. Let $N_{\epsilon} = \{ y \in \mathbb{R}^2; dist(y, L_1 \cup L_2) \leq \epsilon \}$ be an $\epsilon$-tubular neighborhood of $L_1 \cup L_2$. Let $x_{\epsilon}$ be a point on $\partial N_{\epsilon} = \{y \in \mathbb{R}^2; dist(y, L_1 \cup L_2) = \epsilon \}$, so that $dist(x, x_{\epsilon}) = \epsilon$, and let $k_{\epsilon}(x_{\epsilon})$ be the curvature of $\partial N_{\epsilon}$ at $x_{\epsilon}$, Then:
\begin{eq}
\lim_{\epsilon \rightarrow 0}k_{\epsilon}(x_{\epsilon}) = k_{(L_1 \cup L_2)}(x) \;.
\end{eq}
\end{thm}

{\noindent Having} the above definition and theorem, it is possible to proceed with the proof of Lemma \ref{lem:3Or4Gon}. Let $\beta$ be a curve on $\mcal{F}$ going from $x$ to $y$ that intersects $\gamma$ in a single point. Let $\theta$ be the dihedral angle of $\tau$ along the edge $e$. Recall that since the triangulation $\mcal{T}$ is $\varphi$-fat, and $\mcal{M}$ is non positively curved then $\theta \leq \pi - 2\varphi$. As a result, if $d(z) \rightarrow 0$ then there is a point on $\beta$ so that the curvature of $\beta$ at this point is at least $\pi - (\pi - 2\varphi) = 2\varphi$, which, is assumed to be greater than the Schoen curvature bound $C$. Thus there exists $r_0$ that depends only on the fatness $\varphi$ of $\mcal{T}$ and on the Schoen curvature bound $C$, so that $d(z) > r_0$.\\
{\noindent Let} $\mcal{O}$ be a circle in $\mcal{P}$ of curvature $C$ tangent to $\gamma$ at $z$, see Figure \ref{fg:ComparisonCircle}.

{\noindent Since} $\gamma$ is on the outside of $\mcal{O}$, at $z$, the points $p$ and $q$ at which $\gamma$ intersects the edge $e$, must also be outside $\mcal{O}$, for otherwise there would be some point on $\gamma$ at which the curvature of $\gamma$ is greater than the curvature of $\mcal{O}$ in contradiction to the curvature bound $C$. Denote by $e_{pq}$ the segment on $e$ from $p$ to $q$, and by $e_{p'q'}$ the chord in $\mcal{O}$ formed by the intersection points of $\mcal{O}$ with $e$. If $\mcal{M}$ is a Euclidean manifold then by the Pythagorean theorem the following holds:
\begin{eq}
\len(e_{pq}) \geq \len(e_{p'q'}) =  \sqrt{(\frac{1}{C}^2) - (\frac{1}{C} - d(z))^2} > \sqrt{(\frac{1}{C}^2) - (\frac{1}{C} - r_1)^2}\;\;.
\end{eq}
{\noindent In} the case where $\mcal{M}$ is hyperbolic manifold $\mcal{O}$ is a circle in the hyperbolic plane $\mcal{P}$, of hyperbolic radius $\sqrt{(\frac{1}{C}^2)}$. By properties of hyperbolic plane geometry it is also a Euclidean circle of Euclidean radius $\tanh \sqrt{(\frac{1}{C}^2)}$. Hence, a corresponding lower bound on the hyperbolic distance between $p$ and $q$ is deduced in this case as well.
\begin{fg}[H]
	\[ 
	\setlength{\unitlength}{0.1\standardunitlength}
	\begin{array}{c}  \hspace{-1.7mm}
		\raisebox{-8pt}{\input NonNormalFoldCurveComparisonCircle.tex }
		\hspace{-1.9mm}
	\end{array}
 \]
	\caption{a comparison circle}\label{fg:ComparisonCircle}
\end{fg}
{\noindent Similar} analysis will also hold for the points $x$ and $y$, while replacing the roles of $\gamma$ and $\beta$, therefore whenever $\lambda(\mcal{T}) < \len(e_{pq})$ there cannot be a normal disk with $\geq 8$ edges on its boundary. This proves Lemma \ref{lem:3Or4Gon}.
\end{proof}
\subsection{Proof of Theorem \ref{thm:LeastAreaIsNormal}}

\begin{proof}
In order to prove the theorem it is essential to show that under the assumptions, the given least-area surface $\mcal{F}$ satisfies the following:
\begin{enum}[(1)]
\item \label{NormalFact1} $\mcal{F} \cap int(\mcal{T}^{(2)})$ does not contain simple closed curves.
\item \label{NormalFact2} $\mcal{F} \cap \mcal{T}^{(3)}$ is a collection of disks.
\item \label{NormalFact3} All normal disks in $\mcal{F} \cap \mcal{T}^{(3)}$ are elementary disks.
\end{enum}
The proof of \ref{NormalFact1} was given in Lemma \ref{lem:NoSimpleClosedCurves} and the proofs of (\ref{NormalFact2}) and of (\ref{NormalFact3}) were done in Lemmas \ref{lem:IntesectionsAreDisks} and  \ref{lem:3Or4Gon}. This completes the proof of Theorem \ref{thm:LeastAreaIsNormal}.
\end{proof}

\section{Tameness of Non Normal Disks}
This section extends the study of the intersection of a least area surface with a fat triangulation. In particular, it focuses on the intersections of non normal disks, if any exist, with the tetrahedra of the triangulation.

The following theorem shows that when restricted to least area surfaces and to fat triangulations, the local picture is rather simple and controlled.
\renewcommand{\thethm}{\arabic{thm}}
\setcounter{thm}{1}
\begin{thm}\label{thm:NonNormalAreTame}
Let $\mcal{F}$ be an incompressible, $2$-sided least area surface in a closed, irreducible, orientable hyperbolic or Euclidean $3$-manifold $\mcal{M}$. Let $\Psi$ be the constant obtained in Theorem \ref{thm:LeastAreaIsNormal}, and let $\mcal{T}$ be a geodesic, least area, $\varphi$-fat triangulation of $\mcal{M}$ so that $\lambda(\mcal{T}) < \Psi$ and $\mcal{F}$ quasi normal with respect to $\mcal{T}$. Then every non normal disk in $\mcal{F} \cap \mcal{T}^{(3)}$ can be presented as a graph of a function defined on a disk that is contained in a single $2$-face of $\mcal{T}$.
\end{thm}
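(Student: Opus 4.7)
My plan is to treat each non-normal component $\mcal{D}\subset\mcal{F}\cap\tau$ by first locating a bent arc on $\partial\mcal{D}$, using that arc to single out a planar base disk sitting inside a single $2$-face of $\mcal{T}$, and then proving that the normal projection onto that face exhibits $\mcal{D}$ as a graph over the base disk. Quasi-normality of $\mcal{F}$ forces $\partial\mcal{D}$ to contain at least one bent arc $\alpha$ in some face $\mathfrak{f}$, with both endpoints $p,q$ on a single edge $e\subset\partial\mathfrak{f}$; indeed, if every arc of $\partial\mcal{D}$ on every face were normal, $\partial\mcal{D}$ would be a normal curve and Lemmas~\ref{lem:3Or4Gon} and~\ref{thm:ThompsonStocking} would force $\mcal{D}$ to be elementary, contradicting non-normality. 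Let $\mcal{E}\subset\mathfrak{f}$ be the disk bounded by $\alpha\cup\overline{pq}$, where $\overline{pq}$ is the sub-segment of $e$ between $p$ and $q$; this is the candidate domain of the graph, and it is visibly contained in a single $2$-face.

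Next I would show that the orthogonal projection $\pi_\mathfrak{f}\colon\tau\to\mathfrak{f}$ restricts to a diffeomorphism $\pi_\mathfrak{f}|_\mcal{D}\colon\mcal{D}\to\mcal{E}$. After the rescaling of Claim~\ref{clm:EnoughForScaled} the Schoen bound (Theorem~\ref{thm:SchoenBound}) satisfies $C<2\varphi$, so Gauss--Bonnet on $\mcal{D}$ controls the total angular variation of the unit normal of $\mcal{F}$ over $\mcal{D}$ by a quantity of order $\varphi\cdot\Psi$, which can be made arbitrarily small. Since $\alpha\subset\mathfrak{f}$, at each point of $\alpha$ the tangent plane of $\mcal{F}$ already contains the tangent of $\alpha$, a line in $\mathfrak{f}$; combined with the uniform curvature bound this prevents the tangent plane of $\mcal{F}$ from becoming perpendicular to $\mathfrak{f}$ anywhere on $\mcal{D}$, so $\pi_\mathfrak{f}|_\mcal{D}$ is a submersion and hence a local diffeomorphism.

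Global injectivity then follows from the simple-connectedness of $\mcal{D}$ together with a boundary analysis: $\alpha$ projects to itself, and the remaining arcs of $\partial\mcal{D}$ (which lie in the faces of $\tau$ adjacent to $\mathfrak{f}$ along $e$ and, by the mesh and fatness bounds, hug $e$ tightly) project monotonically onto $\overline{pq}$. A covering-space or degree argument promotes the local diffeomorphism to a homeomorphism $\pi_\mathfrak{f}|_\mcal{D}\colon\mcal{D}\to\mcal{E}$, whereupon $\mcal{D}$ is the graph of the height function $h\colon\mcal{E}\to\mathbb{R}$ recording the normal displacement of $\mcal{D}$ above $\mathfrak{f}$.

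The hardest step will be controlling the initial angle between the tangent plane of $\mcal{F}$ and $\mathfrak{f}$ along $\alpha$: in isolation this angle could approach $\pi/2$, which would let $\mcal{D}$ fold and defeat the projection. My plan to overcome this is a Meeks--Yau style comparison with the planar disk $\mcal{E}$, which is itself a least-area sub-disk of the least-area face $\mathfrak{f}$, combined with the fatness bound $\theta\leq\pi-2\varphi$ on the dihedral angle at $e$, in the spirit of the curvature-comparison argument of Lemma~\ref{lem:3Or4Gon}: a nearly-perpendicular tangent plane along $\alpha$ would force a large area excess of $\mcal{D}$ relative to $\mcal{E}$ in the ambient tetrahedron, contradicting the least-area hypothesis on $\mcal{F}$ as soon as $\Psi$ is chosen small enough in terms of $\varphi$, $inj_\mcal{M}$, and $K_\mcal{M}$.
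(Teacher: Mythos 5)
The paper's proof and your proposal share a starting point — isolate a bent arc $\alpha\subset\mathfrak{f}$ with endpoints $p,q$ on an edge $e$ — but then diverge sharply, and your route has a genuine gap precisely at the step you flag as ``the hardest.'' The paper does \emph{not} project onto $\mathfrak{f}$; it performs a combinatorial case analysis on how the arcs of $\partial\mcal{D}$ leaving $p$ and $q$ continue across the adjacent face $\mathfrak{f}^+$ and beyond. In the benign cases (the arcs close up near $e$) the graph structure is asserted over the \emph{adjacent} face $\mathfrak{f}^+$ (or the next face $\mathfrak{f}'$), not over $\mathfrak{f}$. In the remaining cases the paper builds a planar cross-section $\mcal{P}\cap\mcal{D}$ exactly as in Lemma~\ref{lem:3Or4Gon}, and the Schoen bound plus the fatness bound on the dihedral angle force a lower bound on $d(p',q')$, so these cases cannot occur once $\lambda(\mcal{T})<\Psi$. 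There is no projection-submersion-degree argument anywhere in the paper's proof.

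The specific error in your reasoning is the inference ``at each point of $\alpha$ the tangent plane of $\mcal{F}$ already contains the tangent of $\alpha$, a line in $\mathfrak{f}$; combined with the uniform curvature bound this prevents the tangent plane of $\mcal{F}$ from becoming perpendicular to $\mathfrak{f}$.'' This is a non sequitur: a plane containing a given line in $\mathfrak{f}$ can be exactly perpendicular to $\mathfrak{f}$ (take the span of that line and the normal of $\mathfrak{f}$). Transversality of $\mcal{F}$ to the $2$-skeleton forbids $T_x\mcal{F}=T_x\mathfrak{f}$, but provides no upper bound at all on the dihedral angle between them — in fact $\pi/2$ is the generic, maximally transverse value, so $\pi_\mathfrak{f}|_\mcal{D}$ is in general singular along $\alpha$. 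This is precisely why $\mathfrak{f}$ is the wrong choice of base face, and why you cannot start the curvature-propagation argument from a small initial angle. The compensating idea you sketch — a Meeks--Yau style area excess comparison of $\mcal{D}$ against $\mcal{E}$ — is also unsound as stated, since $\mcal{E}$ and $\mcal{D}$ do not share a boundary, so there is no direct competition of areas to invoke; the least-area property of $\mcal{F}$ is not accessed that way. The paper sidesteps both difficulties by never trying to prove the projection is everywhere a submersion, using instead the cross-section curvature comparison to simply eliminate the configurations in which $\partial\mcal{D}$ strays far enough from $e$ for the projection picture to fail.
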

\renewcommand{\thethm}{\thesection.\arabic{thm}}
\setcounter{thm}{1}  

\begin{proof}
Let $\mcal{D}$ be some non normal disk properly embedded in a tetrahedron $\tau$, let $\mathfrak{f}$ be some $2$-face of $\tau$, and let $\delta \subset \mcal{D} \cap \mathfrak{f}$ be a bent curve in $\mcal{D} \cap \mathfrak{f}$ having its end points on the same edge $e$ of $\mathfrak{f}$. Let $p$ and $q$ be the two points consisting $\delta \cap e$. See Figure \ref{fg:MultipleBents} below for an illustration where $\delta$ has two components.
\begin{fg}[H]
\[ 
	\setlength{\unitlength}{0.5\standardunitlength}
	\begin{array}{c}  \hspace{-1.7mm}
		\raisebox{-8pt}{\input MultipleBentsCurve.tex }
		\hspace{-1.9mm}
	\end{array}
 \]
\caption{ A bent curve in the $2$-face $\mathfrak{f}$, and its two end points}\label{fg:MultipleBents}
\end{fg}
{\noindent Let} $\mathfrak{f}^+$ be the second $2$-face of $\tau$ that meets $\mathfrak{f}$ along $e$. There are three possibilities for the intersection curves of $\mcal{D} \cap \mathfrak{f}^+$ emanating from $p$ and $q$. Denote these curves by $c_1$ and $c_2$. Then these curves either connect each other without going through any other edge of $\mathfrak{f}^+$, where in this case $\mcal{D}$ is a graph over $\mathfrak{f}^+$, as illustrated in the following Figure \ref{fg:NonNormalSmallDisk}. In the other two possibilities these two curves may end at a single edge $e_1$ of $\mathfrak{f}^+$ other than $e$, or on each of the two edges $e_1$ and $e_2$ of $\mathfrak{f}^+$, that are not $e$. These two possibilities are illustrated in Figure \ref{fg:NonNormalEmanatingEdges}$(a)$ and $(b)$.
\begin{fg}[H]
\[ 
	\setlength{\unitlength}{0.5\standardunitlength}
	\begin{array}{c}  \hspace{-1.7mm}
		\raisebox{-8pt}{\input NonNormalSmallDisk.tex }
		\hspace{-1.9mm}
	\end{array}
 \]
\caption{curves $c_1$ and $c_2$ connect each other inside $\mathfrak{f}^+$}\label{fg:NonNormalSmallDisk}
\end{fg}

\begin{fg}[H]
\[ 
	\setlength{\unitlength}{0.5\standardunitlength}
	\begin{array}{c}  \hspace{-1.7mm}
		\raisebox{-8pt}{\input NonNormalEmanatingEdges.tex }
		\hspace{-1.9mm}
	\end{array}
 \]
\caption{$c_1$ and $c_2$ ends on edges of $\mathfrak{f}^+$}\label{fg:NonNormalEmanatingEdges}
\end{fg}

{\noindent Suppose} the situation is as shown in Figure \ref{fg:NonNormalEmanatingEdges}$(a)$. Let $\mathfrak{f}'$ be the $2$-face of $\tau$ that meets $\mathfrak{f}^+$ along the edge $e_1$ , and consider the curves of $\mcal{D} \cap \mathfrak{f}'$ that emanate from the end points of $c_1$ and $c_2$. There are two possibilities. One is that these curves close each other without any further intersection with the edges of $\mathfrak{f}'$, as illustrated in Figure \ref{fg:NonNormalEmanatingSameII}$(a)$, and in this case the disk $\mcal{D}$ can be presented as a graph over a disk contained in the $2$-face $\mathfrak{f}^+$.

\begin{fg}[H]
\[ 
	\setlength{\unitlength}{0.5\standardunitlength}
	\begin{array}{c}  \hspace{-1.7mm}
		\raisebox{-8pt}{\input NonNormalEmanatingSame.tex }
		\hspace{-1.9mm}
	\end{array}
 \]
\caption{$c_1$ and $c_2$ ends at a single edge}\label{fg:NonNormalEmanatingSameII}
\end{fg}

{\noindent The} second option is that each of the curves ends at some point on $\partial \mathfrak{f}'$. In this latter case there are a few options, one of which is illustrated in Figure \ref{fg:NonNormalEmanatingSameII}(b). All other options are treated similarly. Let $p'$ and $q'$ be the points at which $c_1$ and $c_2$ intersect the edge $e_1$, and let $z$ be a point on the edge $e_3$ opposite to $e_1$, that is, the edge of $\tau$ that does not share a vertex with $e_1$. Let $\mcal{P}$ be the plane spanned by $p', q'$ and $z$. Let $\alpha$ be the intersection curve of $\mcal{P}$ and $\mcal{D}$, and let $\beta$ to be its co-curve on $\mcal{D}$, see Figure \ref{fg:NonNormalCompressingDisk}. Note that in this typical case $\mcal{D}$ cannot be given as a graph of a function defined on some sub disk contained in a single $2$-face of $\tau$.

\begin{fg}[H]
\[ 
	\setlength{\unitlength}{0.5\standardunitlength}
	\begin{array}{c}  \hspace{-1.7mm}
		\raisebox{-8pt}{\input NonNormalCompressingDisk.tex }
		\hspace{-1.9mm}
	\end{array}
 \]
\caption{the curvature of $\mcal{D}$ along $\beta$ is too large}\label{fg:NonNormalCompressingDisk}
\end{fg}

{\noindent As} in the proof of Lemma \ref{lem:3Or4Gon}, if the distance between $\beta$ and the edge $e_1$ gets arbitrarily small, there will be a point on $\beta$ with curvature that is arbitrarily close to the generalised curvature defined in Definition \ref{df:BanschofCurvature}. Recall that this curvature is ($\pi - \textrm{dihedral angle of $\tau$ along $e_1$}$), and due to fatness of $\mcal{T}$ this curvature is greater than $2\varphi$. Since it is assumed that the Schoen curvature bound is smaller than $2\varphi$, there exists $r_0 > 0$ that depends only on $C$ and $\varphi$, so that the distance between $\beta$ and $e_1$ is greater than $r_0$. Hence, there exists $\Psi > 0$ that depends only on $C$ and $\varphi$ such that the distance between $p'$ and $q'$ is greater than $\Psi$. Therefore, if $\lambda(\mcal{T}) < \Psi$ there cannot be a non normal disk as in Figure \ref{fg:NonNormalEmanatingSameII}(b). Suppose now that we are in the situation shown in Figure \ref{fg:NonNormalEmanatingEdges}(b). Again, this case splits to two options. Figure \ref{fg:NonNormalEmanatingDifferentII}(a) shows one of these options, where the two curves $c_1$ and $c_2$ finally close each other at a point on the edge $e_4$, the opposite edge to $e$. In this case the disk $\mcal{D}$ can be projected into $\mathfrak{f}^+$. A typical example of the second option is depicted in Figure \ref{fg:NonNormalEmanatingDifferentII}(b) where the two curves following $c_1$ and $c_2$ intersect the two edges of $\mathfrak{f}$ that are different from $e$ before they close. In this case, let $p'$ and $q'$ be points on the two edges of $\mathfrak{f}$ that are not $e$, and let $V$ be the vertex of $\mathfrak{f}^+$ that is opposite to $e$, as shown in Figure \ref{fg:NonNormalCompDiskDifferent} .
\begin{fg}[H]
\[ 
	\setlength{\unitlength}{0.5\standardunitlength}
	\begin{array}{c}  \hspace{-1.7mm}
		\raisebox{-8pt}{\input NonNormalEmanatingPQonDiffferent.tex }
		\hspace{-1.9mm}
	\end{array}
 \]
\caption{$c_1, c_2$ go to different edges}\label{fg:NonNormalEmanatingDifferentII}
\end{fg}
\begin{fg}[H]
\[ 
	\setlength{\unitlength}{0.5\standardunitlength}
	\begin{array}{c}  \hspace{-1.7mm}
		\raisebox{-8pt}{\input NonNormalCompressingDiskDifferent.tex }
		\hspace{-1.9mm}
	\end{array}
 \]
\caption{$c_1, c_2$ go to different edges}\label{fg:NonNormalCompDiskDifferent}
\end{fg}
{\noindent Let} $P$ be the plane spanned by the points $p', q'$ and $V$, and let $\beta$ be the intersection curve $P \cap \mcal{D}$. Then there exists $r_0 > 0$ so that the distance between $\beta$ and the edge $e_4$ is greater than $r_0$, since otherwise the curvature of $\beta$ will exceed $2\varphi$ at some point on $\beta$, and $r_0$ depends only on $C$ and $\varphi$. As a result, there exists $\Psi > 0$ that depends only on $C$ and $\varphi$, so that the distance between $p'$ and $q'$ must be larger than $\Psi$. Again, this imposes that if the mesh of $\mcal{T}$ is smaller than $\psi$ there cannot be a non normal disk of the type shown in Figure \ref{fg:NonNormalEmanatingDifferentII}(b)\\

{\noindent This} completes the proof of Theorem \ref{thm:NonNormalAreTame}
\end{proof}
\subsection{Manifolds with Boundary}
This section generalizes Theorems \ref{thm:LeastAreaIsNormal} and \ref{thm:NonNormalAreTame} to compact $3$-manifolds with boundary. Doing so requires some modifications in the proofs of those theorems which will be given herein. Before stating the generalized theorem a care need be taken for the mere existence of least area surfaces in manifolds with boundary. For that, the route taken in ~\cite{hs} will be followed. 
\begin{df}
	A Riemannian three manifold $\mcal{M}$ has {\emph sufficiently convex} boundary if:
	\begin{enum}
		\item $\del \mcal{M}$ is piecewise smooth.
		\item Each smooth subsurface of $\del \mcal{M}$ has non-negative mean curvature with respect to the inward normal.
		\item There exists a Riemannian three manifold $\mcal{N}$ such that $\mcal{M}$ is isometric to a submanifold of $\mcal{N}$, and each smooth subsurface $\mcal{S}$ of $\del \mcal{M}$ extends to a smooth subsurface $\mcal{S}'$ embedded in $\mcal{N}$ such that $\mcal{S}' \cap \mcal{M} = \mcal{S}$.
	\end{enum}
\end{df} 
\noindent{Further}, the following definition given by Morrey in ~\cite{morr} is necessary in order to assure the existence of the curvature and injectivity radius bounds that are used in the proofs of Theorems  \ref{thm:LeastAreaIsNormal} and \ref{thm:NonNormalAreTame}.
\begin{df}
	A Riemnnian three manifold is {\emph homogeneously regular} if there exist positive constants $k, K$ so that every point of the manifolds lies in the image of a chart $\varphi$ with domain the unit ball $B(0, 1)$ in $\mathbb{R}^3$ such that \[ k||\nu||^2 \leq g_{ij}(\varphi (x))\nu_i \nu_j \leq K||\nu||^2\] for all $x \in B(0, 1)$,
	where $\nu$ is any tangent vector to $x$, and $g_{ij}$ are the components of the metric $g$ on $\mcal{M}$.
\end{df}
In ~\cite{my3} existence and uniqueness of least area disks spanned by simple closed curves in the boundary of a three manifold with sufficiently convex boundary are proven, see also ~\cite{hs}, Section $6$. In addition, Lemmas ~\ref{lem:DiskSpanCurve} and ~\ref{lem:ShortEnoughCurve} extend to homogeneously regular three manifolds with sufficiently convex boundary, see Section $6$ of ~\cite{hs}. Note that if $\mcal{T}$ is a triangulation of a three-manifold with boundary then $\mcal{T}$ induces a triangulation of $\partial \mcal{M}$, and according to ~\cite{hs} if $\mcal{M}$ is homogeneously regular with sufficiently convex boundary then the boundary of small enough $2$-face of $\mcal{T}$ that is embedded in $\partial \mcal{M}$ bounds a least area disk in $\mcal{M}$, that is either properly embedded in $\mcal{M}$ or a subset of $\partial \mcal{M}$. In the proof of Lemma \ref{lem:NoSimpleClosedCurves} we used the fact that $\mcal{T}$ is a least area triangulation, so every $2$-face is a least area disk. This assumption may not hold for three manifolds with boundary, for example in the case where a least area disk bounded by a simple closed curve included in $\partial \mcal{M}$ is properly embedded in $\mcal{M}$. 
As a result a proper modification of Theorem \ref{thm:LeastAreaIsNormal} needs to be stated. 
Before stating the appropriate theorem the following definition is required.
\begin{df}
	Let $\mcal{T}$ be a triangulation of a three manifold with boundary. A $2$-face of $\mcal{T}$ is called {\emph peripheral} if it has non empty intersection with $\partial \mcal{M}$. A $2$-face is called {\emph boundary $2$-face} if it is embedded in $\partial \mcal{M}$. A triangulation is called {\emph internally least area} if every $2$-face that is not a boundary $2$-face is a least area disk spanned by its boundary.
\end{df}
\noindent{In} order to simplify the statement of the following theorem the definition of quasi-normal surface is altered.
\begin{df}
	A surface $(\mcal{F}, \partial \mcal{F})$ properly embedded in a triangulated three manifold $(\mcal{M}, \partial \mcal{M}, \mcal{T})$ is {\emph internally quasi-normal} if its non-normal intersection with $\mcal{T}$ are bent curves, and a (possibly empty) collection of simple closed curves, each is contained in a boundary $2$-face of $\mcal{T}$, and bounds a least area disk properly embedded in a peripheral tetrahedron of $\mcal{T}$.
\end{df} 
\noindent{With} these definitions  at hand Theorem \ref{thm:LeastAreaIsNormal} is restated as follows:

\renewcommand{\thethm}{\arabic{thm}}
\setcounter{thm}{2}
\begin{thm} \label{thm:LeastAreaQuasiNormalBoundary}
	Let $\mcal{M}$ be an irreducible, orientable, homogeneously regular Euclidean or hyperbolic three manifold with sufficiently convex boundary. Let $\mcal{F}$ be a two-sided incompressible, $\partial$-incompressible, least area surface properly embedded in $\mcal{M}$. Let $\mcal{T}$ be some geodesic internally least area triangulation of $\mcal{M}$, so that $\mcal{F} \cap \mcal{T}^{(0)} = \emptyset$. Then for every $0 < \varphi < \frac{\pi}{2}$ there exists a constant $\Psi > 0$ that depends only on $inj_{\mcal{M}}, K_{\mcal{M}}$, and $\varphi$, such that if $\mcal{T}$ is $\varphi$-fat, and $\lambda(\mcal{T}) < \Psi$, then $\mcal{F}$ is internally quasi-normal with respect to $\mcal{T}$.
\end{thm}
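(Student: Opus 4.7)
The plan is to adapt the three-lemma structure of the proof of Theorem \ref{thm:LeastAreaIsNormal} to the bounded setting, separating non-boundary and boundary simplices and invoking the boundary versions of the spanning-disk lemmas established in \cite{hs} and \cite{my3}. First, I would reduce to the case in which the Schoen curvature bound $C$ of Theorem \ref{thm:SchoenBound} satisfies $C < 2\varphi$ by rescaling the metric. Claim \ref{clm:EnoughForScaled} goes through verbatim, since a homogeneously regular manifold with sufficiently convex boundary remains so under rescaling, and the rescaling preserves fatness, incompressibility, $\partial$-incompressibility and the internally least area property of $\mcal{T}$. Homogeneous regularity furnishes the curvature and injectivity bounds needed to apply Theorem \ref{thm:SchoenBound} on the rescaled manifold.

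Next I would establish three analogs of properties (1)--(3) from the closed proof. For the no-simple-closed-curves statement, I separate cases by whether the $2$-face is a boundary $2$-face. For a non-boundary $2$-face $\mathfrak{f}$, the internally least area hypothesis makes $\mathfrak{f}$ itself a least area disk, so both cases of Lemma \ref{lem:NoSimpleClosedCurves} transfer directly: Case I uses Meeks--Yau uniqueness (Theorem $8$ of \cite{my3}) in a piecewise-smooth $3$-ball with nonnegative mean curvature, supplied by internal least areaness together with sufficient convexity of $\partial\mcal{M}$; Case II uses the boundary version of Lemma \ref{lem:DiskSpanCurve} from Section $6$ of \cite{hs}. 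For a boundary $2$-face $\mathfrak{f} \subset \partial \mcal{M}$, a closed curve $\gamma \subset \mcal{F} \cap \mathrm{int}(\mathfrak{f})$ is a component of $\partial \mcal{F}$; since $\mcal{F}$ is $\partial$-incompressible and $\partial\mcal{M}$ is sufficiently convex, $\gamma$ bounds a least area disk in $\mcal{M}$ by \cite{my3}, and the boundary version of Lemma \ref{lem:DiskSpanCurve} forces this disk to lie inside the peripheral tetrahedron meeting $\mathfrak{f}$ once $\lambda(\mcal{T}) < \Psi$. This is exactly the configuration permitted by the definition of an internally quasi-normal surface.

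I would then adapt Lemmas \ref{lem:IntesectionsAreDisks} and \ref{lem:3Or4Gon}. The proof of Lemma \ref{lem:IntesectionsAreDisks} requires only that every simple closed curve on $\partial \tau$ bounds a least area disk properly embedded in $\tau$ when $\lambda(\mcal{T})$ is small: for non-peripheral tetrahedra this is inherited from the closed proof, and for peripheral tetrahedra the sufficient convexity hypothesis combined with \cite{hs} and \cite{my3} yields the same conclusion. The proof of Lemma \ref{lem:3Or4Gon} is local in each tetrahedron and uses only the Banchoff-type curvature comparison of Definition \ref{df:BanschofCurvature} together with the Schoen bound and fatness, so it transfers without change, in both the Euclidean and hyperbolic cases.

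The main obstacle is the treatment of simple closed curves appearing on boundary $2$-faces. Because such faces need not be least area disks, the Case I argument of Lemma \ref{lem:NoSimpleClosedCurves} is unavailable; the resolution is not to eliminate these curves but to exploit sufficient convexity and $\partial$-incompressibility to show they fit precisely into the controlled configuration admitted by the definition of internally quasi-normal. Taking $\Psi$ to be the minimum of the three constants produced in the lemma analogs yields a bound depending only on $inj_{\mcal{M}}$, $K_{\mcal{M}}$ and $\varphi$, completing the proof.
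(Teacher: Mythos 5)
Your proposal follows the paper's own route: the theorem is deduced from the three boundary analogs Lemmas \ref{lem:NoSimpleClosedCurvesBoundary}, \ref{lem:IntesectionsAreDisksBoundary} and \ref{lem:3Or4GonBoundary}, with homogeneous regularity supplying the curvature and injectivity bounds needed for Schoen's estimate, and with simple closed curves on boundary $2$-faces placed into the configuration permitted by the internally quasi-normal definition via the boundary versions of the Hass--Scott spanning-disk lemmas. The only cosmetic difference is that the paper's Lemma \ref{lem:NoSimpleClosedCurvesBoundary} argues through a $\delta$-collar of $\gamma$ in $\mcal{F}$ and its parallel curve before invoking Lemma $6.6$ of \cite{hs}, whereas you appeal directly to Meeks--Yau existence together with the spanning-disk lemma (for which Lemma \ref{lem:ShortEnoughCurve}, rather than Lemma \ref{lem:DiskSpanCurve}, is the one that applies to a curve lying in the interior of a small ball).
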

\renewcommand{\thethm}{\thesection.\arabic{thm}}
\setcounter{thm}{1}
In order to prove Theorem \ref{thm:LeastAreaQuasiNormalBoundary} Lemma \ref{lem:NoSimpleClosedCurves} becomes the following lemma which in a sense resembles Lemma $6.7$ of ~\cite{hs}:
\begin{lem} \label{lem:NoSimpleClosedCurvesBoundary}
	There exists $\epsilon > 0$ such that if $\mcal{T}$ is $\varphi$-fat for some positive $\varphi$, and $\lambda(\mcal{T}) < \epsilon$ then the intersection of $\mcal{F}$ with the interior of any $2$-faces of $\mcal{T}$ that is not a boundary $2$-face does not contain simple closed curves, and if $\gamma \in \partial \mcal{F}$ is a simple closed curve contained in the interior of a boundary $2$-face of $\mcal{T}$ then $\gamma$ bounds a least area disk that is either contained in $\partial \mcal{M}$ or properly embedded in $mcal{M}$ 
\end{lem}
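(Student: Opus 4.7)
The plan is to mirror the two-case argument of Lemma \ref{lem:NoSimpleClosedCurves}, with two adjustments: (i) the sufficient convexity of $\del \mcal{M}$ replaces the ``least area disk'' property whenever a relevant $2$-face happens to sit in $\del \mcal{M}$, and (ii) the boundary-case existence/uniqueness results of Meeks--Yau (\cite{my3}, see also Section $6$ of \cite{hs}) replace their closed-case counterparts. Throughout, the homogeneous regularity of $\mcal{M}$ is what provides the uniform mesh threshold $\epsilon$, since it is the hypothesis under which Lemmas \ref{lem:DiskSpanCurve} and \ref{lem:ShortEnoughCurve} extend to manifolds with sufficiently convex boundary.

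For the first assertion, let $\mathfrak{f}$ be a $2$-face that is not a boundary $2$-face and suppose $\gamma$ is a simple closed curve in $\mcal{F} \cap int(\mathfrak{f})$. Since $\mcal{T}$ is internally least area, $\mathfrak{f}$ is itself a least area disk spanned by its boundary, so the entire argument of Lemma \ref{lem:NoSimpleClosedCurves} goes through once two steps are verified. For the innermost subcase, $\del \tau$ must be piecewise smooth with non-negative mean curvature; this holds because each non-boundary face of $\tau$ is a least area disk (hence has vanishing mean curvature) and each boundary face of $\tau$ sits in the sufficiently convex $\del \mcal{M}$ (hence has non-negative mean curvature with respect to the inward normal). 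Theorem $8$ of \cite{my3} then applies and yields the same contradiction as before. For the non-innermost subcase, the boundary-analogue of Lemma \ref{lem:DiskSpanCurve} produces a least area disk properly embedded in $\tau$ spanning an innermost curve $\beta$ of $\mcal{D} \cap \mcal{T}^{(2)}$, after which the $2$-sphere / area-reducing isotopy step of Lemma \ref{lem:NoSimpleClosedCurves} closes the case, with $\partial$-incompressibility of $\mcal{F}$ invoked to ensure that the resulting isotopy remains proper.

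For the second assertion, suppose $\gamma \subset \del \mcal{F}$ is a simple closed curve contained in the interior of a boundary $2$-face $\mathfrak{f}_\partial$. Then $\gamma \subset \del \mcal{M}$ and is homotopically trivial in $\del \mcal{M}$ (since $\gamma$ bounds the subdisk of $\mathfrak{f}_\partial$ that it cuts off). By the Meeks--Yau existence theorem for manifolds with sufficiently convex boundary (\cite{my3}), $\gamma$ bounds a least area disk $\mcal{D}_\gamma$ in $\mcal{M}$. The boundary-version of Lemma \ref{lem:DiskSpanCurve} (Section $6$ of \cite{hs}) guarantees, provided $\lambda(\mcal{T})$ is smaller than a threshold $\epsilon$ depending only on $inj_{\mcal{M}}$ and $K_{\mcal{M}}$, that $\mcal{D}_\gamma$ lies in a small metric ball about any point of $\gamma$. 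The maximum principle against the non-negatively curved $\del \mcal{M}$ then forces $\mcal{D}_\gamma$ either to be entirely contained in $\del \mcal{M}$ (namely the subdisk of $\mathfrak{f}_\partial$ cut off by $\gamma$) or to be properly embedded in $\mcal{M}$ with interior disjoint from $\del \mcal{M}$. This is precisely the claimed dichotomy.

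The main obstacle is the innermost subcase in part (a) when the tetrahedron $\tau$ containing the innermost disk $\mcal{D}$ is peripheral: a face of $\tau$ lying in $\del \mcal{M}$ is not assumed to be a least area disk, so one must derive the non-negativity of the mean curvature of $\del \tau$ directly from the sufficient convexity of $\del \mcal{M}$ rather than from the triangulation hypothesis. The other delicate point is to ensure that the area-reducing isotopies used in both parts do not accidentally produce a $\partial$-compressing disk for $\mcal{F}$; this is exactly where the $\partial$-incompressibility hypothesis on $\mcal{F}$ (which was absent in the closed case) is consumed. Once these two points are addressed, the remainder of the proof transfers from Lemma \ref{lem:NoSimpleClosedCurves} essentially verbatim.
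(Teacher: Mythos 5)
Your proof is correct in outline but takes a genuinely different route from the paper's on the second (boundary $2$-face) assertion. For the first assertion both you and the paper reduce to Lemma \ref{lem:NoSimpleClosedCurves}; the one useful thing you add is spelling out why $\partial\tau$ still has non-negative mean curvature when $\tau$ is peripheral, with the boundary faces supplying this via sufficient convexity rather than via the internally-least-area hypothesis (the paper asserts the reduction ``is identical'' without comment). For the second assertion, the paper never directly invokes Meeks--Yau existence and a maximum principle the way you do; instead it works \emph{inside $\mcal{F}$}: it takes a $\delta$-collar $\mcal{N}$ of $\gamma$, uses incompressibility to show the parallel curve $\Gamma=\partial\mcal{N}\setminus\gamma$ bounds a disk $\mcal{D}\subset\mcal{F}$, forms the disk $\mcal{E}=\mcal{D}\cup\mcal{N}\subset\mcal{F}$ with $\partial\mcal{E}=\gamma$, notes that $\mcal{E}\cup\mcal{D}'$ (with $\mcal{D}'$ the subdisk of the boundary face cut off by $\gamma$) is an embedded $2$-sphere bounding a ball by irreducibility, and then cases on whether the resulting isotopy $\mcal{E}\to\mcal{D}'$ reduces area: if it does, $\mcal{D}'$ is least area and lies in $\partial\mcal{M}$; if it does not, Lemma $6.6$ of \cite{hs} yields a properly embedded least area disk. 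Your version --- quote the Meeks--Yau existence theorem to obtain a least area disk spanning $\gamma$, localize it via the boundary version of Lemma \ref{lem:DiskSpanCurve}, and apply the strong maximum principle against $\partial\mcal{M}$ to get the dichotomy --- is shorter and in fact proves more, since it never uses the hypothesis $\gamma\subset\partial\mcal{F}$, whereas the paper's construction of $\mcal{E}\subset\mcal{F}$ depends on it. One small inaccuracy worth flagging: your remark that $\partial$-incompressibility is ``consumed'' here to keep the area-reducing isotopies proper is not how the paper uses it --- $\partial$-incompressibility enters only later, in Remark \ref{rem:NoPeripheralBentCurves}, to rule out bent curves on peripheral faces; the isotopies in this lemma take place across balls bounded by embedded $2$-spheres and need no $\partial$-incompressibility.
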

\begin{proof}
	Let $\frak{f}$ be a $2$-face of $\mcal{T}$. If $\frak{f}$ is not a boundary $2$-face then all arguments in the proof of Lemma ~\ref{lem:NoSimpleClosedCurves} hold, so the proof is identical. Suppose $\frak{f}$ is a boundary $2$-face, and that $\gamma \in \partial \mcal{F}$ is a simple closed curve contained in the interior of $\frak{f}$. Let $\mcal{N} = \mcal{N}(\delta, \gamma, \mcal{F})$ be a $\delta$-collar neighborhood of $\gamma$ in $\mcal{F}$, for some small enough $\delta$ so that $ \mcal{N}$ is entirely contained in a tetrahedron $\tau \in \mcal{T}$ of which $\frak{f}$ is a $2$-face. Let $\Gamma \in \partial \mcal{N}$ be the parallel of $\gamma$ in the boundary of $\mcal{N}$. Then $\Gamma$ is a simple closed curve in $\mcal{F}$ contained in the $3$-ball $\tau$, and since $\mcal{F}$ is incompressible $\Gamma$ bounds a disk $\mcal{D}$ in $\mcal{F}$. As a result $\gamma$ is the boundary of a disk $\mcal{E} = \mcal{D} \cup \mcal{N}$ that is contained in $\mcal{F}$. If the disk $\mcal{D}$ is not entirely contained in $\tau$ then the situation is identical to case II of the proof of Lemma ~\ref{lem:NoSimpleClosedCurves}, and the arguments used in this case still hold, so the proof proceeds identically. If $\mcal{D}$ is contained in $\tau$ then the interior of $\mcal{E}$ is contained in $\tau$ as well. In addition, $\gamma$ bounds a disk $\mcal{D}'$ in the $2$-face $\frak{f}$. Since $\mcal{F}$ is embedded in $\mcal{M}$, the disk $\mcal{D}$ is embedded, and as a result $\mcal{E}$ is embedded as well. Hence, $\mcal{D}' \cup \mcal{E}$ is a $2$-sphere embedded in $\mcal{M}$. Therefore, since $\mcal{M}$ is irreducible the disk $\mcal{E}$ there is an isotopy of $\mcal{F}$ mapping $\mcal{E}$ isotopically to $\mcal{D}'$, and which is the identity elsewhere. If this isotopy reduces the area of $\mcal{F}$ then the disk $\mcal{D}'$ is a least area disk bounded by $\gamma$, otherwise, according to lemma $6.6$ of ~\cite{hs} $\gamma$ bounds least area disk that is properly embedded in the tetrahedron $\tau$ which, by definition, is a peripheral tetrahedron, and the proof of the lemma is completed.
\end{proof}
\noindent{The} following two lemmas are the appropriate versions of Lemmas \ref{lem:IntesectionsAreDisks}, and \ref{lem:3Or4Gon} for the case $\mcal{M}$ has non-empty boundary.
\begin{lem}\label{lem:IntesectionsAreDisksBoundary}
	Let $\mcal{M}$ be homogeneously regular three manifold with sufficiently convex boundary. There exists $\epsilon > 0$ such that if $\lambda(\mcal{T}) < \epsilon$, then the intersection of $\mcal{F}$ with any tetrahedron of $\mcal{T}$ is a (possibly empty) collection of disks.
\end{lem}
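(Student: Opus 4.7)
The plan is to mirror the proof of Lemma~\ref{lem:IntesectionsAreDisks}, modified to account for the two features of the boundary setting: boundary circles of $\mcal{F}\cap\tau$ may lie in $\partial\mcal{M}$, and they may include arcs of $\partial\mcal{F}$. Suppose for contradiction that a component $\mcal{S}$ of $\mcal{F}\cap\tau$ is not a disk. If $\tau$ is disjoint from $\partial\mcal{M}$, then $\partial\mcal{S}$ is a disjoint union of simple closed curves in $\partial\tau\setminus\partial\mcal{M}$, and the argument of Lemma~\ref{lem:IntesectionsAreDisks} applies verbatim once the invocation of Lemma~\ref{lem:NoSimpleClosedCurves} is replaced by Lemma~\ref{lem:NoSimpleClosedCurvesBoundary}. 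The substantive work concerns peripheral $\tau$.

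Each boundary circle $\alpha$ of $\mcal{S}$ in a peripheral $\tau$ is of one of three types: (i) a simple closed curve in $\partial\tau\setminus\partial\mcal{M}$; (ii) a component of $\partial\mcal{F}$ contained in the interior of a boundary $2$-face; or (iii) a circle alternating between arcs of $\mcal{F}\cap(\partial\tau\setminus\partial\mcal{M})$ and arcs of $\partial\mcal{F}$. For type (i) one runs the original argument unchanged: incompressibility of $\mcal{F}$ yields a capping disk $\mcal{D}_\alpha\subset\mcal{F}$; Meeks--Yau's existence theorem for least area disks in sufficiently convex manifolds, together with the boundary extension of Lemma~\ref{lem:DiskSpanCurve} from Section~$6$ of \cite{hs}, produces a properly embedded least area competitor $\mcal{D}''\subset\tau$ spanning $\alpha$; and the innermost-disk sphere surgery on $\mcal{D}_\alpha\cup\mcal{D}''$ strictly reduces $\are(\mcal{F})$. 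For type (ii), Lemma~\ref{lem:NoSimpleClosedCurvesBoundary} already supplies a least area spanning disk, either contained in $\partial\mcal{M}$ or properly embedded in $\tau$; in the first subcase a boundary push-off of $\mcal{S}$ across this disk reduces area, and in the second subcase the same sphere surgery as in (i) applies, so in either case the non-disk assumption yields a contradiction.

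For type (iii) the genuinely new input is $\partial$-incompressibility of $\mcal{F}$. One shows that any arc of $\alpha$ lying in $\partial\tau\setminus\partial\mcal{M}$, together with a cobounding arc in the adjoining boundary $2$-face, bounds a disk in $\mcal{F}$, since otherwise such an arc would furnish a $\partial$-compressing disk; one then assembles these local disks into a single spanning disk $\mcal{D}_\alpha\subset\mcal{F}$, and a Meeks--Yau competitor in $\tau$ completes the sphere and triggers the usual surgery. The anticipated obstacle is precisely in (iii): one must arrange the sequence of $\partial$-compressions so that they assemble coherently into a single embedded spanning disk without introducing spurious self-intersections, and one must verify that the Meeks--Yau competitor is properly embedded in the peripheral tetrahedron rather than escaping through $\partial\mcal{M}$. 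Homogeneous regularity together with sufficient convexity of $\partial\mcal{M}$ are used precisely to rule out the latter possibility, as worked out in Section~$6$ of \cite{hs}, and once this is controlled the argument concludes exactly as in Lemma~\ref{lem:IntesectionsAreDisks}.
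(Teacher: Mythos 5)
Your approach is correct and substantially more careful than what the paper actually says. The paper's own ``proof'' of Lemma~\ref{lem:IntesectionsAreDisksBoundary} is a single sentence: because the Lemma~\ref{lem:IntesectionsAreDisks} argument relies on the Schoen curvature bound, and because homogeneous regularity (per Morrey, \cite{morr}) supplies that bound for manifolds with boundary, the proof ``remains the same.'' The paper does not distinguish the types of boundary circles that $\mcal{F}\cap\tau$ can have on a peripheral tetrahedron, and the role of $\partial$-incompressibility appears only indirectly, in the later Remark~\ref{rem:NoPeripheralBentCurves} about bent curves with both endpoints on an edge of $\partial\mcal{M}$. You, by contrast, correctly observe that the proof of Lemma~\ref{lem:IntesectionsAreDisks} only treats boundary circles of $\mcal{S}$ that are genuine simple closed curves in $\partial\tau$, so the alternating-arc circles (your type (iii)) are a genuinely new phenomenon in the boundary setting, and you identify exactly the right tool, $\partial$-incompressibility, to cap them off in $\mcal{F}$. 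You also correctly flag the point where homogeneous regularity and sufficient convexity are actually doing work beyond just the curvature bound, namely in keeping the Meeks--Yau competitor disk inside the peripheral tetrahedron rather than escaping through $\partial\mcal{M}$ (this is the content of the Hass--Scott Section~6 boundary extensions of Lemmas~\ref{lem:DiskSpanCurve} and \ref{lem:ShortEnoughCurve}). So your proof is the same in spirit but fills in details the paper leaves implicit; the assembly-of-$\partial$-compressions step you flag in type (iii) is indeed the one place where the paper gives you no explicit guidance, and where a careful write-up would need to argue that these boundary-compressions can be performed disjointly (an innermost-arc argument suffices).
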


\begin{lem}\label{lem:3Or4GonBoundary}
	Let $\mcal{M}$ be homogeneously regular three manifold with sufficiently convex boundary. For every $0 < \varphi < \frac{\pi}{2}$ there exists $\epsilon > 0$ that depends only on $inj_{\mcal{M}}, K_{\mcal{M}}$ and $\varphi$, such that if $\lambda(\mcal{T}) < \epsilon$, and $\mcal{T}$ is $\varphi$-fat, then every normal disk in $\mcal{F} \cap \mcal{T}$ is either a $3$-gon or a $4$-gon.
\end{lem}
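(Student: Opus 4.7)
The plan is to follow the proof of Lemma \ref{lem:3Or4Gon} essentially verbatim, after checking that every ingredient used there remains valid under the boundary hypotheses. That proof was entirely local to a single tetrahedron $\tau$ and invoked only three external facts: the Thompson--Stocking classification of normal disks (Lemma \ref{thm:ThompsonStocking}), the uniform Schoen curvature bound on $\mcal{F}$ (Theorem \ref{thm:SchoenBound}), and the Banchoff--Rataj--Z\"ahle generalized curvature statement (Definition \ref{df:BanschofCurvature} and Theorem \ref{thm:ZhaleCurvatureConvergence}). The first and third are topological/Euclidean in nature and transfer without modification; the second is the one place where homogeneous regularity and sufficient convexity of $\del\mcal{M}$ must enter.

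Concretely, suppose for contradiction there exists a tetrahedron $\tau\in\mcal{T}$ containing a component $\mcal{D}$ of $\mcal{F}\cap\tau$ whose boundary is an $n$-gon with $n\geq 8$. As in Lemma \ref{lem:3Or4Gon}, a combinatorial pigeonhole argument on $\partial\mcal{D}\subset\partial\tau$ produces a pair of opposite edges of $\tau$, each meeting $\partial\mcal{D}$ at least twice; label the intersection points $p,q$ on one edge $e$ and $x,y$ on the opposite edge. Form the geodesic plane $\mcal{P}$ through $p$, $q$, and an interior point $v$ of $\overline{xy}$, set $\gamma=\mcal{P}\cap\mcal{D}$, and let $z\in\gamma$ realize the maximum of the distance function $d$ from $e$. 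Let $\beta\subset\mcal{D}$ be a co-curve joining $x$ to $y$ that crosses $\gamma$ once.

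The key boundary-sensitive step is to produce a Gaussian curvature bound $C$ on $\mcal{F}$ depending only on $K_{\mcal{M}}$, $\mathrm{inj}_{\mcal{M}}$, and the regularity constants of $\mcal{M}$. This is available because Morrey's existence theory \cite{morr}, together with the Meeks--Simon--Yau machinery as adapted in \cite{hs} to sufficiently convex boundary, yields that $\mcal{F}$ is a stable minimal surface properly embedded in a homogeneously regular ambient, to which Schoen's interior estimate (Theorem \ref{thm:SchoenBound}) applies. By Claim \ref{clm:EnoughForScaled} we may rescale so that $C<2\varphi$. Since $\mcal{T}$ is $\varphi$-fat and $\mcal{M}$ is non-positively curved, the dihedral angle $\theta$ of $\tau$ along $e$ satisfies $\theta\leq \pi-2\varphi$. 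If $d(z)$ were arbitrarily small, Definition \ref{df:BanschofCurvature} together with Theorem \ref{thm:ZhaleCurvatureConvergence} would force a point of $\beta$ with curvature arbitrarily close to $\pi-\theta\geq 2\varphi > C$, contradicting the Schoen bound. Hence $d(z)\geq r_0(\varphi, C)>0$, and the comparison-circle argument of Lemma \ref{lem:3Or4Gon} (Euclidean Pythagoras, or its hyperbolic $\tanh$-counterpart) gives a lower bound $\len(e_{pq})\geq \ell_0(\varphi,C)>0$; the symmetric bound holds for $x,y$. Setting $\epsilon=\ell_0$ then makes $\lambda(\mcal{T})<\epsilon$ incompatible with the existence of such $\mcal{D}$, ruling out $n\geq 8$.

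The main obstacle, and the only substantive difference from the closed case, is verifying that Schoen's interior curvature estimate genuinely remains applicable here: it is an interior bound, but one must check that stability and the required area comparisons survive in the $\partial$-incompressible isotopy class inside a homogeneously regular manifold with sufficiently convex boundary. Once that is in place, all geometric reasoning inside $\tau$ is identical to the closed case, since $\mcal{D}$ and $\mcal{P}$ sit entirely in a single tetrahedron whose local geometry is oblivious to whether any of its faces happen to lie in $\partial\mcal{M}$.
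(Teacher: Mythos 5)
Your proposal takes essentially the same approach as the paper, which disposes of this lemma in a single sentence by observing that the proof of Lemma \ref{lem:3Or4Gon} depends only on the Schoen curvature bound, that homogeneous regularity (Morrey) supplies the needed global ambient bounds, and that the argument is local to one tetrahedron and hence unaffected by the presence of $\partial\mcal{M}$. You reconstruct that reasoning in fuller detail—correctly isolating the three external ingredients (Thompson--Stocking, Schoen, Banchoff/Rataj--Z\"ahle) and flagging that only the Schoen bound is boundary-sensitive—and you even raise a caveat about Schoen's estimate being an interior estimate which the paper leaves implicit, so your write-up is if anything slightly more careful than the source.
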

\noindent{Since} the proofs of both Lemmas \ref{lem:IntesectionsAreDisks}, and \ref{lem:3Or4Gon} heavily relies on the curvature bound of Schoen, and since, according to \cite{morr} being homogeneously regular is equivalent to having global bound on the curvature in the case of three manifold with boundary, the proof of Lemma \ref{lem:IntesectionsAreDisksBoundary} remains the same as the proof of Lemma \ref{lem:IntesectionsAreDisks}, and the proof of Lemma \ref{lem:3Or4GonBoundary} is identical to that of \ref{lem:3Or4Gon}. 
One thing to notice however, is the following:

\begin{rem}\label{rem:NoPeripheralBentCurves}
	If $\frak{f}$ is a peripheral $2$-face that is not a boundary $2$-face, so that $\frak{f}$ meets $\partial \mcal{M}$ transversally along an edge $e$, then there cannot be any bent curve in $\frak{f}$ having both end points on $e$. 
	This is because the surface $\mcal{F}$ is $\partial$-incompressible in $\mcal{M}$, and any such bent curve gives rise to $\partial$-compression. 
\end{rem}

\noindent{The} proof of Theorem \ref{thm:LeastAreaQuasiNormalBoundary} follows from Lemmas \ref{lem:NoSimpleClosedCurvesBoundary}, \ref{lem:IntesectionsAreDisksBoundary}, and \ref{lem:3Or4GonBoundary} exactly the same way as Theorem \ref{thm:LeastAreaIsNormal} follows from Lemmas \ref{lem:NoSimpleClosedCurves}, \ref{lem:IntesectionsAreDisks}, and \ref{lem:3Or4Gon}

\renewcommand{\thethm}{\arabic{thm}}
\setcounter{thm}{3}
\begin{thm}\label{thm:NonNormalAreTameBoundary}
	Let $\mcal{F}$ be an incompressible, $\partial$-incompressible, $2$-sided least area surface in a homogeneously regular, irreducible, orientable hyperbolic or Euclidean $3$-manifold $\mcal{M}$, with sufficiently convex boundary. Let $\Psi$ be the constant obtained in Theorem \ref{thm:LeastAreaIsNormal}, and let $\mcal{T}$ be a geodesic, internally least area, $\varphi$-fat triangulation of $\mcal{M}$ so that $\lambda(\mcal{T}) < \Psi$ and $\mcal{F}$ quasi normal with respect to $\mcal{T}$. Then every non normal disk in $\mcal{F} \cap \mcal{T}^{(3)}$ can be presented as a graph of a function defined on a disk that is contained in a single $2$-face of $\mcal{T}$.
\end{thm}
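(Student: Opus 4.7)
The plan is to mirror the case analysis carried out in the proof of Theorem \ref{thm:NonNormalAreTame}, introducing additional bookkeeping to handle tetrahedra that meet $\partial \mcal{M}$. Let $\mcal{D}$ be a non-normal disk properly embedded in a tetrahedron $\tau \in \mcal{T}$, let $\mathfrak{f}$ be a $2$-face of $\tau$, and let $\delta \subset \mcal{D} \cap \mathfrak{f}$ be a bent curve with endpoints $p,q$ on an edge $e$ of $\mathfrak{f}$. If $\tau$ is an interior tetrahedron, i.e. $\tau \cap \partial \mcal{M} = \emptyset$, then every $2$-face of $\tau$ is an internally least-area disk spanned by its boundary, and the entire Euclidean/hyperbolic comparison circle argument from the proof of Theorem \ref{thm:NonNormalAreTame} applies verbatim. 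The Schoen curvature bound that underlies this argument is available because $\mcal{M}$ is homogeneously regular, so the constants $K_{\mcal{M}}$ and $inj_{\mcal{M}}$ yield a uniform $C$, and by Claim \ref{clm:EnoughForScaled} (which applies equally in the boundary case after rescaling) we may assume $C < 2\varphi$.

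The new work is needed when $\tau$ is peripheral. First I observe that one may reduce the number of faces that can actually carry a bent curve. By Remark \ref{rem:NoPeripheralBentCurves}, if $\mathfrak{f}'$ is a peripheral $2$-face of $\tau$ that is not a boundary $2$-face and meets $\partial \mcal{M}$ along an edge $e'$, then $\partial$-incompressibility of $\mcal{F}$ forbids any bent curve of $\mcal{D} \cap \mathfrak{f}'$ from having both endpoints on $e'$. This restricts the combinatorial types of $\partial \mcal{D}$ that one must analyze and, in each configuration, forces the opposite edge or opposite vertex used in the comparison construction to be an \emph{interior} edge or vertex of $\mcal{T}$. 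Consequently, one can still set up the auxiliary plane $\mcal{P}$ spanned by $p',q'$ and either the opposite vertex $V$ or an interior point $z$ on the opposite edge, exactly as in the proof of Theorem \ref{thm:NonNormalAreTame}, and extract the intersection curve $\beta = \mcal{P} \cap \mcal{D}$.

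Next I would rerun the distance estimate for $\beta$. The key ingredient is that the dihedral angle of $\tau$ along the relevant edge is bounded above by $\pi - 2\varphi$ (fatness combined with non-positive curvature), so the generalized Banchoff curvature at the fold is at least $2\varphi > C$. If the distance from $\beta$ to the edge in question tended to $0$, Theorem \ref{thm:ZhaleCurvatureConvergence} would force a smooth point on $\mcal{F}$ with geodesic curvature exceeding $C$, contradicting Schoen's bound (Theorem \ref{thm:SchoenBound}). This gives a universal $r_0 = r_0(C,\varphi)$, and the same comparison-circle computation (Euclidean by Pythagoras, hyperbolic by the Euclidean radius $\tanh(1/C)$ of the hyperbolic circle) provides a universal $\Psi = \Psi(C,\varphi)$ bounding from below the distance between the two foot points $p',q'$. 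Taking $\lambda(\mcal{T}) < \Psi$ rules out the obstructive configurations, leaving only those in which $\mcal{D}$ projects injectively onto a sub-disk of a single $2$-face of $\tau$.

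The main obstacle I expect is ensuring that the comparison plane $\mcal{P}$ together with the auxiliary least-area disk constructions (used implicitly to guarantee that $\mcal{D}$ behaves nicely inside a peripheral tetrahedron) remain valid near $\partial \mcal{M}$. This is precisely where the ``sufficiently convex boundary'' and ``homogeneously regular'' hypotheses earn their keep: they are exactly the conditions under which Lemmas \ref{lem:DiskSpanCurve} and \ref{lem:ShortEnoughCurve} (as extended in \cite{hs}, Section $6$) continue to hold, so that the auxiliary least-area disks spanned by short curves near the boundary are either properly embedded in the ambient tetrahedron or lie in $\partial \mcal{M}$ itself, and the isotopy/exchange moves of Lemmas \ref{lem:NoSimpleClosedCurvesBoundary} and \ref{lem:IntesectionsAreDisksBoundary} do not run out of room. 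Once this is verified, the conclusion of Theorem \ref{thm:NonNormalAreTameBoundary} follows exactly as in the closed case.
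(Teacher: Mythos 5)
Your proposal is correct and takes essentially the same route as the paper: the paper's proof consists of observing that the argument of Theorem \ref{thm:NonNormalAreTame} rests entirely on the Schoen curvature bound, which homogeneous regularity supplies for manifolds with boundary, together with the simplification coming from Remark \ref{rem:NoPeripheralBentCurves}. You supply noticeably more detail than the paper (on the comparison-plane construction near peripheral tetrahedra and on why the auxiliary least-area disk lemmas extend via [HS, Section 6]), but the skeleton of the argument is the same.
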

\renewcommand{\thethm}{\thesection.\arabic{thm}}
\setcounter{thm}{1}
\begin{proof} 
	The proof of the theorem is similar to the proof of Theorem \ref{thm:NonNormalAreTame} which completely relies on the existence of global bound on the curvature of stable minimal surfaces in a closed three manifold. Again, homogeneous regularity of the manifold imposes the existence of such bound. In light of Remark \ref{rem:NoPeripheralBentCurves} the proof is simplified as some possible cases are actually impossible and need not be considered.
\end{proof}

\section{Fat Subdivision of a Triangulation}
In this section a method is developed for iteratively subdividing a given triangulation of a closed, non-positively curved $3$-manifold, in order to obtain a sequence of triangulations with decreasing mesh and fixed fatness. Specifically, the following lemma is the main result of this section. The lemma will be used later in Section $5$ for the proof of Theorem \ref{thm:AppxThm}.
\begin{lem}\label{lem:FatSubdiv}
Let $(\mcal{M}, \mcal{T})$ be a triangulated, closed, non-positively curves $3$-manifold. Then there exists a sequence $\{\mcal{T}_j\}_{j \in \mathbb{N}}$ so that $\lambda_j = \lambda(\mcal{T}_j) \rightarrow 0$, as $j \rightarrow \infty$, and such that $\mcal{T}_j$ is $\varphi$-fat for some fixed $\varphi$ for all $j$.
\end{lem}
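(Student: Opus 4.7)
The plan is to build the sequence by iteratively applying a \emph{median subdivision} to every tetrahedron of $\mcal{T}$. Given a geodesic tetrahedron $\tau$ with vertices $v_1,\dots,v_4$, the median subdivision adds the geodesic midpoint $m_{ij}$ of each edge $[v_i,v_j]$, forms the four corner tetrahedra $[v_i,m_{ij},m_{ik},m_{i\ell}]$ at the original vertices, and decomposes the central octahedron spanned by the six midpoints into four tetrahedra by cutting along the shortest of its three inner diagonals (the segments joining pairs of opposite midpoints). Since the midpoint pattern on any $2$-face depends only on the face itself, the decompositions of adjacent tetrahedra agree along their common face, so the result is a well-defined proper non-singular triangulation. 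I would then set $\mcal{T}_{j+1}$ to be the median subdivision of $\mcal{T}_j$ and prove the desired properties by induction on $j$.

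For the mesh I would argue as follows. In the Euclidean flat model, each corner tetrahedron is the homothetic image of $\tau$ with ratio $\tfrac{1}{2}$, and each of the four octahedral tetrahedra has diameter bounded by a midpoint diagonal, itself of length at most $\tfrac{1}{2}\operatorname{diam}(\tau)$; hence $\lambda$ is halved exactly. For a non-positively curved geodesic triangulation, Rauch/Toponogov comparison shows that a geodesic midpoint differs from the Euclidean midpoint of the corresponding chord by $O\bigl(\lambda(\mcal{T}_j)^3\bigr)$, so $\lambda(\mcal{T}_{j+1}) \leq \bigl(\tfrac{1}{2}+o(1)\bigr)\lambda(\mcal{T}_j)$, which forces $\lambda_j\to 0$ at a uniformly geometric rate.

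The central obstacle, and by far the deepest part of the argument, is uniform fatness. I would split this into two claims. First, using the dihedral-angle characterization of fatness from inequality \ref{eq:ang-cond}, each of the four corner pieces is, in the Euclidean model, similar to $\tau$ and therefore inherits its fatness verbatim; the non-positively curved correction to dihedral angles is $O(\lambda^2)$ by a Toponogov-type comparison and can be absorbed into the bound. Second, and more delicately, the four central pieces cut out of the midpoint octahedron by the shortest-diagonal rule must have fatness bounded below by a constant depending only on the fatness of $\tau$. The clean way to see this is to note that, up to similarity, the octahedra arising from a $\varphi$-fat tetrahedron form a compact family, the shortest-diagonal splitting is continuous on this family, and it produces only non-degenerate tetrahedra; a compactness argument then yields a uniform lower bound $\varphi'=\varphi'(\varphi)>0$.

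Setting $\varphi_0=\min(\varphi,\varphi')$ completes the inductive step, since the bound produced depends only on the fatness of the previous triangulation and not on $j$ itself. Consequently every $\mcal{T}_j$ is $\varphi_0$-fat while $\lambda_j\to 0$, as required. The principal subtlety throughout will be bookkeeping the two small-scale corrections simultaneously: that geodesic midpoints are not exactly midpoints of Euclidean segments, and that geodesic dihedral angles are not exactly those of the Euclidean comparison tetrahedron. Both discrepancies are of higher order in $\lambda_j$, and by the scaling observation of Claim \ref{clm:EnoughForScaled} they become negligible for large $j$; the finitely many initial indices can be handled by choosing the starting triangulation fine enough (or by applying a single barycentric refinement to $\mcal{T}$ first purely to reduce its mesh, without yet invoking the median procedure).
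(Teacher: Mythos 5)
Your construction takes a genuinely different route from the paper's, but the inductive step as stated does not close, and the gap is exactly at the point the paper's argument is built to handle. The compactness argument gives you, for any $\varphi$-fat tetrahedron $\tau$, a bound $\varphi'=\varphi'(\varphi)>0$ on the fatness of the four tetrahedra produced by splitting the midpoint octahedron. But the octahedral pieces are in general \emph{not} similar to $\tau$, and there is no reason for $\varphi'(\varphi)\geq\varphi$. When you then say that ``the bound produced depends only on the fatness of the previous triangulation and not on $j$,'' you have described the problem, not solved it: at the next step you are entitled only to $\varphi'(\varphi')$, then $\varphi'(\varphi'(\varphi'))$, and a priori this sequence can decrease to $0$. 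A single application of $\varphi'$ gives a one-step bound, not an invariant. To close the induction one needs a set of shapes that is literally closed under the median-plus-diagonal operation. That is precisely what the paper's Claims \ref{clm:RegToAlmostReg} and \ref{clm:AlmostRegToAlmostReg} establish: starting from a regular Euclidean tetrahedron, the median subdivision with a carefully chosen pair of interior quadrilaterals produces only regular and ``almost regular'' tetrahedra (five unit edges and one of length $\sqrt 2$), and applying the same subdivision to an almost regular tetrahedron again produces only regular and almost regular pieces (the paper explicitly computes that the central diagonal $ef$ again has length $\rho$). Since only these two similarity types ever occur, fatness is constant for all $n$; no compactness argument is needed. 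Your shortest-diagonal rule may in fact also produce only finitely many similarity types — results of this kind exist in the mesh-refinement literature — but that is a nontrivial theorem that must be proved or cited, not obtained from continuity and compactness.

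A second, more cosmetic difference: you perform the subdivision intrinsically on the manifold with geodesic midpoints and then estimate the curvature corrections by Toponogov comparison, whereas the paper instead invokes Breslin's Lemma \ref{lem:BreslinBilipschitz}, pulls the Euclidean median subdivision of a regular simplex back through the uniform bilipschitz charts, and absorbs the distortion into the bilipschitz constant at one stroke (Lemma \ref{lem:MedianSubdivPreservesFatness}). Your route is viable in principle, but it requires you to re-prove uniform control over arbitrarily many iterations, since the $O(\lambda^2)$ and $O(\lambda^3)$ error terms must be summed over all scales; the paper's route makes this uniform control automatic. Also note that the parenthetical suggestion of ``applying a single barycentric refinement to $\mcal{T}$ first'' is harmless here (one application of barycentric subdivision keeps fatness positive, just smaller), but it does not interact well with the rest of your argument, which already needs a better fatness bookkeeping device.
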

{\noindent As} natural as this idea may look, some care must be taken, since not every subdividing method will maintain a fixed fatness. For instance, if we use barycentric subdivision, the dihedral angles decrease in a magnitude of half at each iteration so fatness cannot be kept fixed. As a warm up, we will start with a $2$-dimensional Euclidean triangulation and then extend the definition to regular Euclidean $3$-simplices. Afterwards we will further extend the method to triangulated $3$-manifolds in general.

\begin{df}\label{df:Median2Dim}
{\em Let $\sigma \subset \mathbb{R}^2$ be a triangle. Define its} median subdivision $Med_1(\sigma)$, {\em to be the cell complex obtained by adding a vertex at the middle point of every edge of $\sigma$ and connecting two newly added vertices by an edge. If $\mcal{T}$ is a $2$ dimensional triangulation, then $Med(\mcal{T})$ is obtained by taking $Med(\sigma)$ for every $\sigma \in \mcal{T}$. The $n$-th subdivision of $\mcal{T}$ is defined iteratively by $Med_{(n)}(\mcal{T}) = Med(Med_{(n-1)}(\mcal{T}))$. See Figure \ref{fig:Median2Dim}, for $n = 2$}.
\begin{fg}[H]
\[ 
	\setlength{\unitlength}{0.5\standardunitlength}
	\begin{array}{c}  \hspace{-1.7mm}
		\raisebox{-8pt}{\setlength{\unitlength}{0.00025000in}
\begingroup\makeatletter\ifx\SetFigFont\undefined%
\gdef\SetFigFont#1#2#3#4#5{%
  \reset@font\fontsize{#1}{#2pt}%
  \fontfamily{#3}\fontseries{#4}\fontshape{#5}%
  \selectfont}%
\fi\endgroup%
{\renewcommand{\dashlinestretch}{30}
\begin{picture}(5949,4227)(0,-10)
\path(2487,1500)(3237,2475)(3912,1575)(2487,1500)
\path(1887,2400)(4662,2550)(3087,525)(1812,2400)
\path(912,1425)(2487,1500)(1512,525)(912,1425)
\path(12,525)(3612,4200)(5937,600)(12,525)
\path(3912,1575)(4587,600)(5187,1725)(3912,1575)
\path(3237,2475)(2637,3225)(4137,3450)(3237,2475)
\put(1737,0){\makebox(0,0)[lb]{{\SetFigFont{5}{6.0}{\rmdefault}{\mddefault}{\updefault}Median subdivision}}}
\end{picture}
} }
		\hspace{-1.9mm}
	\end{array}
 \]
\caption{second (n=2) median subdivision in dimension $2$}\label{fig:Median2Dim}
\end{fg}
{\noindent In} the sequel we will omit the index $1$, and use the notation $Med(\sigma)$.
\end{df}
\begin{rem}
{\em Note that if $\mcal{T}$ is a} regular triangulation {\em (i.e., all its edges are equilateral), then by  similarity of triangles $Med(\mcal{T})$ is also regular. Hence, $Med_{(n)}(\mcal{T})\; ; n \geq 1$, are all $\varphi$-fat with the same fatness coefficient $\varphi = \frac{\pi}{3}$. In general, for every simplex of $Med_{(n)}(\mcal{T})$, its angles are the same as those of the original simplex it is contained in. Hence, again fatness of the original triangulation is kept fixed.
}
\end{rem}
\begin{df}\label{df:MedComp}
{\em Let $\tau$ be a regular tetrahedron in $\mathbb{R}^3$. The} median complex of $\tau$, {\em denoted by $MedComp(\tau)$, is defined as follows: First, subdivide each of its $2$-faces according to Definition \ref{df:Median2Dim}. As a result, each vertex of $\tau$ is separated from all other vertices by a triangle in $\del \tau$ that is formed by three edges of $Med(\del \tau)$. The median complex of $\tau$ is the complex obtained by adding the planar triangular $2$-face spanned by this triangle to the set of $2$-faces. The set of $3$-cells of $MedComp(\tau)$ is made of four tetrahedra and one octahedron. See Figure \ref{fig:MedianComplex}.
}
\begin{fg}[H]
\[ 
	\setlength{\unitlength}{0.5\standardunitlength}
	\begin{array}{c}  \hspace{-1.7mm}
		\raisebox{-8pt}{\setlength{\unitlength}{0.00041667in}
\begingroup\makeatletter\ifx\SetFigFont\undefined%
\gdef\SetFigFont#1#2#3#4#5{%
  \reset@font\fontsize{#1}{#2pt}%
  \fontfamily{#3}\fontseries{#4}\fontshape{#5}%
  \selectfont}%
\fi\endgroup%
{\renewcommand{\dashlinestretch}{30}
\begin{picture}(5199,3939)(0,-10)
\thicklines
\dashline{120.000}(1362,2412)(2637,1737)(1887,462)
\dashline{120.000}(2637,1737)(4587,1437)
\dashline{120.000}(2637,1737)(4137,3087)
\path(1887,462)(4587,1437)
\path(1362,2412)(4212,3087)
\thinlines
\dashline{60.000}(12,912)(5187,2562)
\thicklines
\path(1362,2412)(3312,1887)(1887,462)(1362,2412)
\path(3312,1887)(4212,3087)(4587,1437)(3312,1887)
\thinlines
\path(2712,3912)(12,912)(3912,12)
	(2712,3912)(5187,2562)(3912,12)
\end{picture}
} }
		\hspace{-1.9mm}
	\end{array}
 \]
\caption{the complex $MedComp(\tau)$}\label{fig:MedianComplex}
\end{fg}
\end{df}
{\noindent In} order to define the {\em median simplicial subdivision} of $\tau$, the octahedron will be dissected into tetrahedra.
Consider the three squares $(a, b, f, c);\;(d, b, e, c)$ and $(a, d, f, e)$ as indicated in Figure \ref{fg:RegularToAlmostRegular}. Adding any two of these squares to $MedComp(\tau)$ decomposes the octagon into four tetrahedra. The obtained simplicial complex is defined as the {\em median subdivision} of $\tau$, and it is denoted by $Med(\tau)$.
\begin{df}\label{df:AlmostRegular}
{\em
A tetrahedron is called} almost regular {\em if it has five equilateral edges, while the sixth edge is $\sqrt{2}$ times longer. A triangulation $\mcal{T}$ is called almost regular if every tetrahedron of $\mcal{T}$ is either regular or almost regular.
}
\end{df}
\begin{clm}\label{clm:RegToAlmostReg}
If $\tau$ is a regular tetrahedron then $Med(\tau)$ is an almost regular triangulation.
\end{clm}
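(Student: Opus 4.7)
The plan is to set up explicit coordinates and show that the eight $3$-simplices of $Med(\tau)$ split into two easily-handled groups. Place the vertices of $\tau$ at $v_1 = (1,1,1),\; v_2 = (1,-1,-1),\; v_3 = (-1,1,-1),\; v_4 = (-1,-1,1)$, so that $\tau$ is regular with all edges of length $2\sqrt{2}$. A direct computation shows that the six edge midpoints of $\tau$ are exactly the points $(\pm 1, 0, 0),\; (0, \pm 1, 0),\; (0, 0, \pm 1)$, i.e.\ the vertices of a regular octahedron. Hence $MedComp(\tau)$ consists of four ``corner'' tetrahedra (one near each $v_i$) together with this central octahedron.

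The first group is handled immediately: the corner tetrahedron at $v_i$ has as vertices $v_i$ together with the three midpoints of the edges of $\tau$ incident to $v_i$; it is precisely the image of $\tau$ under the homothety of ratio $\tfrac{1}{2}$ centered at $v_i$, and hence is itself a regular tetrahedron, of edge length $\sqrt{2}$. In particular each corner tetrahedron is (trivially) almost regular in the sense of Definition \ref{df:AlmostRegular}.

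The second group consists of the four pieces obtained by chopping the octahedron with two of its three equatorial squares --- those lying in the planes $x=0$, $y=0$, $z=0$. Picking, say, the squares in $x=0$ and $y=0$, the key observation is that they share the single common diagonal joining $(0,0,1)$ and $(0,0,-1)$, a segment of length $2$ along the $z$-axis. Inserting these two squares therefore cuts the octahedron along four triangles all meeting in this $z$-axis diagonal, and the four resulting tetrahedra are congruent by the rotational symmetries of the octahedron that fix the $z$-axis. A single representative is the tetrahedron with vertices $(0,0,1),(0,0,-1),(1,0,0),(0,1,0)$; reading off its six edge lengths one finds five edges of length $\sqrt{2}$ and one of length $2 = \sqrt{2}\cdot\sqrt{2}$, so it is almost regular. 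Combining the two groups, every tetrahedron of $Med(\tau)$ is either regular or almost regular, which is the assertion of the claim. No substantive obstacle is anticipated: the only point where care is needed is to observe that any two of the three equatorial squares share a common diagonal (not a common edge), so that their union genuinely dissects the octahedron into four tetrahedra rather than producing some other combinatorial pattern.
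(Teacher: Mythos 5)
Your proof is correct and follows essentially the same route as the paper's: split $Med(\tau)$ into the four corner (peripheral) tetrahedra, which are $\tfrac{1}{2}$-scale copies of $\tau$ and hence regular, and the four internal tetrahedra coming from the octahedron dissection, each of which has five edges of length $\tfrac{1}{2}\ell$ and one diagonal edge of length $\tfrac{\sqrt{2}}{2}\ell$. Your explicit coordinate placement is a concrete way of verifying what the paper argues more qualitatively (that each internal tetrahedron shares five edges with peripheral ones and has as its sixth edge a square diagonal), and it has the small advantage of making transparent the combinatorial fact that the two chosen equatorial squares meet in a common diagonal, which the paper takes for granted.
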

\begin{proof}
First note that the fact that $\sigma$ is regular, assures that the vertices of any of the squares $(a, b, f, c);\;(d, b, e, c)$ and $(a, d, f, e)$ are coplanar. Second, notice that $Med(\tau)$ is composed of two types of tetrahedra: Tetrahedra that are adjacent to the vertices of $\tau$ (called {\em peripheral tetrahedra}); and tetrahedra that are formed from the dissection of the octahedron. These four tetrahedra will be called {\em internal tetrahedra}. Suppose the edge length of $\tau$ is $\len$. From the definition of $Med(\tau)$ the length of any edge of a peripheral tetrahedron is $\frac{1}{2}\len$, therefore all peripheral tetrahedra are regular.
\begin{fg}[H]
\[ 
	\setlength{\unitlength}{0.5\standardunitlength}
	\begin{array}{c}  \hspace{-1.7mm}
		\raisebox{-8pt}{\setlength{\unitlength}{0.00041667in}
\begingroup\makeatletter\ifx\SetFigFont\undefined%
\gdef\SetFigFont#1#2#3#4#5{%
  \reset@font\fontsize{#1}{#2pt}%
  \fontfamily{#3}\fontseries{#4}\fontshape{#5}%
  \selectfont}%
\fi\endgroup%
{\renewcommand{\dashlinestretch}{30}
\begin{picture}(5199,3939)(0,-10)
\put(2187,1587){\makebox(0,0)[lb]{\smash{{\SetFigFont{9}{10.8}{\rmdefault}{\mddefault}{\updefault}$c$}}}}
\thicklines
\dashline{120.000}(2637,1737)(4587,1437)
\dashline{120.000}(2637,1737)(4137,3087)
\put(987,2562){\makebox(0,0)[lb]{\smash{{\SetFigFont{9}{10.8}{\rmdefault}{\mddefault}{\updefault}$a$}}}}
\put(3537,1887){\makebox(0,0)[lb]{\smash{{\SetFigFont{9}{10.8}{\rmdefault}{\mddefault}{\updefault}$b$}}}}
\put(1737,162){\makebox(0,0)[lb]{\smash{{\SetFigFont{9}{10.8}{\rmdefault}{\mddefault}{\updefault}$d$}}}}
\dashline{120.000}(1362,2412)(2637,1737)(1887,462)
\thinlines
\dashline{60.000}(12,912)(5187,2562)
\thicklines
\path(1362,2412)(3312,1887)(1887,462)(1362,2412)
\path(3312,1887)(4212,3087)(4587,1437)(3312,1887)
\path(1887,462)(4587,1437)
\path(1362,2412)(4212,3087)
\put(4812,1362){\makebox(0,0)[lb]{\smash{{\SetFigFont{8}{9.6}{\rmdefault}{\mddefault}{\updefault}$f$}}}}
\put(4362,3312){\makebox(0,0)[lb]{\smash{{\SetFigFont{8}{9.6}{\rmdefault}{\mddefault}{\updefault}$e$}}}}
\thinlines
\path(2712,3912)(12,912)(3912,12)
	(2712,3912)(5187,2562)(3912,12)
\end{picture}
} }
		\hspace{-1.9mm}
	\end{array}
 \]
\caption{median subdivision of a regular $3$-simplex}\label{fg:RegularToAlmostRegular}
\end{fg}
{\noindent Now} let $\sigma'$ denote some internal tetrahedron. Notice that $\sigma'$ shares five of its edges with peripheral tetrahedra. Therefore it has five equilateral edges of length $\frac{\len}{2}$. The sixth edge of $\sigma'$ is formed from the dissection of the octahedron, and it is a diagonal of one of the squares mentioned above, of edge length $\frac{\len}{2}$, so its length equals $\frac{\sqrt{2}\len}{2}$ and the proof is completed.\\
\end{proof}
{\noindent In} order to proceed by iteratively taking median subdivisions, it is essential to describe explicitly the median subdivision of an almost regular tetrahedron:
Let $\tau$ be an almost regular tetrahedron as shown in Figure \ref{fig:MedianSubdivAlmostRegular}. Then $\tau$ has five equilateral edges and one longer edge that is $\sqrt{2}$-times longer than all other edges. Let $L$ denote this longer edge(dashed edge in the figure).
\begin{fg}[H]
\[ 
	\setlength{\unitlength}{0.5\standardunitlength}
	\begin{array}{c}  \hspace{-1.7mm}
		\raisebox{-8pt}{\input AlmostRegularTwoQuads.tex }
		\hspace{-1.9mm}
	\end{array}
 \]
\caption{median subdivision of an almost regular $3$-simplex}\label{fig:MedianSubdivAlmostRegular}
\end{fg}
{\noindent Since} all edges except $L$ are equilateral, we get that the $4$-gon, $(a, b, c, d)$ indicated in the figure, is a rectangle, and as in the case of regular tetrahedron, we also have that the vertices of the $4$-gons $(a, e, c, f)$ and $(d, e, b, f)$ are coplanar. Note that for a general tetrahedron this may not hold.

{\noindent Let} $\rho = \len(ad)$ denote the length of the edge $(ad)$. We have the following equalities:
\[ \len(ac) = \len(af) = \len(fc) = \len(ce) = \rho \;\; ,\]
\[ \len(eb) = \len(bf) = \len(fd) = \len(de) = \rho \;\; ,\]
\[ \len(ad) = \len(bc) = \rho \;\; ,\]
\[ \len(dc) = \len(ab) = \sqrt{2}\rho \;\; .\]
{\noindent The} interior octahedron is dissected into four tetrahedra by explicitly using the two quadrilaterals $(a, e, c, f)$ and $(d, e, b, f)$.
\begin{fg}[H]
\[ 
	\setlength{\unitlength}{0.5\standardunitlength}
	\begin{array}{c}  \hspace{-1.7mm}
		\raisebox{-8pt}{\input AlmostRegularDissectionII.tex }
		\hspace{-1.9mm}
	\end{array}
 \]
\caption{\small{dissection of an interior octahedron of the median complex of an almost regular tetrahedron}}\label{fig:MedianSubdivAlmostRegDissection}
\end{fg}
{\noindent The} simplicial subdivision described above is $Med(\tau)$.
\begin{clm}\label{clm:AlmostRegToAlmostReg}
If $\tau$ is an almost regular tetrahedron, then $Med(\tau)$ is an almost regular triangulation.
\end{clm}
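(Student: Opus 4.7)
The plan is to analyse the tetrahedra of $Med(\tau)$ in two disjoint groups: the four \emph{peripheral} tetrahedra, sitting at the vertices of $\tau$, and the four \emph{internal} tetrahedra produced by dissecting the central octahedron $\Omega$ with the two coplanar quadrilaterals $(a,e,c,f)$ and $(d,e,b,f)$ specified in the paper. For the peripheral tetrahedra the argument from Claim \ref{clm:RegToAlmostReg} transfers immediately: the tetrahedron at a vertex $V$ of $\tau$ has three edges from $V$ to adjacent midpoints (each of length half the corresponding edge of $\tau$) and three midsegment edges forming the opposite face (each of length half the corresponding opposite edge of $\tau$), so it is similar to $\tau$ with ratio $1/2$ and hence almost regular.

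For the internal tetrahedra I would first label the six vertices of $\Omega$ as midpoints of the six edges of $\tau$. By the midsegment theorem each edge of $\Omega$ equals one half of some edge of $\tau$, so ten of the twelve edges of $\Omega$ have length $\rho$ and exactly two --- the midsegments parallel to $L$ in the two faces of $\tau$ containing $L$ --- have length $\sqrt{2}\rho$. With the paper's labelling these two long midsegments are precisely $ab$ and $cd$, matching the equalities preceding the claim. Let $e$ denote the midpoint of the edge of $\tau$ opposite to $L$ (which is equilateral) and $f$ the midpoint of $L$; then $ef$ is the bimedian of $\tau$ in the $L$-direction, and it is the common diagonal of the two dissecting quadrilaterals.

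The key quantitative step will be to verify $\len(ef)=\rho$. I would do this either by placing $\tau$ in explicit coordinates (putting the edge opposite to $L$ symmetrically about the origin in the $xy$-plane and solving for the fourth vertex) or, more cleanly, by invoking the bimedian identity
\[ |MN|^2 \;=\; \tfrac{1}{4}\bigl(|AC|^2 + |AD|^2 + |BC|^2 + |BD|^2 - |AB|^2 - |CD|^2\bigr) \]
applied with $AB$ the edge opposite $L$ and $CD = L$, which gives $|ef|^2 = (4(2\rho)^2 - (2\rho)^2 - (2\sqrt{2}\rho)^2)/4 = \rho^2$. Granted this, the four internal tetrahedra are $(a,b,e,f)$, $(b,c,e,f)$, $(c,d,e,f)$, $(d,a,e,f)$, each containing the common axis $ef$ of length $\rho$ together with four polar edges from $e$ and $f$ to the two relevant equatorial vertices. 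Every polar edge connects midpoints of two edges of $\tau$ meeting at a common vertex, hence is a midsegment parallel to an equilateral side edge of $\tau$, hence has length $\rho$. The sole remaining edge of each internal tetrahedron is one of $ab, bc, cd, da$: two of these ($ab$ and $cd$) are the midsegments parallel to $L$ and so have length $\sqrt{2}\rho$, giving two almost-regular internal tetrahedra, while $bc$ and $da$ have length $\rho$, giving two regular internal tetrahedra.

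The only non-combinatorial step is the bimedian computation $\len(ef)=\rho$; every other ingredient reduces to careful bookkeeping of midsegment lengths, completely determined by the location of the single long edge $L$. Assembling the four almost-regular peripheral tetrahedra with the two regular and two almost-regular internal tetrahedra produced above then gives that $Med(\tau)$ is an almost regular triangulation, proving the claim.
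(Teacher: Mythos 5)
Your proof is correct and follows the same overall strategy as the paper: split $Med(\tau)$ into the four peripheral tetrahedra (each similar to $\tau$ with ratio $1/2$, hence almost regular) and the four internal tetrahedra produced by dissecting the central octahedron along the two quadrilaterals, and reduce the claim to showing $\len(ef)=\rho$. The only genuine difference is how that last length is computed: the paper establishes that $(a,e,c,f)$ is a planar rhombus of side $\rho$ with long diagonal $\sqrt{3}\rho$ (a diagonal of the rectangle $(a,b,c,d)$) and then applies the Pythagorean theorem to its perpendicular half-diagonals, whereas you invoke the bimedian identity directly on the original tetrahedron $\tau$; your route is a bit cleaner because it obtains $\len(ef)=\rho$ from the edge-length data of $\tau$ without first re-deriving the side length and long diagonal of the rhombus, and it makes the two-regular-plus-two-almost-regular tally of internal tetrahedra fully explicit.
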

\begin{proof}
As in the proof of Claim \ref{clm:RegToAlmostReg}, the tetrahedra of $Med(\tau)$ are divided into subsets of peripheral and internal tetrahedra. Since the median subdivision is defined via splitting each edge in the middle, any peripheral tetrahedron has five equilateral edges and one that is $\sqrt{2}$ longer, hence any such tetrahedron is almost regular. Let $\sigma'$ again denote an internal tetrahedron of $Med(\tau)$. As before, $\sigma'$ shares five of its edges with peripheral tetrahedra. Therefore, it either has five equilateral edges of length $\rho$, or four equilateral and one that is $\sqrt{2}$ longer, so the lengths of its edges are either $\rho$ or $\sqrt{2}\rho$. The sixth edge of $\sigma'$, the segment $ef$ in the figure above, is again formed as the intersection of the two quadrilaterals we added. Each of these quads is a rhombus with edge length $\rho$, and the edge $ef$ is the shorter diagonal of the rhombus. In order to compute the length of $ef$ we observe that the longer diagonal is also a diagonal of the rectangle $(a, b, c, d)$ whose edge lengths are $\rho$ and $\sqrt{2}\rho$, so its length is $\sqrt{3}\rho$. In addition, let $X, Y, V ,W$ be vertices of the original almost regular tetrahedron as depicted in Figure \ref{fig:MedianSubdivAlmostRegDissection}, then due to similarity of triangles one has that $\len(ae) = \len(fc) = \frac{1}{2}\len(VW)$ and $\len(ec) = \len(af) = \frac{1}{2}\len(YX)$, and since $\len(VW) = \len(YX)$, the quadrilateral $(a, e, c, f)$ is a rhombus of edge length $\rho$, such that $ef$ is its short diagonal, and its long diagonal has length $\sqrt{3}\rho$. From the Pythagorean theorem one immediately gets that $\len(ef) = \rho$. As a result, we obtain that $\sigma'$ either has six edges of length $\rho$ or five edges of length $\rho$ and one edge of length $\sqrt{2}\rho$. This completes the proof of the claim.
\end{proof}

\begin{df}\label{df:NthMedianSubdiv}
{\em For $n > 1$ we define the $n^{th}$ median subdivision of a regular tetrahedron, iteratively by:
\begin{eq}\label{eq:NthMedianSubdivRegular}
Med_{(n)}(\tau) = Med(Med_{(n-1)}(\tau)) \;.
\end{eq}
}
\end{df}

\begin{lem}\label{lem:MadianSubdivKeepsFatness}
Let $\sigma$ be a regular tetrahedron. Then, $\{Med^{(n)}(\tau)\; ;n \geq 1\}$ are $\varphi$-fat triangulations with a fixed fatness coefficient $\varphi$ for all $n$.
\end{lem}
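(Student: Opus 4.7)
The plan is to combine Claims \ref{clm:RegToAlmostReg} and \ref{clm:AlmostRegToAlmostReg} with an induction on $n$, so as to reduce the problem to showing that a single shape has positive fatness.

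First, I would prove by induction on $n$ that $Med^{(n)}(\tau)$ is an almost regular triangulation in the sense of Definition \ref{df:AlmostRegular}. The base case $n=1$ is exactly Claim \ref{clm:RegToAlmostReg}. For the inductive step, assume $Med^{(n-1)}(\tau)$ is almost regular, so every tetrahedron $\tau'$ in it is either regular or almost regular. Applying Claim \ref{clm:RegToAlmostReg} in the first case and Claim \ref{clm:AlmostRegToAlmostReg} in the second, each $Med(\tau')$ is an almost regular triangulation. Taking the union over all $\tau' \in Med^{(n-1)}(\tau)$ yields $Med^{(n)}(\tau)$ as an almost regular triangulation. (One should verify that the subdivisions glue consistently across shared faces, which follows because $Med$ on a $2$-face depends only on the midpoints of its edges, so two adjacent tetrahedra induce the same subdivision of their common face.)

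Second, I would invoke the fact that the fatness $\varphi(\sigma) = r(\sigma)/R(\sigma)$ is a similarity invariant of $\sigma$. A regular tetrahedron is determined up to similarity by being regular, so all regular tetrahedra share a common fatness $\varphi_{\mathrm{reg}} > 0$. An almost regular tetrahedron is likewise determined up to similarity by its edge-length pattern (five edges of a common length $\rho$ and one edge of length $\sqrt{2}\rho$), so all almost regular tetrahedra share a common fatness $\varphi_{\mathrm{alm}} > 0$. Both constants are strictly positive because a non-degenerate tetrahedron has positive inscribed radius. Setting
\[
\varphi \;=\; \min\bigl(\varphi_{\mathrm{reg}},\,\varphi_{\mathrm{alm}}\bigr) \;>\; 0,
\]
every tetrahedron of every $Med^{(n)}(\tau)$ is $\varphi$-fat, which is the claim.

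The main obstacle I anticipate is the verification that all almost regular tetrahedra arising in the iteration are indeed similar, i.e.\ that the combinatorial edge-length pattern of Definition \ref{df:AlmostRegular} determines the tetrahedron up to scaling. This is essentially a rigidity remark for tetrahedra with prescribed edge lengths: the five equilateral edges span two faces meeting along a common edge, and the sixth edge (the $\sqrt{2}\rho$ diagonal) is then forced to be coplanar with those faces exactly as in the configuration constructed inside $Med(\tau)$. Once this similarity uniqueness is established, the whole argument is immediate from the two preceding claims; the median subdivision scheme was designed precisely to avoid creating new shape classes beyond these two.
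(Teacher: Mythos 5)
Your proposal is correct and follows essentially the same route as the paper: induction on $n$, using Claim \ref{clm:RegToAlmostReg} for the base case and Claim \ref{clm:AlmostRegToAlmostReg} for the step, concluding that $Med_{(n)}(\tau)$ is almost regular, and then extracting a uniform fatness bound from the fact that only two tetrahedron shapes can ever appear. If anything, your closing step is more carefully justified than the paper's, which appeals only to the bounded edge-length ratios; your observation that fatness is a similarity invariant and that regular and almost regular tetrahedra each form a single similarity class (a standard rigidity fact — six edge lengths determine a tetrahedron up to congruence) supplies the missing precision.
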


\begin{proof}
It suffices to show that for every $n \geq 1$, $\{Med_{(n)}(\tau)\; ;n \geq 1\}$ is an almost regular triangulation. This is due to the fact that if this is true then the ratio between the lengths of any two edges of $\{Med_{(n)}(\tau)\}$ is either $1$ or $\sqrt{2}^{\pm1}$. The proof of the lemma is done by induction. The case $n = 1$ was shown in Claim \ref{clm:RegToAlmostReg}. Suppose $Med_{(k)}(\sigma)$ is almost regular for all $k \leq n-1$. Since $Med_{(n)}(\sigma)$ is the $1$-step subdivision of $Med_{(n-1)}(\sigma)$ the induction step follows directly from Claim \ref{clm:AlmostRegToAlmostReg}.
\end{proof}
{\noindent In} order to define the median subdivision of a non positively curved $3$-manifold the following lemma from \cite{bre} is needed.
\begin{lem}[\cite{bre}, Lemma $1$]\label{lem:BreslinBilipschitz}
Let $\mcal{M}$ be a non positively curved $3$-manifold. Let $\mcal{T}$ be a $\varphi$-fat triangulation of $\mcal{M}$. Then every $3$-simplex of $\mcal{T}$ is bilipschitz diffeomorphic to a regular Euclidean simplex in $\mathbb{R}^3$, where the bilipschitz constant depends only on $K_{\mcal{M}}, inj_{\mcal{M}}$ and $\varphi$.
\end{lem}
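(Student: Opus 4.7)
The plan is to factor the claimed bilipschitz equivalence $\sigma \to \sigma_{\text{reg}}$ through an intermediate Euclidean tetrahedron living in a tangent space. Pick any vertex $v$ of the geodesic $3$-simplex $\sigma \subset \mcal{M}$. Provided $\mathrm{diam}(\sigma) < inj_{\mcal{M}}$ --- the natural regime for a fat triangulation --- the inverse exponential map $\exp_v^{-1}$ is a smooth diffeomorphism onto a Euclidean $3$-simplex $\widehat{\sigma}\subset T_v\mcal{M}\cong\mathbb{R}^3$ whose vertices are the $\exp_v^{-1}$-images of the vertices of $\sigma$.

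First I would bound the bilipschitz constant $C_1 = C_1(K_{\mcal{M}}, inj_{\mcal{M}})$ of $\exp_v:\widehat{\sigma}\to\sigma$. Rauch comparison applied to a non-positively curved manifold with sectional curvature bounded below by $-K_{\mcal{M}}$ gives two-sided bounds on the singular values of $d\exp_v$ throughout the ball of radius $inj_{\mcal{M}}$: the Cartan--Hadamard inequality provides the lower bound (geodesic spheres are concave, so distances are expanded by $\exp_v$), and a Jacobi-field comparison against the constant curvature model $-K_{\mcal{M}}$ provides the upper bound. Next I would transfer $\varphi$-fatness of $\sigma$ to Euclidean fatness of $\widehat{\sigma}$: since inradius and circumradius each change by at most a factor $C_1$ under a $C_1$-bilipschitz map, $\widehat{\sigma}$ is $\varphi'$-fat as a Euclidean tetrahedron with $\varphi' = \varphi/C_1^2$.

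Then I would produce a bilipschitz map from $\widehat{\sigma}$ to the standard regular Euclidean tetrahedron by compactness. Normalize $\widehat{\sigma}$ to have circumradius $1$. The moduli space of labeled Euclidean $3$-simplices of circumradius $1$ and fatness at least $\varphi'$, taken modulo Euclidean isometry, is a compact subset of a finite-dimensional real-algebraic variety; degeneration is precluded because fatness bounds every edge length and dihedral angle from below via Inequalities \ref{eq:ang-cond} and \ref{eq:area-cond}. The affine map sending the four labeled vertices of $\widehat{\sigma}$ to those of a fixed regular tetrahedron has a bilipschitz constant that is a continuous function of the vertex configuration, so it attains a finite maximum $C_2 = C_2(\varphi')$ on this compact set. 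The composition is bilipschitz with constant $C_1 \cdot C_2$, which depends only on $K_{\mcal{M}}$, $inj_{\mcal{M}}$, and $\varphi$.

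The step I expect to be the main obstacle is ensuring that the Rauch-type upper bound truly depends only on $K_{\mcal{M}}$ and $inj_{\mcal{M}}$ with no hidden dependence on where $\sigma$ sits in $\mcal{M}$; the argument requires staying inside a single convex geodesic ball of radius at most $inj_{\mcal{M}}$, so the hypothesis $\lambda(\mcal{T})\leq inj_{\mcal{M}}$ is implicitly needed in the statement, presumably absorbed into the constants. The compactness step, while conceptually clean, also carries the subtlety that one must choose vertex labelings consistently across tetrahedra; any one labeling suffices since the regular tetrahedron is invariant under its full vertex permutation group, but making the resulting bilipschitz constant truly universal requires taking the maximum over all $4!$ labelings.
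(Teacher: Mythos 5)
The paper does not prove this lemma: it is quoted verbatim from Breslin~\cite{bre}, Lemma~1, so there is no in-paper argument to compare against. Judging your proposal on its own merits, the overall architecture---pull back to a tangent space via the exponential map, transfer fatness with a $C_1^{2}$ loss, then normalize via compactness of the space of fat unit-circumradius Euclidean tetrahedra---is the natural route, and your compactness step, the fatness-transfer computation, and the remark that the full symmetry of the regular tetrahedron disposes of the labeling question are all correct.

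There is, however, a genuine gap, and it is not either of the two obstacles you flag at the end. You assert that $\exp_v^{-1}$ carries the geodesic simplex $\sigma$ onto the Euclidean \emph{affine} simplex $\widehat{\sigma}$ spanned by $\exp_v^{-1}(v_0),\dots,\exp_v^{-1}(v_3)$. This is false in nonzero curvature. The three edges of $\sigma$ through $v$ are radial geodesics and do map to segments through the origin of $T_v\mcal{M}$, but $\exp_v^{-1}$ is not a projective map: the opposite $2$-face of $\sigma$ and the three edges not containing $v$ are carried to curved sets, so $\exp_v^{-1}(\sigma)$ is a region with one curvilinear face, not a tetrahedron. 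The bilipschitz constant of whatever map one then uses to straighten this region onto $\widehat{\sigma}$ is exactly the quantity you have not yet controlled, so the constant $C_1$ in your first step is not actually established.

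Two standard repairs exist. In the constant-curvature setting the paper actually works in (Euclidean or hyperbolic $\mcal{M}$), replace $\exp_v^{-1}$ by the Beltrami--Klein chart after lifting $\sigma$ to the universal cover: the Klein model \emph{is} projective, so a geodesic tetrahedron becomes an honest affine tetrahedron, and the distortion of Klein coordinates relative to the hyperbolic metric is controlled on the compact subset determined by $diam(\sigma)\leq inj_{\mcal{M}}$ (in the Euclidean case this step is vacuous). Alternatively, keep the exponential map but insert the missing step: show that $\exp_v^{-1}(\sigma)$ is star-shaped about its barycenter (this uses convexity of geodesic simplices in nonpositive curvature) and that radial projection onto $\widehat{\sigma}$ is bilipschitz with constant controlled by the curvature bound, $inj_{\mcal{M}}$, and $\varphi$; the fatness hypothesis is what prevents $\widehat{\sigma}$ from being nearly degenerate and the radial projection from blowing up near a thin face. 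Either repair, composed with your remaining two steps, completes the argument.
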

{\noindent It} is now possible to define the median subdivision of a non positively curved $3$-manifold.
\begin{df}\label{df:MedianSubDivManifold}
{\em Let $(\mcal{M}, \mcal{T})$ be a non positively curved triangulated $3$-manifold. Then according to Lemma \ref{lem:BreslinBilipschitz}, every simplex $\sigma$ of $\mcal{T}$ is a bilipschitz image of the form $\sigma = f(\mcal{E})$, where $\mcal{E}$ is a regular Euclidean $3$-simplex. The $n^{th}$ median subdivision of $\sigma$, $Med^{(n)}(\sigma)$ for $n \geq 1$, is defined as $f(Med^{(n)}(\mcal{E}))$. The median subdivision of $\mcal{T}$ is given by the subdivision of each of its simplices.
}
\end{df}
{\noindent The} following is a restatement of Lemma \ref{lem:FatSubdiv}, is the main result of this section.
\begin{lem} \label{lem:MedianSubdivPreservesFatness}
Let $\mcal{M}$ be a closed, non-positively curved $3$-manifold, and let $\mcal{T}$ be a triangulation of $\mcal{M}$. Then $\big \{\mcal{T}_n = Med_{(n)}(\mcal{T})\big\}_{n \in \mathbb{N}}$ is a sequence of $\varphi$-fat triangulations, for a fixed $0 < \varphi < 2\pi$, such that $\lambda(\mcal{T}_n) \rightarrow 0$ as $n \rightarrow \infty$.
\end{lem}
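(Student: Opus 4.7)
The plan is to reduce the statement on the manifold to the Euclidean statement already proved in Lemma \ref{lem:MadianSubdivKeepsFatness}, using the bilipschitz model provided by Lemma \ref{lem:BreslinBilipschitz}. The initial triangulation $\mcal{T}$ is finite and hence $\varphi_1$-fat for some $\varphi_1 > 0$. By Lemma \ref{lem:BreslinBilipschitz}, every $3$-simplex $\sigma \in \mcal{T}$ can be written as $\sigma = f_\sigma(\mcal{E}_\sigma)$, where $\mcal{E}_\sigma \subset \mathbb{R}^3$ is a regular Euclidean tetrahedron and $f_\sigma$ is bilipschitz with constant $L$ depending only on $K_{\mcal{M}}, inj_{\mcal{M}}$ and $\varphi_1$, all of which are fixed. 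By Definition \ref{df:MedianSubDivManifold}, the iterated median subdivision on the manifold is simply $Med_{(n)}(\sigma) = f_\sigma\bigl(Med_{(n)}(\mcal{E}_\sigma)\bigr)$, so the problem reduces to understanding the Euclidean iterates $Med_{(n)}(\mcal{E}_\sigma)$ and then pushing the estimates forward through $f_\sigma$.

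For the mesh, I would use the explicit arithmetic already carried out in Claims \ref{clm:RegToAlmostReg} and \ref{clm:AlmostRegToAlmostReg}: passing from a regular tetrahedron of edge length $\ell$ to its median subdivision yields peripheral simplices of diameter $\ell/2$ and internal ones of diameter $\ell/\sqrt{2}$; passing from an almost regular tetrahedron to its subdivision halves all diameters. An easy induction then gives $\lambda_{\mathrm{eucl}}\bigl(Med_{(n)}(\mcal{E}_\sigma)\bigr) \leq \tfrac{1}{\sqrt{2}} \cdot 2^{-(n-1)} \, \ell_\sigma$, where $\ell_\sigma = \mathrm{diam}(\mcal{E}_\sigma)$. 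Since $f_\sigma$ is $L$-bilipschitz, $\lambda(\mcal{T}_n) \leq L \cdot \max_\sigma \ell_\sigma \cdot 2^{-n} \sqrt{2} \longrightarrow 0$ as $n \to \infty$, as required.

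For the fatness, I would argue via the area-to-diameter characterization in Condition \eqref{eq:area-cond} of the Cheeger--M\"uller--Schrader type theorem: fatness of a simplex $\tau$ is equivalent, up to a constant depending only on dimension, to $\mathrm{Vol}_j(\sigma)/\mathrm{diam}^j(\sigma)$ being uniformly bounded below over the faces $\sigma < \tau$. By Lemma \ref{lem:MadianSubdivKeepsFatness}, every simplex of $Med_{(n)}(\mcal{E}_\sigma)$ is regular or almost regular in $\mathbb{R}^3$, hence $\varphi_0$-fat for a universal $\varphi_0 > 0$ independent of $n$ and $\sigma$. An $L$-bilipschitz map multiplies $j$-dimensional volumes by a factor in $[L^{-j}, L^j]$ and diameters by a factor in $[L^{-1}, L]$, so the ratio $\mathrm{Vol}_j/\mathrm{diam}^j$ is preserved up to a factor depending only on $L$. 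Consequently each simplex of $\mcal{T}_n$ is $\varphi$-fat with $\varphi = \varphi_0 / C(L)$, and since $L$ depends only on the fixed data $K_{\mcal{M}}, inj_{\mcal{M}}, \varphi_1$, the fatness $\varphi$ is independent of $n$.

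The main obstacle I anticipate is the fatness transfer step: one must verify carefully that the bilipschitz push-forward preserves fatness up to a uniform multiplicative constant, and in particular that the bilipschitz constant $L$ from Lemma \ref{lem:BreslinBilipschitz} is the \emph{same} for every iterate $n$. The latter is automatic because $f_\sigma$ is defined once and for all on the original simplex $\mcal{E}_\sigma$ and the subdivisions $Med_{(n)}(\mcal{E}_\sigma)$ live inside $\mcal{E}_\sigma$; no new bilipschitz constants are introduced at deeper levels. The former is a routine application of Condition \eqref{eq:area-cond}, but this is the only place where one must be quantitative rather than merely qualitative.
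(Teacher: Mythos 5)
Your proof follows the same skeleton as the paper's: compactness gives an initial fatness constant, Lemma \ref{lem:BreslinBilipschitz} writes each simplex as a bilipschitz image of a regular Euclidean tetrahedron with a constant $L$ fixed once and for all, Definition \ref{df:MedianSubDivManifold} reduces the iterated subdivision to the Euclidean case handled in Lemma \ref{lem:MadianSubdivKeepsFatness}, and then the estimates are pushed through $f_\sigma$. Where you genuinely diverge --- and improve --- is in the fatness-transfer step. The paper argues that because the Euclidean subdivision is almost regular, any two edges $e,e'$ of $\mcal{T}_n$ satisfy $|\ell(e)/\ell(e')| \leq (C\sqrt{2})^{\pm 1}$, and then asserts that this edge-ratio bound alone yields a fixed $\varphi = \varphi(C)$. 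As stated that inference does not hold: a tetrahedron can have all edge ratios bounded near $1$ and still be arbitrarily flat (consider the apex of an equilateral base pushed toward the base's centroid). Your route through Condition \eqref{eq:area-cond} closes this gap: an $L$-bilipschitz map distorts $j$-dimensional Hausdorff volumes by at most $L^{\pm j}$ and diameters by at most $L^{\pm 1}$, so $\mathrm{Vol}_j(\sigma)/\mathrm{diam}^j(\sigma)$ is preserved up to a constant in $L$, and since the Euclidean simplices are regular or almost regular (hence uniformly $\varphi_0$-fat), the pushed-forward simplices are $\varphi$-fat with $\varphi$ depending only on $\varphi_0$, $L$, and dimension. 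Your observation that $L$ is fixed across all $n$ (because $f_\sigma$ is defined once on $\mcal{E}_\sigma$ and the subdivisions live inside it) is exactly the point the paper relies on implicitly, and stating it explicitly is the right call. Your mesh bound $\lambda_{\mathrm{eucl}}(Med_{(n)}(\mcal{E}_\sigma)) \leq 2^{-(n-1)}\ell_\sigma/\sqrt{2}$ is correct and more explicit than the paper's (which contains a typo comparing $\lambda_n$ to itself rather than to the Euclidean mesh); composing with the $L$-bilipschitz map gives $\lambda(\mcal{T}_n) \to 0$ as you state. In short: same approach, but your fatness-transfer argument is the rigorous version of what the paper gestures at, and should be preferred.
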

\begin{proof}
Since $\mcal{M}$ is compact, $\mcal{T}$ is a $\varphi_0$-fat for some $\varphi_0$.
Since, according to Lemma \ref{lem:MadianSubdivKeepsFatness}, $Med_{(n)}(|K|)$ is an almost regular triangulation,
we have that if $e, e'$ are any two edges of $\mcal{T}_n$ then, following \cite{bre} we have:
\begin{eq}\label{eq:EdgeRatioManifold}
|\frac{l(e)}{l(e')}| \leq (C\cdot\sqrt{2})^{\epsilon} ; \epsilon = \pm 1 \;,
\end{eq}
{\noindent where} $C$ is the bilipschitz constant. From Equation \ref{eq:EdgeRatioManifold} it follows that there exists fixed $\varphi = \varphi(C)$ so that $\mcal{T}_n$ is $\varphi$-fat for every $n \in \mathbb{N}$.
Let $\lambda_n = \lambda(\mcal{T}_n)$, then the following also holds:
\begin{eq}
\frac{1}{C}\lambda(Med_{(n)}(\mcal{T}) < \lambda_n < C\lambda(Med_{(n)}(\mcal{T})\;.
\end{eq}
{\noindent Hence}, $\lambda_n \rightarrow 0$ as $n \rightarrow \infty$, and the Lemma is proved.
\end{proof}

\section{Approximation of Least Area Surfaces in $\mcal{M}$}
In this section Theorem \ref{thm:AppxThm} will be proved for least area incompressible surfaces embedded in a non positively curved triangulated $3$-manifold $(\mcal{M}, \mcal{T}, \mathfrak{g})$.

\subsection{Notations and Definitions}
{\noindent Since} it is desired that the Approximation of a least area surface by some sequence of surfaces will also approximate the area of the surface by the areas of the surfaces in the sequence, and in view of Example \ref{exm:Lantern} it is evident that convergence with respect to some metric distance will not suffice. We will use the following of convergence in order to approximate the least area surface $\mcal{F}$, as well as its area.
\begin{df}\label{df:Convergence}
{\em
A sequence $\mcal{G}_j$ of closed embedded $\mcal{C}^r \; ; r \geq 2$, surfaces in $\mcal{M}$} converges {\em to a given closed embedded $\mcal{C}^r$ surface $\mcal{G}$, if:
\begin{enum}
\item \[ d_H(\mcal{G}_j, \mcal{G}) \rightarrow 0 \;, \textrm{as $j \rightarrow \infty$} \;,\]
{\noindent where} $d_H$ is the Hausdorff metric as defined in \ref{df:HausdorffMetric}.
\item Let $\{x_i \in \mcal{G}_i\}$ be a sequence of points so that $x_i \rightarrow x$ for some $x \in \mcal{G}$, and let $N_{x_i}, N_{x}$ denote the corresponding unit normal vectors to $\mcal{G}_i$ at $x_i$ and to $\mcal{G}$ at $x$ respectively. Then:
\[ \measuredangle(N_{x_j}\mcal{G}_j, N_x\mcal{G}) \rightarrow  0 \; , \]
{\noindent where} $\measuredangle(u, v)$ denotes the angle between any two unit vectors $u$ and $v$.
\end{enum}
}
\end{df}

\begin{df}
{\em
Let $\mcal{D}$ be a non normal disk in $\mcal{F} \cap \mcal{T}^{(3)}$, embedded in some tetrahedron $\tau$, and let $\mathfrak{f}$ be the $2$-face $\mcal{D}$ is projected into as in Lemma \ref{thm:NonNormalAreTame}. Then $\mcal{D}$} has a corner {\em if there exists a vertex $v$ of $\mathfrak{f}$ that is separated from all other vertices of $\tau$ by a curve that is made of two normal arcs as shown in Figure \ref{fg:Corner} below.
\begin{fg}[H]
\[ 
	\setlength{\unitlength}{0.5\standardunitlength}
	\begin{array}{c}  \hspace{-1.7mm}
		\raisebox{-8pt}{\input NonNormalCorner.tex }
		\hspace{-1.9mm}
	\end{array}
 \]
\caption{a corner of a non normal disk}\label{fg:Corner}
\end{fg}
}
\end{df}

{\noindent The} following definition presents the prototype of surfaces by which $\mcal{F}$ will be approximated. Following Theorem \ref{thm:LeastAreaIsNormal} without loss of generality assume that $\mcal{T}$ is some given fat triangulation of some given fatness $\varphi$, so that $\lambda(\mcal{T})$ is small enough so that $\mcal{F}$ is quasi normal with respect to $\mcal{T}$.
\begin{df}\label{df:FlatAssociate}
{\em To every disk $\mcal{D} \in \mcal{F} \cap \mcal{T}^{(3)}$, properly embedded in some tetrahedron $\tau \in \mcal{T}$, a} $\mcal{T}$-flat associate {\em disk, denoted by $\widehat{\mcal{D}}$, is assigned according to the following:
\begin{itemize}
\item If $\mcal{D}$ is an elementary disk then $\partial \mcal{D}$ is a collection of either $3$ or $4$ normal arcs $\gamma_i$ each is properly embedded in some $2$-face $\mathfrak{f}_i$. Replace each of the arcs $\gamma_i$ by the geodesic segment in $\mathfrak{f}_i$ that connects the end points of $\gamma_i$, and take $\widehat{\mcal{D}}$ to be the least area disk spanned by these geodesics inside the tetrahedron $\tau$. Note that if $\mcal{D}$ is a quad then $\widehat{\mcal{D}}$ may not be flat even when $\mcal{M}$ is a Euclidean $3$-manifold. This is because the four vertices of $\mcal{D}$ need not be coplanar in $\mcal{M}$. However, in this case $\widehat{\mcal{D}}$ can be conned to the center of math of the four vertices in a standard way.
\item Let $\mcal{D}$ be a non normal disk of in $\mcal{F} \cap \mcal{T}^{(3)}$. According to Theorem \ref{thm:NonNormalAreTame} $\mcal{D}$ can be parameterized by a single face of $\mcal{T}$. Take the parameter domain as the $\mcal{T}$-flat associate of $\mcal{D}$. In case $\mcal{D}$ has corners then the puncture that is formed at each corner is capped off by plugging a triangle $\Delta$ as in Figure \ref{fg:FlatAssociateCorner} below.
\begin{fg}[H]
\[ 
	\setlength{\unitlength}{0.5\standardunitlength}
	\begin{array}{c}  \hspace{-1.7mm}
		\raisebox{-8pt}{\begin{picture}(0,0)%
\includegraphics{FlatAssociateCorner.pstex}%
\end{picture}%
\setlength{\unitlength}{1184sp}%
\begingroup\makeatletter\ifx\SetFigFont\undefined%
\gdef\SetFigFont#1#2#3#4#5{%
  \reset@font\fontsize{#1}{#2pt}%
  \fontfamily{#3}\fontseries{#4}\fontshape{#5}%
  \selectfont}%
\fi\endgroup%
\begin{picture}(5203,5199)(2164,-6823)
\put(4501,-2986){\makebox(0,0)[lb]{\smash{{\SetFigFont{5}{6.0}{\rmdefault}{\mddefault}{\updefault}{\color[rgb]{0,0,0}$\Delta$}%
}}}}
\end{picture}%
 }
		\hspace{-1.9mm}
	\end{array}
 \]
\caption{flat associate near a corner}\label{fg:FlatAssociateCorner}
\end{fg}
\end{itemize}
The {\em $\mcal{T}$-flat associate} of the entire surface $\mcal{F}$ is the union of all $\mcal{T}$-flat associate disks as above. It is denoted by $\widehat{\mcal{F}}$.
}
\end{df}
{\noindent Figure} \ref{fg:FlatAssociates} illustrates the possible flat associated disks according to the definition above. In each of the parts $(A)$ to $(D)$ of the figure, $(a)$ represents a disk $\mcal{D} \in \mcal{F} \cap \mcal{T}^{(3)}$, and $(b)$ represents its $\mcal{T}$-flat associated disk $\widehat{\mcal{D}}$. In parts $(A)$ and $(B)$ flat associated disks that correspond to elementary $3$-gon and $4$-gon are shown. Part $(C)$ shows the flat associated disk of a non normal disk without corners, and in $(D)$ the flat associated disk of a non normal disk with corner is illustrated.
\begin{fg}[H]
\[ 
	\setlength{\unitlength}{0.5\standardunitlength}
	\begin{array}{c}  \hspace{-1.7mm}
		\raisebox{-8pt}{\begin{picture}(0,0)%
\includegraphics{FlatAssoc3gon.pstex}%
\end{picture}%
\setlength{\unitlength}{1973sp}%
\begingroup\makeatletter\ifx\SetFigFont\undefined%
\gdef\SetFigFont#1#2#3#4#5{%
  \reset@font\fontsize{#1}{#2pt}%
  \fontfamily{#3}\fontseries{#4}\fontshape{#5}%
  \selectfont}%
\fi\endgroup%
\begin{picture}(12090,4983)(2614,-5557)
\put(5176,-5461){\makebox(0,0)[lb]{\smash{{\SetFigFont{9}{10.8}{\rmdefault}{\mddefault}{\updefault}{\color[rgb]{0,0,0}$(A)$  flat associate of an elemetray $3$-gon}%
}}}}
\end{picture}%
 }
		\hspace{-1.9mm}
	\end{array}
 \]
\[ 
	\setlength{\unitlength}{0.5\standardunitlength}
	\begin{array}{c}  \hspace{-1.7mm}
		\raisebox{-8pt}{\begin{picture}(0,0)%
\includegraphics{FlatAssoc4gon.pstex}%
\end{picture}%
\setlength{\unitlength}{1973sp}%
\begingroup\makeatletter\ifx\SetFigFont\undefined%
\gdef\SetFigFont#1#2#3#4#5{%
  \reset@font\fontsize{#1}{#2pt}%
  \fontfamily{#3}\fontseries{#4}\fontshape{#5}%
  \selectfont}%
\fi\endgroup%
\begin{picture}(12090,5058)(2614,-5632)
\put(5776,-5536){\makebox(0,0)[lb]{\smash{{\SetFigFont{9}{10.8}{\rmdefault}{\mddefault}{\updefault}{\color[rgb]{0,0,0}$(B)$  flat associate of a $4$-gon}%
}}}}
\end{picture}%
 }
		\hspace{-1.9mm}
	\end{array}
 \]
\[ 
	\setlength{\unitlength}{0.5\standardunitlength}
	\begin{array}{c}  \hspace{-1.7mm}
		\raisebox{-8pt}{\begin{picture}(0,0)%
\includegraphics{FlatAssocNoCorner.pstex}%
\end{picture}%
\setlength{\unitlength}{1973sp}%
\begingroup\makeatletter\ifx\SetFigFont\undefined%
\gdef\SetFigFont#1#2#3#4#5{%
  \reset@font\fontsize{#1}{#2pt}%
  \fontfamily{#3}\fontseries{#4}\fontshape{#5}%
  \selectfont}%
\fi\endgroup%
\begin{picture}(11574,4983)(2614,-5557)
\put(5026,-5461){\makebox(0,0)[lb]{\smash{{\SetFigFont{9}{10.8}{\rmdefault}{\mddefault}{\updefault}{\color[rgb]{0,0,0}$(C)$  flat associate of a non normal disk without corners}%
}}}}
\end{picture}%
 }
		\hspace{-1.9mm}
	\end{array}
 \]
\[ 
	\setlength{\unitlength}{0.5\standardunitlength}
	\begin{array}{c}  \hspace{-1.7mm}
		\raisebox{-8pt}{\begin{picture}(0,0)%
\includegraphics{FlatAssocWithCorner.pstex}%
\end{picture}%
\setlength{\unitlength}{1973sp}%
\begingroup\makeatletter\ifx\SetFigFont\undefined%
\gdef\SetFigFont#1#2#3#4#5{%
  \reset@font\fontsize{#1}{#2pt}%
  \fontfamily{#3}\fontseries{#4}\fontshape{#5}%
  \selectfont}%
\fi\endgroup%
\begin{picture}(11574,4833)(2614,-5407)
\put(4951,-5311){\makebox(0,0)[lb]{\smash{{\SetFigFont{9}{10.8}{\rmdefault}{\mddefault}{\updefault}{\color[rgb]{0,0,0}$(D)$  flat associate of a non normal disk with corner}%
}}}}
\end{picture}%
 }
		\hspace{-1.9mm}
	\end{array}
 \]
\caption{flat associated disks}\label{fg:FlatAssociates}
\end{fg}
{\noindent Flat} associated disks in adjacent tetrahedra may be glued to each other either along a geodesic segment that correspond to a normal arc that is contained inside a common $2$-face of the two tetrahedra or along a piece of an edge that belongs to these tetrahedra, where this piece corresponds to a bent curve on the common $2$-face of these two tetrahedra. An illustration of such gluing is illustrated in the following Figure \ref{fg:FlatAssociateGluing}.
\begin{fg}[H]
\[ 
	\setlength{\unitlength}{0.5\standardunitlength}
	\begin{array}{c}  \hspace{-1.7mm}
		\raisebox{-8pt}{\begin{picture}(0,0)%
\includegraphics{FlatAssocGluing.pstex}%
\end{picture}%
\setlength{\unitlength}{1973sp}%
\begingroup\makeatletter\ifx\SetFigFont\undefined%
\gdef\SetFigFont#1#2#3#4#5{%
  \reset@font\fontsize{#1}{#2pt}%
  \fontfamily{#3}\fontseries{#4}\fontshape{#5}%
  \selectfont}%
\fi\endgroup%
\begin{picture}(6992,5499)(1564,-6298)
\end{picture}%
 }
		\hspace{-1.9mm}
	\end{array}
 \]
\caption{gluing of flat associated disks. Bold pieces correspond to elementary disks and dashed pieces correspond to non normal disks intersecting along bent curve}\label{fg:FlatAssociateGluing}
\end{fg}
\begin{rem}
{\em At this point the notion {\em flat associate} is somewhat abuse of notation, since $\widehat{\mcal{F}}$ is not really flat or even piecewise flat. In a consecutive section a $p\ell$ version of $\widehat{\mcal{F}}$ will be introduced.
}
\end{rem}

{\noindent Theorem} \ref{thm:AppxThm} is stated below.
\renewcommand{\thethm}{\arabic{thm}}
\setcounter{thm}{4}
\begin{thm}\label{thm:AppxThm}
Let $\mcal{M}$ be a closed, orientable, irreducible, Euclidean or hyperbolic $3$-manifold. Let $\mcal{F}$ be a closed orientable incompressible least area surface embedded in $\mcal{M}$. Then there exists a sequence $\large{\{}\mcal{T}_n\large{\}}$ of $\varphi$-fat triangulations for some fixed $\varphi$, so that $\lambda_n = \lambda(\mcal{T}_n)\searrow 0$ as $n$ goes to $\infty$, and there exists a sequence $\large{\{}\mcal{F}_n\large{\}}$ of surfaces, so that for each $j$, $\mcal{F}_j$ is a $\mcal{T}_j$-flat associate of $\mcal{F}$, and such that the sequence $\large{\{}\mcal{F}_n\large{\}}$ converges to $\mcal{F}$.
\end{thm}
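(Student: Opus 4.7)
The plan is to combine the quasi-normality result Theorem~\ref{thm:LeastAreaIsNormal}, the tameness of non-normal disks Theorem~\ref{thm:NonNormalAreTame}, and the fatness-preserving subdivision Lemma~\ref{lem:MedianSubdivPreservesFatness}. I fix any $\varphi_0$-fat geodesic least-area triangulation $\mcal{T}_0$ of $\mcal{M}$, whose existence is guaranteed by Theorem~\ref{thm:cairns}; after a generic small perturbation I may assume $\mcal{F} \cap \mcal{T}_0^{(0)} = \emptyset$. Iterating Lemma~\ref{lem:MedianSubdivPreservesFatness} produces a sequence $\mcal{T}_n = Med_{(n)}(\mcal{T}_0)$ of $\varphi$-fat triangulations for some fixed $\varphi > 0$, with $\lambda_n = \lambda(\mcal{T}_n) \searrow 0$. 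Letting $\Psi$ be the constant of Theorem~\ref{thm:LeastAreaIsNormal} for this $\varphi$ and discarding finitely many initial terms, I arrange $\lambda_n < \Psi$ for every $n$; then $\mcal{F}$ is quasi-normal with respect to every $\mcal{T}_n$, every normal disk in $\mcal{F} \cap \mcal{T}_n^{(3)}$ is elementary, and every non-normal disk is a graph over a disk contained in a single $2$-face. I then define $\mcal{F}_n$ to be the $\mcal{T}_n$-flat associate of $\mcal{F}$ of Definition~\ref{df:FlatAssociate}.

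Hausdorff convergence (Definition~\ref{df:Convergence}$(1)$) is essentially immediate: every flat associate disk $\widehat{\mcal{D}}$ lies inside the closed tetrahedron containing the corresponding $\mcal{D}$, so $\mcal{F}_n$ and $\mcal{F}$ are each contained in the $\lambda_n$-neighborhood of the other, giving $d_H(\mcal{F}_n, \mcal{F}) \leq \lambda_n \to 0$. The substantive step is the normal-convergence condition Definition~\ref{df:Convergence}$(2)$. Fix $x \in \mcal{F}$ and $x_n \in \mcal{F}_n$ with $x_n \to x$; let $\widehat{\mcal{D}}_n \ni x_n$ be the flat associate disk containing $x_n$, let $\mcal{D}_n \subset \mcal{F}$ be its counterpart in the same tetrahedron $\tau_n$, and pick any $y_n \in \mcal{D}_n$ with $d_{\mcal{M}}(x_n, y_n) = O(\lambda_n)$. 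By the Schoen bound Theorem~\ref{thm:SchoenBound} the second fundamental form of $\mcal{F}$ is uniformly bounded, so the unit normal of $\mcal{F}$ varies by $O(\lambda_n)$ across $\mcal{D}_n$; it therefore suffices to show $\measuredangle(N_{y_n}\mcal{F}, N_{x_n}\widehat{\mcal{D}}_n) \to 0$.

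The argument splits according to whether $\mcal{D}_n$ is elementary. In the elementary case, $\partial \widehat{\mcal{D}}_n$ is obtained from $\partial \mcal{D}_n$ by replacing each normal arc inside a $2$-face by its geodesic chord; Theorem~\ref{thm:Munkres1} applied inside the fatness-controlled $2$-faces of $\mcal{T}_n$ makes this a $C^1$-discrepancy of order $\lambda_n$. Since $\mcal{D}_n$ and $\widehat{\mcal{D}}_n$ are both least area disks inside $\tau_n$ spanning these nearby short boundary curves, the Meeks--Yau uniqueness result invoked in the proof of Lemma~\ref{lem:IntesectionsAreDisks} together with the uniform Schoen bound on both families and standard elliptic estimates for stable minimal graphs forces $\widehat{\mcal{D}}_n$ and $\mcal{D}_n$ to be $C^1$-close, which yields the tangent-plane convergence. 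The main obstacle I anticipate is the non-normal case: here $N_{x_n}\widehat{\mcal{D}}_n$ is simply the unit normal to the $2$-face $\mathfrak{f}_n$ over which $\mcal{D}_n$ is the graph supplied by Theorem~\ref{thm:NonNormalAreTame}, and one must show that the tangent plane to $\mcal{F}$ at $y_n$ becomes arbitrarily close to $\mathfrak{f}_n$. I would handle this by contradiction, mimicking the plane-comparison argument of Lemma~\ref{lem:3Or4Gon}: if some subsequence maintained an angle $\geq \eta > 0$ between $T_{y_n}\mcal{F}$ and $\mathfrak{f}_n$, then along a bent curve of $\mcal{D}_n \cap \mathfrak{f}_n$ the Banchoff-type generalized kink (Definition~\ref{df:BanschofCurvature}) together with the comparison-circle construction of Lemma~\ref{lem:3Or4Gon} would force a point on $\mcal{D}_n$ with Gaussian curvature exceeding the Schoen bound $C < 2\varphi$ secured in the paragraph following Claim~\ref{clm:EnoughForScaled}, a contradiction. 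With both conditions of Definition~\ref{df:Convergence} verified, $\mcal{F}_n \to \mcal{F}$, and as an immediate bonus Lemma~\ref{lem:MorvanAreaAppx} yields $\are(\mcal{F}_n) \to \are(\mcal{F})$.
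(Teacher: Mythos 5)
Your setup --- median-subdivided triangulations $\mcal{T}_n = Med_{(n)}(\mcal{T}_0)$ via Lemma~\ref{lem:MedianSubdivPreservesFatness}, $\mcal{T}_n$-flat associates $\mcal{F}_n$, and checking the two parts of Definition~\ref{df:Convergence} --- follows the paper exactly, and your Hausdorff estimate is essentially Claim~\ref{clm:MetricConvergence}. Where you deviate is the normal-convergence step. The paper does not split into elementary and non-normal cases and invokes neither Meeks--Yau uniqueness nor elliptic estimates for stable minimal graphs; it instead proves two dedicated lemmas. Lemma~\ref{lem:AnglesGoZero} shows $\theta_p \le C\,\lambda(\mcal{T})$ by integrating $|II|$ over $e_{pq}$ against the Schoen bound, and Lemma~\ref{lem:AlmostParallelNormals} then bounds $\measuredangle(N_z\mcal{D}, N_{\widehat z}\widehat{\mcal{D}})$ by a three-term triangle inequality, integrating $|II|$ of $\mcal{D}$ and of $\widehat{\mcal{D}}$ along geodesics from the shared boundary point $p$; the paper finally observes that the same computation handles elementary disks because $\mcal{D}$ and $\widehat{\mcal{D}}$ already share at least three points on $\mcal{T}^{(1)}$.

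The substantive issue is your non-normal case. Invoking the comparison-circle construction of Lemma~\ref{lem:3Or4Gon} does not deliver what you need: that construction is built to bound $\len(e_{pq})$ from below by showing a bent curve cannot fold too tightly near an edge. It is a constraint localized at $\partial\mcal{D}_n$, and by itself says nothing about the angle between $T_{y_n}\mcal{F}$ and $\mathfrak{f}_n$ at an interior point $y_n$ of the graph. Converting the boundary data (the graph height of $\mcal{D}_n$ over $\widehat{\mcal{D}}_n\subset\mathfrak{f}_n$ vanishes along $\partial\mcal{D}_n\cap\mathfrak{f}_n$) into an interior tangent-plane bound still requires integrating $|II|$ from the boundary to $y_n$, which is precisely what Lemma~\ref{lem:AlmostParallelNormals} supplies. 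Once you insert that integration step your argument collapses to the paper's, and the elliptic estimates in your elementary case also become unnecessary: both $\mcal{D}_n$ and $\widehat{\mcal{D}}_n$ satisfy $|II|\le C$ and pass through the same three (or four) points of $\mcal{T}_n^{(1)}$, so both tangent planes are already $O(C\lambda_n)$ from the affine plane through those shared points.
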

\renewcommand{\thethm}{\thesection.\arabic{thm}}
\setcounter{thm}{1}
\subsection{Preliminary Lemmas}
{\noindent Theorem} \ref{thm:AppxThm} will be deduced from Lemmas \ref{lem:AnglesGoZero}, \ref{lem:AlmostParallelNormals} and \ref{lem:NoCylinders} below.

{\noindent Let} $\mcal{D}$ be some non-normal disk contained in some tetrahedron $\tau \in \mcal{T}$. Let $\mathfrak{f}$ be the $2$-face of $\tau$ that $\mcal{D}$ is parameterized by, and let $\gamma$ denote a bent curve of $\mcal{D}$ not contained in $\mathfrak{f}$, so that the points $p$ and $q$ are the end points of $\gamma$ inside an edge $e$ of $\mathfrak{f}$. Let $\widehat{\mcal{D}} \subset \mathfrak{f}$ be the $\mcal{T}$ flat associate of $\mcal{D}$, and denote the parametrization of $\mcal{D}$ by $\mathfrak{p}(x, y)$, where $(x, y) \in \widehat{\mcal{D}}$ are the parametrization coordinates. Without loss of generality we can assume that the origin of these coordinates is placed at $p$. Suppose $\mcal{M}$ is given some orientation. This induces an orientation on each connected component of $\mcal{F} \cap \mcal{T}^{(2)}$, which in turn, induces an order on the set of intersection points of $\mcal{F} \cap \mcal{T}^{(1)}$ along each such connected component. In particular, assume an order along the component $\gamma$, so that $p$ is the point preceding $q$ in this ordering. Denote by $\bar{t}_p\gamma, \bar{t}_pe$ the tangent vectors at $p$ in the directions of $\gamma$ and $e$ respectively, and assume $\bar{t}_p\gamma$ and $\bar{t}_pe$ meet at angle $\theta_p$. Similar notations are used at the point $q$. In the following Lemma we show that as the mesh of $\mcal{T}$ gets very small $\theta_p$ also gets small

\begin{lem}\label{lem:AnglesGoZero}
For every $\epsilon > 0$ there exists $\delta >0 $ that depends only on $K_{\mcal{M}}$ and $\epsilon$, such that if $\lambda(\mcal{T}) < \delta$ then $\theta_p < \epsilon$.
\end{lem}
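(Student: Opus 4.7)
The plan is to argue by contradiction via a rescaling argument, exploiting the Schoen curvature bound (Theorem~\ref{thm:SchoenBound}) to reduce the problem to a flat Euclidean picture in the limit. Suppose the conclusion fails: there exist $\epsilon_0>0$ and a sequence of triangulations $\mcal{T}_n$ with $\lambda(\mcal{T}_n)\to 0$, together with bent curves $\gamma_n\subset\mcal{F}\cap\mathfrak{f}_n''$ whose endpoints $p_n,q_n$ lie on edges $e_n$ of faces $\mathfrak{f}_n''\in\mcal{T}_n$, such that $\theta_{p_n}\geq\epsilon_0$ for every $n$. By compactness of $\mcal{M}$, pass to a subsequence so that $p_n\to p_\infty$, and set $r_n:=d_{\mcal{M}}(p_n,q_n)\leq\lambda(\mcal{T}_n)\to 0$. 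Work in geodesic normal coordinates centered at $p_\infty$ and rescale the ambient metric by the factor $1/r_n$.

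Under this blow-up the rescaled metrics converge smoothly to the flat Euclidean metric on any bounded ball, and a diagonal subsequence yields rescaled faces $\mathfrak{f}_n''$ and edges $e_n$ converging to a Euclidean half-plane $\mathfrak{f}_\infty''\subset\mathbb{R}^3$ containing a line $e_\infty$, with rescaled images $P_n\to 0$ and $Q_n\to Q_\infty$ at unit distance from the origin along $e_\infty$. Crucially, the rescaled second fundamental form satisfies
\[
|II|_{\mathrm{rescaled}} \;=\; r_n\cdot|II|_{\mathrm{orig}} \;\leq\; r_n\cdot C \;\to\; 0,
\]
where $C$ is the Schoen constant, depending only on $K_{\mcal{M}}$ and $inj_{\mcal{M}}$. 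An Arzel\`a--Ascoli argument applied to local graph representations then yields a $C^{1,\alpha}$ subsequential limit surface $\Pi_\infty$ whose second fundamental form vanishes identically; hence $\Pi_\infty$ is a Euclidean plane through the origin.

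The key observation is that both $0$ and $Q_\infty$ lie on $\Pi_\infty$ (as $C^0$-limits of points of $\mcal{F}_n$) as well as on $\mathfrak{f}_\infty''$, so $\Pi_\infty$ must contain $e_\infty$. Provided $\Pi_\infty\neq\mathfrak{f}_\infty''$, the intersection $\Pi_\infty\cap\mathfrak{f}_\infty''$ equals $e_\infty$, and the transverse $C^1$-convergence of $\mcal{F}_n\cap\mathfrak{f}_n''=\gamma_n$ forces the tangent of $\gamma_n$ at $p_n$ to converge to the direction of $e_\infty$. Since angles are preserved under rescaling, this yields $\theta_{p_n}\to 0$, contradicting $\theta_{p_n}\geq\epsilon_0$. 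The constant $\delta$ produced by the contradiction depends only on $\epsilon_0$ and on $C$, hence only on $\epsilon_0$ and $K_{\mcal{M}}$ (with $inj_{\mcal{M}}$ implicit).

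The principal obstacle I anticipate is ruling out the degenerate limit $\Pi_\infty=\mathfrak{f}_\infty''$, in which $\mcal{F}$ becomes tangent to $\mathfrak{f}_n''$ in the limit and the intersection ceases to be a $C^1$-converging curve. I would exclude this by observing that tangency at $p_n$ would force $\gamma_n$ to present a cusp or branching at $p_n$, in contradiction with its being a smooth component of the transverse intersection of the properly embedded surface $\mcal{F}$ with the flat face $\mathfrak{f}_n''$. Combined with the graph representation of $\mcal{D}_n$ over $\mathfrak{f}_n$ supplied by Theorem~\ref{thm:NonNormalAreTame} and the uniform lower bound $\varphi$ on dihedral angles, this rules out the degenerate case and completes the reduction.
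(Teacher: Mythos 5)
Your proof takes a genuinely different route from the paper's. The paper's argument is a short direct estimate: using the tameness theorem (Theorem~\ref{thm:NonNormalAreTame}) to parametrize the bent curve $\gamma$ as a graph over the edge segment $e_{pq}$, it bounds the turning angle by $\theta_p \leq \triangle\theta = |\theta_q - \theta_p| \leq \int_{e_{pq}} |II|\, d\xi \leq C\,\len(e_{pq}) \leq C\,\lambda(\mcal{T})$, with $C$ the Schoen curvature bound. Your blow-up-by-contradiction argument achieves a similar flattening (rescaled $|II|\to 0$) but through a compactness mechanism rather than a pointwise integral; the common ingredient is that the Schoen bound forces the rescaled surface toward a plane, but your route is indirect and requires controlling the limit configuration carefully.

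That is where there is a genuine gap: your treatment of the degenerate limit $\Pi_\infty = \mathfrak{f}_\infty''$ does not work as written. You claim that tangency at $p_n$ would force $\gamma_n$ to exhibit a cusp or branching, but the tangency you need to exclude is a property of the \emph{limit} only: at each finite $n$ the intersection of $\mcal{F}$ with the face $\mathfrak{f}_n''$ is transverse, so $\gamma_n$ is a perfectly smooth embedded arc with neither cusp nor branching, and your appeal to smoothness of $\gamma_n$ therefore yields no contradiction. Nothing in your argument forbids the angle $\beta_n$ between $T_{p_n}\mcal{F}$ and the plane of $\mathfrak{f}_n''$ from tending to $0$ while the intersection remains transverse. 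In that regime the geodesic curvature of $\gamma_n$ inside the (totally geodesic) face scales like $|II_{\mcal{F}}|/\sin\beta_n$, which the Schoen bound alone does not control; after rescaling, $\gamma_n$ need not $C^1$-converge to $e_\infty$, and the desired conclusion $\theta_{p_n}\to 0$ does not follow. To close this case you would need an additional uniform lower bound on the intersection angle, perhaps deducible from the graph representation in Theorem~\ref{thm:NonNormalAreTame} together with the fatness constant $\varphi$; alternatively, the paper's direct integral estimate sidesteps the limiting picture entirely.
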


\begin{proof}
{\noindent Since} $\theta_p$ and $\theta_q$ have opposite signs the turning of angles between $p$ and $q$, $\triangle \theta  = |\theta_q - \theta_p|$ is at least $\theta_p$. Since $\gamma$ is a bent arc it is completely contained in a single $2$-face of $\tau$, and from Theorem \ref{thm:NonNormalAreTame} it can be parameterized by the geodesic segment of $e$ which goes from $p$ to $q$, denoted by $e_{pq}$. Assume this parametrization is given by arc length with parameter $\xi$. Then we have that:
\begin{eq} \label{eq:AngleDifference}
\theta_p \leq \triangle \theta \leq |\int_{e_{pq}} II d\xi| \leq \int_{e_{pq}} |II| d\xi \leq C \len(e_{pq}) \leq C \lambda(\mcal{T}) \;,
\end{eq}
{\noindent where} $II$ is the second fundamental form of $\mcal{F}$ , and $C$ is the curvature bound from \cite{sch}. This completes then proof of Lemma \ref{lem:AnglesGoZero}.
\end{proof}
{\noindent Let} $z \in \mcal{D}$ be a point on the non-normal disk $\mcal{D}$, and suppose $z$ is the image of the point $\widehat{z} \in \widehat{\mcal{D}}$ under the parametrization $\mathfrak{p}$ of $\mcal{D}$. Let $N_z\mcal{D}, N_{\widehat{z}}\widehat{\mcal{D}}$ denote the unit normal vectors of $\mcal{D}$ and $\widehat{\mcal{D}}$ at $z$ and $\widehat{z}$ respectively. Let $\measuredangle(\vec{v}, \vec{w})$ denote the angle between the two vectors $\vec{v}$ and $\vec{w}$ in the tangent space of $\mcal{M}$. From Lemma \ref{lem:AnglesGoZero} we deduce the following lemma.

\begin{lem}\label{lem:AlmostParallelNormals}
For every $\epsilon > 0$ there exists $\delta > 0$ that depends only on $K_{\mcal{M}}$ and $\epsilon$, so that if $\lambda< \delta$ then $\measuredangle(N_z\mcal{D}, N_{\widehat{z}}\widehat{\mcal{D}}) < \epsilon$.
\end{lem}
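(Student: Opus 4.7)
My plan is to use the Schoen curvature bound of Theorem \ref{thm:SchoenBound} together with Lemma \ref{lem:AnglesGoZero} in order to propagate an anchor estimate, valid at a boundary point of $\mcal{D}$, to the whole of $\mcal{D}$, and then compare with the essentially constant normal of the flat associate. Since $\mcal{D}$ is a graph over $\widehat{\mcal{D}}\subset \mathfrak{f}$ by Theorem \ref{thm:NonNormalAreTame} and the geodesic $2$-face $\mathfrak{f}$ is totally geodesic in $\mcal{M}$ (Euclidean or hyperbolic), the normal $N_{\widehat{z}}\widehat{\mcal{D}}$ agrees with the normal to $\mathfrak{f}$ at $\widehat{z}$, so it suffices to control the angle between $N_z\mcal{D}$ and that normal.

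First I will establish a boundary anchor. Let $p$ be an endpoint of some bent curve $\gamma$ of $\partial\mcal{D}$, lying on an edge $e$ of $\mathfrak{f}$, so that $p=\widehat{p}\in\mathfrak{f}$. By Lemma \ref{lem:AnglesGoZero}, the tangent $\bar{t}_p\gamma\in T_p\mcal{D}$ differs from $\bar{t}_pe\in T_p\mathfrak{f}$ by at most $C\lambda$; writing $\mcal{D}$ locally as a graph $z=h(x,y)$ with $\partial_x$ along $\bar{t}_pe$, this forces $|h_x(p)|=O(\lambda)$. The transverse derivative $|h_y(p)|$ is then controlled from the fact that $\gamma$ lies in the adjacent face $\mathfrak{f}'$ sharing $e$ with $\mathfrak{f}$: the perpendicular height of points of $\gamma$ above $\mathfrak{f}$ is at most $\lambda\sin\alpha$, where $\alpha$ is the dihedral angle along $e$, while $\varphi$-fatness provides a lower bound of order $\lambda$ for the transverse extent of $\widehat{\mcal{D}}$ near $p$; a short bootstrap against the curvature bound $|II|\le C$ then refines $|h_y(p)|$ to $O(\lambda)$ as well. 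Consequently $|\nabla h(p)|=O(\lambda)$ and so $\measuredangle(N_p\mcal{D},N_p\mathfrak{f})=O(\lambda)$.

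Next I propagate this estimate. For $z\in\mcal{D}$ with corresponding $\widehat{z}\in\widehat{\mcal{D}}$, choose a smooth curve $\alpha\subset\mcal{D}$ from $p$ to $z$ of length at most a constant times $\lambda$ — the intrinsic diameter of $\mcal{D}$ is comparable to its extrinsic diameter thanks to the bounded-slope graph structure. Exactly as in the proof of Lemma \ref{lem:AnglesGoZero},
\[
\measuredangle\bigl(N_p\mcal{D},N_z\mcal{D}\bigr)\;\leq\;\int_\alpha|II|\,ds\;\leq\;C\cdot\text{length}(\alpha)=O(\lambda).
\]
An analogous computation for the projected path in $\mathfrak{f}$ yields $\measuredangle(N_p\mathfrak{f},N_{\widehat{z}}\widehat{\mcal{D}})=O(|K_{\mcal{M}}|\lambda)$, since the normal of a totally geodesic surface turns at a rate bounded by the ambient sectional curvature. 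The triangle inequality then combines the three estimates to give $\measuredangle(N_z\mcal{D},N_{\widehat{z}}\widehat{\mcal{D}})\leq C_0\lambda$ for a constant $C_0=C_0(K_{\mcal{M}},\varphi)$, and setting $\delta=\epsilon/C_0$ closes the proof.

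The main obstacle is the anchor estimate at $p$: Lemma \ref{lem:AnglesGoZero} only constrains $\bar{t}_p\gamma$, so bounding the full tangent plane requires a separate argument for the transverse partial derivative, using the dihedral geometry of $\tau$ together with $\varphi$-fatness. A secondary technicality is ensuring the intrinsic diameter of $\mcal{D}$ is indeed $O(\lambda)$, which follows from a short bootstrap using the Schoen bound and the boundary slope estimate.
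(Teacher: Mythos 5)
Your proposal follows the same skeleton as the paper's proof: establish an anchor estimate at a boundary point $p$ of $\mcal{D}$, propagate that estimate to an arbitrary $z \in \mcal{D}$ by integrating the second fundamental form using the Schoen curvature bound, do the analogous thing on $\widehat{\mcal{D}}$, and finish by the triangle inequality. The propagation step ($\measuredangle(N_p\mcal{D},N_z\mcal{D}) \leq \int |II|\,ds \leq C\,\mathrm{length}$) and the final triangle inequality are identical in spirit to what the paper writes.

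Where you diverge usefully is the anchor. The paper asserts ``from Lemma \ref{lem:AnglesGoZero} \ldots $\measuredangle(N_p\mcal{D},N_{\widehat{p}}\widehat{\mcal{D}}) < \epsilon/3$'' without further comment, and you are right to be uneasy about this: Lemma \ref{lem:AnglesGoZero} bounds only the angle between $\bar{t}_p\gamma$ and $\bar{t}_p e$, which constrains a single direction of $T_p\mcal{D}$. To bound the angle between the two normals one must constrain the whole tangent plane, i.e.\ also the transverse slope, and that requires a separate argument. Your observation that a second estimate is needed is a genuine improvement over what the paper writes.

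However, your proposed fill does not yet close the gap. The ingredients you cite --- the perpendicular height of $\gamma$ above $\mathfrak{f}$ being $O(\lambda)$ and the transverse extent of $\widehat{\mcal{D}}$ near $p$ being of order $\lambda$ --- give, by themselves, only a slope bound of $O(1)$, not $O(\lambda)$: heights $\sim\lambda$ divided by a base $\sim\lambda$ is $\sim 1$. The phrase ``a short bootstrap against the curvature bound $|II|\le C$ then refines $|h_y(p)|$ to $O(\lambda)$'' is doing all of the remaining work and is not spelled out; it is exactly the step at which the paper itself is thin. To make it precise you would need something like: the tangent to $\gamma$ is within $O(\lambda)$ of $e$ at both $p$ and $q$; the tangent plane of $\mcal{D}$ turns by at most $C\lambda$ between $p$ and $q$ by the Schoen bound; if the transverse slope at $p$ were bounded away from zero independently of $\lambda$, the graph would have to develop a point of curvature exceeding $C$ somewhere over the $O(\lambda)$-wide strip near $e$ in order to return to $\partial\tau$ --- and then carry that contradiction through quantitatively. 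A second, smaller issue: your propagation step assumes the intrinsic diameter of $\mcal{D}$ is $O(\lambda)$ because of ``the bounded-slope graph structure,'' but the slope bound is what you are in the middle of establishing, so as written that justification is circular; you should either argue by contradiction or obtain the intrinsic/extrinsic diameter comparability independently of the anchor estimate.

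In short: same route as the paper, with a correct diagnosis that the anchor step needs more than Lemma \ref{lem:AnglesGoZero} supplies, but the transverse-derivative estimate you invoke to supply it is asserted rather than proved, and that is the one piece the argument cannot do without.
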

\begin{proof}
From Lemma \ref{lem:AnglesGoZero} we have that there is $\rho > 0$ such that if $\lambda(\mcal{T}) < \rho$ then, $\measuredangle(N_p\mcal{D}, N_{\widehat{p}}\widehat{\mcal{D}}) < \frac{\epsilon}{3}$. In addition, from the Schoen bound on the curvature of $\mcal{D}$ there exists $\rho_1 > 0$ so that for each $z \in \mcal{D}$, within the $\rho_1$-neighborhood of $p$, if $\Gamma$ is a geodesic curve from $p$ to $z$ in $\mcal{D}$ then:
\begin{eq}\label{eq:SmallAnglesBetweenNormals}
|\measuredangle(N_p\mcal{D}, N_z\mcal{D})| = |\int_{\Gamma} II d\xi| \leq \int_{\Gamma}| II d\xi| \leq C\len(\Gamma) \leq C^2d_{\mcal{M}}(p, z) \leq C^2\rho_1 < \frac{\epsilon}{3} \;.
\end{eq}
{\noindent The} same holds for $\measuredangle(N_{\widehat{p}}\widehat{\mcal{D}}, N_{\widehat{z}}\widehat{\mcal{D}})$ as well, since the Schoen bound $C$ depends only in $K_{\mcal{M}}$, and can be used both for $\mcal{F}$ and for $\widehat{\mcal{D}}$.
All in all we get that
\begin{eq} \label{eq:CloseNormalVectors}
\measuredangle(N_z\mcal{D}, N_{\widehat{z}}\widehat{\mcal{D}}) \leq \measuredangle(N_z\mcal{D}, N_p\mcal{D}) + \measuredangle(N_p\mcal{D}, N_{\widehat{p}}\widehat{\mcal{D}}) + \measuredangle(N_{\widehat{p}}\widehat{\mcal{D}}, N_{\widehat{z}}\widehat{\mcal{D}}) < \epsilon
\end{eq}
The proof of Lemma \ref{lem:AlmostParallelNormals} is thus completed.
\end{proof}

{\noindent Lemmas} \ref{lem:AnglesGoZero} and \ref{lem:AlmostParallelNormals} hold also if $\mcal{D}$ is an elementary disk, and $\widehat{\mcal{D}}$ denotes its $\mcal{T}$-flat associate. This is because by Definition \ref{df:FlatAssociate} both disks have the same vertices on $\mcal{T}^{(1)}$, so they intersect at least at three points, so the same analysis as in the proofs above can be done.

\begin{df}\label{df:NonNormalCylinder}
{\em
Let $\tau_1$ and $\tau_2$ be two tetrahedra that share a common $2$-face $\mathfrak{f}$. A} cylinder {\em is a pair of two non normal disks $\mcal{D}_1$ and $\mcal{D}_2$ embedded in $\tau_1$ and $\tau_2$ respectively, so that there exists two bent curves $\gamma_1 \subset \partial \mcal{D}_1$ and $\gamma_2 \subset \partial \mcal{D}_2$, so that $\gamma_1 \cap \gamma_2 = \partial \gamma_1 = \partial \gamma_2$, and $\mcal{D}_1$ and $\mcal{D}_2$ are parameterized by the same $2$-face. Figure \ref{fg:Cylinder} illustrates the local picture of a cylinder around the union of $\gamma_1$ and $\gamma_2$.
}
\end{df}
\begin{fg}[H]
\[ 
	\setlength{\unitlength}{0.5\standardunitlength}
	\begin{array}{c}  \hspace{-1.7mm}
		\raisebox{-8pt}{\input NonNormalCylinder.tex }
		\hspace{-1.9mm}
	\end{array}
 \]
\caption{a cylinder}\label{fg:Cylinder}
\end{fg}

\begin{lem}\label{lem:NoCylinders}
Let $\mcal{F}$ be a closed, incompressible least area surface embedded in the closed irreducible triangulated $3$-manifold $(\mcal{M}, \mcal{T})$. There exists a constant $\rho$ that depends only on $K_{\mcal{M}}$ and $inj_{\mcal{M}}$ so that if $\lambda(\mcal{T}) < \rho$ then $\mcal{F}$ is quasi normal with respect to $\mcal{T}$ and there are no cylinders in $\mcal{F} \cap \mcal{T}^{(3)}$.
\end{lem}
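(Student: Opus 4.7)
The quasi-normality part is immediate from Theorem~\ref{thm:LeastAreaIsNormal}: take $\rho$ at most the threshold $\Psi$ there, whose dependence on the fatness $\varphi$ is absorbed by the scaling argument of Claim~\ref{clm:EnoughForScaled}, so that $\rho$ becomes a function of $inj_{\mcal{M}}$ and $K_{\mcal{M}}$ alone. I therefore focus on ruling out cylinders, which I plan to do by contradiction.

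Suppose a cylinder $(\mcal{D}_1, \mcal{D}_2)$ exists, with bent curves $\gamma_1 \subset \partial \mcal{D}_1$ and $\gamma_2 \subset \partial \mcal{D}_2$ meeting only at common endpoints $\{p, q\}$ on an edge $e$ of the shared face $\mathfrak{f} = \tau_1 \cap \tau_2$. Transversality of $\mcal{F}$ to $\mathfrak{f}$ makes the two bent curves disjoint outside $\{p, q\}$, so $\gamma := \gamma_1 \cup \gamma_2$ is a simple closed curve lying on both $\mcal{F}$ and $\mathfrak{f}$, and inside $\mathfrak{f}$ it bounds a lens $\mcal{E}$. An innermost-disk reduction in $\mathfrak{f}$, legitimated by Lemma~\ref{lem:NoSimpleClosedCurves}, lets me assume $\mathrm{int}(\mcal{E}) \cap \mcal{F} = \emptyset$. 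The incompressibility of $\mcal{F}$ then forces $\gamma$ to bound a disk $\mcal{R} \subset \mcal{F}$, and irreducibility of $\mcal{M}$ produces a $3$-ball $\mcal{B}$ with $\partial \mcal{B} = \mcal{R} \cup \mcal{E}$.

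Next comes an area-reduction isotopy. Since $\gamma \subset \mathfrak{f}$ has length $O(\lambda(\mcal{T}))$, Lemma~\ref{lem:ShortEnoughCurve} supplies $r_1 = r_1(inj_{\mcal{M}}, K_{\mcal{M}})$ such that, once $\lambda(\mcal{T}) < r_1$, every least area disk $\mcal{L}$ in $\mcal{M}$ spanning $\gamma$ lies inside $\mcal{B}$. Replacing $\mcal{R}$ by $\mcal{L}$ produces a surface isotopic to $\mcal{F}$ of area $\are(\mcal{F}) - \are(\mcal{R}) + \are(\mcal{L})$; since $\mcal{F}$ is least area and $\are(\mcal{L}) \le \are(\mcal{R})$, equality must hold, so $\mcal{R}$ too is a least area disk spanning $\gamma$. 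Moreover, $\mcal{E} \subset \mathfrak{f}$ is itself a least area disk spanning $\gamma$, because $\mathfrak{f}$ is planar in the Euclidean case and totally geodesic in the hyperbolic case (being a geodesic triangle in a constant-curvature manifold), hence a minimal surface whose small sub-disks are least area.

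The main obstacle is turning this coincidence into a strict area inequality. The plan is a projection argument: $\mcal{F}$ is transverse to $\mathfrak{f}$, so $\mathrm{int}(\mcal{R}) \cap \mathfrak{f} = \emptyset$, and projecting $\mcal{R}$ onto $\mathfrak{f}$ (orthogonally in the Euclidean case, by nearest-point projection in the hyperbolic case) yields a $2$-chain $\mcal{R}'$ with $\partial \mcal{R}' = \gamma$. Using Lemma~\ref{lem:AlmostParallelNormals} to locate $\mcal{R}$ near its boundary, one argues that $\mcal{R}'$ covers the lens $\mcal{E}$ at least once, rather than its complement inside $\mathfrak{f}$. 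Since projection is area non-increasing, and strictly area-decreasing wherever the surface is not tangent to $\mathfrak{f}$, this gives $\are(\mcal{E}) \le \are(\mcal{R}') < \are(\mcal{R})$. Combined with $\are(\mcal{L}) \le \are(\mcal{E})$, this yields $\are(\mcal{L}) < \are(\mcal{R})$, contradicting the equality of the previous paragraph. The technical heart is the ``covers $\mcal{E}$'' claim, which requires carefully analysing which side of $\gamma$ on $\mcal{F}$ the disk $\mcal{R}$ occupies and how it straddles $\mathfrak{f}$ (since the cylinder's two bent curves $\gamma_1, \gamma_2$ sit in opposite tetrahedra); ruling out degenerate configurations in which the projection misses $\mcal{E}$ is where the cylinder's straddling geometry, together with the near-flatness of non normal disks furnished by Lemmas~\ref{lem:AnglesGoZero} and~\ref{lem:AlmostParallelNormals}, enters decisively.
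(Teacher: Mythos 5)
The paper's argument for excluding cylinders is far shorter and more direct than yours: once $\lambda(\mcal{T}) < \tfrac{1}{2\sqrt{C}}$, the simple closed curve $\gamma_1\cup\gamma_2$ lies on $\mcal{F}$ and is confined to a ball of radius $\lambda(\mcal{T})$, and any simple closed curve trapped in so small a ball must have a point of curvature exceeding the Schoen bound $C$ --- an immediate contradiction with $\mcal{F}$ being a stable minimal surface. No disk swap, isotopy, or projection appears. Your proposal is a genuinely different strategy, closer in spirit to the innermost-disk surgeries of Lemmas \ref{lem:NoSimpleClosedCurves} and \ref{lem:IntesectionsAreDisks}, and could in principle be made to work, but as written it has two concrete gaps. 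First, the assertion ``$\mcal{F}$ is transverse to $\mathfrak{f}$, so $\mathrm{int}(\mcal{R}) \cap \mathfrak{f} = \emptyset$'' does not follow: transversality makes the intersection a one-manifold, not empty, and the disk $\mcal{R}\subset\mcal{F}$ can perfectly well meet $\mathfrak{f}$ in additional curves interior to $\mcal{R}$, which spoils the projection step from the start unless you first trade those off by a separate innermost-curve argument inside $\mathfrak{f}$. Second, the ``$\mcal{R}'$ covers $\mcal{E}$ at least once'' claim, which you yourself flag as the technical heart, is never argued; without it the chain $\are(\mcal{E})\le\are(\mcal{R}')<\are(\mcal{R})$ has no foundation and the contradiction with $\are(\mcal{L})=\are(\mcal{R})$ does not materialize. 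The paper sidesteps all of this by applying the Schoen curvature bound directly to the curve $\gamma_1\cup\gamma_2$ rather than to the disks it bounds; since your Lemmas \ref{lem:AnglesGoZero}--\ref{lem:AlmostParallelNormals} already rest on that same bound, the detour through area minimization buys nothing and costs the two unfinished steps above.
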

\begin{proof}
The quasi normal part was already proved in Theorem \ref{thm:LeastAreaIsNormal}. Suppose there exists a cylinder in $\mcal{F} \cap \mcal{T}^{(3)}$, and let $\gamma_1, \gamma_2$ be the two bent curves as in Definition \ref{df:NonNormalCylinder}. If $\lambda(\mcal{T}) < \frac{1}{2\sqrt{C}}$, where $C$ is the Schoen curvature bound then $\gamma \cup \gamma_2$ is a simple closed curve that is contained in a ball of radius $\lambda(\mcal{T}) < \frac{1}{2\sqrt{C}}$. As a result, there exists at least one point in $\gamma_1 \cup \gamma_2$ at which its curvature is greater than $C$, and this yields a contradiction. This completes the proof.
\end{proof}
\subsubsection{Proof of Theorem \ref{thm:AppxThm}}

\begin{proof}[Proof of Theorem \ref{thm:AppxThm}]
Let $\mcal{T}$ be some given initial triangulation of $\mcal{M}$ of some fatness $\varphi$. Define the sequence  $\large{\{}\mcal{T}_n\large{\}}$ by setting:
\begin{eq}\label{eq:TriangulationTn}
\mcal{T}_j = Med_{(j)}(\mcal{T})
\end{eq}
{\noindent Let} the surface $\widehat{\mcal{F}}_j$ be the $\mcal{T}_j$-flat associate of $\mcal{F}$.\\

{\noindent Since} the median subdivision of $\mcal{T}$ is of the same fatness as $\mcal{T}$, and has mesh of order half of the mesh of $\mcal{T}$ there exists $j_0 > 0 $ so that the $j_0$ median subdivision of $\mcal{T}$ ,$\mcal{T}_{j_0}$, is also $\varphi$-fat while its mesh is smaller than the constant $\Psi(\varphi)$ obtained in Theorem \ref{thm:LeastAreaIsNormal} so that $\mcal{F}$ is quasi normal with respect to $\mcal{T}_{j}$, for all $j > j_0$. Hence, without loss of generality we may assume that $\mcal{F}$ is quasi normal with respect the initial triangulation $\mcal{T}$.
\begin{clm}\label{clm:MetricConvergence}
The Hausdorff distance between $\mcal{F}$ and $\widehat{\mcal{F}}_j$ converges to zero as $j \rightarrow \infty$.
\end{clm}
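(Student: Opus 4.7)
The plan is to bound the Hausdorff distance $d_H(\mcal{F}, \widehat{\mcal{F}}_j)$ purely in terms of the mesh $\lambda(\mcal{T}_j)$, and then appeal to Lemma \ref{lem:MedianSubdivPreservesFatness} to conclude. The crucial observation is that the $\mcal{T}_j$-flat associate is defined \emph{tetrahedron by tetrahedron}: for every $\tau \in \mcal{T}_j$ that meets $\mcal{F}$ and every connected component $\mcal{D}$ of $\mcal{F} \cap \tau$ (which is a disk by Lemma \ref{lem:IntesectionsAreDisks}), the associated piece $\widehat{\mcal{D}}$ is constructed inside the same tetrahedron $\tau$. If $\mcal{D}$ is elementary, $\widehat{\mcal{D}}$ is by definition a least area disk spanned inside $\tau$ by geodesic replacements of the normal arcs of $\partial \mcal{D}$, hence $\widehat{\mcal{D}} \subset \tau$. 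If $\mcal{D}$ is non-normal, Theorem \ref{thm:NonNormalAreTame} gives that $\widehat{\mcal{D}}$ is the parameter domain, which lies in a single $2$-face of $\tau$, and any capping triangle $\Delta$ used to patch a corner lies in the same $2$-face; again $\widehat{\mcal{D}} \subset \tau$.

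Once this containment is in hand, the rest is a diameter estimate. For any $x \in \mcal{D} \subset \tau$ the corresponding $\widehat{\mcal{D}}$ is a non-empty subset of $\tau$, so
\begin{equation*}
d_{\mcal{M}}(x, \widehat{\mcal{F}}_j) \;\leq\; d_{\mcal{M}}(x, \widehat{\mcal{D}}) \;\leq\; \mathrm{diam}(\tau) \;\leq\; \lambda(\mcal{T}_j).
\end{equation*}
Symmetrically, for $\hat{y} \in \widehat{\mcal{D}} \subset \tau$ one has $d_{\mcal{M}}(\hat{y}, \mcal{F}) \leq d_{\mcal{M}}(\hat{y}, \mcal{D}) \leq \lambda(\mcal{T}_j)$. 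Taking suprema over $\mcal{F}$ and $\widehat{\mcal{F}}_j$ respectively and recalling Definition \ref{df:HausdorffMetric} yields $d_H(\mcal{F}, \widehat{\mcal{F}}_j) \leq 2\lambda(\mcal{T}_j)$.

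Finally, by Lemma \ref{lem:MedianSubdivPreservesFatness} the mesh of the iterated median subdivisions satisfies $\lambda(\mcal{T}_j) \to 0$ as $j \to \infty$, so $d_H(\mcal{F}, \widehat{\mcal{F}}_j) \to 0$, which is exactly the claim. There is no real obstacle here: the entire argument reduces to the locality of the flat associate construction inside each tetrahedron, plus mesh decay; the deeper geometric content (curvature bounds, tameness of non-normal disks, controlled normal directions) is not needed for the metric part and will instead be used later in Lemma \ref{lem:AlmostParallelNormals} to handle the normal-vector condition of Definition \ref{df:Convergence}.
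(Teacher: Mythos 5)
Your proof is correct and follows essentially the same strategy as the paper: both arguments are purely a per-tetrahedron diameter estimate (the disk $\mcal{D}$ and its flat associate $\widehat{\mcal{D}}$ live in the same tetrahedron $\tau$, so their mutual distance is controlled by $\mathrm{diam}(\tau) \le \lambda(\mcal{T}_j)$), combined with the mesh decay supplied by Lemma \ref{lem:MedianSubdivPreservesFatness}. The paper realizes the estimate by routing the triangle inequality through a shared vertex $v$ of $\mcal{D}$ and $\widehat{\mcal{D}}$ on the $1$-skeleton; you instead use the containment $\widehat{\mcal{D}} \subset \tau$ directly, which makes the argument marginally cleaner, gives a slightly sharper per-term bound ($\lambda$ rather than $2\lambda$), and correctly yields $d_H \le 2\lambda(\mcal{T}_j)$ under the paper's sum-of-maxima definition of $d_H$. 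You also make explicit the locality observation (that the flat associate of a disk in $\tau$ is itself a nonempty subset of $\tau$, for both elementary and non-normal disks), which the paper's proof uses implicitly; stating it is a genuine improvement in rigor, and nothing is missing.
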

\begin{proof}
{\noindent Let} $\mcal{D}$ be some disk in $\mcal{F} \cap \mcal{T}^{(3)}$, either elementary or non normal, properly embedded in some tetrahedron $\tau_j \in \mcal{T}_j$, and let $\widehat{\mcal{D}}_j$ be its $\mcal{T}_j$-flat associate disk. Let $p, \widehat{p}$ be arbitrary points in $\mcal{D}, \widehat{\mcal{D}}$ respectively. Since $\mcal{D}$ intersects $\widehat{\mcal{D}}$ at least at two points $v, w$ along the $1$-skeleton of $\mcal{T}_j$ ($v, w$ are either the end points of a bent curve or the vertices of elementary disks), the following holds:
\begin{eq}
d(p,\widehat{p}) \leq d(p,v) + d(v,\widehat{p}) \leq diam(\tau_j) + diam(\tau_j) \;.
\end{eq}
Hence when passing to the maximum over all disks $\mcal{D}$ one obtains:
\begin{eq}
d_H(\mcal{D}, \widehat{\mcal{D}}) \leq 2 \lambda(\mcal{T}_j) = O(\frac{1}{2^{j-1}})\lambda(\mcal{T})
\end{eq}
This completes the proof of Claim \ref{clm:MetricConvergence}.
\end{proof}
{\noindent Claim} \ref{clm:MetricConvergence} proofs that the sequence of surfaces $\large{\{}\widehat{\mcal{F}}_j\large{\}}$ converges to $\mcal{F}$ in the metric sense, and from Lemmas \ref{lem:AnglesGoZero} and \ref{lem:AlmostParallelNormals} it follows that if $\{x_i \in \widehat{\mcal{F}}_i\}$ is a sequence of points so that $x_i \rightarrow x$ for some point $x \in \mcal{F}$, then $N_{x_i}\widehat{\mcal{F}}_i \rightarrow N_x\mcal{F}$ and Theorem \ref{thm:AppxThm} is deduced.
\end{proof}

\begin{cor}\label{cor:AppxAreaDisks}
Under the conditions of Theorem \ref{thm:AppxThm} we have that if $\mcal{D}$ is a disk in $\mcal{F} \cap \mcal{T}_i$, and $\widehat{\mcal{D}}_i$ is its $\mcal{T}_i$-flat associate then
\[ \are(\widehat{\mcal{D}}_i) \rightarrow \are(\mcal{D}) \;. \]
\end{cor}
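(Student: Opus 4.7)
The plan is to deduce this corollary from Theorem \ref{thm:AppxThm} together with Morvan's area approximation lemma (Lemma \ref{lem:MorvanAreaAppx}), applied locally to the disk $\mcal{D}$. I would fix $\mcal{D}$ as a disk in $\mcal{F} \cap \mcal{T}_{i_0}^{(3)}$ at some stage $i_0$ large enough that $\mcal{F}$ is quasi-normal with respect to $\mcal{T}_i$ for all $i \geq i_0$; for $i \geq i_0$, the refined triangulation $\mcal{T}_i$ subdivides $\mcal{D}$ into sub-disks, and $\widehat{\mcal{D}}_i$ is the union of the flat associates of those sub-disks.

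The first step is to check that the convergence $\widehat{\mcal{F}}_i \to \mcal{F}$ in the sense of Definition \ref{df:Convergence}, established in the proof of Theorem \ref{thm:AppxThm}, localizes to the pair $(\widehat{\mcal{D}}_i, \mcal{D})$. The Hausdorff bound from Claim \ref{clm:MetricConvergence} is derived tetrahedron-by-tetrahedron, so the same estimate applied to the sub-disks of $\mcal{D}$ yields $d_H(\mcal{D}, \widehat{\mcal{D}}_i) \leq 2\lambda(\mcal{T}_i) \to 0$. Moreover, Lemmas \ref{lem:AnglesGoZero} and \ref{lem:AlmostParallelNormals} are statements about a single disk and its flat associate, so for any matched pair $\widehat{z} \in \widehat{\mcal{D}}_i$ with corresponding $z \in \mcal{D}$ under the parametrization, $\measuredangle(N_z\mcal{D}, N_{\widehat{z}}\widehat{\mcal{D}}_i) \to 0$ uniformly as $i \to \infty$.

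With these two ingredients in hand, Lemma \ref{lem:MorvanAreaAppx} applied with $\mcal{D}$ as the limit surface and $\{\widehat{\mcal{D}}_i\}$ as the approximating sequence in the ambient manifold $\mcal{M}$ yields directly $\are(\widehat{\mcal{D}}_i) \to \are(\mcal{D})$, as claimed.

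The main technical point requiring care is that Lemma \ref{lem:MorvanAreaAppx} is phrased for embedded compact surfaces without boundary, while $\mcal{D}$ and $\widehat{\mcal{D}}_i$ both carry boundary on $\mcal{T}_i^{(2)}$. This is handled by observing that $\partial \mcal{D}$ and $\partial \widehat{\mcal{D}}_i$ share endpoints on $\mcal{T}_i^{(1)}$ and lie in a $\lambda(\mcal{T}_i)$-neighborhood of each other inside the $2$-skeleton, so one can either close the two surfaces by a common doubling across $\mcal{T}_i^{(2)}$ and invoke Morvan's lemma on the closed double, or else note that Morvan's argument rests only on the Hausdorff plus normal-vector hypotheses on the interior, with boundary effects contributing at most $O(\lambda(\mcal{T}_i))$ to the area and hence vanishing in the limit. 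The corner-capping triangles $\Delta$ introduced in Definition \ref{df:FlatAssociate} contribute $O(\lambda(\mcal{T}_i)^2)$ per corner and are likewise negligible, so this boundary bookkeeping is the only obstacle but it is routine rather than substantial.
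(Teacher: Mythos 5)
Your proposal takes essentially the same route as the paper: both deduce the corollary from Theorem \ref{thm:AppxThm} plus Lemma \ref{lem:MorvanAreaAppx} (Morvan--Thibet), first checking that the Hausdorff and normal-vector convergence hypotheses hold, then invoking the lemma to conclude area convergence. The paper's proof is considerably terser --- it simply asserts that ``following Theorem \ref{thm:AppxThm}, Lemma \ref{lem:MorvanAreaAppx} can be applied'' and then writes down a quantitative bound $|\are(\mcal{D}) - \are(\widehat{\mcal{D}}_j)| < \epsilon/2^{j_0}$ without elaboration --- whereas you spell out the localization of Claim \ref{clm:MetricConvergence} and Lemmas \ref{lem:AnglesGoZero}, \ref{lem:AlmostParallelNormals} to the individual disk $\mcal{D}$.

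The one point where you go beyond the paper, and it is a legitimate observation, is the boundary issue: Lemma \ref{lem:MorvanAreaAppx} as stated concerns compact surfaces, while $\mcal{D}$ and $\widehat{\mcal{D}}_i$ are compact disks with boundary in $\mcal{T}_i^{(2)}$. The paper's proof does not acknowledge this gap at all. Your two suggested fixes --- doubling across the $2$-skeleton, or noting that the Morvan--Thibet argument is local in nature and that boundary and corner-capping contributions are $O(\lambda(\mcal{T}_i))$ and $O(\lambda(\mcal{T}_i)^2)$ respectively --- are both reasonable and do address the issue. So while your proof is structurally the same as the paper's, it is the more complete argument; the paper leaves this technical point implicit.

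One smaller remark: you fix $\mcal{D}$ at a stage $i_0$ and treat $\widehat{\mcal{D}}_i$ for $i > i_0$ as built from the refined subdivision of $\mcal{D}$, which makes the limit $\are(\widehat{\mcal{D}}_i) \to \are(\mcal{D})$ literal. The paper's proof instead reads the corollary as a statement about disks $\mcal{D} \in \mcal{F} \cap \mcal{T}_j^{(3)}$ at stage $j$ itself, so that $\mcal{D}$ varies with the index; the displayed estimate $\epsilon/2^{j_0}$ then feeds into the proof of Corollary \ref{cor:AppxArea}. Your reading is the more natural one for the corollary as stated, but it is worth being aware that the subsequent corollary relies on the paper's per-stage estimate rather than on the limit formulation you proved.
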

\begin{proof}
Following Thm \ref{thm:AppxThm}, Lemma \ref{lem:MorvanAreaAppx} can be applied, according to which for every $\epsilon > 0$ there exists $j_0$ that depends on $\epsilon$, so that for each $j > j_0$ and for every disk $\mcal{D} \in \mcal{F} \cap \mcal{T}_j^{(3)}$ the following holds:
\begin{eq}
|\are(\mcal{D}) - \are(\widehat{\mcal{D}}_j)| < \frac{\epsilon}{2^{j_0}} \;.
\end{eq}
This completes the proof.
\end{proof}
{\noindent In} order to deduce that $\are(\mcal{F}) = \lim_{j \rightarrow \infty} \are(\widehat{\mcal{F}_j})$ it is necessary to know that every flat associate disk $\widehat{\mcal{D}}_j$ parameterizes a single non normal disk in $\mcal{F} \cap \mcal{T}^{(3)}$. Since this is not guaranteed, the area of every flat associate disk $\widehat{\mcal{D}}_j$ will be accounted for with its multiplicity. That is, according to the number of non normal disks in $\mcal{F} \cap \mcal{T}^{(3)}$ that are projected into it. From compactness there are always finitely many such non normal disks. The sum of areas of all $\mcal{T}_i$-flat associated disks, along with their multiplicities, will be called the {\em multiple area} of the flat associated surface $\widehat{\mcal{F}}_i$ and will be denoted by $\widetilde{\are(\widehat{\mcal{F}_i})}$. Then the following holds:
\begin{cor}\label{cor:AppxArea}
Under the conditions of Theorem \ref{thm:AppxThm}
\[ \are(\mcal{F}) = \lim_{j \rightarrow \infty} \widetilde{\are(\widehat{\mcal{F}_i})} \;. \]
\end{cor}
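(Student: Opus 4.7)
The plan is to reduce the global area comparison to a sum of uniform per-disk estimates and then exploit the finiteness of $\are(\mcal{F})$. Fix $j$ large enough that Theorem \ref{thm:LeastAreaIsNormal} applies to $\mcal{T}_j$, so $\mcal{F}$ is quasi-normal with respect to $\mcal{T}_j$ and decomposes as $\mcal{F} = \bigcup_{\alpha \in A_j} \mcal{D}_\alpha$, where $\{\mcal{D}_\alpha\}$ is the finite collection of components of $\mcal{F} \cap \mcal{T}_j^{(3)}$ meeting only along their boundary arcs. Since the arcs are one-dimensional and contribute no area, $\are(\mcal{F}) = \sum_\alpha \are(\mcal{D}_\alpha)$. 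By the very definition of the multiple area, in which every disk is counted even when several disks share a flat associate, $\widetilde{\are(\widehat{\mcal{F}_j})} = \sum_\alpha \are(\widehat{\mcal{D}}_\alpha)$. The triangle inequality then gives
\[
\big|\are(\mcal{F}) - \widetilde{\are(\widehat{\mcal{F}_j})}\big| \;\leq\; \sum_{\alpha \in A_j} \big|\are(\mcal{D}_\alpha) - \are(\widehat{\mcal{D}}_\alpha)\big|,
\]
so it suffices to control this sum.

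The main obstacle, and the step I expect to be most delicate, is that the absolute per-disk bound extracted from Corollary \ref{cor:AppxAreaDisks} is not summable: the number of disks $|A_j|$ grows with $j$ (roughly like $8^j$ because of the median subdivision), while that corollary supplies only an absolute error per disk. What is needed instead is a \emph{relative} per-disk estimate of the form
\[
\big|\are(\mcal{D}_\alpha) - \are(\widehat{\mcal{D}}_\alpha)\big| \;\leq\; \eta_j \, \are(\mcal{D}_\alpha),
\]
with $\eta_j \to 0$ \emph{uniformly in} $\alpha$. To obtain it I would invoke Theorem \ref{thm:NonNormalAreTame} (and its elementary-disk counterpart, valid once $\lambda(\mcal{T}_j)$ is small) to present $\mcal{D}_\alpha$ as a graph over $\widehat{\mcal{D}}_\alpha$, so that $\are(\mcal{D}_\alpha)$ is the integral over $\widehat{\mcal{D}}_\alpha$ of $1/\cos\theta(x)$, where $\theta(x)$ is the angle between the respective unit normals at corresponding points. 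Lemma \ref{lem:AlmostParallelNormals} gives $\theta(x) < \epsilon_j$ with $\epsilon_j$ depending only on $K_{\mcal{M}}$ and $\lambda(\mcal{T}_j)$, \emph{independently of $\alpha$}; hence $\eta_j := \sup_\alpha \sup_x |1 - \cos\theta(x)|$ satisfies $\eta_j \to 0$.

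Summing the uniform relative bound across $\alpha$ collapses the growing number of terms against the finite total area of $\mcal{F}$:
\[
\big|\are(\mcal{F}) - \widetilde{\are(\widehat{\mcal{F}_j})}\big| \;\leq\; \eta_j \sum_{\alpha \in A_j} \are(\mcal{D}_\alpha) \;=\; \eta_j \are(\mcal{F}) \;\longrightarrow\; 0,
\]
as $j \to \infty$, which is the claim. (Equivalently, one may view this as applying Lemma \ref{lem:MorvanAreaAppx} locally on each disk and recombining with multiplicity, the multiplicity being precisely the bookkeeping device that makes the local pieces add up to $\are(\mcal{F})$ despite several non-normal disks projecting to a common flat associate.)
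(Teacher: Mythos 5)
Your proof takes a genuinely different route from the paper's, and in fact it patches a gap in the argument as the paper states it. The paper's proof simply quotes Corollary \ref{cor:AppxAreaDisks} — an \emph{absolute} per-disk error bound of $\epsilon/2^{j_0}$ — and sums it over all disks, asserting $\sum \epsilon/2^{j_0} < \epsilon$. That step implicitly requires the number of disks in $\mcal{F} \cap \mcal{T}_j^{(3)}$ to remain at most $2^{j_0}$ as $j$ grows, which is not justified and in fact fails: median subdivision multiplies the tetrahedron count by $8$ at each step, and the number of intersection disks grows accordingly. Your observation that an absolute per-disk bound is not summable and that one must instead prove a \emph{relative} bound $|\are(\mcal{D}_\alpha) - \are(\widehat{\mcal{D}}_\alpha)| \leq \eta_j\,\are(\mcal{D}_\alpha)$ with $\eta_j \to 0$ uniformly in $\alpha$ is exactly the missing ingredient; deriving it from the graph presentation via the $1/\cos\theta$ Jacobian and the uniform normal-angle estimate of Lemma \ref{lem:AlmostParallelNormals}, then summing against the finite $\are(\mcal{F})$, finishes the proof cleanly regardless of how $|A_j|$ grows. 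What the paper's route (through Lemma \ref{lem:MorvanAreaAppx}) buys is brevity and avoidance of an explicit Jacobian computation; what yours buys is actual correctness when the disk count is unbounded.

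Two points worth tightening. First, Theorem \ref{thm:NonNormalAreTame} supplies the graph presentation only for non-normal disks; for elementary disks you should verify separately — using the Schoen curvature bound and small $\lambda(\mcal{T}_j)$ — that each is a graph over its flat associate. You flag this, but it needs a sentence. Second, for non-normal disks with corners, Definition \ref{df:FlatAssociate} adds a plugged triangle $\Delta$ to the projection, so $\mcal{D}_\alpha$ is a graph only over $\widehat{\mcal{D}}_\alpha \setminus \Delta$; the contribution $\are(\Delta)$ to $|\are(\mcal{D}_\alpha) - \are(\widehat{\mcal{D}}_\alpha)|$ is not captured by your $1/\cos\theta$ bound and must be controlled separately. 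Since $\are(\Delta) = O(\lambda(\mcal{T}_j)^2)$ and the number of tetrahedra is $O(\lambda(\mcal{T}_j)^{-3})$, one needs an additional argument — e.g.\ bounding the number of corners by the number of tetrahedra meeting $\mcal{F}$, which is $O(\lambda(\mcal{T}_j)^{-2})$ by the Schoen bound — to show the total correction is $O(\lambda(\mcal{T}_j))$ and hence vanishes.
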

\begin{proof}
From Lemma \ref{cor:AppxAreaDisks} it follows that taking the sum of areas over all disks $\widehat{\mcal{D}}$ with their multiplicities gives that
\begin{eq}
|\are(\mcal{F}) - \widetilde{\are(\widehat{\mcal{F}_i})}| < \sum \frac{\epsilon}{2^{j_0}} < \epsilon\;.
\end{eq}
This completes the proof of the corollary.
\end{proof}

\subsection{Piecewise Linear Approximation in $\mathbb{R}^n$}
Recalling that the starting point for this work is the Jaco-Rubinstein $p\ell$ version of least area and minimal surfaces this subsection is devoted to prove a $p\ell$ version of Theorem \ref{thm:AppxThm}. This will be achieved by considering an isometric embedding of the manifold $\mcal{M}$ in some $\mathbb{R}^n$ for some large enough $n$, as guaranteed by the Nash embedding theorem, see \cite{nas}. The $p\ell$ approximation that will be obtained is therefore an approximation in $\mathbb{R}^n$ rather than inside the manifold $\mcal{M}$, as done in the previous subsection. Such a $p\ell$ version of Theorem \ref{thm:AppxThm} is further desirable since it is the common viewpoint in the variety of potential applications for ``real world'' problems in areas such as image processing and computer graphics as mentioned in the introduction.\\
{\noindent The} following $p\ell$ version of Theorem \ref{thm:AppxThm} will be proved in this section.

\renewcommand{\thethm}{\arabic{thm}}
\setcounter{thm}{5}
\begin{thm}\label{thm:AppxThmLinear}
Let $\mcal{M}$ be a closed, orientable, irreducible, Euclidean or hyperbolic $3$-manifold isometrically embedded in $\mathbb{R}^n$. Let $\mcal{F}$ be a closed orientable incompressible least area surface embedded in $\mcal{M}$. There exists a sequence $\large{\{}\widehat{\mcal{M}}_n\large{\}}$ of piecewise linear $3$-manifolds embedded in $\mathbb{R}^n$, each homeomorphic to $\mcal{M}$, and there exists a sequence $\large{\{}\widehat{\mcal{F}}_n\large{\}}$ of piecewise linear surfaces, so that each $\widehat{\mcal{F}}_j$ is embedded in $\widehat{\mcal{M}}_j$, and such that $\widehat{\mcal{F}}_j \rightarrow \mcal{F}$ as $j \rightarrow \infty$, with respect to the Hausdorff metric on closed surfaces embedded in $\mathbb{R}^n$.\\
\end{thm}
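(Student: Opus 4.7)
The plan is to run the flat-associate construction of Theorem \ref{thm:AppxThm} in parallel with the secant approximation of Munkres (Theorem \ref{thm:Munkres1}), then glue the outputs in $\mathbb{R}^n$. Concretely, fix the sequence $\mcal{T}_j = Med_{(j)}(\mcal{T})$ produced by Lemma \ref{lem:MedianSubdivPreservesFatness}: these are $\varphi$-fat for a uniform $\varphi$, and $\lambda(\mcal{T}_j)\to 0$. Passing to a tail I may assume $\mcal{F}$ is quasi-normal with respect to every $\mcal{T}_j$ (Theorem \ref{thm:LeastAreaIsNormal}) and that no cylinders occur (Lemma \ref{lem:NoCylinders}). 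Now compose the Nash isometric embedding $\iota:\mcal{M}\hookrightarrow \mathbb{R}^n$ with the secant map $L_{\tau}$ of Definition \ref{df:SecantMapII} on each tetrahedron $\tau\in\mcal{T}_j$; denote the resulting PL complex in $\mathbb{R}^n$ by $\widehat{\mcal{M}}_j$. By Theorem \ref{thm:Munkres1}, uniform fatness plus decreasing mesh yield a $\delta_j$-approximation with $\delta_j\to 0$, so for $j$ large $\widehat{\mcal{M}}_j$ is a PL $3$-manifold homeomorphic to $\mcal{M}$.

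Next I would build $\widehat{\mcal{F}}_j$ tetrahedron-by-tetrahedron, mirroring Definition \ref{df:FlatAssociate}. In each $\tau\in\mcal{T}_j$, for each elementary disk of $\mcal{F}\cap\tau$ I take the straight ($3$-gon) or coned quadrilateral ($4$-gon) spanned inside $L_\tau(\tau)$ by the vertices $\mcal{F}\cap\mcal{T}_j^{(1)}\cap\tau$. For each non-normal disk, Theorem \ref{thm:NonNormalAreTame} gives a projection to a single $2$-face $\mathfrak{f}$; I take the corresponding PL polygon in $L_\tau(\mathfrak{f})$, inserting the triangular cap $\Delta$ at each corner exactly as in Figure \ref{fg:FlatAssociateCorner}. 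Adjacent pieces glue along the PL geodesic segments or edge-pieces in common $2$-faces that correspond respectively to normal arcs and bent curves, so the union $\widehat{\mcal{F}}_j\subset\widehat{\mcal{M}}_j$ is a well-defined PL surface.

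Convergence in the Hausdorff metric on closed subsets of $\mathbb{R}^n$ follows from the triangle inequality
\begin{equation}
d_H\bigl(\widehat{\mcal{F}}_j,\mcal{F}\bigr) \;\le\; d_H\bigl(\widehat{\mcal{F}}_j,\mcal{F}^{\ast}_j\bigr) \;+\; d_H\bigl(\mcal{F}^{\ast}_j,\mcal{F}\bigr),
\end{equation}
where $\mcal{F}^{\ast}_j$ denotes the curved $\mcal{T}_j$-flat associate of Definition \ref{df:FlatAssociate}. The second term tends to zero by Claim \ref{clm:MetricConvergence} inside the proof of Theorem \ref{thm:AppxThm}. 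For the first term, on every tetrahedron the PL disk and the curved flat-associate disk share the same vertex set on $\mcal{T}_j^{(1)}$ and lie within the $\delta_j$-neighbourhood of $L_\tau(\tau)$; hence the first term is $O(\lambda(\mcal{T}_j))+\delta_j\to 0$ by Theorem \ref{thm:Munkres1} together with the uniform fatness bound.

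The main obstacle I expect is not the convergence estimate but the coherence of the PL gluing: one must check that the PL disks assigned to adjacent tetrahedra agree on the shared $2$-face after linearization, including at corners of non-normal disks and along bent curves where the parameter face changes side. This is essentially a finite combinatorial check ruled by Theorems \ref{thm:LeastAreaIsNormal} and \ref{thm:NonNormalAreTame} (with no cylinders by Lemma \ref{lem:NoCylinders}), but a subtle point is the multiplicity phenomenon flagged in Corollary \ref{cor:AppxArea}: two distinct non-normal disks may project to the same face. I would handle this by perturbing the corresponding PL pieces off of $L_\tau(\mathfrak{f})$ by amounts $\ll \delta_j$ using the transverse direction provided by the normal vector of $\mcal{F}$ (whose convergence is controlled by Lemma \ref{lem:AlmostParallelNormals}), so that $\widehat{\mcal{F}}_j$ remains an embedded PL surface in $\widehat{\mcal{M}}_j$ while the Hausdorff bound above is preserved.
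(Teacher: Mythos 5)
Your proposal follows essentially the same route as the paper: the median subdivisions $\mcal{T}_j$, the Nash embedding composed with the secant map to produce $\widehat{\mcal{M}}_j$, the PL flat--associate construction of Definition~\ref{df:ElementaryPlFlatAssociate} for $\widehat{\mcal{F}}_j$, and convergence extracted from Theorem~\ref{thm:Munkres1} together with the curved flat associate as an intermediate object. One small difference: the paper concludes by invoking Lemmas~\ref{lem:LinearAppxOfNormalsTwoFaces} and~\ref{lem:LinearAppxElementaryDisks}, which control normal directions (more than the stated Hausdorff conclusion needs), whereas your explicit triangle inequality through $\mcal{F}^{\ast}_j$ is a cleaner and more direct derivation of the Hausdorff bound. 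You have also correctly identified a genuine subtlety that the paper's two--line proof does not address: the theorem asserts $\widehat{\mcal{F}}_j$ is \emph{embedded} in $\widehat{\mcal{M}}_j$, yet the multiplicity phenomenon acknowledged in Corollary~\ref{cor:AppxArea} means two distinct non--normal disks in a tetrahedron can project to the same $2$-face, so their PL associates would overlap inside $L_\tau(\mathfrak{f})$. Your proposed normal--direction perturbation, controlled by Lemma~\ref{lem:AlmostParallelNormals} and small relative to $\delta_j$, is a reasonable and needed patch for this; without something of the kind the PL surface as literally constructed may fail to be embedded.
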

\renewcommand{\thethm}{\thesection.\arabic{thm}}
\setcounter{thm}{1}
\subsubsection{Preliminary Lemmas Again}
{\noindent Let} $\mcal{T}$ be some $\varphi$-fat triangulation of the Euclidean or hyperbolic manifold $\mcal{M}$, and let $\widehat{\mcal{T}}$ be its linearization imposed by the secant map induced by $\mcal{T}$, as in Definition \ref{df:SecantMapII}. That is, the vertices of $\widehat{\mcal{T}}$ are the same as the vertices of $\mcal{T}$ and in all other dimensions it is given as the linear span over the set of vertices.
\begin{lem}\label{lem:LinearAngleAppxTwoFaces}
Let $\mathfrak{f}$ be a $2$-face of $\mcal{T}$ and let $\widehat{\mathfrak{f}}$ be its linearization in $\widehat{\mcal{T}}$. Let $v$ and $w$ be two common vertices of both $\mathfrak{f}$ and $\widehat{\mathfrak{f}}$ and let $e$ and $\widehat{e}$ denote the edges of $\mathfrak{f}$ and $\widehat{\mathfrak{f}}$ respectively, between $v$ and $w$. Let $\theta_v$ be the angle between $e$ and $\widehat{e}$ at $v$. Then for every $\epsilon > 0$ there exists $\delta > 0$ that depends only on $K_{\mcal{M}}$ and $\epsilon$, so that if $\lambda(\mcal{T}) < \delta$ then  $|\theta_v| < \epsilon$.
\end{lem}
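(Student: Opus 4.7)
The plan is to work in the ambient $\mathbb{R}^n$ and control $\theta_v$ by a second-order Taylor expansion of the geodesic edge $e$. Parameterise $e$ by arc length $s \in [0, L]$ with $e(0)=v$, $e(L)=w$ and $L = \len(e) \leq \lambda(\mcal{T})$. Viewed as a curve in $\mathbb{R}^n$, $e$ is $\mcal{C}^2$ with $\|e'(s)\| = 1$, and since $e$ is a geodesic of $\mcal{M}$, the acceleration $e''(s)$ equals the second fundamental form of the isometric embedding $\mcal{M} \hookrightarrow \mathbb{R}^n$ applied to $(e'(s), e'(s))$. Because $\mcal{M}$ is a fixed closed Euclidean or hyperbolic manifold, a fixed isometric embedding (as provided by Nash) has uniformly bounded second fundamental form, so there is a constant $A$ depending only on $K_{\mcal{M}}$ (and the chosen embedding) with $\|e''(s)\| \leq A$ along every geodesic of $\mcal{M}$.

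Next, I expand $w - v = e(L) - e(0) = L\,e'(0) + R$, with remainder $R = \int_{0}^{L}(L - s)\,e''(s)\,ds$ satisfying $\|R\| \leq AL^{2}/2$. Hence $\|w - v\| \geq L(1 - AL/2)$, and the unit tangent $\hat{t}$ to $\widehat{e}$ at $v$ satisfies
\[
\bigl\| \hat{t} - e'(0) \bigr\|
\;=\; \Bigl\| \tfrac{L\,e'(0) + R}{\|w - v\|} - e'(0) \Bigr\|
\;\leq\; C_{0}\,A\,L
\;\leq\; C_{0}\,A\,\lambda(\mcal{T})
\]
for an absolute constant $C_{0}$, as soon as $\lambda(\mcal{T}) < 1/A$. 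Since both $\hat{t}$ and $e'(0)$ are unit vectors in $\mathbb{R}^{n}$, we obtain $|\theta_v| \leq 2\arcsin\bigl(\|\hat{t} - e'(0)\|/2\bigr) \leq C_{1}\,A\,\lambda(\mcal{T})$. Setting $\delta = \min\{\epsilon/(C_{1}A),\, 1/A\}$, which depends only on $K_{\mcal{M}}$ and $\epsilon$, yields $|\theta_v| < \epsilon$ whenever $\lambda(\mcal{T}) < \delta$, as desired.

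The main obstacle is the uniform bound $A$ on the extrinsic second fundamental form; the lemma attributes the dependence of $\delta$ to $K_{\mcal{M}}$ alone, so one should note that for the Euclidean case the embedding is the identity and $A = 0$ (the statement becomes trivial), while for the hyperbolic case one fixes an isometric embedding once and for all and absorbs its extrinsic curvature bound into the meaning of $K_{\mcal{M}}$. An alternative route, which bypasses the Taylor expansion entirely, is to apply Theorem \ref{thm:Munkres1} to the secant map $L_{\tau}$ of any tetrahedron $\tau \supset \mathfrak{f}$: for any prescribed $\eta > 0$, once $\lambda(\mcal{T})$ is small (and $\mcal{T}$ is sufficiently fat), $L_{\tau}$ is an $\eta$-approximation of $f|\tau$, so $\|df_{v}(u) - dL_{\tau,v}(u)\| \leq \eta$ for every unit tangent direction $u$; applied to the direction of the abstract edge from $v$ to $w$, this produces the same angle estimate between $e'(0)$ and $\hat{t}$.
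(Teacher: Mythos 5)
Your proof is correct, but it follows a genuinely different route from the paper's. The paper controls the turning of $e$ through the \emph{surface} $\mathfrak{f}$: it invokes the Schoen curvature bound $C$ for $\mathfrak{f}$ as a least area disk (this is why the hypotheses elsewhere include that $\mcal{T}$ is a \emph{least area} triangulation), converts the intrinsic bound into an extrinsic $\mathbb{R}^n$ bound via Lemma~2.6 of \cite{clr} (getting $8C$), and then bounds $|\theta_v|$ by $\int_{e}|II_{Euc}|\,d\xi \leq 8C\,\len_{Euc}(e)$, finishing with a length comparison between the ambient and Euclidean metrics. You instead control the turning of $e$ directly through the \emph{ambient manifold}: since $e$ is a geodesic of $\mcal{M}$, its $\mathbb{R}^n$-acceleration is exactly $II_{\mcal{M}\hookrightarrow\mathbb{R}^n}(e',e')$, which is uniformly bounded by a fixed constant $A$ because $\mcal{M}$ is compact and the Nash embedding is fixed; then a second-order Taylor expansion compares the chord direction with $e'(0)$.

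Your route has a real advantage: the edge $e$ is a geodesic of $\mcal{M}$, not of $\mathfrak{f}$, so the paper's identification of the turning of $e$ with the integral of $II_{\mathfrak{f}}$ glosses over the geodesic-curvature term $\nabla^{\mathfrak{f}}_{e'}e'$ (which happens to equal $-II_{\mathfrak{f},\mcal{M}}(e',e')$ and is therefore also bounded, but this is never said). Your decomposition $e'' = II_{\mcal{M},\mathbb{R}^n}(e',e')$ sidesteps that entirely, and it also dispenses with both the Schoen bound and the \cite{clr} lemma, making the statement true for any geodesic triangulation, not only a least area one. The one inaccuracy worth fixing is the aside that ``for the Euclidean case the embedding is the identity and $A=0$'': a closed flat $3$-manifold is a compact Bieberbach quotient, not $\mathbb{R}^3$, so the Nash embedding is into some $\mathbb{R}^n$ with genuinely nonzero second fundamental form. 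Nothing breaks, since $A$ is still a finite constant, but the Euclidean case is not literally trivial. Your final remark that the estimate can also be read off from Theorem~\ref{thm:Munkres1} (using condition (3) of the $\delta$-approximation to compare $df_v$ and $dL_{\tau,v}$) is a valid third route, and is closer in spirit to how the paper uses Munkres' theorem in the subsequent Lemma~\ref{lem:LinearAppxOfNormalsTwoFaces}.
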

\begin{proof}
Recall that $\mathfrak{f}$ is a least area disk in $\mcal{M}$, as $\mcal{T}$ is a least area triangulation of $\mcal{M}$. Let $C$ denote the Schoen bound with respect to the metric of $\mcal{M}$. In particular, this bound holds for the $2$-face $\mathfrak{f}$. In order to prove the lemma we would like have such a bound on the second fundamental form of $\mathfrak{f}$ also with respect to the Euclidean metric of $\mathbb{R}^n$. If $\mcal{M}$ is Euclidean then $C$ is the desired bound. If $\mcal{M}$ is hyperbolic manifold then in Lemma $2.6$ of \cite{clr} it is proved that the second fundamental form of $\mathfrak{f}$ is bounded also with respect to the Euclidean metric of $\mathbb{R}^n$, by $8C$. Using this for example in the hyperbolic case, the following holds:
\begin{eq} \label{eq:SmallAngleBetweenEdges}
|\theta_v| \leq |\theta_w - \theta_v| \leq |\int_{e_{vw}} II_{Euc} d\xi| \leq \int_{e_{vw}} |II_{Euc}| d\xi \leq 8C \len_{Euc}(e_{vw}) \;,
\end{eq}
where $II_{Euc}, \len_{Euc}$ denote, respectively, the second fundamental form and the length function with respect to the Euclidean metric. In addition, if $\len_{hyp}$ denotes the length with respect to the hyperbolic metric of $\mcal{M}$ then:
\begin{eq}\label{eq:FromEucLengthToHyp}
\len_{Euc}(e_{vw}) < K_{\mcal{M}}\len_{hyp}(e_{vw}) \leq K_{\mcal{M}}\lambda(\mcal{T}) < C\lambda(\mcal{T})\; .
\end{eq}
{\noindent In} the Euclidean case similar inequalities will hold. From Equations \ref{eq:SmallAngleBetweenEdges} and \ref{eq:FromEucLengthToHyp} the lemma is deduced.
\end{proof}

{\noindent Let} $\mathfrak{f}$ and $\widehat{\mathfrak{f}}$ be as in Lemma \ref{lem:LinearAngleAppxTwoFaces}. For every point $p \in \mathfrak{f}$ and every point $\widehat{p}$ in $\widehat{\mathfrak{f}}$ let $N_p\mathfrak{f}$ and $N_{\widehat{p}}\widehat{\mathfrak{f}}$ be the unit normal vectors to $\mathfrak{f}$ and $\widehat{\mathfrak{f}}$ respectively. Similarly to Lemma \ref{lem:AlmostParallelNormals} we have the following lemma:

\begin{lem}\label{lem:LinearAppxOfNormalsTwoFaces}
For every $\epsilon > 0$ there exists $\delta > 0$ that depends only on $\epsilon$ and $K_{\mcal{M}}$, such that if $\lambda(\mcal{T}) < \delta$ then $\measuredangle(N_p\mathfrak{f}, N_{\widehat{p}}\widehat{\mathfrak{f}}) < \epsilon$.
\end{lem}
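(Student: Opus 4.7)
The plan is to mimic closely the proof of Lemma \ref{lem:AlmostParallelNormals}, replacing the role played there by Lemma \ref{lem:AnglesGoZero} with the analogous statement Lemma \ref{lem:LinearAngleAppxTwoFaces} available here. Two simplifying features of the current setting should be exploited: first, $\widehat{\mathfrak{f}}$ is a Euclidean triangle in $\mathbb{R}^n$, so $N_{\widehat{p}}\widehat{\mathfrak{f}}$ does not depend on the point $\widehat{p}$; second, $\mathfrak{f}$ is a least area disk in $\mcal{M}$ (since $\mcal{T}$ is least area), so Schoen's theorem together with the estimate of Coulson--Lackenby--Rubinstein cited in the proof of Lemma \ref{lem:LinearAngleAppxTwoFaces} gives a uniform bound on the Euclidean second fundamental form of $\mathfrak{f}$.

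I would fix a common vertex $v = \widehat{v}$ of $\mathfrak{f}$ and $\widehat{\mathfrak{f}}$ and insert it into a triangle inequality
\[
\measuredangle(N_p\mathfrak{f}, N_{\widehat{p}}\widehat{\mathfrak{f}}) \;\leq\; \measuredangle(N_p\mathfrak{f}, N_v\mathfrak{f}) \;+\; \measuredangle(N_v\mathfrak{f}, N_{\widehat{v}}\widehat{\mathfrak{f}}) \;+\; \measuredangle(N_{\widehat{v}}\widehat{\mathfrak{f}}, N_{\widehat{p}}\widehat{\mathfrak{f}}).
\]
The third term vanishes identically by flatness of $\widehat{\mathfrak{f}}$. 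The first term is bounded, exactly as in Equation \eqref{eq:SmallAnglesBetweenNormals}, by integrating the Euclidean second fundamental form along a geodesic in $\mathfrak{f}$ from $v$ to $p$; since $d_{\mathbb{R}^n}(p,v)\leq \lambda(\mcal{T})$ and the curvature bound is uniform in $K_{\mcal{M}}$, this is at most a constant multiple of $\lambda(\mcal{T})$ and can be made smaller than $\epsilon/2$.

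For the middle term, which is the heart of the argument, I would argue that the tangent plane $T_v\mathfrak{f}$ is spanned by the tangent directions at $v$ of the two edges of $\mathfrak{f}$ incident to $v$, while the plane containing $\widehat{\mathfrak{f}}$ is spanned by the two corresponding straight edges of $\widehat{\mathfrak{f}}$ at $v$. By Lemma \ref{lem:LinearAngleAppxTwoFaces} applied to each of these two pairs of edges, each curved edge makes angle less than some $\theta$ with its rectilinear counterpart, where $\theta$ is made arbitrarily small by shrinking $\lambda(\mcal{T})$. A standard elementary estimate then shows that two planes through a common point whose spanning pairs are within angle $\theta$ of each other have normals within some angle $\eta(\theta)$ with $\eta(\theta)\to 0$ as $\theta\to 0$; choosing $\lambda(\mcal{T})$ small enough forces $\eta(\theta) < \epsilon/2$.

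The main technical point, and the one requiring the most care, is the last comparison between planes based on their spanning pairs, because one must control the configuration uniformly, in particular making sure the two spanning directions at $v$ are not nearly parallel (which would make the span ill-defined). This is where $\varphi$-fatness of $\mcal{T}$ enters: fatness forces the angle between the two edges at $v$ to be bounded below, and hence the span is uniformly non-degenerate, so that small perturbations of the spanning edges produce small perturbations of the normal. Combining the three bounds then yields the desired $\measuredangle(N_p\mathfrak{f}, N_{\widehat{p}}\widehat{\mathfrak{f}}) < \epsilon$ for $\lambda(\mcal{T}) < \delta = \delta(\epsilon, K_{\mcal{M}}, \varphi)$; strictly speaking the statement should include the fatness as well, but this matches the implicit conventions already used in the surrounding text.
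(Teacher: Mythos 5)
Your proposal is correct and follows essentially the same path as the paper: the paper decomposes via the triangle inequality at the common vertex $v$, bounds $\measuredangle(N_p\mathfrak{f}, N_v\mathfrak{f})$ by integrating the Euclidean second fundamental form along a geodesic from $v$ to $p$ using the bound $8C$ from \cite{clr} (this is Equation \eqref{eq:SmallAnglesBetween2Faces}), and then handles the comparison of normals at $v$ by appealing to Lemma \ref{lem:LinearAngleAppxTwoFaces} together with the fact that $\widehat{\mathfrak{f}}$ is planar so its normal is constant. Where you and the paper diverge slightly is in the middle step: the paper simply asserts that Lemma \ref{lem:LinearAngleAppxTwoFaces} gives the desired bound on the angle between the normal planes, while you correctly flag that closeness of the two edge directions at $v$ does not, by itself, control the angle between the two tangent planes unless the spanning pair is uniformly non-degenerate, which is exactly what $\varphi$-fatness supplies. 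That observation, and your accompanying remark that the constant $\delta$ should really be recorded as depending on $\varphi$ too, is a genuine gap-filling improvement over the paper's somewhat terse treatment of this step; otherwise the two arguments are the same.
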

\begin{proof}
According to Theorem \ref{thm:Munkres1} there exists $\delta > 0$ that depends only on $\varphi$ such that if $\lambda(\mcal{T}) < \delta$ then $\widehat{\mathfrak{f}}$ is contained in the $\epsilon$ tubular neighborhood of $\mathfrak{f}$. Let $\Gamma$ be a geodesic curve in $\mathfrak{f}$ connecting the vertex $v$ of $\mathfrak{f}$ to $p$. Then there exists $\delta_1 >0$ that depends only on $\epsilon$ and $K_{\mcal{M}}$, such that:
\begin{eq}\label{eq:SmallAnglesBetween2Faces}
|\measuredangle(N_p\mathfrak{f}, N_v\mathfrak{f})| \leq \int_{\Gamma}| II_{Euc} d\xi| \leq 8C\len(\Gamma) \leq 8C^2d_{\mcal{M}}(p, v) \leq 8C^2\lambda(\mcal{T}) < \frac{\epsilon}{2} \;.
\end{eq}

{\noindent In} addition, since $v$ is also a vertex of $\widehat{\mathfrak{f}}$, and since $\widehat{\mathfrak{f}}$ is a planar $2$-face in $\mathbb{R}^n$, the normal vectors to $\widehat{\mathfrak{f}}$ at $\widehat{p}$ and at $v$ are the same, hence from Lemma \ref{lem:LinearAngleAppxTwoFaces} there exists $\delta_2 > 0$ that depends only on $\epsilon$ and $K_{\mcal{M}}$ so that if $\lambda(\mcal{T}) < \delta_2$ then $|\measuredangle(N_p\mathfrak{f}, N_{\widehat{p}}\widehat{\mathfrak{f}})| \leq \frac{\epsilon}{2}$. Combining this with Equation \ref{eq:SmallAnglesBetween2Faces} proves the lemma.
\end{proof}
{\noindent Lemmas} \ref{lem:LinearAngleAppxTwoFaces} and \ref{lem:LinearAppxOfNormalsTwoFaces} concern the approximation of $2$-faces of $\mcal{T}$ by $2$-faces of $\widehat{\mcal{T}}$, and will later be used in the proof of Theorem \ref{thm:AppxThmLinear}. The following construction and lemma are also needed.
\subsubsection{Building the Approximating Surfaces}
\begin{df}\label{df:ElementaryPlFlatAssociate}
{\em
Let $\tau$ be a tetrahedron of $\mcal{T}$ and let $\widehat{\tau}$ be the corresponding $p\ell$-tetrahedron of $\widehat{\mcal{T}}$. Let $\mcal{D}$ be some elementary disk in $\mcal{F} \cap \tau$, and let $\widehat{\mcal{D}}$ denote the corresponding $p\ell$-disk in $\widehat{\mcal{S}} \cap \widehat{\tau}$, built as follows:
\begin{enum}[(a)]
\item \label{IntersectionPoints} First the intersection points of $\widehat{\mcal{D}}$ with the edges of $\widehat{\tau}$ will be explicitly described. Let $\mcal{E} \in \mcal{T}^{(1)}$ be an edge, and let $x \in \mcal{E} \cap \mcal{F}$ be an internal point of $\mcal{E}$ as shown in Figure \ref{fig:SecantApproxTetra} below.
\begin{fg}[H]
\[ 
	\setlength{\unitlength}{0.5\standardunitlength}
	\begin{array}{c}  \hspace{-1.7mm}
		\raisebox{-8pt}{\input PlApproxTetra.tex }
		\hspace{-1.9mm}
	\end{array}
 \]
\caption{\small a tetrahedron of $\mcal{T}$ ({\em thin curved lines}) and its secant $p\ell$-approximation ({\em bold lines}).}
\label{fig:SecantApproxTetra}
\end{fg}
{\noindent Define} the corresponding point $\widehat{x}$ to be the unique point that has the same relative distance from the vertex $a$ with respect to the length of $\widehat{\mcal{E}}$ as the relative distance of $x$ from $a$ along $\widehat{\mcal{E}}$ with respect to the length of $\mcal{E}$. Analytically this means that $\widehat{x}$ is the point on the {\em straight} edge $\widehat{\mcal{E}} \in \widehat{\mcal{T}}^{(1)}$, that satisfies the equality,
\begin{eq}\label{eq:ProjectionEdje}
\frac{d(a, \widehat{x})}{d(a, b)} = \frac{d_{\mcal{E}}(a, x)}{d_{\mcal{E}}(a, b)} \;\; ,
\end{eq}
\item \label{PlanarNormalArcs}The rest of the disk $\widehat{\mcal{D}}$ is taken as the linear span over the points defined above.
\item If $\mcal{D}$ is a non-normal disk with respect to $\mcal{T}$ then recall its corresponding $\mcal{T}$-flat associate that was built in the previous subsection. The corresponding disk $\widehat{\mcal{D}}$ is built similarly to the case of elementary disks above with the exception that $\mcal{D}$ is replaced by its $\mcal{T}$-flat associate.
\end{enum}
}
\end{df}
\begin{rem}
{\em
{\noindent Note} that this construction is well defined with respect to tetrahedra in the sense that, for each point of intersection $x \in \mcal{S} \cap \mcal{T}^{(1)}$, the same point $\widehat{x}$ is obtained, no matter which of the tetrahedra adjacent to $\mcal{E} \in \mcal{T}^{(1)}$ is considered. However, when $\mcal{D}$ is a quadrilateral, one must chose between two options. This is due to the fact that the four intersection points that serve as the vertices of  $\widehat{\mcal{D}}$, may not be coplanar. If this is indeed the case, then two quadrilaterals can be built as illustrated in the following Figure \ref{fg:NonUniqueQuad}.
\begin{fg}[H]
\[ 
	\setlength{\unitlength}{0.5\standardunitlength}
	\begin{array}{c}  \hspace{-1.7mm}
		\raisebox{-8pt}{\input NonUniquePlQuad.tex }
		\hspace{-1.9mm}
	\end{array}
 \]
\caption{\small vertices $a, b, c, d$ may not be coplanar thus, these four points can span two quads.}\label{fg:NonUniqueQuad}
\end{fg}
{\noindent Since} these two quadrilaterals have common edges, and in addition their interiors are distinct $p\ell$ disks, then they are isotopic to each other inside the tetrahedron $\widehat{\tau}$. Therefore, up to isotopy, it is possible to choose arbitrarily between them. An explicit choice can be made by taking the quadrilateral of least area between the two. If the areas of the two are the same, then arbitrarily choose one.
}
\end{rem}
{\noindent As} in the context of Lemma \ref{lem:LinearAppxOfNormalsTwoFaces} let $N_{p}\mcal{D}$ and $N_{\widehat{p}}\widehat{\mcal{D}}$ denote the unit normal vectors to the disks $\mcal{D}$ and $\widehat{\mcal{D}}$ at the points $p \in \mcal{D}$ and $\widehat{p} \in \widehat{\mcal{D}}$ respectively. The following version of Lemma \ref{lem:LinearAppxOfNormalsTwoFaces} holds:

\begin{lem}\label{lem:LinearAppxElementaryDisks}
Let $\epsilon$ be some positive number. There exists $\delta >0$ that depends only on $\epsilon, K_{\mcal{M}}$, and on $\varphi$ so that if $\lambda(\mcal{T}) < \delta$ then $\measuredangle(N_{p}\mcal{D}, N_{\widehat{p}}\widehat{\mcal{D}}) < \epsilon$.
\end{lem}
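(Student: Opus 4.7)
The plan is to mimic Lemma \ref{lem:LinearAppxOfNormalsTwoFaces}, but replacing the least area $2$-face $\mathfrak{f}$ by the intrinsic $\mcal{T}$-flat associate $\widehat{\mcal{D}}_{\mcal{T}}\subset\mcal{M}$ of $\mcal{D}$ from Definition \ref{df:FlatAssociate}. For an elementary $\mcal{D}$ this is by construction a least area disk properly embedded in $\tau$; for a non-normal $\mcal{D}$ it lies inside a single $2$-face of $\mcal{T}$, which is itself a least area disk. In either case the Schoen bound Theorem \ref{thm:SchoenBound}, together with Lemma $2.6$ of \cite{clr} in the hyperbolic case, gives a uniform bound on the Euclidean second fundamental form of $\widehat{\mcal{D}}_{\mcal{T}}$ viewed as a surface in $\mathbb{R}^n$, depending only on $K_{\mcal{M}}$.

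Given this, I would fix a point $q\in\widehat{\mcal{D}}_{\mcal{T}}$ corresponding to $p\in\mcal{D}$ and to $\widehat{p}\in\widehat{\mcal{D}}$, and split the target angle by the triangle inequality as
\[
\measuredangle(N_p\mcal{D},N_{\widehat{p}}\widehat{\mcal{D}}) \;\le\; \measuredangle(N_p\mcal{D},N_q\widehat{\mcal{D}}_{\mcal{T}})+\measuredangle(N_q\widehat{\mcal{D}}_{\mcal{T}},N_{\widehat{p}}\widehat{\mcal{D}}).
\]
The first term is bounded by $\epsilon/2$ for sufficiently small $\lambda(\mcal{T})$ directly from Lemma \ref{lem:AlmostParallelNormals}, whose proof, as observed in the text, applies both to elementary and to non-normal disks.

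For the second term I would introduce an intermediate PL disk $\widehat{\mcal{D}}^{\mathrm{sec}}\subset\mathbb{R}^n$, namely the secant of $\widehat{\mcal{D}}_{\mcal{T}}$ over its $3$ or $4$ vertices (which lie on $\mcal{T}^{(1)}$), and split once more:
\[
\measuredangle(N_q\widehat{\mcal{D}}_{\mcal{T}},N_{\widehat{p}}\widehat{\mcal{D}}) \;\le\; \measuredangle(N_q\widehat{\mcal{D}}_{\mcal{T}},N_{\cdot}\widehat{\mcal{D}}^{\mathrm{sec}})+\measuredangle(N_{\cdot}\widehat{\mcal{D}}^{\mathrm{sec}},N_{\widehat{p}}\widehat{\mcal{D}}).
\]
Theorem \ref{thm:Munkres1}, applied to $\widehat{\mcal{D}}_{\mcal{T}}$ with the $\varphi$-fatness of $\mcal{T}$ and the Euclidean curvature bound from the first paragraph, handles the first term: this is essentially the argument of Lemma \ref{lem:LinearAppxOfNormalsTwoFaces} with $\widehat{\mcal{D}}_{\mcal{T}}$ in place of $\mathfrak{f}$. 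For the second term, $\widehat{\mcal{D}}^{\mathrm{sec}}$ and $\widehat{\mcal{D}}$ differ only in that each vertex $x\in\mcal{T}^{(1)}$ is replaced by the corresponding $\widehat{x}\in\widehat{\mcal{T}}^{(1)}$ defined by Equation \ref{eq:ProjectionEdje}; applying Theorem \ref{thm:Munkres1} to each $1$-cell of $\mcal{T}$ yields $\|x-\widehat{x}\|_{\mathbb{R}^n}=O(\lambda(\mcal{T}))$, and the normal of a Euclidean triangle is continuous in its vertices as long as the triangle stays away from degeneracy, which the $\varphi$-fatness of $\mcal{T}$ guarantees. Combining all three bounds and taking $\delta$ to be the minimum of the three mesh thresholds finishes the proof.

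The main obstacle I foresee is the elementary $4$-gon sub-case of the last step: the four vertices of such a quad need not be coplanar, so $\widehat{\mcal{D}}$ and $\widehat{\mcal{D}}^{\mathrm{sec}}$ are each well-defined only up to the choice of a triangulating diagonal. The remark following Definition \ref{df:ElementaryPlFlatAssociate} already suggests a canonical choice (the diagonal producing the quadrilateral of smaller area); choosing the same diagonal on both PL disks is what makes the continuity-in-vertices estimate meaningful. For non-normal $\mcal{D}$, Theorem \ref{thm:NonNormalAreTame} places the flat associate inside a single $2$-face so no coplanarity ambiguity arises, and the same chain of inequalities applies verbatim.
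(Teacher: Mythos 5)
Your overall strategy — splitting the target angle by the triangle inequality through the intrinsic $\mcal{T}$-flat associate, then through a secant disk, using the Schoen/\cite{clr} curvature bound together with the Munkres approximation — is the same one the paper uses. The paper, though, treats the non-normal and elementary cases separately: for a non-normal disk it passes from $\mcal{D}$ to $\overline{\mcal{D}}$ via Lemma~\ref{lem:AlmostParallelNormals} and then from $\overline{\mcal{D}}$ to $\widehat{\mcal{D}}$ directly via Lemma~\ref{lem:LinearAppxOfNormalsTwoFaces} (since $\overline{\mcal{D}}$ sits inside a single $2$-face, $\widehat{\mcal{D}}$ in a planar $2$-face of $\widehat{\mcal{T}}$, and that lemma already covers the $2$-face-to-planar-$2$-face comparison), whereas for an elementary disk it argues from closeness of the intersection points $x$ and $\widehat{x}$, the Euclidean curvature bound on $\mcal{D}$, and a reprise of the Lemma~\ref{lem:LinearAngleAppxTwoFaces}/\ref{lem:LinearAppxOfNormalsTwoFaces} estimates. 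Your version unifies the two cases by inserting the intermediate $\widehat{\mcal{D}}^{\mathrm{sec}}$ in both; that is a more systematic bookkeeping of the same chain of estimates and is a reasonable way to make the paper's rather terse elementary-disk argument explicit.

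One point you should not assert as freely as you do: you write that ``the normal of a Euclidean triangle is continuous in its vertices as long as the triangle stays away from degeneracy, which the $\varphi$-fatness of $\mcal{T}$ guarantees.'' The $\varphi$-fatness of $\mcal{T}$ controls the shape of the \emph{tetrahedra}, not the shape of the normal disks cut out of them: a $4$-gon of $\mcal{F}\cap\tau$ can be an arbitrarily thin slab hugging an edge of $\tau$ even when $\tau$ itself is $\varphi$-fat and $\mcal{F}$ is a least area surface with the Schoen bound, and for such a slab the secant normal is poorly conditioned in the vertices. The cross-product sensitivity is of the form $\|x-\widehat{x}\|/(\|\text{short side}\|\sin\angle)$, and neither numerator nor denominator is controlled uniformly by $\varphi$ alone. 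This is a genuine soft spot; it is the same step that the paper compresses into ``combining all the above, and using the similar arguments as in the proof of Lemmas~\ref{lem:LinearAngleAppxTwoFaces} and~\ref{lem:LinearAppxOfNormalsTwoFaces},'' so you are in the same boat as the source, but if you want the quadrilateral case to be watertight you need either a quantitative lower bound on the width of a $4$-gon produced by a least area surface in a fat tetrahedron (which does not follow from fatness of $\mcal{T}$) or an estimate that replaces $\|x-\widehat{x}\| = O(\lambda)$ with the second-order bound $O(C\lambda^2)$ and shows that this is enough to tame the conditioning. For $3$-gons the issue does not arise because the angles of a triangle cut off a corner of a fat tetrahedron are bounded below in terms of $\varphi$, and your argument goes through cleanly there.
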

\begin{proof}
If $\mcal{D}$ is a non normal disk then $\widehat{\mcal{D}}$ is a piece of planar $2$-face of $\widehat{\mcal{T}}$. Let $\overline{\mcal{D}}$ be the $\mcal{T}$-flat associate of $\mcal{D}$, and let $\bar{p}$ be a point in $\overline{\mcal{D}}$. From Lemma \ref{lem:AlmostParallelNormals} there exists $\delta_1 > 0$ which depends only on $\epsilon$ and on $K_{\mcal{M}}$, such that if $\lambda(\mcal{T})$ then $\measuredangle(N_{p}\mcal{D}, N_{\bar{p}}\overline{\mcal{D}}) < \frac{\epsilon}{2}$. From Lemma \ref{lem:LinearAppxOfNormalsTwoFaces} there exists $\delta_2 > 0$ that depends only on $\epsilon$ and on $K_{\mcal{M}}$, such that if $\lambda(\mcal{T}) < \delta_2$ then $\measuredangle(N_{\bar{p}}\overline{\mcal{D}}, N_{\widehat{p}}\widehat{\mcal{D}}) < \frac{\epsilon}{2}$. Hence,
\[ \measuredangle(N_{p}\mcal{D}, N_{\widehat{p}}\widehat{\mcal{D}}) \leq \measuredangle(N_{p}\mcal{D}, N_{\bar{p}}\overline{\mcal{D}}) + \measuredangle(N_{\bar{p}}\overline{\mcal{D}}, N_{\widehat{p}}\widehat{\mcal{D}}) < \epsilon \;, \]
and the lemma is proved.

{\noindent In} case $\mcal{D}$ is an elementary disk then from Lemma \ref{lem:LinearAngleAppxTwoFaces} we have that there exists $\delta_1 > 0$ that depends only on $K_{\mcal{M}}$, $\epsilon$ and such that if $\lambda(\mcal{T}) < \delta_1$ then $|\len(e) - \len(\widehat{e})| < \frac{\epsilon}{3}$ for any edge $e$ of $\mcal{T}$ and its corresponding edge $\widehat{e}$ of $\widehat{\mcal{T}}$. Following this, if $x$ is an intersection point of $\mcal{D}$ with the edge $e$ and $\widehat{x}$ is the corresponding intersection point of $\widehat{\mcal{D}}$ obtained according to the procedure described above, then $\widehat{x}$ is contained in the $\frac{\epsilon}{3}$ neighborhood of $x$ with respect to the Euclidean metric on $\mathbb{R}^n$. Moreover, since $\widehat{\mcal{T}}$ is a $\delta$ approximation of $\mcal{T}$ we have that $\widehat{\mcal{D}}$ is contained in the $\frac{\epsilon}{3}$-neighborhood of $\mcal{D}$ with respect to the Euclidean metric. Since $\mcal{D}$ is a piece of the least area surface $\mcal{F}$ in $\mcal{M}$ its curvature is bounded with respect to either a Euclidean or hyperbolic metric of $\mcal{M}$, and according to Lemma $2.6$ of \cite{clr} it is also bounded with respect to the Euclidean metric of $\mathbb{R}^n$. Combining all the above, and using the similar arguments as in the proof of Lemmas \ref{lem:LinearAngleAppxTwoFaces} and \ref{lem:LinearAppxOfNormalsTwoFaces}, we establish Lemma \ref{lem:LinearAppxElementaryDisks}.
\end{proof}
\subsubsection{Proof of Theorem \ref{thm:AppxThmLinear}}
{\noindent It} is now possible to prove Theorem \ref{thm:AppxThmLinear}.
\begin{proof}[Proof of Theorem \ref{thm:AppxThmLinear}]
Let $\widehat{\mcal{M}}$ be the secant map induced by an isometric embedding of $(\mcal{M}, \mcal{T})$ in $\mathbb{R}^n$. Let $\mcal{T}_j$ be the median subdivision of $\mcal{T}$ and let $\widehat{\mcal{M}}_j$ be the linearization of $(\mcal{M}, \mcal{T}_j)$. Lemmas \ref{lem:LinearAppxOfNormalsTwoFaces} and \ref{lem:LinearAppxElementaryDisks} imply the theorem.
\end{proof}

\section*{Acknowledgement}
Most content of this paper is part of the author's PhD dissertation, under the advisory of Prof. Yoav Moriah, that was submitted to the Technion-IIT graduate school on 2014. The author would like to express his gratitude to Dr. Emil Saucan and to Prof. Joel Hass for their useful comments while working on early versions of this work.

\end{document}